\documentclass{amsart}
\usepackage[all]{xy}
\usepackage{color}
\usepackage{amsthm}
\usepackage[colorlinks=true]{hyperref}

\numberwithin{equation}{subsection}

\newtheorem{theorem}{Theorem}[section]
\newtheorem*{theorem*}{Theorem}
\newtheorem{lemma}[theorem]{Lemma}
\newtheorem{proposition}[theorem]{Proposition}
\newtheorem{corollary}[theorem]{Corollary}
\newtheorem*{corollary*}{Corollary}

\setcounter{tocdepth}{1}

\theoremstyle{remark}
\newtheorem{definition}[theorem]{Definition}
\theoremstyle{remark}
\newtheorem{example}[theorem]{Example}
\newtheorem*{example*}{Example}

\theoremstyle{remark}
\newtheorem{remark}[theorem]{Remark}
\theoremstyle{remark}
\newtheorem{notation}[theorem]{Notation}
\DeclareMathOperator{\Mot}{Mot}
\DeclareMathOperator{\hocolim}{hocolim}
\DeclareMathOperator{\colim}{colim}
\DeclareMathOperator{\incl}{incl}
\DeclareMathOperator{\id}{id}
\newcommand{\Ho}{\mathsf{Ho}}
\newcommand{\ko}{\: , \;}
\newcommand{\too}{\longrightarrow}
\newcommand{\dg}{\mathsf{dg}}
\newcommand{\dgHo}{\mathsf{H}^0}

\newcommand{\cA}{{\mathcal A}}
\newcommand{\cB}{{\mathcal B}}
\newcommand{\cC}{{\mathcal C}}
\newcommand{\cD}{{\mathcal D}}
\newcommand{\cE}{{\mathcal E}}
\newcommand{\cF}{{\mathcal F}}

\newcommand{\cM}{{\mathcal M}}

\newcommand{\cR}{{\mathcal R}}
\newcommand{\cS}{{\mathcal S}}
\newcommand{\cT}{{\mathcal T}}
\newcommand{\cU}{{\mathcal U}}

\newcommand{\cW}{{\mathcal W}}


\newcommand{\bbD}{\mathbb{D}}

\newcommand{\bbL}{\mathbb{L}}
\newcommand{\bbK}{I\mspace{-6.mu}K}
\newcommand{\bbR}{\mathbb{R}}
\newcommand{\bbT}{\mathbb{T}}
\newcommand{\bbN}{\mathbb{N}}
\newcommand{\bbZ}{\mathbb{Z}}

\newcommand{\op}{\mathsf{op}} 
\newcommand{\ie}{\textsl{i.e.}\ }

\newcommand{\loccit}{\textsl{loc.\,cit.}}

\newcommand{\Hqe}{\mathsf{Hqe}}
\newcommand{\Hec}{\mathsf{Hec}}
\newcommand{\Hmo}{\mathsf{Hmo}}

\newcommand{\sSet}{\mathsf{sSet}}
\newcommand{\Map}{\mathsf{Map}}
\newcommand{\Tri}{\mathsf{Tri}}
\newcommand{\tri}{\mathsf{tri}}
\newcommand{\perf}{\mathsf{perf}}
\newcommand{\pretr}{\mathsf{pre}\mbox{-}\mathsf{tr}}
\newcommand{\Cat}{\mathsf{Cat}} 
\newcommand{\CAT}{\mathsf{CAT}} 
\newcommand{\h}{\mathsf{H}}
\newcommand{\Kw}{\underline{K}}
\newcommand{\w}{\wedge}
\newcommand{\Hom}{\mathsf{Hom}} 
\newcommand{\rep}{\mathsf{rep}} 
\newcommand{\Loc}{\mathsf{L}}
\newcommand{\St}{\mathsf{St}}
\newcommand{\Spec}{\mathsf{Spec}}
\newcommand{\stab}{\mathsf{stab}}
\newcommand{\Fun}{\mathsf{Fun}} 
\newcommand{\dgcat}{\mathsf{dgcat}}
\newcommand{\Trdgcat}{\mathsf{Trdgcat}}
\newcommand{\HO}{\mathsf{HO}} 
\newcommand{\Madd}{\Mot^{\mathsf{add}}_{\dg}}
\newcommand{\Maddw}{\underline{\Mot}^{\mathsf{add}}_{\alpha}}
\newcommand{\Maddwzero}{\underline{\Mot}^{\mathsf{add}}_{\aleph_0}}

\newcommand{\Uadd}{\cU^{\mathsf{add}}_{\dg}}
\newcommand{\Uaddw}{\underline{\cU}^{\mathsf{add}}_{\alpha}}
\newcommand{\Uaddwzero}{\underline{\cU}^{\mathsf{add}}_{\aleph_0}}

\newcommand{\Mloc}{\Mot^{\mathsf{loc}}_{\dg}}
\newcommand{\Mlocvw}{\underline{\Mot}^{\mathsf{wloc}}_{\alpha}}
\newcommand{\Mlocw}{\underline{\Mot}^{\mathsf{loc}}_{\alpha}}
\newcommand{\Mlocwzero}{\underline{\Mot}^{\mathsf{loc}}_{\aleph_0}}

\newcommand{\Uloc}{\cU^{\mathsf{loc}}_{\dg}}
\newcommand{\Ulocvw}{\underline{\cU}^{\mathsf{wloc}}_{\alpha}}
\newcommand{\Ulocw}{\underline{\cU}^{\mathsf{loc}}_{\alpha}}

\newcommand{\ModZ}{\mathsf{Mod}\text{-}\bbZ}
\newcommand{\Spt}{\mathsf{Spt}}

\newcommand{\uHom}{\underline{\mathsf{Hom}}}
\newcommand{\HomC}{\uHom_{!}}
\newcommand{\HomAw}{\uHom_{\alpha}^{\mathsf{add}}}
\newcommand{\HomL}{\uHom^{\mathsf{loc}}_{\dg}}
\newcommand{\HomLvw}{\uHom_{\alpha}^{\mathsf{wloc}}}
\newcommand{\HomLw}{\uHom_{\alpha}^{\mathsf{loc}}}

\newcommand{\dgS}{\cS}
\newcommand{\dgD}{\cD}

\newcommand{\internalcomment}[1]{}

\title{Non-connective $K$-theory via universal invariants}
\author[D.-C. Cisinski and G. Tabuada]{Denis-Charles Cisinski and Gon{\c c}alo~Tabuada}

\address{LAGA\\
CNRS~(UMR 7539)\\
Universit\'e Paris~13\\
\hbox{Avenue~Jean-Baptiste~Cl\'ement}\\
93430 Villetaneuse\\France}
\email{cisinski@math.univ-paris13.fr}
\urladdr{http://www.math.univ-paris13.fr/~cisinski/}

\address{Departamento de Matem{\'a}tica e CMA, FCT-UNL\\
Quinta da Torre\\
2829-516 Caparica\\
Portugal}
\email{tabuada@fct.unl.pt}

\subjclass{19D35, 19D55, 18G55}
\date{\today}
\keywords{Non-connective $K$-theory, Dg categories, Higher Chern characters, Non-commutative algebraic geometry, Grothendieck derivators}

\begin{document}
\begin{abstract}
In this article, we further the study of higher $K$-theory of dg categories via universal
invariants, initiated in~\cite{Additive}.
Our main result is the  co-representability of non-connective $K$-theory
by the base ring in the ``universal localizing motivator''. As an application, we obtain for free higher Chern characters, resp. higher trace maps,
from non-connective $K$-theory to cyclic homology, resp. to topological Hochschild homology.
\end{abstract}

\maketitle

\tableofcontents

\section*{Introduction}
\subsection*{Dg categories}
A {\em differential graded (=dg) category}, over a commutative base ring $k$, is a category enriched over 
cochain complexes of $k$-modules (morphisms sets are such complexes)
in such a way that composition fulfills the Leibniz rule\,:
$d(f\circ g)=(df)\circ g+(-1)^{\textrm{deg}(f)}f\circ(dg)$.
Dg categories enhance and solve many of the technical problems inherent to triangulated categories; see Keller's ICM adress~\cite{ICM}. In {\em non-commutative algebraic geometry} in the sense of
Bondal, Drinfeld, Kapranov, Kontsevich, To{\"e}n, Van den Bergh, $\ldots$ \cite{BK}  \cite{Bvan} \cite{Drinfeld} \cite{Chitalk} \cite{Kontsevich} \cite{finMotiv} \cite{Toen}, they are considered as dg-enhancements of (bounded) derived categories of quasi-coherent sheaves on a hypothetic non-commutative space.
\subsection*{Additive/Localizing invariants}
All the classical (functorial) invariants, such as algebraic $K$-theory, Hochschild homology, cyclic homology,
and even topological Hochschild homology and topological cyclic homology (see \cite[\S10]{THH}),
extend naturally from $k$-algebras to dg categories. In a ``motivic spirit'', in order to study {\em all}
these classical invariants simultaneously, the notions of {\em additive} and {\em localizing}
invariant have been introduced in \cite{Additive}. These notions, that we now recall, make use of
the language of Grothendieck derivators~\cite{Grothendieck}, a formalism which allows us to state
and prove precise universal properties at the level of homotopy theories, and to dispense with many
of the technical problems one faces in using model categories; consult Appendix~\ref{appendix}.
For this introduction, it is sufficient to think of (triangulated) derivators as (triangulated)
categories which have all the good properties of the homotopy category of a (stable)
model category.

Let $E: \HO(\dgcat) \to \bbD$ be a morphism of derivators, from the pointed
derivator associated to the Morita model structure (see \S\ref{sub:Morita}), to
a strong triangulated derivator
(in practice, $\bbD$ will be the derivator associated to a stable model category $\cM$,
and $E$ will come from a functor $\dgcat\to\cM$ which sends derived Morita equivalences to weak equivalences in $\cM$).
We say that $E$ is an {\em additive invariant} if it commutes with filtered homotopy colimits,
preserves the terminal object and 
\begin{itemize}
\item[add)] sends {\em split exact sequences} (see~\ref{def:ses}) 
to direct sums. 
\end{itemize}
$$ 
\begin{array}{lcccr}
\xymatrix{
 \cA \ar[r]_{I} & \cB \ar@<-1ex>[l]_R
\ar[r]_P & \cC \ar@<-1ex>[l]_{S} 
} & &\mapsto & & [E(I)\,\, E(S)]: E(\cA)\oplus E(\cC) \stackrel{\sim}{\too} E(\cB)
\end{array}
$$
We say that $E$ is a {\em localizing invariant} if it satisfies the same conditions
as an additive invariant but with condition add) replaced by\,:
\begin{itemize}
\item[loc)] sends {\em exact sequences} (see~\ref{def:ses}) 
to distinguished triangles 
\end{itemize}
$$
\begin{array}{lcccr}
\cA \stackrel{I}{\too} \cB \stackrel{P}{\too} \cC &&\mapsto&&
E(\cA) \stackrel{E(I)}{\too} E(\cB) \stackrel{E(P)}{\too} E(\cC) \too E(\cA)[1]
\end{array}
$$
In \cite{Additive}, the second named author
have constructed the {\em universal additive invariant}
$$\Uadd:\HO(\dgcat) \too \Madd$$
and the {\em universal localizing invariant}
$$\Uloc: \HO(\dgcat) \too \Mloc\,.$$
Roughly, every additive (resp. localizing) invariant $E: \HO(\dgcat) \to \bbD$
factors uniquely through $\Uadd$ (resp. $\Uloc$); see Theorem~\ref{thm:loc}.

For instance, Waldhausen's $K$-theory of dg categories defines
a functor from $\dgcat$ to the model category of spectra $\Spt$
which sends derived Morita equivalences to stable homotopy
equivalences (see~\ref{not:Kneg}),
hence a morphism of derivator
$$K:\HO(\dgcat)\too \HO(\Spt)\, .$$
As $K$-theory preserves filtered (homotopy) colimits, it follows from
Waldhausen's additivity theorem that $K$ is an additive invariant.
Hence there is a unique homotopy colimit preserving morphism of triangulated derivators
$$K_{\mathsf{add}}:\Madd\too \HO(\Spt)$$
such that $K=K_{\mathsf{add}}\circ \Uadd$.

Notice that a localizing invariant is also an additive invariant, but the converse does not hold.
Because of this universality property, which is a reminiscence of motives,
$\Madd$ is called the {\em additive motivator} and $\Mloc$ the {\em localizing motivator}.
Recall that they are both triangulated derivators.

Before going further, note that any triangulated derivator is canonically
enriched over spectra; see \S\ref{sub:Stab}. The spectra of morphisms in a triangulated
derivator $\bbD$ will be denoted by $\bbR\Hom(-,-)$. The main result from \cite{Additive} is the following\,:
\begin{theorem*}{(\cite[Thm.\,15.10]{Additive})}\label{thm:co-repadd}
For every dg category $\cA$, we have a natural isomorphism in the stable homotopy category
of spectra
$$K(\cA)\simeq \bbR\Hom(\,\Uadd(\underline{k}),\, \Uadd(\cA)\,) \,,$$
where $\underline{k}$ is the dg category with a single object and $k$
as the dg algebra of endomorphisms, and $K(\cA)$ is Waldhausen's $K$-theory
spectrum of $\cA$.
\end{theorem*}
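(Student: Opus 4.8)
The plan is to identify the right‑hand side directly from the construction of the additive motivator: the enriched Yoneda lemma reduces it to an ``additive‑fibrant replacement'', and one recognises Waldhausen's $S_\bullet$‑construction inside that replacement. \textbf{Step 1 (reduction).} First, note that $E:=\bbR\Hom(\Uadd(\underline{k}),\Uadd(-))$ is a morphism of derivators $\HO(\dgcat)\to\HO(\Spt)$ which, like Waldhausen $K$‑theory, is an additive invariant: it preserves the terminal object (as $\Uadd$ does and $\bbR\Hom(\Uadd(\underline{k}),0)=0$), sends split exact sequences to direct sums (as $\Uadd$ does and $\bbR\Hom(\Uadd(\underline{k}),-)$ is a triangulated morphism commuting with direct sums), and commutes with filtered homotopy colimits (as $\Uadd$ does and $\Uadd(\underline{k})$ is a compact object of $\Madd$ --- a structural feature of the construction of $\Madd$ as a localisation of a derivator of presheaves on the finite, i.e.\ homotopically finitely presented, dg categories $\dgcatf$, in which the images of the objects of $\dgcatf$ are compact). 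By Theorem~\ref{thm:loc} both $K$ and $E$ then factor through $\Uadd$ by homotopy‑colimit‑preserving morphisms of triangulated derivators $\Madd\to\HO(\Spt)$; since both preserve filtered homotopy colimits and every dg category is such a colimit of objects of $\dgcatf$, it suffices to produce an isomorphism $E(\cA)\simeq K(\cA)$ natural in $\cA$.

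\textbf{Step 2 (Yoneda reduction).} Realising $\Uadd$ as the composite of a (derived, stabilised) Yoneda‑type morphism $h$ into a derivator of spectral presheaves on $\dgcatf$ with the localisation $L$ defining $\Madd$, the enriched Yoneda lemma identifies
$$E(\cA)\;\simeq\;\bigl(R_{\mathsf{add}}(h_{\cA})\bigr)(\underline{k})\,,$$
the value at $\underline{k}$ of an additive‑local (``additive‑fibrant'') replacement $R_{\mathsf{add}}(h_{\cA})$ of the spectral presheaf $h_{\cA}$ represented by $\cA$. Its starting value $h_{\cA}(\underline{k})$ is built from the mapping space $\Map_{\HO(\dgcat)}(\underline{k},\cA)$, which by To\"en's computation of mapping spaces in $\dgcat$ is the classifying space $N(w\,\hat{\cA}_{\mathsf{pe}})$ of the category of perfect $\cA$‑modules and their quasi‑isomorphisms --- the bottom level $|wS_{1}\hat{\cA}_{\mathsf{pe}}|$ of Waldhausen's $S_\bullet$‑machine for $\hat{\cA}_{\mathsf{pe}}$.

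\textbf{Step 3 (the additive‑fibrant replacement is $S_\bullet$).} The heart of the proof is to show that $R_{\mathsf{add}}(h_{\cA})$ evaluated at $\underline{k}$ is precisely Waldhausen's $K$‑theory spectrum of $\hat{\cA}_{\mathsf{pe}}$, equivalently of $\cA$. The mechanism is that the simplicial dg category $[n]\mapsto S_{n}\hat{\cA}_{\mathsf{pe}}$ is, degreewise, glued from $\hat{\cA}_{\mathsf{pe}}$ along \emph{split} exact sequences of dg categories: the sequence $\hat{\cA}_{\mathsf{pe}}\hookrightarrow\cS_{2}\hat{\cA}_{\mathsf{pe}}\to\hat{\cA}_{\mathsf{pe}}$ sending $X$ to the admissible short exact sequence $0\to X\to X$ and sending a short exact sequence to its sub‑object, together with the middle‑object functor, and the analogous sequences assembling the higher $S_{n}$ and the iterated constructions $S_{\bullet}^{(r)}$. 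These sequences are among the maps inverted in $\Madd$, so applying $\Uadd$ (hence $E$) to them is forced to realise Waldhausen's additivity theorem: $E(S_{n}\hat{\cA}_{\mathsf{pe}})\simeq E(\hat{\cA}_{\mathsf{pe}})^{\oplus n}$ with the face and degeneracy structure of a bar‑type construction, so that the homotopy colimit over $\Delta^{\op}$ computes a suspension and iterating reconstructs the whole $\Omega$‑spectrum $\{|wS_{\bullet}^{(r)}\hat{\cA}_{\mathsf{pe}}|\}_{r}=K(\cA)$. Combined with Step 2, this gives $E(\cA)\simeq K(\cA)$, manifestly natural in $\cA$; the isomorphism can be anchored, say on $\pi_{0}$, by observing that the displayed $\cS_{2}$‑sequence imposes on $\pi_{0}E(\cA)=\Hom_{\Madd}(\Uadd(\underline{k}),\Uadd(\cA))$ exactly the relation $[A_{1}]=[A_{0}]+[A_{2}]$ for every cofiber sequence $A_{0}\to A_{1}\to A_{2}$ of perfect modules, identifying it with $K_{0}(\cA)=\pi_{0}K(\cA)$.

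\textbf{The main obstacle} is Step 3: organising Waldhausen's additivity theorem inside the additive motivator, and matching level by level the $S_\bullet$‑delooping of the spectrum of perfect modules with the mapping spectrum out of $\Uadd(\underline{k})$, using \emph{only} the split exact sequences and filtered homotopy colimits that $\Madd$ makes available. Unavoidable subsidiary points are the compactness of $\Uadd(\underline{k})$ and the precise shape of the Yoneda identification once the localised presheaf derivator has been stabilised; To\"en's description of $\Map_{\HO(\dgcat)}(\underline{k},-)$; and enough strictification that $\cA\mapsto|wS_{\bullet}^{(r)}\hat{\cA}_{\mathsf{pe}}|$ is a genuine morphism of derivators and that the left Bousfield localisation defining $\Madd$ is a localisation at a set.
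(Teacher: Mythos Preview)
The theorem you are asked to prove is not proved in the present paper; it is quoted from \cite[Thm.\,15.10]{Additive}. The only indication here of how the argument goes is the proof of the analogous Theorem~\ref{thm:KTradd}, which simply says: replace $\rep_{mor}$ by $\rep_{tr}$ and $K$ by $\Kw$ in \cite[Prop.\,14.11, Thm.\,15.9, Thm.\,15.10]{Additive}. Your outline is broadly in line with that strategy --- enriched Yoneda to reduce $\bbR\Hom(\Uadd(\underline{k}),\Uadd(\cA))$ to the value at $\underline{k}$ of an additive--fibrant replacement of the represented presheaf, To\"en's description of $\Map(\underline{k},-)$ as the nerve of $w\,\perf(\cA)$, and then Waldhausen additivity to identify the result with $K(\cA)$ --- so you are on the right track.

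There is, however, a genuine gap in Step~3. You observe that the split exact sequences assembling $S_n$ from copies of the perfect modules are inverted in $\Madd$, whence $E(S_n)\simeq E(\hat\cA_{\mathsf{pe}})^{\oplus n}$, and you assert that ``iterating reconstructs the whole $\Omega$-spectrum $K(\cA)$''. But this only shows that $E$ \emph{applied to} the $S_\bullet$-tower behaves additively; it does not show that the $S_\bullet$-tower \emph{is} the additive--fibrant replacement $R_{\mathsf{add}}(h_\cA)$ you invoked in Step~2. What is missing is the two-sided check: (a) the spectral presheaf $\cB\mapsto K(\rep_{mor}(\cB,\cA))$ is already local for the additive localization (this is essentially Waldhausen additivity, and is the content of \cite[Prop.\,14.11]{Additive}); and (b) the comparison map from $h_\cA$ to that presheaf is an additive--local equivalence, i.e.\ becomes an isomorphism after applying any additive invariant. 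Part~(b) is the hard step and is where \cite[Thm.\,15.9]{Additive} enters; it is not a consequence of the bar-type identifications you sketch. Without (b) you only know there is \emph{some} map from $E(\cA)$ to $K(\cA)$, not that it is an isomorphism. Two minor points: the $S_n$ you manipulate are Waldhausen categories and must be promoted to dg categories before $\Uadd$ can be applied (this is done in \cite{Additive} via the ``upper/lower path'' dg categories of \S\S12--13 there); and your Step~1 cites Theorem~\ref{thm:loc}, which concerns localizing invariants, where you mean the additive universal property --- and in any case Step~1 is not actually a reduction, since ``produce $E(\cA)\simeq K(\cA)$ natural in $\cA$'' is precisely the statement to be proved.
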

However, from the ``motivic'' point of view,
this co-representability result is not completely satisfactory, in the sense
that all the classical invariants (except Waldhausen's $K$-theory)
are localizing.
Therefore, the base category $\Mloc(e)$ of the localizing motivator is
morally what we would like to consider as the triangulated category of
{\em non-commutative motives}.

From this point of view,
the importance of the computation of the (spectra of) morphisms between two objects
in the localizing motivator is by now clear. Hence, a fundamental
problem of the theory of non-commutative motives is to compute the localizing
invariants (co-)represented by dg categories in $\Mloc$.
In particular, we would like a co-representability theorem analogous to
the preceding one, with $\Madd$ replaced by $\Mloc$ and $K(\cA)$
replaced by the non-connective $K$-theory spectrum $\bbK(\cA)$
(see Notation~\ref{not:Kneg}).
The main result of this paper is a proof of the co-representability
of non-connective $K$-theory by the base ring $k$\,:

\begin{theorem*}{(Thm.\,\ref{thm:main})}\label{thm:main-intro}
For every dg category $\cA$, we have a natural isomorphism in the stable homotopy category
of spectra
$$\bbK(\cA)\simeq\bbR\Hom(\,\Uloc(\underline{k}),\, \Uloc(\cA)\,) \,.$$
In particular, we obtain isomorphisms of abelian groups
$$\bbK_n(\cA)\simeq\Hom(\,\Uloc(\underline{k})[n],\, \Uloc(\cA)\,)\,,\,\,n \in \bbZ\, .$$
\end{theorem*}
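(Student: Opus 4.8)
The plan is to realise the localizing motivator $\Mloc$ as a localization of the additive motivator $\Madd$, and then to identify, inside $\Madd$, the local replacement of $\Uadd(\cA)$ with the mapping telescope built from the delooping that defines non-connective $K$-theory. Since $\Uloc$ is in particular an additive invariant, the universal property of $\Uadd$ (Theorem~\ref{thm:loc}) produces a unique homotopy colimit preserving morphism of triangulated derivators $\phi\colon\Madd\too\Mloc$ with $\Uloc=\phi\circ\Uadd$; one checks that $\phi$ is a Bousfield localization, with fully faithful right adjoint $\psi$, so that $\psi\phi$ is the local-replacement endofunctor of $\Madd$ and $\bbR\Hom_{\Mloc}(\phi X,\phi Y)\simeq\bbR\Hom_{\Madd}(X,\psi\phi Y)$ for all $X,Y$.

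The only non-formal ingredient is the description of the maps inverted by $\phi$. For each dg category $\cA$ the flasque exact sequence $\cA\too\cF(\cA)\too\Sigma\cA$, with $\cF(\cA)$ flasque and $\Sigma\cA$ its suspension (see~\ref{not:Kneg}), is sent by the localizing invariant $\Uloc$ to a distinguished triangle; as $\Uloc$ is also additive it kills $\cF(\cA)$ by an Eilenberg swindle, so the resulting comparison map $\chi_{\cA}\colon\Uadd(\cA)[1]\too\Uadd(\Sigma\cA)$ becomes invertible after $\phi$. Conversely — and here, building on Schlichting's delooping and his localization theorem for non-connective $K$-theory, one shows that — once the whole family $\{\chi_{\cA}\}_{\cA}$ has been inverted in $\Madd$ every exact sequence of dg categories is automatically carried to a distinguished triangle; hence $\phi$ \emph{is} the localization of $\Madd$ at $\{\chi_{\cA}\}_{\cA}$, after the routine reduction to $\alpha$-small dg categories needed to keep everything essentially small.

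Next I would introduce, for each dg category $\cA$, the object
$$L(\cA)\ :=\ \hocolim\Bigl(\Uadd(\cA)\too\Uadd(\Sigma\cA)[-1]\too\Uadd(\Sigma^{2}\cA)[-2]\too\cdots\Bigr)\ \in\ \Madd(e),$$
with transition maps the desuspensions of the $\chi_{\Sigma^{n}\cA}$, together with its canonical map $u_{\cA}\colon\Uadd(\cA)\too L(\cA)$. On the one hand $\phi(u_{\cA})$ is an isomorphism, because $\phi$ preserves homotopy colimits and inverts every transition map, so that $\phi L(\cA)$ is a homotopy colimit of a tower of isomorphisms. On the other hand — and this is the technical core — one proves that $L(\cA)$ is $\phi$-local, combining the compact generation of $\Madd$ by $\Uadd(\dgcatf)$ (Waldhausen $K$-theory commutes with filtered homotopy colimits) with a finiteness property of Schlichting's suspension $\Sigma$ entering the verification that the orthogonality conditions cutting out the local objects are stable under the telescope. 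Granting this, $u_{\cA}$ exhibits $L(\cA)$ as the local replacement, i.e. $\psi\phi\,\Uadd(\cA)\simeq L(\cA)$.

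It then only remains to compute
$$\bbR\Hom_{\Mloc}\bigl(\Uloc(\underline k),\Uloc(\cA)\bigr)\ \simeq\ \bbR\Hom_{\Madd}\bigl(\Uadd(\underline k),\psi\phi\,\Uadd(\cA)\bigr)\ \simeq\ \bbR\Hom_{\Madd}\bigl(\Uadd(\underline k),L(\cA)\bigr),$$
and then, using the compactness of $\Uadd(\underline k)$ to commute $\bbR\Hom$ with the telescope and the additive co-representability theorem \cite[Thm.\,15.10]{Additive} termwise, this equals $\hocolim_{n}K(\Sigma^{n}\cA)[-n]$, which is exactly Schlichting's model for $\bbK(\cA)$ (Notation~\ref{not:Kneg}); whence the first isomorphism. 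The statement about $\bbK_{n}(\cA)$ is immediate by taking $\pi_{0}$ of the $(-n)$-shift, since $\Hom(\Uloc(\underline k)[n],\Uloc(\cA))=\pi_{0}\bigl(\bbR\Hom(\Uloc(\underline k),\Uloc(\cA))[-n]\bigr)=\pi_{n}\bbK(\cA)$. I expect the main obstacle to be the non-formal input of the second paragraph — that inverting the flasque delooping maps $\chi_{\cA}$ already forces every exact sequence to become a distinguished triangle — closely followed by the verification that the telescope $L(\cA)$ is genuinely $\phi$-local; once those are in hand, everything else is a formal consequence of compact generation of the motivators together with the additive theorem.
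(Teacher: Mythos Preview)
Your overall architecture---realise $\Mloc$ as a Bousfield localization of an additive motivator, identify the local replacement of $\Uadd(\cA)$ with the Schlichting telescope, then compute the mapping spectrum from $\Uadd(\underline{k})$ termwise via the additive co-representability theorem---is precisely the paper's strategy. The gap is at the point you yourself flag as the technical core, and it is not repairable along the lines you suggest.

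The flasque completion $\cF(\cA)$ has countable sums by construction; this is what makes it flasque. Consequently $\cF(\cA)$ is never homotopically finitely presented, and the functor $\cA\mapsto\cF(\cA)$ (hence $\cA\mapsto\Sigma\cA=\cS(\cA)$) does \emph{not} preserve filtered homotopy colimits---only $\aleph_1$-filtered ones (Lemma~\ref{lem:F-filt}). There is no ``finiteness property of Schlichting's suspension''. This breaks both non-formal steps in your outline. First, you cannot reduce the class $\{\chi_\cA\}_\cA$ to a small generating set indexed by finite dg cells and then recover the general $\chi_\cA$ by filtered colimits; so your description of $\phi$ as the localization at $\{\chi_\cA\}$ never becomes a statement about a set of maps, and the ``routine reduction to $\alpha$-small dg categories'' is exactly the non-routine part. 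Second, the assignment $\cA\mapsto L(\cA)$ does not extend to a homotopy-colimit-preserving endomorphism of $\Madd$, which is what one needs to run the idempotence argument $L^2\simeq L$ that would show $L(\cA)$ is $\phi$-local.

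The paper's fix is to interpolate a larger cardinal and a weaker equivalence relation. One first builds $\alpha$-additive and $\alpha$-localizing motivators $\Maddw$, $\Mlocvw$, $\Mlocw$ for $\alpha\geq\aleph_1$, so that $\cF$ and $\cS$ \emph{do} commute with the relevant ($\alpha$-filtered) colimits. In $\Mlocvw$ one uses Waldhausen's fibration theorem for strict exact sequences (this is why the quasi-equiconic model structure and $\underline{K}$, rather than the Morita model structure and $K$, appear) to get co-representability by $\underline{K}$ (Theorem~\ref{thm:Kwloc}). The telescope $V_l$ is then shown to be an $\alpha$-localizing invariant (Proposition~\ref{prop:V-loc}) and hence to coincide with the local replacement $l^\ast$ (Proposition~\ref{prop:can-isom}); this yields the co-representability by $\bbK$ in $\Mlocw$ (Theorem~\ref{thm:negK}). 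Only at the very last step does one descend from $\Mlocw$ to $\Mloc=\Mlocwzero$, and the one fact that survives the cardinal descent is that $\bbK$ itself preserves filtered homotopy colimits (Proposition~\ref{prop:genarg} applied once more). Your proposal collapses this two-stage cardinal argument into a single step in $\Madd$, and that collapse is where it fails.
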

The above theorem asserts that non-connective $K$-theory is ``non-commutative motivic cohomology''.

Non-connective $K$-theory goes back to the work of Bass~ \cite{Bass},
Karoubi~\cite{Karoubi}, Pedersen-Weibel~\cite{Pedersen,Weibel},
Thomason-Trobaugh~\cite{Thomason}, Schlichting~ \cite{Marco},
and has been the source of many
deep results ``amalgamated'' with somewhat ad-hoc constructions.
The above co-representability theorem radically changes this state of affairs by offering,
to the best of the authors knowledge, the first conceptual characterization of non-connective
$K$-theory in the setting of dg categories. It provides a completely new understanding of non-connective $K$-theory
as the universal construction, with values in a stable context, which preserves filtered
homotopy colimits and satisfies the localization property. More precisely, the co-representability theorem above should be understood
as the construction of the universal Chern character from non-connective $K$-theory\,:
it follows from this result and from the Yoneda lemma
that, for any localizing invariant $E:\HO(\dgcat)\to\HO(\Spt)$ with values in the
triangulated derivator of spectra, the data of natural maps $\bbK(\cA)\to E(\cA)$
are equivalent to the datum of a single class in the stable homotopy group $E_0(k)=\pi_0(E(\underline{k}))$;
see Theorem \ref{thm:abschern}. This is illustrated as follows.

\subsection*{Applications}
Let 
$$ \bbK_n(-), HC_j(-), THH_j(-): \Ho(\dgcat) \too \ModZ\,,\,\, n \in \bbZ,  j \geq 0$$
be respectively, the $n$-th algebraic $K$-theory group functor, the $j$-th cyclic homology group functor, and
the $j$-th topological Hochschild homology group functor; see Section~\ref{chap:Chern}.
The co-representability theorem above furnishes for free higher Chern characters and trace maps\,: 
\begin{theorem*}{(Thm.\,\ref{thm:Chern})}
We have the following canonical morphisms of abelian groups for every small dg category $\cA$\,:
\begin{itemize}
\item[(i)] Higher Chern characters
$$ ch_{n,r}: \bbK_n(\cA) \too HC_{n+2r}(\cA)\,,\,\,n \in \bbZ\,,\,\, r \geq 0\,,$$
such that $ch_{0,r}: \bbK_0(k) \too HC_{2r}(k)$ sends $1\in \bbK_0(k)$ to
a generator of the $k$-module of rank one $HC_{2r}(k)$.
\item[(ii)] When $k=\bbZ$, higher trace maps
$$ tr_{n}: \bbK_n(\cA) \too THH_{n}(\cA)\,, \, n \in \bbZ\, ,$$
such that $tr_{0}: \bbK_0(k) \too THH_{0}(k)$ sends $1\in\bbK_0(\bbZ)$
to $1\in THH_0(\bbZ)$, and
$$ tr_{n,r}: \bbK_n(\cA) \too THH_{n+2r-1}(\cA)\,, \, n \in \bbZ\,,\,\, r \geq 1\,,$$
such that $tr_{0,r}: \bbK_0(k) \too THH_{2r-1}(k)$ sends $1\in\bbK_0(\bbZ)$
to a generator in the cyclic group $THH_{2r-1}(\bbZ)\simeq\bbZ/ r\bbZ$.
\item[(iii)] When $k=\bbZ/p\bbZ$, with $p$ a prime number, higher trace maps
$$ tr_{n,r}: \bbK_n(\cA) \too THH_{n+2r}(\cA)\,,\, n,r \in \bbZ\,,$$
such that $tr_{0,r}: \bbK_0(k) \too THH_{2r}(k)$ sends $1\in\bbK_0(\bbZ)$
to a generator in the cyclic group $THH_0(\bbZ/p\bbZ)\simeq\bbZ/ p\bbZ$.
\end{itemize}
\end{theorem*}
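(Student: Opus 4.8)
The plan is to obtain every one of these maps for free from the co-representability theorem (Theorem~\ref{thm:main}), via the ``universal Chern character'' principle of Theorem~\ref{thm:abschern}: for any localizing invariant $E: \HO(\dgcat)\to\HO(\Spt)$ with values in the triangulated derivator of spectra, natural transformations $\bbK\Rightarrow E$ are in bijection with the group $\pi_0 E(\underline{k})$, a transformation corresponding to the image of $1\in\bbK_0(k)$ under the induced map $\bbK_0(k)\to\pi_0 E(\underline{k})$. It therefore suffices to exhibit cyclic homology, topological Hochschild homology and appropriate shifts of these as spectrum-valued localizing invariants, and then to compute the relevant $\pi_0$ on the base ring.

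First I would recall --- this is the substance of Section~\ref{chap:Chern}, drawing on Keller's localization theorem for cyclic homology of dg categories and its analogue for $THH$ --- that $HC$ and $THH$ are localizing invariants of dg categories: they commute with filtered homotopy colimits, preserve the terminal object, and carry exact sequences of dg categories to distinguished triangles. Since $HC$ is naturally valued in connective complexes of $k$-modules, I would compose it with the Eilenberg--MacLane functor to land in $\HO(\Spt)$ (nothing is lost: $\pi_n = HC_n(\cA)$ for $n\geq 0$), while $THH$ is already spectrum-valued. Then, for each $r$, the $2r$-fold loop autoequivalence $\Omega^{2r}$ of the stable derivator $\HO(\Spt)$ --- which raises homotopy groups by $2r$ and, being an equivalence, commutes with homotopy colimits, fixes the zero object and merely rotates triangles --- turns $HC$ into a localizing invariant $E_r$ with $\pi_n E_r(\cA) = HC_{n+2r}(\cA)$; looping $THH$ likewise produces localizing invariants with homotopy groups $THH_{n+2r-1}(\cA)$ and $THH_{n+2r}(\cA)$.

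Next, for each such shifted invariant $E$ I would apply Theorem~\ref{thm:abschern} and identify $\pi_0 E(\underline{k})$ by classical computations. In the cyclic case $\pi_0 E_r(\underline{k}) = HC_{2r}(k)$, which is free of rank one over $k$ --- the iterated periodicity operator identifies it with $HC_0(k) = k$, since $HH_\ast(k) = k$ is concentrated in degree $0$. For $THH$ over $\bbZ$, B\"{o}kstedt's computation gives $THH_0(\bbZ) = \bbZ$ and $THH_{2r-1}(\bbZ)\simeq\bbZ/r\bbZ$ (with $THH_{2r}(\bbZ) = 0$) for $r\geq 1$; over $\bbZ/p\bbZ$, his computation $THH_\ast(\bbZ/p\bbZ)\simeq(\bbZ/p\bbZ)[x]$ with $|x| = 2$ gives $THH_{2r}(\bbZ/p\bbZ)\simeq\bbZ/p\bbZ$ for $r\geq 0$. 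Fixing a generator of each of these groups selects, by the bijection of Theorem~\ref{thm:abschern}, a natural transformation $\bbK\Rightarrow E$; passing to $\pi_n$ yields the maps $ch_{n,r}$, $tr_n$, $tr_{n,r}$ respectively, and the prescribed effect on $1\in\bbK_0(k)$ is exactly the chosen generator, again by that bijection.

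The entire mathematical weight is thus carried by Theorem~\ref{thm:main} (equivalently, Theorem~\ref{thm:abschern}); what is left is the comparatively soft task of checking that $HC$ and $THH$ genuinely define localizing invariants at the level of derivators --- in particular that the localization sequences hold for arbitrary dg categories, not merely for $k$-algebras --- and that the promotion to spectra via Eilenberg--MacLane, together with the loop-shifts, is compatible with all of this structure. I expect this bookkeeping, rather than any genuinely new idea, to be the main obstacle; once it is in place the theorem follows formally from co-representability and the Yoneda lemma, the homotopy of $k$, $\bbZ$ and $\bbZ/p\bbZ$ being the only arithmetic input.
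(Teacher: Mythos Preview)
Your proposal is correct and follows essentially the same route as the paper: apply Theorem~\ref{thm:abschern} to suitably shifted versions of $HC$ and $THH$, and read off the natural transformations from the known values $HC_\ast(k)\simeq k[u]$ and B\"okstedt's computation of $THH_\ast(\bbZ)$ and $THH_\ast(\bbZ/p\bbZ)$. The only cosmetic difference is that for cyclic homology the paper passes through the mixed-complex invariant $C:\HO(\dgcat)\to\HO(\Lambda\text{-}\mathsf{Mod})$ of \cite[Thm.\,10.7]{Additive} and lands in spectra via $\bbR\Hom_{\cD(k)}(k,HC(-))$ rather than via the Eilenberg--MacLane functor, and for $THH$ it invokes the Blumberg--Mandell localization theorem \cite[Thm.\,6.1]{BMandell} explicitly; these amount to the same thing as what you wrote.
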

Before explaining the main ideas behind the proof of the co-repre\-sen\-tability theorem, we would like to emphasize that, as condition loc)
is much more subtle than condition add), the tools and arguments used in the
proof of co-representability result in the additive motivator, {\em are not available}
when we work with the localizing motivator.
\subsection*{Strategy of the proof}
The proof is based on a rather ``orthogonal'' construction of the
localizing motivator $\Mloc$. A crucial ingredient is the observation
that we should not start with the notion of derived Morita equivalence, but
with the weaker notion of quasi-equiconic dg functor (these are the
dg functors which are homotopically fully-faithful and essentialy surjective
after pre-triangulated completion):
in other words, this means that we should work first
with pre-triangulated dg categories which are not necessarily idempotent complete;
see Proposition~\ref{prop:fibquasiec}. The reason for this is that
negative $K$-theory is, in particular, an obstruction for the (triangulated)
quotients to be Karoubian (see~\cite{Marco}), which is a difficult aspect
to see when we work with the notion of derived Morita equivalence.
The other difficulty consists in understanding
the localizing invariants associated to additive ones\,: Schlichting's construction
of the non-connective $K$-theory spectrum is general enough to be applied to
a wider class of functors than connective $K$-theory, and we would like to use it
to understand as explicitely as possible $\Mloc$ as a left Bousfield localization of
$\Madd$. One crucial step in Schlichting's construction is
the study of a flasque resolution (of dg categories), obtained
as a completion by countable sums. The problem with this completion functor
is that it doesn't preserves filtered (homotopy) colimits.
Our strategy consists in skirting this problem by considering
invariants of dg categories which only preserve $\alpha$-filtered homotopy
colimits, for a big enough cardinal $\alpha$. Through a careful analysis of Schlichting's construction,
we prove a co-representability result in this wider setting, and then use the nice
behaviour of non-connective $K$-theory to solve the problem with filtered homotopy colimits at the end.

In particular, we shall work with quite a few different kinds of K-theories.
Given a small dg category $\cA$, there are two ways to complete it. First, one can
consider its pre-triangulated envelope; see Proposition~\ref{prop:fibquasiec}. Waldhausen's $K$-theory of the pre-triangulated envelope of $\cA$ will be denoted by $\underline{K}(\cA)$; see Notation~\ref{not:Ktri}. One can also consider the Morita envelope of $\cA$, that roughly speaking is the pseudo-abelian completion of the pre-triangulated envelope of $\cA$; see Proposition~\ref{prop:fibMorita}. We will write $K(\cA)$ for the Waldhausen $K$-theory of the Morita envelope of $\cA$; see Notation~\ref{not:Kperf}. This corresponds to what we usualy call $K$-theory of $\cA$; see the example below. We thus have a natural comparison map
$$\underline{K}(\cA)\too K(\cA)$$
which induces isomorphisms $\underline{K}_i(\cA)\simeq K_i(\cA)$ for $i>0$,
and a monomorphism $\underline{K}_0(\cA)\subset K_0(\cA)$. And of course, we shall
deal also with non-connective $K$-theory $\bbK(\cA)$ of $\cA$; see Notation~\ref{not:Kneg}. By construction, we have a comparison map
$$K(\cA)\too\bbK(\cA)$$
which induces isomorphisms $K_i(\cA)\simeq \bbK_i(\cA)$ for $i\geq 0$. See~\S~\ref{sub:K-theory} for further details.

\begin{example*}
Any $k$-algebra $A$ can be seen as a dg category with one object whose
endomorphisms are given by $A$ (seen as a complex of $k$-modules concentrated in degree $0$).
Then $K(A)$ (resp. $\underline{K}(A)$) is (equivalent to) the spectrum (seen
as an infinite loop space) obtained as the loop space of
Quillen's  $Q$-construction applied to the exact category of projective right $A$-modules
of finite type (resp. to the exact category of free $A$-modules of finite type).
In particular $K(A)$ agrees with what we usualy call the $K$-theory of $A$, which is not
the case of $\underline{K}(A)$ in general.
Finally, the negative (stable) homotopy groups of $\bbK(A)$ are
(isomorphic to) the usual negative $K$-groups of $A$ (as considered by Bass, Karoubi, Weibel, etc.).
\end{example*}

Here is a more detailed account on the contents of the paper\,: in the first two
Sections we recall some basic results and constructions concerning dg categories.

Let $\Trdgcat$ be the derivator associated with the quasi-equiconic model
structure, $\alpha$ a regular cardinal and $\cA$ a (fixed) dg category.
In Section~\ref{chap:additive}, we construct the {\em universal $\alpha$-additive invariant}
$$
\begin{array}{lcccccccccr}
\Uaddw:\Trdgcat \too \Maddw & &&&&&&&& & (\mbox{see Theorem}~\ref{thm:Uadd})
\end{array}
$$
(\ie $\Uaddw$ sends quasi-equiconic dg functors to isomorphisms, preserves the terminal
object as well as $\alpha$-filtering homotopy colimits, satisfies condition
add), and is universal for these
properties). We then prove the identification of spectra
$$
\begin{array}{lcccccr}
 \bbR\Hom(\,\Uaddw(\underline{k}),\, \Uaddw(\cA)\,) \simeq \Kw(\cA) &&&&&&
 (\mbox{see Theorem}~\ref{thm:KTradd})\,.
\end{array}
$$
In Section~\ref{chap:localizing}, we introduce the class of {\it strict exact sequences}
(see~\ref{def:Wald})\,:
$$
\cA \too \cB \too \cB/\cA\,, 
$$
where $\cB$ is a pre-triangulated dg category, $\cA$ is
a \emph{thick} dg subcategory of $\cB$, and $\cB/\cA$ is the Drinfeld
dg quotient~\cite{Drinfeld} of $\cB$ by $\cA$. We localize
the derivator $\Maddw$ to force the strict exact sequences to become distinguished triangles,
and obtain a homotopy colimit preserving morphism of triangulated derivators
$$\gamma_!:\Maddw\too\Mlocvw\, .$$
Using Theorem~\ref{thm:KTradd} and Waldhausen's fibration Theorem we obtain\,:
$$
\begin{array}{lcccccr}
 \bbR\Hom(\,\Ulocvw(\underline{k}),\, \Ulocvw(\cA)\,) \simeq \Kw(\cA) &&&&&& (\mbox{see Theorem}~\ref{thm:Kwloc})\,.
\end{array}
$$
Then, we localize the derivator $\Mlocvw$ so that the derived Morita equivalences become invertible,
which leads to a new homotopy colimit preserving morphism of triangulated
derivators $l_!:\Mlocvw\to\Mlocw$. The derivator $\Mlocw$ is the analog
of $\Mloc$ (in the case $\alpha=\aleph_0$, we have by definition $\Mlocw=\Mloc$).

In Section~\ref{chap:set-up}, we adapt Schlichting's construction
of non-connective $K$-theory of Frobenius pairs to the setting of dg categories (see Propositions
\ref{prop:dg-setup} and \ref{prop:comparison}). This is used
in Section~\ref{chap:negK} to construct a morphism of derivators
$V_l(-): \Trdgcat \to \Mlocvw$ for which, if $\alpha$ is big enough,
we have\,:
$$
\begin{array}{lcccccr}
\bbR\Hom(\,\Ulocvw(\underline{k}),\, V_l(\cA)\,) \simeq \bbK(\cA) &&&&&& (\mbox{see Proposition}~\ref{prop:negK})\,.
\end{array}
$$
A key technical point is the fact that $V_l(-)$ is an $\alpha$-localizing invariant;
see Proposition~\ref{prop:V-loc}. This allows us to prove the identification (still for $\alpha$
big enough)\,:
$$
\begin{array}{lcccccr} 
\bbR\Hom(\,\Ulocw(\underline{k}),\, \Ulocw(\cA)\,) \simeq \bbK(\cA) &&&&&& (\mbox{see Theorem}~\ref{thm:negK})\,.
\end{array}
$$
In Proposition~\ref{prop:locfiltered}, we prove that the localizing motivator $\Mloc$ can be obtained from $\Mlocw$ by localizing it with respect to the the morphisms of shape
$$
\underset{j \in J}{\mbox{hocolim}}\, \Ulocw(D_j) \too \Ulocw(\cA) \,,
$$ where $\cA$ is a $\alpha$-small dg cell (see \S\,\ref{sub:dg-cells}), $J$ is a $\alpha$-small filtered category, and $D: J \to \dgcat$ is a functor such that $\underset{j \in J}{\mbox{hocolim}}\, D_j\simeq \cA$. Let us now sum up the
steps of the construction of the universal localizing motivator\,:
$$\xymatrix{
\Trdgcat  \ar[rrr] \ar[d]_{\Uaddw} &&& \HO(\dgcat) \ar[d]^{\Uloc} \\
\Maddw \ar[r]^{\gamma_!} & \Mlocvw \ar[r]^{l_!} & \Mlocw \ar[r]_{} \ar[r] & \Mloc
}$$
Finally, using Theorem~\ref{thm:negK} and the fact that negative $K$-theory preserves
filtered colimits, we prove the co-representability Theorem.

The last Section is devoted to the construction of Chern characters and trace maps.
We have also included an Appendix containing several results on Grothendieck derivators,
used throughout the article and which are also of independent interest.

\section{Preliminaries}
\subsection{Notations}\label{sec:notations}
Throughout the article we will work over a fixed commutative base ring $k$. 

Let $\cC(k)$ be the category of complexes of $k$-modules. We will use cohomological
notation, \ie the differential increases the degree. The tensor product of complexes will be denoted
by $\otimes$. The category $\cC(k)$ is a symmetric monoidal model category, where one uses the
projective model structure for which the fibrations are the degreewise
surjections, while the weak equivalences are the quasi-isomorphisms.

Let $\sSet$ be the category of simplicial sets and $\Spt$ the
category of spectra; see~\cite{Bos-Fri} for details. Given a Quillen model
category $\cM$~\cite{Quillen}, we will denote by $\Ho(\cM)$ its homotopy category.

We will use freely the language and basic results of the theory of Grothendieck derivators.
A short reminder of our favourite tools and notations can be found in the Appendix.
If $\cM$ is a model category, we will denote by $\HO(\cM)$ its
associated derivator; see \S\ref{sub:leftBousfield}. We will write $e$ for the terminal category, so that for
a derivator $\bbD$, $\bbD(e)$ will be its underlying category (for instance, if
$\bbD=\HO(\cM)$, $\bbD(e)$ is the usual homotopy category of the model category $\cM$).

Every triangulated derivator $\bbD$ is canonically enriched over
spectra; see~\S\ref{sub:Stab}. We will denote by $\bbR\Hom_{\bbD}(X,Y)$ the spectrum of
maps from $X$ to $Y$ in $\bbD$. If there is not any ambiguity in doing so,
we will simplify the notations by writing $\bbR\Hom(X,Y)=\bbR\Hom_{\bbD}(X,Y)$.

Throughout the article the adjunctions will be displayed vertically with the left (resp. right) adjoint on the left (resp. right) hand-side.

\subsection{Triangulated categories}\label{sec:triangcat}
We assume the reader is familiar with the basic notions concerning triangulated
categories. The unfamiliar reader is invited to consult Neeman's book~\cite{Neeman},
for instance.
\begin{notation}\label{not:Tri}
We will denote by $\Tri$ the category of triangulated categories: the objects are the triangulated categories, and the maps are the exact functors.
\end{notation}
\begin{definition}{(\cite[4.2.7]{Neeman})}\label{def:compact}
Let $\cT$ be a triangulated category admitting arbitrary coproducts. An object $X$ in $\cT$ is
called {\em compact} if for each family $\{Y_i\}_{i \in I}$ of objects in $\cT$, the canonical morphism
$$ \underset{i \in I}{\bigoplus} \,\mathsf{Hom}_{\cT}(X,Y_i) \too
\mathsf{Hom}_{\cT}(X, \underset{i \in I}{\bigoplus} \,Y_i)$$
is invertible. Given a triangulated category $\cT$ admitting arbitrary
coproducts, we will denote by $\cT_c$ its full subcategory of compact objects
(note that $\cT_c$ is a thick subcategory of $\cT$; in particular, it has a natural
structure of triangulated category).
\end{definition}
\begin{definition}{(\cite[\S\,2]{Neeman})}\label{def:exact}
A sequence of triangulated categories
$$ \cR \stackrel{I}{\too} \cS \stackrel{P}{\too} \cT$$
is called {\em exact} if the composition $P\circ I$ is zero, while the functor
$\cR \stackrel{I}{\to} \cS$ is fully-faithful and the induced functor from
the Verdier quotient $\cS/\cR$ to $\cT$ is {\em cofinal},
$\ie$ it is fully-faithful and every object in $\cT$ is a direct summand of an object of $\cS/\cR$. 

An exact sequence of triangulated categories as above is
called {\em split exact}, if there exist triangulated functors
$$
\begin{array}{lcr}
\cS \stackrel{R}{\too} \cR & \mbox{and} & \cT \stackrel{S}{\too} \cS\,,
\end{array}
$$
with $R$ right adjoint to $I$, $S$ right adjoint to $P$ and $P\circ S=\mathrm{Id}_{\cT}$
and $R \circ I = \mathrm{Id}_{\cR}$ via the adjunction morphisms.
\end{definition}
\begin{notation}\label{not:idemp}
Given a triangulated category $\cT$, we will denote by $\widetilde{\cT}$ its
{\em idempotent completion}; see \cite{BM} for details. This construction
is functorial. Notice also that the idempotent completion of a split exact sequence is split exact.
\end{notation}
\section{Homotopy theories of dg categories}
In this Section, we will review the three homotopy theories of dg categories
developped in \cite[\S1-2]{Thesis}. This will give us the opportunity to fix some notation,
which will be used throughout the article. For a survey article on dg categories, the reader is advised
to consult Keller's ICM adress~\cite{ICM}. 
\begin{definition}\label{def:dgcategory}
A {\em small dg category $\cA$} is a $\cC(k)$-enriched category~\cite[Def.\,6.2.1]{Borceaux}.
Recall that this consists of the following data\,: 
\begin{itemize}
\item[-] a small set of objects $\mathrm{obj}(\cA)$ (usually denoted by $\cA$ itself); 
\item[-] for each pair of objects $(x,y)$ in $\cA$, a complex of $k$-modules $\cA(x,y)$; 
\item[-] for each triple of objects $(x,y,z)$ in $\cA$, a composition morphism in $\cC(k)$
$$\cA(y,z)\otimes \cA(x,y) \too \cA(x,z)\,,$$
satisfying the usual associativity condition; 
\item[-] for each object $x$ in $\cA$, a morphism $k \to \cA(x,x)$ in $\cC(k)$, satisfying the usual unit condition with respect to the above composition.
\end{itemize}
\end{definition}
\begin{definition}\label{def:dgfunctor}
A {\em dg functor} $F: \cA \to \cB$ is a $\cC(k)$-functor~\cite[Def.\,6.2.3]{Borceaux}. Recall that this consists of the following data\,:
\begin{itemize}
\item[-] a map of sets $F: \mathrm{obj}(\cA) \too \mathrm{obj}(\cB)$;
\item[-] for each pair of objects $(x,y)$ in $\cA$, a morphism in $\cC(k)$
$$ F(x,y): \cA(x,y) \too \cB(Fx, Fy)\,,$$
satisfying the usual unit and associativity conditions.
\end{itemize}
\end{definition}
\begin{notation}\label{not:dgcat}
We denote by $\dgcat$ the category of small dg categories.
\end{notation}
\subsection{Dg cells}\label{sub:dg-cells}
\begin{definition}\label{def:Icells}
\begin{itemize}
\item[-] Let $\underline{k}$ be the dg category with one
object $\ast$ and such that $\underline{k}(\ast,\ast):=k$ (in degree
zero). 
\item[-] For $n \in \mathbb{Z}$, let $S^{n}$ be the complex $k[n]$
(with $k$ concentrated in degree $n$) and let $D^n$ be the mapping
cone on the identity of $S^{n-1}$. We denote by $\dgS(n)$ the dg
category with two objects $1$ et $2$ such that $ \dgS(n)(1,1)=k \ko
\dgS(n)(2,2)=k \ko \dgS(n)(2,1)=0  \ko \dgS(n)(1,2)=S^{n} $ and
composition given by multiplication. 
\item[-] We denote by $\dgD(n)$ the dg
category with two objects $3$ and $4$ such that $ \dgD(n)(3,3)=k \ko
\dgD(n)(4,4)=k \ko \dgD(n)(4,3)=0 \ko \dgD(n)(3,4)=D^n $ and with
composition given by multiplication.
\item[-] Let $\iota(n):\dgS(n-1)\to \dgD(n)$ be the dg functor that sends $1$ to
$3$, $2$ to $4$ and $S^{n-1}$ to $D^n$ by the identity on $k$ in
degree~$n-1$\,:
$$
\vcenter{
\xymatrix@C=2em@R=1em{
\dgS(n-1) \ar@{=}[d] \ar[rrr]^{\displaystyle \iota(n)}
&&& \dgD(n) \ar@{=}[d]
\\
&&&\\
\\
1 \ar@(ul,ur)[]^{k} \ar[dd]_-{S^{n-1}}
& \ar@{|->}[r] &
& 3\ar@(ul,ur)[]^{k}  \ar[dd]^-{D^n}
\\
& \ar[r]^-{\incl} &
\\
2 \ar@(dr,dl)[]^{k}
& \ar@{|->}[r] &
& 4\ar@(dr,dl)[]^{k}
}}
\qquad\text{where}\qquad
\vcenter{\xymatrix@R=1em@C=.8em{ S^{n-1} \ar[rr]^-{\incl} \ar@{=}[d]
&& D^n \ar@{=}[d]
\\
\ar@{.}[d]
&& \ar@{.}[d]
\\
0 \ar[rr] \ar[d]
&& 0 \ar[d]
\\
0 \ar[rr] \ar[d]
&& k \ar[d]^{\id}
\\
k \ar[rr]^{\id} \ar[d]
&& k \ar[d]
&{\scriptstyle(\textrm{degree }n-1)}
\\
0 \ar[rr] \ar@{.}[d]
&& 0 \ar@{.}[d]
\\
&&}}
$$
\end{itemize}
\end{definition}
\begin{notation}\label{not:I}
We denote by $I$ the set consisting of the dg functors
$\{\iota(n)\}_{n\in \mathbb{Z}}$ and the dg functor $\emptyset
\rightarrow \underline{k}$ (where the empty dg category $\emptyset$
is the initial one).
\end{notation}
\begin{definition}{(\cite[Def.\,10.4.1]{Hirschhorn})}\label{def:alphasmall}
Let $\alpha$ be an infinite regular cardinal; see~\cite[Def.\,10.1.10]{Hirschhorn}. A dg category $\cA$ is called {\em $\alpha$-small} if for every regular cardinal $\lambda \geq \alpha$ and every $\lambda$-sequence of dg categories $\{\cB_{\beta}\}_{\beta < \lambda}$, the induced map of sets
$$ \underset{\beta< \lambda}{\mathrm{colim}}\, \Hom_{\dgcat}(\cA, \cB_{\beta}) \too \Hom_{\dgcat}(\cA, \underset{\beta< \lambda}{\mathrm{colim}}\, \cB_{\beta})$$
is invertible.
\end{definition}
\begin{definition}\label{def:dgcell}
A dg category $\cA$ is a \emph{dg cell} if
the (unique) map $\emptyset\to\cA$ is a transfinite composition of pushouts of
elements of $I$. If $\cA$ is moreover $\alpha$-small, we say that $\cA$ is a \emph{$\alpha$-small dg cell}
\end{definition}
\subsection{Dg modules}
\begin{definition}\label{def:opposite}
Let $\cA$ be a small dg category. The {\em
opposite dg category} $\mathcal{A}^{\op}$ of $\cA$ has the same objects
as $\mathcal{A}$ and complexes of morphisms given by
$\mathcal{A}^{\op}(x,y) =\mathcal{A}(y,x)$.
\end{definition}
\begin{definition}{(\cite[Def.\,3.1]{ICM})}\label{def:module}
Let $\cA$ be a small dg category. A {\em right dg $\cA$-module} (or simply a $\cA$-module) is a dg functor $M:\cA^{\op} \rightarrow
\cC_{\dg}(k)$ with values in the dg category $\cC_{\dg}(k)$ of
complexes of $k$-modules. 
\end{definition}
\begin{notation}\label{not:modules}
We denote by $\cC(\cA)$ (resp.\ by
$\cC_{\dg}(\cA)$) the category (resp.\ dg category) of $\cA$-modules. Notice that $\cC(\cA)$ is endowed with the projective Quillen model structure (see~\cite[Thm.\,3.2]{ICM}), whose weak equivalences and fibrations are defined objectwise. We denote by $\cD(\cA)$ the {\em derived category of $\cA$},
\ie the homotopy category $\Ho(\cC(\cA))$ or equivalently the localization of $\cC(\cA)$ with respect to the class of
quasi-isomorphisms. Notice that $\cD(\cA)$ is a triangulated category. 
\end{notation}
\begin{definition}\label{def:Yoneda}
The {\em Yoneda dg functor}
\begin{eqnarray*}
\underline{h}: \cA  \too   \cC_{\dg}(\cA) && x  \mapsto  \cA(-,x)=:\underline{h}(x)
\end{eqnarray*}
sends an object $x$ to the $\cA$-module $\cA(-,x)$ {\em represented by $x$}. 
\end{definition}
\begin{notation}\label{not:tri&perf}
\begin{itemize}
\item[-] Let $\tri(\cA)$  be the smallest triangulated subcategory of $\cD(\cA)$ (see Notation~\ref{not:modules}) which contains the $\cA$-modules $\underline{h}(x)$, $x \in \cA$. 
\item[-] Let $\perf(\cA)$ be the smallest thick triangulated subcategory of $\cD(\cA)$ (\ie stable under direct factors) which contains the $\cA$-modules $\underline{h}(x)$, $x \in \cA$.
\end{itemize}
\end{notation}
A dg functor $F: \cA \to \cB$ gives rise to a {\em restriction/extension of scalars} Quillen adjunction\,:
$$
\xymatrix{
\cC(\cB) \ar@<1ex>[d]^{F^{\ast}}\\
\cC(\cA) \ar@<1ex>[u]^{F_!}  \,.\\
}
$$
The functor $F^{\ast}$ preserves quasi-isomorphisms and the functor $F_!$ can be naturally derived. We obtain then the adjunction\,:
$$
\xymatrix{
\cD(\cB) \ar@<1ex>[d]^{F^{\ast}}\\
\cD(\cA) \ar@<1ex>[u]^{\bbL F_!} \,.
}
$$
\begin{remark}\label{rk:extension} 
Notice that the derived extension of scalars functor $\bbL F_!$ restricts to two triangulated functors\,: 
\begin{eqnarray*} \bbL F_!:\tri(\cA) \too \tri(\cB)& \mathrm{and} &\bbL F_!: \perf(\cA) \too \perf(\cB)\,.
\end{eqnarray*}
\end{remark}
\subsection{Quasi-equivalences}\label{sub:quasieq}
\begin{definition}\label{def:homotcat}
Let $\cA$ be a dg category. The category $\dgHo(\cA)$ has the same objects as $\cA$ and
morphisms given by $\dgHo(\cA)(x,y)=\textrm{H}^0(\cA(x,y))$, where $\textrm{H}^0$ denotes the $0$-th cohomology group. This construction is functorial and so we obtain a well-defined functor
\begin{eqnarray*}
\dgHo(-): \dgcat \too \mathrm{Cat} && \cA \mapsto \dgHo(\cA)\,,
\end{eqnarray*}
with values in the category of small categories.
\end{definition}
\begin{definition}\label{def:quasieq}
A dg functor $F:\cA \to \cB$ is a {\em quasi-equivalence} if\,:
\begin{itemize}
\item[(QE1)] for all objects $x,y \in \cA$, the morphism in $\cC(k)$
 $$F(x,y): \cA(x,y) \too \cB(Fx,Fy)$$
  is a quasi-isomorphism and
\item[(QE2)] the induced functor 
$$\dgHo(F): \dgHo(\cA) \too \dgHo(\cB)$$
is an equivalence of categories.
\end{itemize}
\end{definition}
\begin{theorem}{(\cite[Thm.\,1.8]{Thesis})}\label{thm:quasieq}
The category $\dgcat$ carries a cofibrantly generated Quillen model structure whose weak equivalences are the quasi-equivalences. The set of generating cofibrations is the set $I$ of Notation~\ref{not:I}.
\end{theorem}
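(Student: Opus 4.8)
The plan is to invoke the standard recognition theorem for cofibrantly generated model categories (see \cite[Thm.\,11.3.1]{Hirschhorn}). Since $\dgcat$ is complete and cocomplete, it suffices to specify the class $W$ of weak equivalences, the generating cofibrations $I$ (already fixed in Notation~\ref{not:I}), and a set $J$ of generating trivial cofibrations, and then to verify the recognition hypotheses. Take $W$ to be the class of quasi-equivalences; closure under retracts and the two-out-of-three property are immediate from Definition~\ref{def:quasieq} together with the corresponding properties of quasi-isomorphisms in $\cC(k)$ and of equivalences of categories. The first substantial point is to identify the $I$-injectives: lifting against $\emptyset \to \underline{k}$ forces surjectivity on objects, and lifting against $\iota(n) \colon \dgS(n-1) \to \dgD(n)$ forces, by transporting to each $\Hom$-complex the analogous statement in $\cC(k)$ (where $\{S^{n-1} \to D^n\}_{n}$ generates the cofibrations), each $F(x,y) \colon \cA(x,y) \to \cB(Fx,Fy)$ to be a degreewise surjective quasi-isomorphism. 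Hence $I\text{-}\mathrm{inj}$ is exactly the class of dg functors that are surjective on objects and a trivial fibration of complexes on each $\Hom$-complex; such an $F$ is automatically a quasi-equivalence, since $\dgHo(F)$ is then surjective on objects and fully faithful (the zeroth cohomology functor carries quasi-isomorphisms to isomorphisms).

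For $J$, I would take $J = J_0 \cup \{\xi\}$, where $J_0$ consists, for each $n \in \bbZ$, of the inclusion of the dg category with two objects and vanishing $\Hom$-complex between them into the one with the same objects and $\Hom$-complex $D^n$ between them (composition given by multiplication) --- this detects, slot by slot, the degreewise surjectivity of $\Hom$-complexes, \ie the left lifting problems for $\{0 \to D^n\}_{n}$ --- and $\xi \colon \underline{k} \hookrightarrow \cK$ is the inclusion of one object of a fixed dg category $\cK$ with two objects $a,b$ such that $\cK(a,a) = \cK(b,b) = k$, the complexes $\cK(a,b)$ and $\cK(b,a)$ are acyclic, and $a \cong b$ in $\dgHo(\cK)$; one arranges $\underline{k} \to \cK$ to be a relative $I$-cell complex, so that $\xi$ is a cofibration, and it is plainly a quasi-equivalence. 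With these choices, $J\text{-}\mathrm{inj}$ is precisely the class of dg functors $F$ such that each $F(x,y)$ is a degreewise epimorphism and $F$ has the iso-lifting property in $\dgHo(-)$ (every isomorphism of $\dgHo(\cB)$ with source $Fx$ lifts to an isomorphism of $\dgHo(\cA)$ with source $x$); these will be the fibrations.

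Granting this, most of the recognition hypotheses are routine. Smallness of the domains of $I$ and $J$ is clear --- they are finite dg categories --- so the small object argument applies. Each element of $J$ has the left lifting property against $I\text{-}\mathrm{inj}$, so $J\text{-}\mathrm{cof} \subseteq I\text{-}\mathrm{cof}$. The remaining comparison is the identity $I\text{-}\mathrm{inj} = W \cap J\text{-}\mathrm{inj}$: the inclusion $\subseteq$ holds because a levelwise trivial fibration which is surjective on objects is in particular a levelwise epimorphism with the iso-lifting property and, as observed, a quasi-equivalence; conversely, if $F$ is both a fibration and a quasi-equivalence, then $F$ is surjective on objects --- by (QE2) the functor $\dgHo(F)$ is essentially surjective, and the iso-lifting property upgrades this to surjectivity on objects --- and then each $F(x,y)$ is a quasi-isomorphism by (QE1) and a degreewise epimorphism by the fibration condition, so $F \in I\text{-}\mathrm{inj}$.

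The real work --- and the step I expect to be the main obstacle --- is the inclusion $J\text{-}\mathrm{cell} \subseteq W$ (then $J\text{-}\mathrm{cell} \subseteq W \cap I\text{-}\mathrm{cof}$ since $J\text{-}\mathrm{cof} \subseteq I\text{-}\mathrm{cof}$). Pushing out an element of $J_0$ changes the $\Hom$-complexes only by adding direct summands, each an iterated tensor product over $k$ having a factor $D^n$ (from paths that traverse the newly adjoined morphism), hence contractible; so such a pushout induces quasi-isomorphisms on all $\Hom$-complexes and is the identity on objects, whence it lies in $W$. The delicate case is a pushout of $\xi \colon \underline{k} \to \cK$ along an arbitrary dg functor $\underline{k} \to \cA$ selecting an object $a$: the pushout $\cA \sqcup_{\underline{k}} \cK$ is obtained from $\cA$ by adjoining a single new object $b$ together with freely adjoined morphisms between $a$ and $b$ subject to the relations carried by $\cK$, and one must control its $\Hom$-complexes. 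Filtering by the number of occurrences of the new generators and carrying out a normal-form computation, one shows that the canonical maps $\cA(x,y) \to (\cA \sqcup_{\underline{k}} \cK)(x,y)$ and $\cA(x,a) \to (\cA \sqcup_{\underline{k}} \cK)(x,b)$ are quasi-isomorphisms, whence $\cA \to \cA \sqcup_{\underline{k}} \cK$ is a quasi-equivalence. Finally, a transfinite composition of quasi-equivalences is again one: (QE1) is preserved because cohomology of complexes of $k$-modules commutes with the filtered colimits at hand, and (QE2) because passing to the categories $\dgHo(-)$ and then to isomorphism classes of objects commutes with them as well. Assembling these facts yields $J\text{-}\mathrm{cell} \subseteq W \cap I\text{-}\mathrm{cof}$, and the recognition theorem produces the desired model structure, with $I$ as its set of generating cofibrations by construction.
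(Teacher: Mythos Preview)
The paper does not supply its own proof of this statement; it is quoted without argument from \cite[Thm.\,1.8]{Thesis} (the result first appeared in Tabuada's 2005 C.~R.~Acad.~Sci.~Paris note). Your sketch follows precisely the strategy of that reference: apply the recognition theorem \cite[Thm.\,11.3.1]{Hirschhorn} with the given $I$ and a set $J$ consisting of degreewise-surjectivity detectors for the Hom-complexes together with a single map $\xi:\underline{k}\to\cK$ encoding the isomorphism-lifting condition, and then verify the axioms. You have also correctly isolated the one genuinely non-formal step --- that a pushout of $\xi$ along an arbitrary $\underline{k}\to\cA$ is a quasi-equivalence --- and your filtration-by-word-length outline is the right idea; in the cited source this computation is carried out in detail and rests on an explicit description of the Hom-complexes of a pushout in $\dgcat$, which you have compressed to ``one shows''.

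Two small points worth tightening. First, the equivalence between ``$F$ has the right lifting property against $\xi$'' and ``$F$ has the iso-lifting property in $\dgHo$'' is not tautological: one must argue that any isomorphism in $\dgHo(\cB)$ can be realized by a map out of $\cK$ after possibly using the $J_0$-liftings to adjust representatives, and this short lemma deserves a sentence. Second, your assertion that pushouts along elements of $J_0$ change the Hom-complexes only by adjoining contractible direct summands is correct, but it already presupposes the explicit normal-form description of pushouts in $\dgcat$ that you defer in the $\xi$-case; both computations draw on the same underlying analysis.
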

\begin{notation}
We denote by $\Hqe$ the homotopy category hence obtained.
\end{notation}
\begin{definition}\label{def:tensorprod}
Given small dg categories $\cA$ and $\cB$, its {\em tensor product} $\cA \otimes \cB$ is defined as follows: the set of objects of $\cA \otimes \cB$ is $\mbox{obj}(\cA)\times \mbox{obj}(\cB)$ and for two objects $(x,y)$ and $(x',y')$ in $\cA \otimes \cB$, we define
$$ (\cA \otimes \cB)((x,y),(x',y'))= \cA(x,x') \otimes \cB(y,y')\,.$$
\end{definition}
The tensor product of dg categories defines a symmetric monoidal structure on $\dgcat$ (with unit the dg category $\underline{k}$ of Definition~\ref{def:Icells}), which is easily seen to be closed. However, the model structure of Theorem~\ref{thm:quasieq} endowed with this symmetric monoidal structure is {\em not} a symmetric monoidal model category, as the tensor product of two cofibrant objects in $\dgcat$ is not cofibrant in general. Nevertheless, the bifunctor $-\otimes-$ can be derived into a bifunctor
\begin{eqnarray*}
-\otimes^{\bbL}-\,: \Hqe \times \Hqe \too \Hqe && (\cA, \cB) \mapsto Q(\cA)\otimes \cB=\cA \otimes^{\bbL}\cB\,,
\end{eqnarray*}
where $Q(A)$ is a cofibrant resolution of $\cA$.
\begin{theorem}{(\cite[Thm.\,6.1]{Toen})}\label{thm:Toen}
The derived tensor monoidal structure $-\otimes^{\bbL}-$ on $\Hqe$ admits an
internal Hom-functor (denoted by $\bbR\underline{\Hom}(-,-)$ in \loccit)
$$\rep(-,-): \Hqe^{\op} \times \Hqe \too \Hqe\,.$$
\end{theorem}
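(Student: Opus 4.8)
The plan is to follow the strategy of \loccit{} and realize the internal Hom as the dg category of \emph{quasi-functors}, i.e.\ of right quasi-representable dg bimodules, obtaining the adjunction by a currying argument for bimodules. Fix a cofibrant representative of $\cA$ and consider the model category $\cC(\cA^{\op}\otimes\cB)$ of $\cA$-$\cB$-bimodules; since $\cA$ is cofibrant, $\cA^{\op}\otimes\cB$ has the correct homotopy type, so $\cD(\cA^{\op}\otimes\cB)$ computes $\cD(\cA^{\op}\otimes^{\bbL}\cB)$. Call a bimodule $M$ \emph{right quasi-representable} if for every object $x\in\cA$ the right $\cB$-module $M(x,-)$ is isomorphic in $\cD(\cB)$ to a representable module $\underline{h}(b)$. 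This condition is invariant under quasi-isomorphism, and up to quasi-isomorphism such an $M$ is determined by a map $\mathrm{obj}(\cA)\to\mathrm{obj}(\cB)$ together with a bounded amount of gluing data, so the right quasi-representable bimodules span an essentially small full subcategory of $\cD(\cA^{\op}\otimes\cB)$. I would define $\rep(\cA,\cB)$ to be a small dg category whose homotopy category $\dgHo$ realizes this subcategory --- concretely, the full dg subcategory of $\cC_{\dg}(\cA^{\op}\otimes\cB)$ on a chosen set of cofibrant-and-fibrant right quasi-representable bimodules. Next I would check that $\rep(\cA,\cB)$ is well defined up to quasi-equivalence (independence of the cofibrant replacement) and functorial in $\Hqe^{\op}\times\Hqe$: a morphism of $\Hqe$ is itself represented by a quasi-functor, the derived tensor product of bimodules over the middle variable preserves right quasi-representability (since $\underline{h}(b)\otimes^{\bbL}_{\cB}N\simeq N(b,-)$), and this composition law provides the two variances together with the associativity and unit constraints.

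The heart of the proof is the natural identification
$$\Map_{\Hqe}(\cC\otimes^{\bbL}\cA,\,\cB)\;\simeq\;\Map_{\Hqe}(\cC,\,\rep(\cA,\cB))\,;$$
equivalently, $\rep(\cA,-)$ is right adjoint to $-\otimes^{\bbL}\cA$, and once this is known the full closed structure and all coherences follow formally by uniqueness of adjoints. Invoking the description of the mapping spaces of $\dgcat$ as nerves of categories of quasi-isomorphisms between cofibrant-fibrant right quasi-representable bimodules (see \loccit), and taking $\cC$, $\cA$ cofibrant so that $\cC\otimes\cA$ resolves $\cC\otimes^{\bbL}\cA$, the left-hand side is computed by right quasi-representable $(\cC\otimes\cA)$-$\cB$-bimodules and the right-hand side by right quasi-representable $\cC$-$\rep(\cA,\cB)$-bimodules. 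The closed symmetric monoidal structure on $\dgcat$ identifies, at the point-set level, a $(\cC\otimes\cA)$-$\cB$-bimodule with a dg functor from $\cC$ to the dg category $\cC_{\dg}(\cA^{\op}\otimes\cB)$; under this identification the right quasi-representability of $M$ is exactly the condition that for each $c\in\cC$ the $\cA$-$\cB$-bimodule $M(c,-,-)$ be right quasi-representable, i.e.\ that the corresponding dg functor factor, up to quasi-isomorphism on objects, through the full subcategory $\rep(\cA,\cB)$. Comparing this with the unravelling of a right quasi-representable $\cC$-$\rep(\cA,\cB)$-bimodule (a dg functor $\cC\to\cC_{\dg}(\rep(\cA,\cB))$ landing, up to quasi-isomorphism, in the representable modules) and matching cofibrancy/fibrancy conditions on the two sides, one gets an equivalence between the two categories of quasi-functors that preserves and reflects quasi-isomorphisms, hence an equivalence of nerves, natural in $\cC$, $\cA$ and $\cB$. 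Specialising $\cC=\underline{k}$ recovers the intended description of $\rep(\cA,\cB)$ as the dg category of quasi-functors from $\cA$ to $\cB$.

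I expect the main obstacle to be precisely this last matching of model-category data: the closed structure on $\dgcat$ is \emph{not} a Quillen pair for the quasi-equivalence model structure, so one cannot simply transport cofibrant-fibrant objects across the currying bijection; instead one must show that currying and its inverse preserve the relevant homotopy types after cofibrant and fibrant replacement, i.e.\ that the derived bimodule tensor/Hom functors involved are sufficiently well behaved. A second, genuinely necessary, input is the computation of mapping spaces in $\Hqe$ via quasi-representable bimodules, which I would take from \loccit. Granting these two facts, the remaining points --- smallness of $\rep(\cA,\cB)$, its functoriality, and the coherence of the adjunction isomorphisms --- are routine. An essentially equivalent alternative would be to exhibit the evaluation quasi-functor $\rep(\cA,\cB)\otimes^{\bbL}\cA\to\cB$ directly and to verify that it induces a bijection $[\cC,\rep(\cA,\cB)]_{\Hqe}\stackrel{\sim}{\too}[\cC\otimes^{\bbL}\cA,\cB]_{\Hqe}$ for every $\cC$, which again reduces to the same currying computation.
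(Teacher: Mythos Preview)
The paper does not prove this theorem at all: it is stated with the attribution ``(\cite[Thm.\,6.1]{Toen})'' and simply quoted as a known result, followed only by a remark describing $\mathsf{H}^0(\rep(\cA,\cB))$ as the full subcategory of $\cD(\cA^{\op}\otimes^{\bbL}\cB)$ spanned by right quasi-representable bimodules. Your sketch is a faithful outline of To\"en's own argument in \loccit, so there is nothing to compare; if anything, you have supplied more than the paper does.
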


\begin{remark}
The category $\mathsf{H}^0(\rep(\cA,\cB))$ can be described rather easily: this is
the full subcategory of $\cD(\cA^\op\otimes^\bbL\cB)$ spanned by bimodules $X$
such that, for any object $a$ of $\cA$, $X(a)$ belongs to the essential
image of the Yoneda embedding $\mathsf{H}^0(\cB)\to\cD(\cB)$.
\end{remark}

\subsection{Quasi-equiconic dg functors}\label{sub:equiconic}
\begin{definition}\label{def:quasiec}
A dg functor $F:\cA \to \cB$ is {\em quasi-equiconic} if the induced triangulated functor (see Remark~\ref{rk:extension}) 
$$\bbL F_!:\tri(\cA) \too \tri(\cB)$$
is an equivalence of categories.
\end{definition}
\begin{theorem}{(\cite[Thm.\,2.2]{Thesis})}\label{thm:quasiec}
The category $\dgcat$ carries a cofibrantly generated Quillen model structure (called the {\em quasi-equiconic model structure}) whose weak equivalences are the quasi-equiconic dg functors. The cofibrations are the same as those of Theorem~\ref{thm:quasieq}. 
\end{theorem}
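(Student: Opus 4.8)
The plan is to obtain the quasi-equiconic model structure as a left Bousfield localization of the quasi-equivalence model structure of Theorem~\ref{thm:quasieq}: one keeps the generating cofibrations $I$ of Notation~\ref{not:I} — so the cofibrations do not change — and only enlarges the class of trivial cofibrations, so as to force the pre-triangulated dg categories to become the fibrant objects. The conceptual tool is the Bondal--Kapranov pre-triangulated completion $(-)^{\pretr}\colon\dgcat\to\dgcat$ (\cite{BK}, see also \cite{ICM}), with its structural unit $\eta_{\cA}\colon\cA\to\cA^{\pretr}$. First I would record that $\dgHo(\cA^{\pretr})$ is canonically triangle-equivalent to $\tri(\cA)$ — indeed $\cA^{\pretr}$ is generated, inside $\cC_{\dg}(\cA)$, by the representable modules under iterated shifts, cones, and a zero object — and that, consequently, a dg functor $F$ is quasi-equiconic if and only if $F^{\pretr}\colon\cA^{\pretr}\to\cB^{\pretr}$ is a quasi-equivalence; note that full faithfulness of $\bbL F_!$ on $\tri$ already forces condition (QE1) for $F^{\pretr}$, since $\mathsf{Hom}_{\cD(\cA)}(\underline{h}(x)[n],\underline{h}(y))\simeq\mathrm{H}^{n}(\cA(x,y))$. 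Thus the class $W_{\mathsf{ec}}$ of quasi-equiconic dg functors is the preimage under $(-)^{\pretr}$ of the quasi-equivalences; since these satisfy two-out-of-three, are stable under retracts and form an accessible class, the same holds for $W_{\mathsf{ec}}$, which moreover contains the quasi-equivalences.

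Next I would choose a set of generating trivial cofibrations $J:=J_{\mathsf{qe}}\cup S$, where $J_{\mathsf{qe}}$ generates the trivial cofibrations of the quasi-equivalence structure (\cite{Thesis}) and $S$ is a small set of $I$-cofibrations between cofibrant dg categories encoding the elementary pre-triangulation moves — the adjunction of a zero object, of shifts of objects, and of cones of closed morphisms — for instance maps of the form $\dgS(n)\hookrightarrow\cE(n)$ that adjoin to a ``walking closed morphism'' a formal cone, together with its canonical dg-module structure, so that $\dgHo(\cE(n))$ acquires the corresponding distinguished triangle. Each element of $S$ becomes a quasi-equivalence after applying $(-)^{\pretr}$, since the object it adjoins already exists — canonically, and uniquely up to homotopy — in the pre-triangulated completion; hence $S\subseteq W_{\mathsf{ec}}$, and $J\subseteq W_{\mathsf{ec}}\cap\mathsf{Cof}$, where $\mathsf{Cof}$ denotes the class of cofibrations of Theorem~\ref{thm:quasieq}.

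With $I$, $J$ and $W_{\mathsf{ec}}$ at hand I would invoke the recognition criterion for cofibrantly generated model categories (\cite[\S\,11.3]{Hirschhorn}): $\dgcat$ is complete, cocomplete and locally presentable; $W_{\mathsf{ec}}$ is closed under retracts and two-out-of-three; the $I$-injectives are exactly the trivial fibrations of the quasi-equivalence structure, hence quasi-equivalences, hence lie in $W_{\mathsf{ec}}$, and, since they have the right lifting property against all cofibrations, a fortiori against every element of $J$. What remains — and this is where the real work lies — is to check that $W_{\mathsf{ec}}\cap\mathsf{Cof}$ is closed under pushout and transfinite composition, equivalently that $J$ is ``large enough'', i.e.\ that every quasi-equiconic cofibration is a retract of a transfinite composite of pushouts of elements of $J$. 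Closure under transfinite composition follows from the compatibility of $\tri(-)$, equivalently of $(-)^{\pretr}$, with filtered colimits of dg categories. Closure under pushout is the main obstacle: one must show that, given a quasi-equiconic cofibration $\cA\to\cB$ and an arbitrary dg functor $\cA\to\cA'$, the induced map $\cA'\to\cB\amalg_{\cA}\cA'$ is still quasi-equiconic — in other words, control how $\cA\mapsto\dgHo(\cA^{\pretr})$ transforms such pushout squares. I would attack this via the cellular description of cofibrations as transfinite composites of pushouts of the generating cofibrations $\iota(n)$ and $\emptyset\to\underline{k}$, tracking the representable modules and their morphism complexes through each attachment and verifying that, once the whole composite is a quasi-equiconic cofibration, the passage to $\tri$ creates no object or morphism in the derived category beyond those already present in $\tri(\cA')$. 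Granting this, the recognition theorem produces the desired cofibrantly generated model structure, with generating cofibrations $I$ — so the cofibrations coincide with those of Theorem~\ref{thm:quasieq} — generating trivial cofibrations $J$, and weak equivalences exactly the quasi-equiconic dg functors; the fibrations are then forced to be the maps with the right lifting property against $W_{\mathsf{ec}}\cap\mathsf{Cof}$.
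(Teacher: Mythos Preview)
The paper does not prove this theorem: it is quoted verbatim from \cite[Thm.\,2.2]{Thesis} and no argument is given here. There is therefore nothing in the present paper to compare your proposal against; the only internal information is Proposition~\ref{prop:Bousfield1}, which records \emph{a posteriori} that the quasi-equiconic structure is a left Bousfield localization of the quasi-equivalence structure, consistent with your overall strategy.

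On the proposal itself: the outline is reasonable, but you explicitly leave the decisive step open. You write ``Granting this'' about closure of $W_{\mathsf{ec}}\cap\mathsf{Cof}$ under pushout, and your suggested attack --- tracking representable modules through cell attachments and arguing that no new objects appear in $\tri(\cA')$ --- is not a proof but a hope; pushouts of dg categories are notoriously hard to control at the level of homotopy categories, and there is no reason the cellular bookkeeping you describe would succeed without substantial further input. This is a genuine gap, not a routine verification.

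A cleaner route, and the one actually available once you have decided to localize, is to bypass the recognition criterion entirely: the quasi-equivalence model structure is left proper and combinatorial, so Smith's theorem (or \cite[Thm.\,4.1.1]{Hirschhorn}) produces the left Bousfield localization $L_S\dgcat$ at your set $S$ for free, with the same cofibrations. The work then shifts to two identifications: first, that the $S$-local objects are exactly the pre-triangulated dg categories (this is where the design of $S$ matters and where $(-)^{\pretr}$ enters as a fibrant replacement), and second, that the $S$-local equivalences coincide with the quasi-equiconic dg functors. The second step is not automatic --- $S$-local equivalences are defined via derived mapping spaces into local objects --- but it reduces to the statement that $F$ is quasi-equiconic iff $\rep(F,\cC)$ is a quasi-equivalence for every pre-triangulated $\cC$, which is tractable. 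This avoids the pushout-stability problem altogether.
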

\begin{notation}\label{not:quasiec}
We denote by $\Hec$ the homotopy category hence obtained.
\end{notation}
\begin{proposition}{(\cite[Prop.\,2.14]{Thesis})}\label{prop:Bousfield1}
The Quillen model structure of Theorem~\ref{thm:quasiec} is a left Bousfield localization of the Quillen model structure of Theorem~\ref{thm:quasieq}.
\end{proposition}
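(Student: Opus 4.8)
The plan is to exhibit the quasi-equiconic model structure as an explicit left Bousfield localization of the quasi-equivalence model structure of Theorem~\ref{thm:quasieq}, and then to invoke the fact that a model structure is determined by its cofibrations and its weak equivalences. As a preliminary, I would check that every quasi-equivalence is quasi-equiconic, so that the two classes of weak equivalences are nested: if $F\colon\cA\to\cB$ is a quasi-equivalence, then restriction of scalars $F^{\ast}\colon\cD(\cB)\to\cD(\cA)$ is an equivalence with quasi-inverse $\bbL F_!$; since $\bbL F_!$ sends $\underline{h}(x)$ to $\underline{h}(Fx)$ (Remark~\ref{rk:extension}) and, using that $\dgHo(F)$ is essentially surjective, $F^{\ast}$ sends each $\underline{h}(y)$ into $\tri(\cA)$, both functors restrict to the triangulated subcategories generated by the representables, whence $\bbL F_!\colon\tri(\cA)\to\tri(\cB)$ is an equivalence. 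Thus the quasi-equivalences form a subclass of the quasi-equiconic dg functors, and by Theorem~\ref{thm:quasiec} the two structures have the same cofibrations.

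Next I would produce a genuine \emph{set} $S$ of dg functors between ($\aleph_0$-small) dg cells which ``forces shifts and cones of all objects and morphisms to be representable''. Concretely, $S$ would consist of an inclusion $\underline{k}\hookrightarrow\cT$, where $\cT$ is a dg cell consisting of an object together with a chosen shift of it, and an inclusion $\cP\hookrightarrow\cP'$, where $\cP$ is the dg cell ``walking closed degree-zero morphism'' and $\cP'$ is obtained from $\cP$ by freely adjoining a cone of that morphism; each of these is a small dg cell, so $S$ is a set. Since the quasi-equivalence model structure is left proper and combinatorial, the left Bousfield localization $L_S\dgcat$ exists, with the same cofibrations. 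The key point is then the identification of the $S$-local objects: a fibrant dg category $\cX$ is $S$-local if and only if every object of $\cX$ has a shift inside $\cX$ and every closed degree-zero morphism of $\cX$ has a cone inside $\cX$; since $\tri(\cX)$ is by definition the triangulated subcategory of $\cD(\cX)$ generated under shifts and cones by the representables, this happens exactly when $\dgHo(\cX)\to\tri(\cX)$ is essentially surjective, hence (being fully faithful by Yoneda) an equivalence, i.e.\ exactly when $\cX$ is pre-triangulated.

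It then remains to match the $S$-local equivalences with the quasi-equiconic dg functors. For this I would use the identification $\tri(\cA)=\dgHo(\cA^{\pretr})$, together with the fact that $\cA\hookrightarrow\cA^{\pretr}$ is fully faithful, to rewrite: $F$ is quasi-equiconic $\iff$ $F^{\pretr}\colon\cA^{\pretr}\to\cB^{\pretr}$ is a quasi-equivalence. Combining this with the universal property of the pre-triangulated completion in $\Hqe$ (for $\cX$ pre-triangulated and fibrant, $\cA\to\cA^{\pretr}$ induces a weak equivalence of mapping spaces $\Map(\cA^{\pretr},\cX)\xrightarrow{\ \sim\ }\Map(\cA,\cX)$), one gets: $F$ is quasi-equiconic $\iff$ $\Map(F^{\pretr},\cX)$ is a weak equivalence for all pre-triangulated fibrant $\cX$ $\iff$ $\Map(F,\cX)$ is a weak equivalence for all such $\cX$ $\iff$ $F$ is an $S$-local equivalence, where the last step uses the identification of the $S$-local objects from the previous paragraph. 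Therefore $L_S\dgcat$ and the quasi-equiconic model structure are two model structures on $\dgcat$ with the same cofibrations and the same weak equivalences, hence equal, and this displays the quasi-equiconic structure as a left Bousfield localization of the quasi-equivalence structure.

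I expect the main obstacle to be the second paragraph: one must construct the dg cells $\cT$, $\cP$, $\cP'$ so that mapping out of them genuinely computes ``an object with a shift'' and ``a morphism with a cone'' (this involves a careful, if routine, description via one-sided twisted complexes), check they are small so that $S$ is a set, and then verify that representability of all shifts and cones is equivalent to pre-triangulatedness. A secondary technical input is left properness of the quasi-equivalence model structure, needed for the localization to exist; this is established in \cite{Thesis}. Granting these, the comparison of weak equivalences and the final uniqueness argument are formal.
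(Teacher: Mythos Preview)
The paper does not supply its own proof; the proposition is simply imported from \cite[Prop.\,2.14]{Thesis}. So there is no argument in the present paper to compare against, and your proposal stands on its own.

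Your proof is correct, but it does considerably more work than the bare statement demands. Once Theorems~\ref{thm:quasieq} and~\ref{thm:quasiec} are both in hand, the proposition is essentially immediate: by the very statement of Theorem~\ref{thm:quasiec} the two model structures share the same class of cofibrations, and your first paragraph already verifies that every quasi-equivalence is quasi-equiconic. A model structure on the same category with the same cofibrations and a strictly larger class of weak equivalences is, by the standard characterization (see \cite[\S3.3]{Hirschhorn}), a left Bousfield localization of the original one. There is no need to produce an explicit localizing set~$S$, to build the dg cells $\cT,\cP,\cP'$, or to invoke left properness.

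Your route---exhibiting a concrete set $S$ and identifying the $S$-local objects with the pre-triangulated dg categories---is the natural one if the goal is to \emph{construct} the quasi-equiconic model structure from scratch as a localization, and it has the merit of making the fibrant objects (Proposition~\ref{prop:fibquasiec}) fall out of the same analysis. One small caution: in your third paragraph, the step ``$\Map(\cA^{\pretr},\cX)\simeq\Map(\cA,\cX)$ for $\cX$ pre-triangulated'' must be established independently of the identification of $S$-local equivalences with quasi-equiconic functors (otherwise the argument is circular); this is fine if you argue directly from the explicit description of $(-)^{\pretr}$ via twisted complexes, or alternatively you can bypass the issue entirely by noting that two model structures with the same cofibrations and the same fibrant objects necessarily coincide.
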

Notice that we have a well-defined functor (see Notations~\ref{not:Tri} and \ref{not:tri&perf})\,:
\begin{eqnarray*}
\tri:\Hec  \too  \Tri && \cA  \mapsto  \tri(\cA)\,.
\end{eqnarray*}
\begin{proposition}{(\cite[Prop.\,2.10]{Thesis})}\label{prop:fibquasiec}
The fibrant dg categories, which respect to the quasi-equiconic model structure (called the {\em pre-triangulated dg categories}), are the dg categories $\cA$ for which the image of the Yoneda embedding $\dgHo(\cA) \hookrightarrow \cD(\cA)$ (\ref{def:Yoneda}) is stable under (co)suspensions and cones. Equivalently, these are the dg categories $\cA$ for which we have an equivalence $\dgHo(\cA) \stackrel{\sim}{\to} \tri(\cA)$ of triangulated categories. 
\end{proposition}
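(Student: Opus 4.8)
The plan is to establish the two asserted equivalences in turn, the second one (stability of the Yoneda image versus $\dgHo(\cA)\overset{\sim}{\to}\tri(\cA)$) being purely triangulated. The Yoneda dg functor of Definition~\ref{def:Yoneda} induces a functor $\dgHo(\cA)\to\cD(\cA)$ which is fully faithful, since representable modules are cofibrant in $\cC(\cA)$ (so their morphism complexes compute derived $\mathrm{Hom}$'s) and one may invoke the dg Yoneda lemma. Its essential image $\cE$ is the full subcategory of $\cD(\cA)$ spanned by the $\underline{h}(x)$, $x\in\cA$, and trivially $\cE\subseteq\tri(\cA)$. As $\tri(\cA)$ is by construction the smallest triangulated subcategory of $\cD(\cA)$ containing the representables, $\cE$ is stable under (co)suspensions and cones iff it is triangulated, iff $\cE=\tri(\cA)$, iff $\dgHo(\cA)\to\tri(\cA)$ is essentially surjective and hence, being fully faithful, an equivalence. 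Call a dg category satisfying these conditions \emph{pre-triangulated}.

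The bridge to fibrancy is the Bondal--Kapranov envelope $\iota_{\cA}\colon\cA\to\cA^{\pretr}$, obtained by formally adjoining shifts, cones, finite direct sums and a zero object. Two easy facts: (i) $\cA^{\pretr}$ is pre-triangulated, because $\dgHo(\cA^{\pretr})\to\cD(\cA^{\pretr})$ is fully faithful and its essential image is closed under cones and shifts --- $\cA^{\pretr}$ already contains a formal cone and formal shifts of any of its objects, and the Yoneda dg functor carries these to the corresponding cones and shifts of modules; (ii) $\iota_{\cA}$ is quasi-equiconic, being fully faithful on the nose and inducing via $\bbL(\iota_{\cA})_{!}$ the identification of $\tri(\cA)$ with $\tri(\cA^{\pretr})$, both being the triangulated subcategory generated by the representables coming from $\cA$. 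Combining (i), (ii) with the first paragraph, when $\cA$ is \emph{itself} pre-triangulated, $\iota_{\cA}$ induces on $\dgHo$ the composite of equivalences $\dgHo(\cA)\overset{\sim}{\to}\tri(\cA)\overset{\sim}{\to}\tri(\cA^{\pretr})\overset{\sim}{\leftarrow}\dgHo(\cA^{\pretr})$, so $\iota_{\cA}$ is then a \emph{quasi-equivalence}.

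Now for ``fibrant $\Leftrightarrow$ pre-triangulated''. By Proposition~\ref{prop:Bousfield1} the quasi-equiconic structure is the left Bousfield localization of the quasi-equivalence structure, in which \emph{every} dg category is fibrant; so, by the standard characterization of fibrant objects in a left Bousfield localization \cite{Hirschhorn}, $\cA$ is quasi-equiconic-fibrant iff it is \emph{local}, \ie $\Map(g,\cA)$ is a weak equivalence for every quasi-equiconic dg functor $g$. Suppose $\cA$ is fibrant. Applying locality to $\iota_{\cA}$ and passing to connected components (we may take $\cA^{\pretr}$ cofibrant, which is harmless), the map $-\circ\iota_{\cA}\colon\pi_{0}\Map(\cA^{\pretr},\cA)\to\pi_{0}\Map(\cA,\cA)$ is a bijection, so $\mathrm{id}_{\cA}$ lifts to a dg functor $r\colon\cA^{\pretr}\to\cA$ with $r\circ\iota_{\cA}\simeq\mathrm{id}_{\cA}$. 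By two-out-of-three $r$ is quasi-equiconic, so $\bbL r_{!}\colon\tri(\cA^{\pretr})\overset{\sim}{\to}\tri(\cA)$. Given $M\in\tri(\cA)$, write $M\cong\bbL r_{!}(M')$; by (i), $M'\cong\underline{h}(z)$ for some object $z$ of $\cA^{\pretr}$, whence $M\cong\bbL r_{!}(\underline{h}(z))\cong\underline{h}(r(z))$ is representable. Hence $\dgHo(\cA)\to\tri(\cA)$ is essentially surjective, \ie $\cA$ is pre-triangulated.

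Conversely, if $\cA$ is pre-triangulated then $\iota_{\cA}$ is a quasi-equivalence (second paragraph), so $\Map(-,\cA)\simeq\Map(-,\cA^{\pretr})$ (a quasi-equivalence between objects fibrant for the quasi-equivalence structure), and it suffices to prove that $\cA^{\pretr}$ is local. Using To{\"e}n's description of the mapping spaces of $\dgcat$ \cite{Toen} --- $\Map(\cZ,\cA^{\pretr})$ is the classifying space of the groupoid of quasi-isomorphisms of right-quasi-representable $\cZ$-$\cA^{\pretr}$-bimodules, the latter being, by the Remark after Theorem~\ref{thm:Toen}, the $\cZ^{\op}\otimes^{\bbL}\cA^{\pretr}$-modules that are objectwise in $\tri(\cA^{\pretr})$ --- this reduces to showing that for every quasi-equiconic $g$ the restriction functor between the corresponding bimodule categories is a quasi-equivalence; equivalently (since any quasi-equiconic dg functor fits in a square with a quasi-equivalence and the structural maps $\iota$), that $\rep(\cB^{\pretr},\cA^{\pretr})\to\rep(\cB,\cA^{\pretr})$ is a quasi-equivalence for all $\cB$. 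This last point --- that $\rep(-,\cA^{\pretr})$ is a quasi-equiconic (indeed Morita) invariant of its source because the target $\cA^{\pretr}$ is pre-triangulated (``triangulated-complete''), so that a right-quasi-representable bimodule out of $\cB$ extends essentially uniquely along $\cB\to\cB^{\pretr}$ --- is where the real work lies, and I expect it to be the main obstacle; I would extract it from To{\"e}n's analysis of internal Homs and of the pre-triangulated/idempotent completions. Granting it, $\cA^{\pretr}$ is local; since $\iota_{\cA}$ is a quasi-equivalence, $\cA$ is local too, hence fibrant. The remaining ingredients --- the triangulated equivalence of the first paragraph, the two facts about the Bondal--Kapranov envelope, and the homotopy-retraction argument for ``fibrant $\Rightarrow$ pre-triangulated'' --- are routine.
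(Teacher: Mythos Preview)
The paper does not prove this proposition: it is stated with a citation to \cite[Prop.\,2.10]{Thesis} and no proof is given in the text, so there is nothing to compare your argument against here.

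That said, a few remarks on your sketch. The equivalence in the first paragraph and the facts about $\cA^{\pretr}$ are standard and fine. In the ``fibrant $\Rightarrow$ pre-triangulated'' direction, be careful that the locality criterion for a left Bousfield localization is phrased in terms of maps between \emph{cofibrant} objects; applying it to $\iota_{\cA}:\cA\to\cA^{\pretr}$ requires replacing the source cofibrantly and then checking that the resulting retraction $r$ is realized by an honest dg functor (which is fine since $\cA$ is fibrant for the quasi-equivalence structure). The reverse direction you yourself flag as the real work: showing that $\cA^{\pretr}$ is local amounts to proving that $\rep(-,\cA^{\pretr})$ sends quasi-equiconic dg functors to quasi-equivalences, and this is indeed the substantive step. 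In \cite{Thesis} this is handled not via To{\"e}n's bimodule description but more directly, by exhibiting explicit generating trivial cofibrations for the quasi-equiconic model structure and checking the lifting property against them; that route avoids the bimodule analysis you anticipate needing.
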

\begin{proposition}{(\cite[Remark\,2.8]{Thesis})}\label{prop:repec}
The monoidal structure $-\otimes^{\bbL}-$ on $\Hqe$ descends to $\Hec$ and the internal Hom-functor $\rep(-,-)$ (see Theorem~\ref{thm:Toen}) can be naturally derived\,: 
\begin{eqnarray*}\rep_{tr}(-,-): \Hec^{\op} \times \Hec \too \Hec && (\cA, \cB) \mapsto \rep(\cA, \cB_f)=\rep_{tr}(\cA,\cB)\,,
\end{eqnarray*}
where $\cB_f$ is a pre-triangulated resolution of $\cB$ as in Proposition~\ref{prop:fibquasiec}.
\end{proposition}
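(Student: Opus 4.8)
The plan is to deduce the statement from the closed symmetric monoidal structure on $\Hqe$ furnished by Theorem~\ref{thm:Toen}, using the general principle that such a structure descends along a reflective localization as soon as the internal Hom out of a local object stays local. Since the quasi-equiconic model structure is a left Bousfield localization of the quasi-equivalence one with the same cofibrations (Proposition~\ref{prop:Bousfield1}), the category $\Hec$ is identified with the full subcategory of $\Hqe$ spanned by the pre-triangulated dg categories (the quasi-equiconic fibrant objects of Proposition~\ref{prop:fibquasiec}), the inclusion admitting the fibrant replacement $\cB\mapsto\cB_f$ as left adjoint; in particular, $\Hom_{\Hec}(\cA,Z)=\Hom_{\Hqe}(\cA,Z)$ whenever $Z$ is pre-triangulated. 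This equality, together with the closed structure of Theorem~\ref{thm:Toen}, is what all the manipulations below rest on.

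The one substantial step is to check that $\rep(\cA,\cB)$ is pre-triangulated whenever $\cB$ is. For this I would use the explicit description recorded just after Theorem~\ref{thm:Toen}: $\dgHo(\rep(\cA,\cB))$ is the full subcategory of $\cD(\cA^{\op}\otimes^{\bbL}\cB)$ of those bimodules $X$ whose restriction $X(a)$ lies, for every object $a$ of $\cA$, in the essential image of the Yoneda embedding $\dgHo(\cB)\hookrightarrow\cD(\cB)$. When $\cB$ is pre-triangulated this essential image is precisely $\tri(\cB)$, a triangulated subcategory of $\cD(\cB)$ (that is exactly what Proposition~\ref{prop:fibquasiec} says); since each ``evaluation at $a$'' functor $\cD(\cA^{\op}\otimes^{\bbL}\cB)\to\cD(\cB)$, being a restriction of scalars, is triangulated, the displayed subcategory is closed under (co)suspensions and cones in $\cD(\cA^{\op}\otimes^{\bbL}\cB)$, i.e.\ it is itself a triangulated subcategory. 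Realizing $\rep(\cA,\cB)$ as a full dg subcategory of the dg category of cofibrant $\cA^{\op}\otimes^{\bbL}\cB$-modules (which is pre-triangulated) and invoking the full-faithfulness of derived extension of scalars along a full dg embedding, one then identifies $\tri(\rep(\cA,\cB))$ with that triangulated subcategory and concludes that $\dgHo(\rep(\cA,\cB))\to\tri(\rep(\cA,\cB))$ is an equivalence; by Proposition~\ref{prop:fibquasiec}, $\rep(\cA,\cB)$ is pre-triangulated.

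Everything after this is formal bookkeeping with the adjunction of Theorem~\ref{thm:Toen}. For a quasi-equiconic equivalence $f\colon\cB\to\cB'$ and any $\cA$, the adjunction identifies, for every pre-triangulated $Z$, the map induced by $f\otimes^{\bbL}\id_{\cA}$ on $\Hom_{\Hqe}(-,Z)$ with $f^{\ast}\colon\Hom_{\Hqe}(\cB',\rep(\cA,Z))\to\Hom_{\Hqe}(\cB,\rep(\cA,Z))$, which is a bijection since $\rep(\cA,Z)$ is pre-triangulated and $f$ is invertible in $\Hec$; as a morphism of $\Hqe$ inducing bijections on $\Hom_{\Hqe}(-,Z)$ for all pre-triangulated $Z$ is invertible in $\Hec$, it follows that $f\otimes^{\bbL}\id_{\cA}$ is a quasi-equiconic equivalence. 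By symmetry of $\otimes^{\bbL}$ the same holds in the other variable, so $-\otimes^{\bbL}-$ descends to a symmetric monoidal bifunctor on $\Hec$ for which $\Hqe\to\Hec$ is monoidal. The same computation in the first variable of $\rep$ shows that $\rep(-,Z)$ sends quasi-equiconic equivalences to isomorphisms of $\Hqe$ when $Z$ is pre-triangulated; combined with the fact that a quasi-equiconic equivalence between pre-triangulated dg categories is a quasi-equivalence (a weak equivalence between fibrant objects of a left Bousfield localization is a weak equivalence of the ambient model structure), this makes $\rep_{tr}(\cA,\cB):=\rep(\cA,\cB_f)$ well defined and functorial $\Hec^{\op}\times\Hec\to\Hec$. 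Finally the natural chain $\Hom_{\Hec}(\cA\otimes^{\bbL}\cB,\cC)=\Hom_{\Hqe}(\cA\otimes^{\bbL}\cB,\cC_f)=\Hom_{\Hqe}(\cA,\rep(\cB,\cC_f))=\Hom_{\Hec}(\cA,\rep_{tr}(\cB,\cC))$ exhibits $\rep_{tr}$ as the internal Hom of the descended monoidal structure.

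I expect the only real obstacle to be the first part. One must be careful that $\dgHo(\rep(\cA,\cB))$ is closed under precisely the triangulated operations it inherits from the ambient derived category $\cD(\cA^{\op}\otimes^{\bbL}\cB)$, and — more delicately — that this closure genuinely forces $\rep(\cA,\cB)$ \emph{itself} to be pre-triangulated and not merely to have a triangulated $\dgHo$; this is what the full-faithfulness of $\bbL i_{!}$ for a full dg embedding $i$ provides. Everything downstream is adjunction bookkeeping, hinging only on the identification of $\Hec$ with the reflective subcategory of pre-triangulated objects of $\Hqe$ and the resulting equality $\Hom_{\Hec}(-,Z)=\Hom_{\Hqe}(-,Z)$ for $Z$ pre-triangulated.
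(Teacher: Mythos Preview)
The paper does not supply its own proof of this proposition: it is stated with the citation to \cite[Remark\,2.8]{Thesis} and no proof environment follows. So there is nothing in the present paper to compare your argument against directly.

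That said, your proof is correct and is precisely the standard way one would establish the result. The general principle you invoke---that a closed symmetric monoidal structure descends along a reflective localization as soon as the internal Hom into a local object is local---is exactly what is needed, and you correctly identify the one non-formal step: that $\rep(\cA,\cB)$ is pre-triangulated whenever $\cB$ is. Your argument for that step (via the explicit description of $\dgHo(\rep(\cA,\cB))$ as a subcategory of $\cD(\cA^{\op}\otimes^{\bbL}\cB)$, closure under triangulated operations because evaluation at each object of $\cA$ is triangulated and $\dgHo(\cB)\simeq\tri(\cB)$, and then full faithfulness of $\bbL i_!$ along a full dg embedding to identify $\tri(\rep(\cA,\cB))$ with that subcategory) is sound. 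The subtlety you flag in the final paragraph---that having a triangulated $\dgHo$ is not literally the same as being pre-triangulated---is real, and your resolution via the ambient pre-triangulated dg category of cofibrant bimodules is the right fix. The adjunction bookkeeping in the remainder is routine and correct.
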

\begin{remark}
The category $\mathsf{H}^0(\rep_{tr}(\cA,\cB))$ is
the full subcategory of $\cD(\cA^\op\otimes^\bbL\cB)$ spanned by bimodules $X$
such that, for any object $a$ of $\cA$, $X(a)$ belongs to the essential
image of the full inclusion $\tri(\cB)\to\cD(\cB)$.
\end{remark}
\subsection{Derived Morita equivalences}\label{sub:Morita}
\begin{definition}\label{def:Morita}
A dg functor $F: \cA \to \cB$ is a {\em derived Morita equivalence} if the induced triangulated functor (see Remark~\ref{rk:extension}) 
$$\bbL F_!: \perf(\cA) \too \perf(\cB)$$ 
is an equivalence of categories. 
\end{definition}
\begin{remark}\label{rk:Morita}
Since the triangulated categories $\cD(\cA)$ and $\cD(\cB)$ (see Notation~\ref{not:modules})
are compactly generated (see~\cite[\S\,8.1]{Neeman}) and the triangulated functor
\begin{equation}\label{eq:derextension}
\bbL F_!: \cD(\cA) \too \cD(\cB)
\end{equation}
preserves arbitrary sums, we conclude that a dg functor $F: \cA \to \cB$ is a derived Morita equivalence if and only if
the triangulated functor (\ref{eq:derextension}) is an equivalence.
\end{remark}
\begin{theorem}{(\cite[Thm.\,2.27]{Thesis})}\label{thm:Morita}
The category $\dgcat$ carries a cofibrantly generated model structure (called the {\em Morita model structure}), whose weak equivalences are the derived Morita equivalences. The cofibrations are the same as those of Theorem~\ref{thm:quasiec}. 
\end{theorem}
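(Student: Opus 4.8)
\emph{Proof proposal.}
The plan is to build the Morita model structure as a left Bousfield localization of the quasi-equiconic model structure of Theorem~\ref{thm:quasiec}, exactly in the spirit of Proposition~\ref{prop:Bousfield1}. Then its cofibrations are automatically those of Theorem~\ref{thm:quasiec} (equivalently those generated by the set $I$, as in Theorem~\ref{thm:quasieq}), and the whole problem reduces to choosing the right set of maps to invert and identifying the resulting local equivalences. First I would record the inputs needed for the localization machinery: the category $\dgcat$ is locally presentable (it is a category of $\cC(k)$-enriched categories), so by Theorem~\ref{thm:quasiec} the quasi-equiconic model structure is combinatorial; and it is left proper, which follows from left properness of the quasi-equivalence model structure of Theorem~\ref{thm:quasieq} (see \cite{Thesis}) together with Proposition~\ref{prop:Bousfield1}, since a left Bousfield localization of a left proper model category is again left proper. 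Hence the standard existence theorem for left Bousfield localizations of combinatorial model categories (see \cite{Hirschhorn}) applies to any small set of maps.

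The key choice is the localizing set $S$. Guided by Proposition~\ref{prop:fibquasiec} and Remark~\ref{rk:Morita}, I want the fibrant objects of the localized structure to be precisely the pre-triangulated dg categories $\cA$ that are moreover idempotent complete, i.e.\ for which $\tri(\cA)=\perf(\cA)$ inside $\cD(\cA)$, equivalently $\dgHo(\cA)\stackrel{\sim}{\too}\perf(\cA)$. I would therefore take $S$ to be a small set of derived Morita equivalences between cofibrant dg categories which ``formally splits idempotents'': concretely, cofibrant models of the canonical dg functors $\cB\to\widehat{\cB}$, where $\widehat{\cB}$ is obtained from $\cB$ by adjoining direct summands of the representable modules (a dg idempotent completion), with $\cB$ ranging over the dg cells. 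Each such map is a derived Morita equivalence because passing to $\widehat{\cB}$ does not change $\perf$, and, using Toën's internal Hom $\rep_{tr}$ (Theorem~\ref{thm:Toen} and Proposition~\ref{prop:repec}) to compute the mapping spaces in $\Hec$, one checks that a quasi-equiconic-fibrant $\cA$ is $S$-local precisely when every idempotent in $\dgHo(\cA)$ splits: idempotent complete $\cA$ are visibly local, since a bimodule on $\cB$ with values in $\tri(\cA)=\perf(\cA)$ extends essentially uniquely along $\cB\to\widehat{\cB}$; and conversely any non-split idempotent in $\tri(\cA)$ is detected by a map out of a suitable dg cell, on which $S$-locality then forces the splitting.

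Applying the localization theorem produces a combinatorial model structure on $\dgcat$ with the same cofibrations as Theorem~\ref{thm:quasiec} and whose weak equivalences are the $S$-local equivalences; it remains to identify these with the derived Morita equivalences. One inclusion is essentially formal: the class of derived Morita equivalences contains the quasi-equiconic equivalences (if $\tri(\cA)\stackrel{\sim}{\too}\tri(\cB)$ then $\perf(\cA)\stackrel{\sim}{\too}\perf(\cB)$, as $\perf=\widetilde{\tri}$) and the maps of $S$, and --- detecting derived Morita equivalences through $\bbL F_!\colon\cD(\cA)\to\cD(\cB)$ as in Remark~\ref{rk:Morita} --- it is stable under pushouts along cofibrations, transfinite compositions, retracts and pushout-products with cofibrations; since the $S$-local equivalences are obtained from the quasi-equiconic weak equivalences and $S$ by precisely these operations, every $S$-local equivalence is a derived Morita equivalence. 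For the converse, a derived Morita equivalence $F\colon\cA\to\cB$ induces an equivalence $\perf(\cA)\stackrel{\sim}{\too}\perf(\cB)$, hence an equivalence $\rep_{tr}(\cB,\cX)\stackrel{\sim}{\too}\rep_{tr}(\cA,\cX)$ for every $S$-local $\cX$ (such $\cX$ being idempotent complete pre-triangulated, $\rep_{tr}(-,\cX)$ is a Morita invariant of its first argument), so $F$ is an $S$-local equivalence. This yields the Morita model structure with the stated cofibrations.

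The main obstacle is the construction and verification of the localizing set $S$: exhibiting an honest \emph{set} of derived Morita equivalences whose local objects are exactly the idempotent complete pre-triangulated dg categories requires both a coherent encoding of ``splitting an idempotent'' by finite dg-cell data --- where the classical subtleties of homotopy idempotents intervene --- and a careful computation of the mapping spaces $\rep_{tr}(\cB,\cA)$ under idempotent completion of the source. A secondary technical point, used in the final identification of the weak equivalences, is the stability of the class of derived Morita equivalences under pushouts along cofibrations and transfinite compositions (a left-properness-type statement), which must be checked directly via Remark~\ref{rk:Morita} rather than deduced from the very model structure one is constructing.
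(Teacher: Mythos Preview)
The paper does not prove this theorem: it is quoted from \cite[Thm.\,2.27]{Thesis} and used as a black box. Your strategy---realize the Morita model structure as a left Bousfield localization of the quasi-equiconic one---is exactly the content of Proposition~\ref{prop:Bousfield2} (also cited from \cite{Thesis}), so you are reconstructing the argument of the original reference rather than comparing with anything in this paper.

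That said, your argument has one genuine gap. In the step ``every $S$-local equivalence is a derived Morita equivalence'' you assert that the $S$-local equivalences are obtained from the quasi-equiconic weak equivalences and $S$ by closure under pushouts along cofibrations, transfinite compositions, retracts and pushout-products with cofibrations. This is not a correct description of the class of $S$-local equivalences: a statement of that shape holds for the $S$-local \emph{trivial cofibrations} (they are retracts of relative cell complexes built from the old generating trivial cofibrations together with horns on $S$), but not for arbitrary $S$-local weak equivalences, and in any case $\dgcat$ is not a simplicial model category in a way that makes ``horns on $S$'' and ``pushout-products'' immediately meaningful. The clean argument avoids all of this and uses only the identification of the $S$-local objects that you have already carried out: given an $S$-local equivalence $F:\cA\to\cB$, form the square
\[
\xymatrix{
\cA \ar[r]^{F}\ar[d] & \cB \ar[d]\\
\cA_f \ar[r]_{F_f} & \cB_f
}
\]
with $\cA_f,\cB_f$ fibrant replacements in the localized structure. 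The vertical maps are derived Morita equivalences because, by your computation, the $S$-local objects are the idempotent complete pre-triangulated dg categories, so $\perf(\cA)\simeq\perf(\cA_f)$ and likewise for $\cB$. The bottom map $F_f$ is an $S$-local equivalence between $S$-local objects, hence a quasi-equiconic equivalence, hence a derived Morita equivalence. By $2$-out-of-$3$, $F$ is a derived Morita equivalence. This also makes your ``secondary technical point'' about stability of derived Morita equivalences under pushouts unnecessary.
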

\begin{notation}\label{not:Morita}
We denote by $\Hmo$ the homotopy category hence obtained.
\end{notation}
\begin{proposition}{(\cite[Prop.\,2.35]{Thesis})}\label{prop:Bousfield2}
The Quillen model structure of Theorem~\ref{thm:Morita} is a left Bousfield
localization of the Quillen model structure of Theorem~\ref{thm:quasiec}.
\end{proposition}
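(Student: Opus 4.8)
The plan is to realise the Morita model structure as the left Bousfield localization of the quasi-equiconic one at a suitable set $S$ of dg functors, the whole point being to identify the fibrant objects on the two sides. Three preliminary inputs set the stage. First, by Theorems~\ref{thm:quasiec} and~\ref{thm:Morita} the two model structures on $\dgcat$ have the same cofibrations, namely those generated by $I$. Second, every quasi-equiconic dg functor $F$ is a derived Morita equivalence: if $\bbL F_{!}\colon\tri(\cA)\to\tri(\cB)$ is an equivalence, then applying the functorial idempotent completion $\widetilde{(-)}$ of Notation~\ref{not:idemp} gives an equivalence $\widetilde{\tri(\cA)}\xrightarrow{\sim}\widetilde{\tri(\cB)}$; since $\cD(\cA)$ has arbitrary coproducts (Remark~\ref{rk:Morita}) it is idempotent complete, so $\perf(\cA)$ — the thick closure of $\tri(\cA)$ inside $\cD(\cA)$ — is canonically equivalent to $\widetilde{\tri(\cA)}$, and under this identification the restricted functor $\bbL F_{!}\colon\perf(\cA)\to\perf(\cB)$ of Remark~\ref{rk:extension} is precisely the equivalence just produced. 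Hence every quasi-equiconic equivalence is a derived Morita equivalence. Third, the quasi-equiconic model structure is left proper and cellular (being a left Bousfield localization of the quasi-equivalence structure, Proposition~\ref{prop:Bousfield1}), so its left Bousfield localization at any set of maps exists.

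I would then record the two descriptions of fibrant objects. By Proposition~\ref{prop:fibquasiec} the quasi-equiconic-fibrant dg categories are the pre-triangulated ones, those $\cA$ with $\dgHo(\cA)\xrightarrow{\sim}\tri(\cA)$; by the analogous statement for the Morita structure (Proposition~\ref{prop:fibMorita}) the Morita-fibrant dg categories are the pre-triangulated $\cA$ for which moreover $\tri(\cA)$ is idempotent complete, i.e.\ $\tri(\cA)=\perf(\cA)$. So the content of the proposition is exactly that ``pre-triangulated with idempotent-complete $\tri$'' is a locality condition for the quasi-equiconic model structure.

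Producing the set $S$ witnessing this is, I expect, the main obstacle. Let $\underline{k}[e]$ be the dg category with one object and endomorphism $k$-algebra $k[e]/(e^{2}-e)$, and let $\cE'$ be the dg category with two objects $x,y$ equipped with morphisms $p\colon x\to y$, $i\colon y\to x$ with $p\circ i=\mathrm{id}_{y}$ (so that $e:=i\circ p$ is automatically idempotent on $x$, and there is an evident inclusion $\underline{k}[e]\hookrightarrow\cE'$ sending the object to $x$); take $S$ to be a cofibrant model, for the quasi-equiconic structure, of this dg functor, if necessary enlarged so as also to detect idempotents on suspensions and cones. Using the bimodule description of $\dgHo(\rep_{tr}(-,-))$ (the remark following Proposition~\ref{prop:repec}), for a pre-triangulated $\cA$ the space $\Map(\underline{k}[e],\cA)$ parametrises pairs (object of $\tri(\cA)$, homotopy idempotent on it) while $\Map(\cE',\cA)$ parametrises the same data together with a splitting; $S$-locality of $\cA$ then says the forgetful map between these is a weak equivalence, i.e.\ every homotopy idempotent on an object of $\tri(\cA)$ splits, and conversely a dg category with that property is $S$-local. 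Since for pre-triangulated $\cA$ the objects of $\tri(\cA)$ are the objects of $\cA$ itself, and idempotents of $\dgHo(\cA)$ correspond to homotopy-coherent dg idempotents, this condition is precisely that $\tri(\cA)$ be idempotent complete. Verifying this correspondence carefully — in particular that the space of splittings of a fixed idempotent is empty or contractible, and that homotopy idempotents lift — is where the real work lies.

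Finally, with such an $S$ fixed, the left Bousfield localization $L_{S}$ of the quasi-equiconic model structure has the same cofibrations as it, hence as the Morita structure, and by the previous paragraph its fibrant objects (the $S$-local pre-triangulated dg categories) are exactly the Morita-fibrant dg categories. A model structure on a fixed underlying category is determined by its cofibrations together with its class of weak equivalences (the fibrations being the maps with the right lifting property against cofibrations that are weak equivalences), so it remains to check that $L_{S}$ and the Morita structure have the same weak equivalences. Both classes contain all quasi-equiconic equivalences, and by the Whitehead criterion a map is a weak equivalence in either structure if and only if it induces weak equivalences on the homotopy function complexes $\Map(-,-)$ into every object fibrant in that structure; since those test objects coincide and the function complexes into them — all being quasi-equiconic-fibrant — agree with the quasi-equiconic ones (the same cofibrations give the same cofibrant resolutions), the two classes of weak equivalences agree. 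Therefore the Morita model structure equals $L_{S}$ of the quasi-equiconic one, which is the assertion.
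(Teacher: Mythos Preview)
The paper does not give a proof of this proposition at all; it is merely a citation to \cite[Prop.\,2.35]{Thesis}. So there is no ``paper's own proof'' to compare against here.

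That said, your proposal is considerably more elaborate than what the statement requires. Once you have established your three preliminary inputs --- same cofibrations, and every quasi-equiconic equivalence is a derived Morita equivalence --- you are essentially done: a model structure with the same cofibrations as another and a strictly larger class of weak equivalences is, by definition (see \cite[3.3.1]{Hirschhorn}), a left Bousfield localization of it. No explicit localizing set $S$ is needed to prove the proposition as stated; the existence of the Morita model structure is already granted by Theorem~\ref{thm:Morita}, so you are not being asked to \emph{construct} it as a localization, only to observe that it \emph{is} one.

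Your attempt to produce an explicit $S$ is a reasonable side project, but it has a genuine gap that you yourself flag. The correspondence between dg functors out of $\underline{k}[e]$ (in the homotopy category $\Hec$) and idempotents in $\dgHo(\cA)$ is delicate: a strict idempotent in $\dgHo(\cA)$ need not lift to a homotopy-coherent one, and the mapping space $\Map(\underline{k}[e],\cA)$ really parametrizes the latter. Showing that $S$-locality of a pre-triangulated $\cA$ is equivalent to idempotent completeness of $\dgHo(\cA)$ therefore requires a non-trivial argument (or a different choice of $S$, e.g.\ the generating trivial cofibrations of the Morita structure, which sidesteps the issue entirely but is less explicit). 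In any case, this extra work is orthogonal to the proposition.
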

Notice that we have a well-defined functor (see Notations~\ref{not:Tri} and \ref{not:tri&perf})\,:
\begin{eqnarray*}
\perf:\Hmo  \too  \Tri  && \cA  \mapsto  \perf(\cA)\,.
\end{eqnarray*}
\begin{proposition}{(\cite[Prop.\,2.34]{Thesis})}\label{prop:fibMorita}
The fibrant dg categories, which respect to the Morita model structure (called the
{\em Morita fibrant dg categories}), are the dg categories $\cA$ for which
the image of the Yoneda embedding $\dgHo(\cA) \hookrightarrow \cD(\cA)$
(\ref{def:Yoneda}) is stable under (co)suspensions, cones, and direct factors.
Equivalently, these are the dg categories $\cA$ for which we have an equivalence
$\dgHo(\cA) \stackrel{\sim}{\to} \perf(\cA)$ of (idempotent complete) triangulated categories.
\end{proposition}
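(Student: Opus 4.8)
The plan is to characterize the Morita fibrant objects through the left Bousfield localization of Proposition~\ref{prop:Bousfield2}: a dg category $\cA$ is Morita fibrant if and only if it is quasi-equiconic fibrant --- hence pre-triangulated, so that $\dgHo(\cA)\stackrel{\sim}{\too}\tri(\cA)$ by Proposition~\ref{prop:fibquasiec} --- and moreover \emph{local}, meaning that the homotopy function complex $\Map(f,\cA)$ is a weak equivalence for every derived Morita equivalence $f$. Everything then reduces to showing that, for a pre-triangulated $\cA$, being local is equivalent to $\tri(\cA)$ being idempotent complete, \ie to $\tri(\cA)=\perf(\cA)$. Granting this, the ``Equivalently'' clause is formal: the essential image of $\dgHo(\cA)\hookrightarrow\cD(\cA)$ always lies inside $\tri(\cA)\subseteq\perf(\cA)$; by Proposition~\ref{prop:fibquasiec} it is stable under (co)suspensions and cones precisely when it equals $\tri(\cA)$, and, that being so, it is moreover stable under direct factors precisely when it is a thick subcategory containing the representables, \ie when it equals $\perf(\cA)$ --- which, combined with the previous point, amounts to $\cA$ being pre-triangulated with $\tri(\cA)=\perf(\cA)$.

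Two ingredients would be set up first. \emph{A triangulated lemma}: if the inclusion $\cT\hookrightarrow\widetilde{\cT}$ of a triangulated category into its idempotent completion admits an exact retraction up to natural isomorphism, then $\cT$ is already idempotent complete --- indeed, given an idempotent $e$ on $X\in\cT$, it splits in $\widetilde{\cT}$ through $Y=(X,e)$ with structure maps $i,p$, and applying the retraction $G$ and identifying $G(X)\simeq X$ via the natural isomorphism $G\circ\incl\simeq\id$ one finds that $e$ is sent to itself, so that $Gi$ and $Gp$ split $e$ through $G(Y)\in\cT$. \emph{A concrete model}: I fix a pre-triangulated, idempotent complete dg category $\widehat{\cA}$ with a derived Morita equivalence $\iota\colon\cA\to\widehat{\cA}$ --- say the dg category of perfect dg $\cA$-modules with $\iota$ the Yoneda embedding --- and record (using the dg Yoneda lemma, which gives $\iota^{\ast}\,\underline{h}_{\widehat{\cA}}(M)\simeq M$) that under the equivalence $\bbL\iota_{!}\colon\cD(\cA)\stackrel{\sim}{\too}\cD(\widehat{\cA})$ the functor $\tri(\cA)\to\tri(\widehat{\cA})$ induced by $\iota$ becomes the idempotent-completion inclusion $\tri(\cA)\hookrightarrow\perf(\cA)=\widetilde{\tri(\cA)}$, and $\perf(\cA)$ becomes $\tri(\widehat{\cA})$.

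For ``$\tri(\cA)=\perf(\cA)\Rightarrow$ Morita fibrant'' I would take a fibrant replacement $F\colon\cA\to L\cA$ in the Morita model structure (a trivial cofibration into a Morita fibrant, hence pre-triangulated, object $L\cA$); $F$ is then a derived Morita equivalence, so $\bbL F_{!}\colon\tri(\cA)\to\tri(L\cA)$ is fully faithful (Remark~\ref{rk:Morita}) and its idempotent completion $\perf(\cA)\to\perf(L\cA)$ is an equivalence (Definition~\ref{def:Morita}); since a fully faithful functor with idempotent complete source whose idempotent completion is an equivalence is itself an equivalence, $\bbL F_{!}\colon\tri(\cA)\to\tri(L\cA)$ is an equivalence and $F$ is quasi-equiconic (Definition~\ref{def:quasiec}). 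Thus $\cA$ is quasi-equiconic fibrant and linked to the local object $L\cA$ by a weak equivalence between quasi-equiconic fibrant objects; as such a map induces weak equivalences on all homotopy function complexes, $\cA$ has the same mapping spaces as $L\cA$ and is therefore local, hence Morita fibrant. For the converse, suppose $\cA$ is Morita fibrant, so pre-triangulated (Propositions~\ref{prop:Bousfield2} and~\ref{prop:fibquasiec}) and local. Applying locality to the derived Morita equivalence $\iota\colon\cA\to\widehat{\cA}$ and passing to $\pi_{0}$, the map $\iota^{\ast}\colon\Hec(\widehat{\cA},\cA)\to\Hec(\cA,\cA)$ is bijective; pick $g\in\Hec(\widehat{\cA},\cA)$ with $g\circ\iota=\id_{\cA}$ in $\Hec$. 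Applying the functor $\tri\colon\Hec\to\Tri$ and the identification above, $\tri(g)$ is an exact retraction of the idempotent-completion inclusion $\tri(\cA)\hookrightarrow\widetilde{\tri(\cA)}$; by the lemma $\tri(\cA)$ is idempotent complete, \ie $\dgHo(\cA)\simeq\tri(\cA)=\perf(\cA)$.

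The part that is genuinely more than bookkeeping is the passage between the abstract, Bousfield-theoretic notion of a \emph{local} object and the concrete triangulated condition ``$\tri(\cA)$ is idempotent complete''. The delicate points are: the structural input that the fibrant objects of a left Bousfield localization are exactly the local ones among the original fibrant objects (so that maps into $\cA$ are computed in $\Hec$); and, above all, checking that the functor $\tri$ really carries the abstract retraction $g$ to an honest exact retraction of an idempotent-completion inclusion of triangulated categories. This is where the dg Yoneda lemma and the explicit idempotent complete model $\widehat{\cA}$ are needed, and where one must verify carefully that $\bbL\iota_{!}$ being an equivalence on all of $\cD(\cA)$ (Remark~\ref{rk:Morita}) matches $\tri(\cA)$ precisely with the triangulated subcategory of $\cD(\widehat{\cA})$ generated by the images of the representables and $\perf(\cA)$ with $\tri(\widehat{\cA})$. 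The remaining steps are formal, given these two propositions and Definition~\ref{def:Morita}.
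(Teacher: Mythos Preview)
The paper does not contain a proof of this proposition: it is simply quoted from \cite[Prop.\,2.34]{Thesis} and used as a black box. So there is no ``paper's own proof'' to compare against.

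That said, your argument is sound and is essentially the natural one. The only places where you should be a little more explicit are: (a) when you invoke locality of $\cA$ with respect to the specific map $\iota:\cA\to\widehat{\cA}$, you are using that in a left Bousfield localization the $S$-local fibrant objects are in fact local with respect to \emph{all} $S$-local equivalences, not just the generating set --- this is standard (see \cite[Thm.\,3.2.13]{Hirschhorn}) but worth flagging; and (b) in the ``$\Rightarrow$'' direction you silently identify $\pi_0\Map(\widehat{\cA},\cA)$ with $\Hec(\widehat{\cA},\cA)$, which is fine because $\cA$ is quasi-equiconic fibrant and the cofibrations in the two model structures coincide, but again deserves a sentence. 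The triangulated retraction lemma and the ``fully faithful with idempotent complete source and equivalence on completions $\Rightarrow$ equivalence'' step are both correct and elementary.
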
  
\begin{proposition}{(\cite[Remark\,2.40]{Thesis})}
The monoidal structure $-\otimes^{\bbL}-$ on $\Hec$ descends to $\Hmo$ and the internal Hom-functor $\rep_{tr}(-,-)$ (see Proposition~\ref{prop:repec}) can be naturally derived 
\begin{eqnarray*}\rep_{mor}(-,-): \Hmo^{\op} \times \Hmo \too \Hmo && (\cA, \cB) \mapsto \rep_{tr}(\cA, \cB_f)=\rep_{mor}(\cA,\cB)\,,
\end{eqnarray*}
where $\cB_f$ is a Morita fibrant resolution of $\cB$ as in Proposition~\ref{prop:fibMorita}.
\end{proposition}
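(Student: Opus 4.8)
The plan is to use that the Morita model structure is a left Bousfield localization of the quasi-equiconic one (Proposition~\ref{prop:Bousfield2}); thus $\Hmo$ is the localization of $\Hec$ at the derived Morita equivalences, and the Morita fibrant dg categories are exactly those which are both pre-triangulated and local. The first task is to show that $-\otimes^{\bbL}-\colon\Hec\times\Hec\to\Hec$ carries derived Morita equivalences to derived Morita equivalences in each variable; since $\Hmo\times\Hmo$ is the localization of $\Hec\times\Hec$ at the maps of the form $(w,\id)$ and $(\id,w)$ with $w$ a derived Morita equivalence, this forces $-\otimes^{\bbL}-$ to descend uniquely to a symmetric monoidal structure on $\Hmo$, with unit $\underline{k}$ and the same coherence constraints. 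The second task is to produce the internal Hom by fibrant replacement: set $\rep_{mor}(\cA,\cB):=\rep_{tr}(\cA,\cB_f)$ with $\cB_f$ a Morita fibrant resolution of $\cB$, check that this again lands in Morita fibrant dg categories and is functorial on $\Hmo^{\op}\times\Hmo$, and deduce the adjunction from the one already known over $\Hec$ (Proposition~\ref{prop:repec}).

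For the first task, by symmetry of $-\otimes^{\bbL}-$ it suffices to treat the second variable, and by Remark~\ref{rk:Morita} the point is this: given a derived Morita equivalence $F\colon\cB\to\cB'$ and a dg category $\cA$, the functor $\bbL(\id_{\cA}\otimes F)_!\colon\cD(\cA\otimes^{\bbL}\cB)\to\cD(\cA\otimes^{\bbL}\cB')$ is an equivalence. Both sides are compactly generated and this functor preserves coproducts and compact objects (its right adjoint, restriction of scalars, preserves coproducts), so by a standard argument with compact generators (see~\cite{Neeman}) it is enough to show that it restricts to an equivalence $\perf(\cA\otimes^{\bbL}\cB)\to\perf(\cA\otimes^{\bbL}\cB')$. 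Now $\perf(\cA\otimes^{\bbL}\cB)$ is the thick triangulated subcategory generated by the representables $\underline{h}(a,b)$, hence also by the external products $M\otimes N$ of a perfect $\cA$-module $M$ with a perfect $\cB$-module $N$ (the external product of $\underline{h}(a)$ and $\underline{h}(b)$ being $\underline{h}(a,b)$); a Künneth isomorphism gives $\bbR\Hom(M\otimes N,\,M'\otimes N')\simeq\bbR\Hom(M,M')\otimes^{\bbL}_{k}\bbR\Hom(N,N')$, and $\bbL(\id_{\cA}\otimes F)_!$ sends $M\otimes N$ to $M\otimes\bbL F_!(N)$. Since $\bbL F_!\colon\perf(\cB)\to\perf(\cB')$ is an equivalence (Definition~\ref{def:Morita}), the functor is therefore fully faithful on perfect modules, and its essential image is a thick subcategory of $\perf(\cA\otimes^{\bbL}\cB')$ containing a generating family, hence equals it. A byproduct, used below, is that $\cD(\cA^{\op}\otimes^{\bbL}\cB)\simeq\cD(\cA^{\op}\otimes^{\bbL}\cB_f)$ whenever $\cB\to\cB_f$ is a derived Morita equivalence, via an equivalence matching the condition $X(a)\in\perf(\cB)$ on bimodules with the condition $X(a)\in\perf(\cB_f)$.

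For the second task, a Morita fibrant $\cB_f$ is in particular pre-triangulated, so $\rep_{tr}(\cA,\cB_f)=\rep(\cA,\cB_f)$, whose $\mathsf{H}^0$ is, by the description recalled after Proposition~\ref{prop:repec}, the full subcategory of $\cD(\cA^{\op}\otimes^{\bbL}\cB_f)$ of bimodules $X$ with $X(a)\in\tri(\cB_f)=\perf(\cB_f)$ for all $a$. Transporting along the equivalence from the first task, $\mathsf{H}^0(\rep_{mor}(\cA,\cB))$ is identified with the full subcategory of $\cD(\cA^{\op}\otimes^{\bbL}\cB)$ spanned by those $X$ with $X(a)\in\perf(\cB)$ — a description independent of the choice of $\cB_f$ and, in each variable, invariant under derived Morita equivalences, which makes $\rep_{mor}(-,-)$ well defined and functorial on $\Hmo^{\op}\times\Hmo$. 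This subcategory is stable under shifts and cones (which is already why $\rep_{tr}$ lands in pre-triangulated dg categories) and, moreover, under direct factors (since $\perf(\cB)$ is thick), so the same argument shows that $\rep_{mor}(\cA,\cB)$ is Morita fibrant, by Proposition~\ref{prop:fibMorita}. Finally, for $\cC$ Morita fibrant one has $\Hom_{\Hmo}(-,\cC)=\Hom_{\Hec}(-,\cC)$, and $\rep_{mor}(\cB,\cC)=\rep_{tr}(\cB,\cC)$ is again Morita fibrant, so the required natural bijection $\Hom_{\Hmo}(\cA\otimes^{\bbL}\cB,\cC)\simeq\Hom_{\Hmo}(\cA,\rep_{mor}(\cB,\cC))$ coincides with the adjunction of Proposition~\ref{prop:repec}.

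The hard part will be the input to the first task: that $\perf(\cA\otimes^{\bbL}\cB)$ is generated by external products of perfect modules, together with the Künneth formula for their derived $\Hom$-complexes. Over a general commutative base ring $k$ this compels one to work with cofibrant resolutions of the dg categories and of the bimodule representing $F$ (so that the relevant complexes are $k$-flat), and it is the only genuinely technical ingredient; an alternative route is to present $F$ by its graph bimodule and invoke the monoidality of $-\otimes^{\bbL}\cA$ on the derived category of bimodules to transport the equivalence directly, trading the explicit computation for the setup of that formalism.
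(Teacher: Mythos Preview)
The paper does not give its own proof of this proposition: it is stated as a citation of \cite[Remark\,2.40]{Thesis} and left at that, as part of the background recalled in Section~2. There is therefore no proof in the paper to compare your attempt against.

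That said, your argument is a reasonable reconstruction. The strategy---show that $-\otimes^{\bbL}-$ preserves derived Morita equivalences in each variable via the induced equivalence on derived categories of bimodules, then define $\rep_{mor}$ by Morita-fibrant replacement and transport the adjunction from $\Hec$---is the natural one. One comment: for the first task you can avoid the K\"unneth computation entirely. A derived Morita equivalence $F:\cB\to\cB'$ induces, by Remark~\ref{rk:Morita}, an equivalence $\bbL F_!:\cD(\cB)\to\cD(\cB')$; since $\cD(\cA\otimes^{\bbL}\cB)$ identifies with the derived category of dg $\cA^{\op}$-modules with values in $\cC_{\dg}(\cB)$ (and likewise for $\cB'$), the functor $\id_\cA\otimes^{\bbL} F$ induces an equivalence on derived categories by base change along the Quillen equivalence $\cC(\cB)\rightleftarrows\cC(\cB')$. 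This sidesteps the explicit K\"unneth isomorphism you flag as the delicate point, and works uniformly over any commutative base ring~$k$. Your alternative route via the graph bimodule amounts to the same thing.
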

\begin{remark}
The category $\mathsf{H}^0(\rep_{mor}(\cA,\cB))$ is
the full subcategory of $\cD(\cA^\op\otimes^\bbL\cB)$ spanned by bimodules $X$
such that, for any object $a$ of $\cA$, $X(a)$ belongs to the essential
image of the full inclusion $\perf(\cB)\to\cD(\cB)$.
\end{remark}
\subsection{$K$-theories}\label{sub:K-theory}
\begin{notation}{($\Kw$-theory)}\label{not:Ktri}
Let $\cA$ be a small dg category. We denote by $\tri^{\cW}(\cA)$
the full subcategory of $\cC(\cA)$ (see~\ref{not:modules})
whose objects are the $\cA$-modules which become isomorphic in $\cD(\cA)$
to elements of $\tri(\cA)$ (see~\ref{not:tri&perf}), and which are moreover
cofibrant. The category $\tri^{\cW}(\cA)$, endowed with the weak equivalences and
cofibrations of the Quillen model structure on $\cC(\cA)$, is a Waldhausen category;
see \cite[\S3]{DS} or \cite{ciskth}. We denote by $\Kw(\cA)$ the Waldhausen $K$-theory
spectrum~\cite{Wald} of $\tri^{\cW}(\cA)$. Given a dg functor $F: \cA \to \cB$, the
extension of scalars left Quillen functor $F_!: \cC(\cA) \to \cC(\cB)$ preserves weak equivalences,
cofibrations, and pushouts. Therefore, it restricts to an exact functor
$F_!: \tri^{\cW}(\cA) \to \tri^{\cW}(\cB)$ between Waldhausen categories.
Moreover, if $F$ is a quasi-equiconic dg functor, the Waldhausen functor $F_!$ is an equivalence.
We obtain then a well-defined functor with values in the homotopy category of spectra:
\begin{eqnarray*}
\Hec \too \Ho(\Spt) && \cA \mapsto \Kw(\cA)\,.
\end{eqnarray*}
\end{notation}
\begin{notation}{($K$-theory)}\label{not:Kperf}
Let $\cA$ be a small dg category. We denote by $\perf^{\cW}(\cA)$ the full subcategory
of $\cC(\cA)$ (see~\ref{not:modules}) whose objects are the $\cA$-modules
which become isomorphic in $\cD(\cA)$ to elements of $\perf(\cA)$ (see~\ref{not:tri&perf}),
and which are moreover cofibrant. The category $\perf^{\cW}(\cA)$, endowed with the weak
equivalences and cofibrations of the Quillen model structure on $\cC(\cA)$, is a Waldhausen
category; see \cite[\S3]{DS} or \cite{ciskth}. We denote by $K(\cA)$ the Waldhausen $K$-theory
spectrum~\cite{Wald} of $\perf^{\cW}(\cA)$. Given a dg functor $F: \cA \to \cB$, the extension of
scalars left Quillen functor $F_!: \cC(\cA) \to \cC(\cB)$ preserves weak equivalences, cofibrations,
and pushouts. Therefore, it restricts to an exact functor $F_!: \perf^{\cW}(\cA) \to \perf^{\cW}(\cB)$
between Waldhausen categories. Moreover, if $F$ is a derived Morita equivalence, the Waldhausen functor
$F_!$ is an equivalence. We obtain then a well-defined functor
with values in the homotopy category of spectra:
\begin{eqnarray*}
\Hmo \too \Ho(\Spt) && \cA \mapsto K(\cA)\,.
\end{eqnarray*}
\end{notation}
\begin{notation}{($\bbK$-theory)}\label{not:Kneg}
Let $\cA$ be a small dg category. We denote by $\bbK(\cA)$
the non-connective $K$-theory spectrum of $\perf^{\cW}(\cA)$; see~\cite[\S 12.1]{Marco}
(one can also consider Proposition \ref{compKnegdgFrob} below as a definition).
This construction induces a functor with values in the homotopy category of spectra:
\begin{eqnarray*}
\Hmo \too \Ho(\Spt) && \cA \mapsto \bbK(\cA)\,.
\end{eqnarray*}
\end{notation}
\section{Additive invariants}\label{chap:additive}
In this Section, we will re-prove the main
Theorems of the article \cite{Additive}, using the quasi-equiconic
model structure (see Theorem~\ref{thm:quasiec}) instead of the Morita
model structure; see Theorem~\ref{thm:Morita}. Moreover, and in contrast
with \cite{Additive}, we will work with a fixed infinite regular cardinal
$\alpha$ (see \cite[Def.\,10.1.10]{Hirschhorn})\,: we want to study additive
invariants which preserve only $\alpha$-filtered homotopy colimits of dg categories.

\begin{definition}\label{def:ses}
A sequence in $\Hec$, resp. in $\Hmo$, (see Notations~\ref{not:quasiec} and \ref{not:Morita})
$$ \cA \stackrel{I}{\too} \cB \stackrel{P}{\too} \cC$$
is called {\em exact} if the induced sequence of triangulated categories on the left, resp. on the right, (see Notation~\ref{not:tri&perf})
$$
\begin{array}{lccr}
\tri(\cA) \too \tri(\cB) \too \tri(\cC) && \perf(\cA) \too \perf(\cB) \too \perf(\cC)
\end{array}
$$
is exact (\ref{def:exact}). A {\em split exact sequence} in $\Hec$, resp. in $\Hmo$, is an exact
sequence in $\Hec$, resp. in $\Hmo$, which is equivalent to one of the form
$$ 
\xymatrix{
 \cA \ar[r]_{I} & \cB \ar@<-1ex>[l]_R
\ar[r]_P & \cC \ar@<-1ex>[l]_{S} \,,
}
$$
where $I$, $P$, $R$ and $S$ are dg functors, with $R$ right adjoint to $I$, $S$ right adjoint to $P$ and $P\circ S=\mathrm{Id}_{\cC}$ and $R\circ I=\mathrm{Id}_{\cA}$ via the adjunction morphisms.
\end{definition}
Let us denote by $\Trdgcat$ the derivator associated with the quasi-equiconic model structure (see Theorem~\ref{thm:quasiec}) on $\dgcat$. In particular, we have $\Trdgcat(e)=\Hec$.
\begin{theorem}\label{thm:Uadd}
There exists a morphism of derivators
$$ \Uaddw: \Trdgcat \too \Maddw\,,$$
with values in a strong triangulated derivator (\ref{def:srp}), which\,:
\begin{itemize}
\item[$\alpha$-flt)] commutes with $\alpha$-filtered homotopy colimits;
\item[$\underline{\mbox{p}}$)] sends the terminal object in $\Trdgcat$ to the terminal object in $\Maddw$ and
\item[\underline{\mbox{add}})] sends the split exact sequences in $\Hec$ (\ref{def:ses}) to direct sums 
$$\Uaddw(\cA)\oplus \Uaddw(\cC) \stackrel{\sim}{\too} \Uaddw(\cB)\,.$$
\end{itemize}
Moreover, $\Uaddw$ is universal with respect to these properties, $\ie$ for every strong triangulated derivator $\bbD$, we have an equivalence of categories
$$ (\Uaddw)^{\ast}: \HomC(\Maddw, \bbD) \stackrel{\sim}{\too} \HomAw(\Trdgcat, \bbD)\,,$$
where the right-hand side consists of the full subcategory of $\uHom(\Trdgcat, \bbD)$ of morphisms of derivators which verify the above three conditions.
\end{theorem}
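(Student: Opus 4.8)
The statement is a construction of a universal invariant together with a universal property, and the natural strategy is to realize $\Maddw$ as an iterated left Bousfield localization of a derivator of presheaves of spectra. First I would form the derivator $\HO(\Fun(\dgcatf^{\op},\Spt))$ of presheaves of spectra on a small full subcategory $\dgcatf\subset\dgcat$ of dg cells of cardinality bounded by (a suitable function of) $\alpha$; the Yoneda-type morphism $\Trdgcat\to\HO(\Fun(\dgcatf^{\op},\Spt))$ sending $\cA$ to the (stabilization of the) presheaf it represents is the universal morphism to a strong triangulated derivator that preserves $\alpha$-filtered homotopy colimits, by the general nonsense of left Kan extension along the $\alpha$-accessible presentation of $\Trdgcat$ (this is exactly the role of working with $\dgcatf$ and $\alpha$: every object of $\Trdgcat$ is an $\alpha$-filtered homotopy colimit of objects of $\dgcatf$). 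I expect to cite the appendix for the statement that Bousfield localization of such a derivator of presheaves exists and produces a strong triangulated derivator with the expected universal property.

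Next I would perform two left Bousfield localizations. The first localization inverts the maps $\hocolim_{j}\, h_{D_j}\to h_{\cA}$ for $\alpha$-filtered diagrams $D\colon J\to\dgcatf$ with $\hocolim_j D_j\simeq\cA$ in $\Trdgcat$; this is what forces the resulting morphism to send quasi-equiconic equivalences to isomorphisms and to commute with $\alpha$-filtered homotopy colimits (the quasi-equiconic weak equivalences themselves are inverted because they already are in $\Trdgcat$, and the point is to descend along the localization). The second localization inverts, for each split exact sequence $\cA\to\cB\to\cC$ in $\Hec$, the map $h_{\cA}\oplus h_{\cC}\to h_{\cB}$, i.e. forces condition $\underline{\mathrm{add}}$). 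One has to check that after the first localization these split-exact maps are still maps between cofibrant-fibrant objects so the second localization makes sense, and that the composite localization remains a strong triangulated derivator; again I would lean on the appendix. Call the composite $\Uaddw\colon\Trdgcat\to\Maddw$. That it satisfies $\alpha$-flt), $\underline{\mathrm p}$) and $\underline{\mathrm{add}}$) is then built into the construction: $\alpha$-flt) and $\underline{\mathrm p}$) because it is a composite of homotopy-colimit-preserving morphisms out of a derivator of presheaves, and $\underline{\mathrm{add}}$) by the second localization.

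For the universal property, let $\bbD$ be any strong triangulated derivator. By the universal property of the derivator of presheaves of spectra (cocontinuous morphisms out of it correspond to arbitrary morphisms out of $\Trdgcat_f$, i.e. to morphisms out of $\Trdgcat$ preserving $\alpha$-filtered homotopy colimits), precomposition with $\Trdgcat\to\HO(\Fun(\dgcatf^{\op},\Spt))$ is an equivalence $\HomC(\HO(\Fun(\dgcatf^{\op},\Spt)),\bbD)\xrightarrow{\sim}\uHom_{\alpha}(\Trdgcat,\bbD)$ onto the full subcategory of $\alpha$-filtered-homotopy-colimit-preserving morphisms. Then the universal property of left Bousfield localization of derivators (again from the appendix) says that $\gamma^{\ast}\colon\HomC(\Maddw,\bbD)\to\HomC(\HO(\Fun(\dgcatf^{\op},\Spt)),\bbD)$ is fully faithful with essential image those cocontinuous morphisms that send the chosen classes of maps to isomorphisms. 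Unwinding both classes: a cocontinuous morphism out of the presheaf derivator sends the first class to isomorphisms iff the induced morphism out of $\Trdgcat$ factors through quasi-equiconic equivalences and preserves $\alpha$-filtered homotopy colimits — which, for a morphism already coming from $\uHom_{\alpha}(\Trdgcat,\bbD)$, is automatic; and it sends the second class to isomorphisms iff the induced morphism satisfies $\underline{\mathrm{add}}$). Composing the two equivalences and identifying essential images gives exactly $(\Uaddw)^{\ast}\colon\HomC(\Maddw,\bbD)\xrightarrow{\sim}\HomAw(\Trdgcat,\bbD)$. (One should separately note that $\underline{\mathrm p}$) is automatic: a morphism satisfying $\underline{\mathrm{add}}$) applied to the split exact sequence $0\to 0\to 0$, or simply any cocontinuous morphism, preserves the terminal/zero object in a pointed setting.)

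**Main obstacle.** The genuinely delicate point is not the formal Yoneda-and-localize machinery but checking that the relevant left Bousfield localizations of the derivator of presheaves of spectra \emph{exist} and stay within strong triangulated derivators — i.e. that the classes of maps we invert are (generated by) a set, and that the appendix's localization theorem applies with the cardinality/accessibility bookkeeping consistent across the two successive localizations and across the choice of $\dgcatf$ and $\alpha$. Equivalently: making precise that "$\alpha$-filtered homotopy colimit preserving morphisms out of $\Trdgcat$" is corepresented by the presheaf derivator on an honestly small category, and that the two generating sets of maps are small. Once that infrastructure (which the paper has deferred to the Appendix) is in hand, the identification of the two universal properties is a routine diagram chase of the kind already carried out in \cite{Additive}, adapted by replacing "derived Morita equivalence / filtered" throughout by "quasi-equiconic / $\alpha$-filtered".
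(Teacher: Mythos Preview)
Your approach is essentially the same as the paper's, and the identification of the main obstacle is apt. The paper's proof is explicitly a transcription of \cite[\S 5--15]{Additive} with ``Morita'' replaced by ``quasi-equiconic'' and ``filtered'' by ``$\alpha$-filtered'': one forms $\mathsf{L}_{\Sigma}\mathsf{Hot}_{\dgcat_\alpha}$ (universal for $\alpha$-filtered homotopy colimits via Proposition~\ref{prop:general}), then localizes for pointedness, then for additivity (getting an unstable motivator), and only then stabilizes. You instead pass to spectral presheaves immediately and perform the localizations afterwards; this is a legitimate reordering, and your observation that condition~$\underline{\mathrm{p}}$) follows from $\underline{\mathrm{add}}$) (via the split exact sequence $0\to 0\to 0$) lets you skip the separate pointedness localization that the paper needs in its unstable intermediate step. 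One small wrinkle worth tightening: you first assert that the Yoneda map to spectral presheaves is already the universal $\alpha$-filtered-colimit-preserving morphism to a triangulated derivator, and then say a further localization is needed to enforce exactly that property; in fact the Yoneda map into \emph{unlocalized} presheaves only has the universal property for maps out of $\dgcat_\alpha$, and the $\mathsf{L}_\Sigma$-style localization is precisely what upgrades this to the universal property for $\alpha$-filtered-colimit-preserving morphisms out of $\Trdgcat$---so your ``first localization'' is doing real work, not redundant with the Yoneda step. With that clarified, the two proofs are the same argument in slightly different packaging.
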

\begin{notation}\label{not:AInv}
The objects of the category $\HomAw(\Trdgcat, \bbD)$ will be called {\em $\alpha$-additive invariants} and $\Uaddw$ the {\em universal $\alpha$-additive invariant}. 
\end{notation}
Before proving Theorem~\ref{thm:Uadd}, let us state a natural variation on
a result due to To\"en and Vaqui\'e \cite[Prop.\,2.2]{TV}, which
is verified by all three model structures on $\dgcat$ (see \S\,\ref{sub:quasieq}-\ref{sub:Morita}).
\begin{notation}
Given a Quillen model category $\cM$, we denote by
$$\Map(-,-):\Ho(\cM)^{\op}\times\Ho(\cM)\to\Ho(\sSet)$$
its homotopy function complex~\cite[Def.\,17.4.1]{Hirschhorn}.
\end{notation}
\begin{definition}
Let $\alpha$ be a infinite regular cardinal; see \cite[Def.\,10.1.10]{Hirschhorn}. An object $X$ in $\cM$ is said to be {\em homotopically $\alpha$-small} if, for any $\alpha$-filtered direct system $\{Y_j\}_{j \in J}$ in $\cM$, the induced map
$$\underset{j \in J}{\hocolim}\, \Map(X,Y_j) \too \Map(X,\underset{j \in J}{\hocolim}\, Y_j)$$
is an isomorphism in $\Ho(\sSet)$.
\end{definition}
\begin{proposition}\label{prop:general}
Let $\cM$ be a cellular Quillen model category, with $I$ a set of generating cofibration. If the (co)domains of the elements of $I$ are cofibrant, $\alpha$-small and homotopically $\alpha$-small,
then\,:
\begin{itemize}
\item[(i)] A $\alpha$-filtered colimit of trivial fibration is a trivial fibration.
\item[(ii)] For any $\alpha$-filtered direct system $\{X_j\}_{j \in J}$ in $\cM$, the natural morphism
$$ \underset{j \in J}{\hocolim}\,X_j \too \underset{j \in J}{\colim}\,X_j$$
is an isomorphism in $\Ho(\cM)$.
\item[(iii)] Any object $X$ in $\cM$ is equivalent to a $\alpha$-filtered colimit of $\alpha$-small $I$-cell objects.
\item[(iv)] A object $X$ in $\cM$ is homotopically $\alpha$-small presented if and only if it is equivalent to a retraction in $\Ho(\cM)$ of a $\alpha$-small $I$-cell object.
\end{itemize}
\end{proposition}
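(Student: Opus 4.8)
The plan is to follow closely the strategy of To\"en--Vaqui\'e \cite[Prop.\,2.2]{TV}, adapting each step to the relative cardinal $\alpha$. First I would treat (i): given an $\alpha$-filtered diagram of trivial fibrations $\{X_j\to Y_j\}_{j\in J}$, one checks the right lifting property against the generating cofibrations $i\colon A\to B$ in $I$. Since by hypothesis $A$ and $B$ are $\alpha$-small, any commutative square from $i$ to $\colim_j(X_j\to Y_j)$ factors through some stage $j$ (using that $J$ is $\alpha$-filtered and the $\alpha$-smallness of $A$ and $B$ gives $\colim_j\Hom(A,X_j)\cong\Hom(A,\colim_j X_j)$, likewise for $B$); a lift at that stage produces the desired lift after passing back to the colimit. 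Hence the filtered colimit is again a trivial fibration. Note that $\alpha$-smallness here must be used in the form of Definition~\ref{def:alphasmall}, applied to the $\lambda$-sequences underlying the filtered system, or reduced to it via the standard trick of cofinally refining an $\alpha$-filtered category by $\lambda$-sequences for $\lambda\ge\alpha$.

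Next, for (ii): to compare $\hocolim_j X_j$ with $\colim_j X_j$, use a functorial cofibrant replacement $QX_j\to X_j$ in the projective model structure on $J$-diagrams. The homotopy colimit is computed by $\colim_j QX_j$, and the induced map $\colim_j QX_j\to\colim_j X_j$ is a weak equivalence: indeed each $QX_j\to X_j$ is a trivial fibration (or becomes one after the cellularity-driven small-object argument), so by (i) the $\alpha$-filtered colimit $\colim_j QX_j\to\colim_j X_j$ is a trivial fibration, in particular a weak equivalence. This yields the isomorphism in $\Ho(\cM)$. For (iii), I would observe that every object $X$ is a retract of an $I$-cell complex $X'$ (small-object argument), and an $I$-cell complex is the colimit of its $\alpha$-small subcomplexes, which form an $\alpha$-filtered system; by (ii) this colimit is also a homotopy colimit, and since $X$ is a retract, $X$ is equivalent in $\Ho(\cM)$ to an $\alpha$-filtered homotopy colimit of $\alpha$-small $I$-cell objects. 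For (iv), the "if" direction follows because $\alpha$-small $I$-cell objects are homotopically $\alpha$-small (a finite/$\alpha$-small cell object is built from the (co)domains of $I$, which are homotopically $\alpha$-small by hypothesis, and homotopical $\alpha$-smallness is stable under the relevant $\alpha$-small homotopy colimits and under retracts); the "only if" direction uses (iii): write $X\simeq\hocolim_j X_j$ with $X_j$ $\alpha$-small $I$-cell objects, then homotopical $\alpha$-smallness of $X$ forces the identity of $X$ to factor up to homotopy through some $X_j$, exhibiting $X$ as a retract of $X_j$ in $\Ho(\cM)$.

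The main obstacle I expect is bookkeeping the interaction between the two notions of smallness—$\alpha$-small in the sense of Definition~\ref{def:alphasmall} (a set-theoretic condition on $\lambda$-sequences) versus homotopically $\alpha$-small (a condition on $\alpha$-filtered homotopy colimits of mapping spaces)—and making sure that an $\alpha$-filtered colimit of trivial fibrations can genuinely be tested on $I$ using only $\alpha$-smallness of the (co)domains, which requires the cellularity hypothesis to guarantee that the relevant cell attachments and the small-object argument stay within the $\alpha$-bounded world. A secondary subtlety is verifying in (iv) that homotopical $\alpha$-smallness passes to $\alpha$-small $I$-cell objects: this is where one genuinely uses that \emph{both} the domains and codomains of $I$ are assumed homotopically $\alpha$-small, together with the fact that $\hocolim$ over an $\alpha$-filtered category commutes with the $\Map$ out of such a cell object, proved by induction on the cellular filtration. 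Once these smallness manipulations are set up carefully, the rest is a routine transcription of the To\"en--Vaqui\'e argument.
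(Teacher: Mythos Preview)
Your proposal is correct and is precisely the approach the paper takes: the paper does not spell out a proof but simply states that it is ``completely similar to the proof of \cite[Prop.\,2.2]{TV}'' and omits it. Your outline is a faithful $\alpha$-relative transcription of that argument, and the subtleties you flag (the two flavours of smallness, and the inductive passage of homotopical $\alpha$-smallness along an $\alpha$-small cellular filtration) are exactly the points where care is needed.
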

The proof of this Proposition is completely similar to the proof of \cite[Prop.\,2.2]{TV},
and so we omit it.
\begin{proof}{(of Theorem~\ref{thm:Uadd})}
The construction on $\Maddw$ is analogous to the construction of the additive motivator $\Madd$; see~\cite[Def.\,15.1]{Additive}. The only difference is that we start with the quasi-equiconic model structure instead of the Morita model structure and we consider $\alpha$-filtered homotopy colimits instead of filtered homotopy colimits.

Let us now guide the reader throughout the several steps of its construction. The quasi-equiconic model structure satisfies all the hypothesis of Proposition~\ref{prop:general}. Therefore we can construct, as in \cite[\S 5]{Additive}, the universal morphism of derivators which preserves $\alpha$-filtered homotopy colimits
$$ \bbR \underline{h}: \Trdgcat \too \mathsf{L}_{\Sigma} \mathsf{Hot}_{\dgcat_{\alpha}}\,.$$
Although the derivator $\Trdgcat$ is not pointed (the canonical dg functor $\emptyset \to 0$ is not quasi-equiconic), we can still construct a localization morphism
$$ \Phi: \mathsf{L}_{\Sigma} \mathsf{Hot}_{\dgcat_{\alpha}} \too \mathsf{L}_{\Sigma, \underline{\mathsf{p}}} \mathsf{Hot}_{\dgcat_{\alpha}}\,,$$
by a procedure analogous to the one of \cite[\S 6]{Additive}. We obtain the following universal property\,: for every pointed derivator $\bbD$, the composition $\Phi \circ \bbR\underline{h}$ induces (as in \cite[Prop.\,6.1]{Additive}) an equivalence of categories
$$ (\Phi \circ \bbR \underline{h})^{\ast}: \HomC(\mathsf{L}_{\Sigma, \underline{\mathsf{p}}} \mathsf{Hot}_{\dgcat_{\alpha}}, \bbD) \stackrel{\sim}{\too} \uHom_{\alpha\mbox{-}\mathsf{flt}, \underline{\mathsf{p}}}(\Trdgcat, \bbD)\,.$$
Now, since Sections \cite[\S 12-13]{Additive} can be entirely re-written using the quasi-equiconic model structure instead of the Morita model structure, we can localize $\mathsf{L}_{\Sigma, \underline{\mathsf{p}}} \mathsf{Hot}_{\dgcat_{\alpha}}$ as in \cite[14.5]{Additive}. We obtain then a localization morphism
$$ \mathsf{L}_{\Sigma, \underline{\mathsf{p}}} \mathsf{Hot}_{\dgcat_{\alpha}} \too \underline{\Mot}_{\alpha}^{\mathsf{unst}}\,,$$
with values in a ``Unstable Motivator''. Finally, we stabilize $\underline{\Mot}_{\alpha}^{\mathsf{unst}}$, as in \cite[\S 8]{Additive}, and obtain the universal $\alpha$-additive motivator $\Maddw$. The morphism $\Uaddw$ is given by the composition
$$\Trdgcat \stackrel{\Phi\circ \bbR\underline{h}}{\too} \mathsf{L}_{\Sigma, \underline{\mathsf{p}}} \mathsf{Hot}_{\dgcat_{\alpha}} \too \underline{\Mot}_{\alpha}^{\mathsf{unst}} \too \Maddw\,.$$
The proof of the universality of $\Uaddw$, with respect to the properties of Theorem~\ref{thm:Uadd}, is now entirely analogous to the one of \cite[Thm.\,15.4]{Additive}.
\end{proof}
\begin{proposition}\label{prop:GenAdd}
The set of objects $\{\,\Uaddw(\cB)[n]\,|\, n \in \bbZ\,\}$, with $\cB$ a $\alpha$-small dg cell (\ref{def:dgcell}), forms a set of compact generators (see~\cite[\S\,8.1]{Neeman}) of the triangulated category $\Maddw(e)$.
\end{proposition}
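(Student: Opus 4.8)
The plan is to transport a tautological set of compact generators through the construction of $\Maddw$ carried out in the proof of Theorem~\ref{thm:Uadd}. There $\Uaddw$ is realized as the composite
$$\Trdgcat \stackrel{\bbR\underline{h}}{\too} \mathsf{L}_{\Sigma}\mathsf{Hot}_{\dgcat_{\alpha}} \stackrel{\Phi}{\too} \mathsf{L}_{\Sigma, \underline{\mathsf{p}}}\mathsf{Hot}_{\dgcat_{\alpha}} \too \underline{\Mot}_{\alpha}^{\mathsf{unst}} \too \Maddw\,,$$
where $\mathsf{Hot}_{\dgcat_{\alpha}}$ is the derivator of presheaves of spaces on the (essentially small) category of $\alpha$-small dg cells, the intermediate arrows are left Bousfield localizations, and the last one is a stabilization. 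In $\mathsf{Hot}_{\dgcat_{\alpha}}$ the representables $\underline{h}(\cB)$, for $\cB$ an $\alpha$-small dg cell, form a set of generators and are homotopically finitely presented (evaluation at $\cB$ commutes with all homotopy colimits); so after stabilization $\{\Sigma^{\infty}\underline{h}(\cB)[n]\mid n\in\bbZ\}$ is a set of compact generators of $\Spt(\mathsf{Hot}_{\dgcat_{\alpha}})$. By construction $\bbR\underline{h}$ carries $\cB$ to $\underline{h}(\cB)$, so the image of $\underline{h}(\cB)$ under the whole composite is $\Uaddw(\cB)$, and it remains to check that ``compact generators'' is preserved at each stage.

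Preservation of generation is formal: a left Bousfield localization of derivators, as well as the stabilization functor, carries a set of generators to a set of generators; and since, by Proposition~\ref{prop:general}(iii), every dg category is an $\alpha$-filtered homotopy colimit of $\alpha$-small dg cells, the images of the representables indexed by $\alpha$-small dg cells already generate $\mathsf{L}_{\Sigma}\mathsf{Hot}_{\dgcat_{\alpha}}$ as a localizing subcategory, hence a fortiori $\Maddw(e)$ is generated by $\{\Uaddw(\cB)[n]\}$.

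For compactness one argues that, after discarding those members of the various localizing sets that are already invertible, each of the localizations above is a left Bousfield localization at a \emph{set} of maps between homotopically finitely presented objects. For $\mathsf{L}_{\Sigma}$ this uses Proposition~\ref{prop:general}: since the (co)domains of $I$, hence all $\alpha$-small dg cells, are homotopically $\alpha$-small, the comparison maps forcing $\bbR\underline{h}$ to preserve $\alpha$-filtered homotopy colimits are already invertible in $\mathsf{Hot}_{\dgcat_{\alpha}}$. For the passage to $\underline{\Mot}_{\alpha}^{\mathsf{unst}}$ one uses, exactly as in \cite[\S\S12--13]{Additive}, that condition \underline{\mbox{add}}) can be imposed using only the split exact sequences of $\alpha$-small dg cells (every split exact sequence being an $\alpha$-filtered homotopy colimit of such ones); and $\Phi$, the pointing localization, is manifestly of this form. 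Granting this, the localizing subcategory killed at each step is generated by compact objects, so by the localization theorem for compactly generated triangulated categories (\cite{Neeman}) each localized stable derivator is again compactly generated with the images of compact objects remaining compact, and the stabilization preserves compact generation. Therefore $\Uaddw(\cB)$ is compact for every $\alpha$-small dg cell $\cB$, and $\{\Uaddw(\cB)[n]\}$ is a set of compact generators of $\Maddw(e)$. The main obstacle is this last bookkeeping — in particular the density argument reducing condition \underline{\mbox{add}}) to split exact sequences of $\alpha$-small dg cells, together with the verification that all the relevant localizing sets live between homotopically finitely presented objects — which is the structural reason one works with a fixed regular cardinal $\alpha$ and the point where Proposition~\ref{prop:general} is used in an essential way.
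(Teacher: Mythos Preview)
Your proposal is correct and follows essentially the same approach as the paper: trace the representables $\underline{h}(\cB)$ through the chain of localizations and the stabilization constituting $\Maddw$, using that each localization is taken at a set of maps with homotopically finitely presented (co)domains. The paper's proof is terser --- it starts from the fact that the $(\Phi\circ\bbR\underline{h})(\cB)$ already form a set of homotopically finitely presented generators of $\mathsf{L}_{\Sigma,\underline{\mathsf{p}}}\mathsf{Hot}_{\dgcat_\alpha}$ and then invokes \cite[Lemma~7.1]{Additive} and \cite[Lemma~8.2]{Additive} (preservation of homotopically finitely presented objects under such Bousfield localizations and under stabilization, respectively) in place of your appeal to Neeman's localization theorem --- but the content is the same.
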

\begin{proof}
Recall from the proof of Theorem~\ref{thm:Uadd}, the construction of $\Maddw$ and $\Uaddw$. Notice that by construction the objects
$(\Phi \circ \bbR \underline{h})(\cB)$, with $\cB$ a $\alpha$-small dg cell, form a set of homotopically finitely presented generators of $\mathsf{L}_{\Sigma, \underline{\mathsf{p}}} \mathsf{Hot}_{\dgcat_{\alpha}}$. Since $\Maddw$ is obtained from $\mathsf{L}_{\Sigma, \underline{\mathsf{p}}} \mathsf{Hot}_{\dgcat_{\alpha}}$ using the operations of stabilization and left Bousfield localization with respect to a set of morphisms whose (co)domains are homotopically finitely presented, \cite[Lemma\,8.2]{Additive} and \cite[Lemma\,7.1]{Additive} allow us to conclude the proof.
\end{proof}

\begin{remark}\label{rk:enrichcomp}
Notice that, for a triangulated derivator $\bbD$ (\ref{def:srp}), and for an object $X$ of $\bbD(e)$,
the spectrum of maps functor $\bbR\Hom(X,-)$ preserves small sums if and only if $X$ is compact in the triangulated category $\bbD(e)$ (which means that the functors $\Hom_{\bbD(e)}(X[n],-)$ preserve small sums for
any integer $n$). Moreover, as the spectrum of maps functor $\bbR\Hom(X,-)$ preserves finite homotopy (co-)limits
for any object $X$, the spectrum of maps functor $\bbR\Hom(X,-)$ preserves small sums if and only if it preserves
small homotopy colimits. Hence $X$ is compact in  $\bbD$
(in the sense that $\bbR\Hom(X,-)$ preserves filtered homotopy colimits)
if and only if it is compact in the triangulated category $\bbD(e)$,
which is also equivalent to the property that
$\bbR\Hom(X,-)$ preserves arbitrary small homotopy colimits. We will use freely this
last characterization in the sequel of this article.

In particular, Proposition \ref{prop:GenAdd} can be restated by saying that
$\bbR\Hom(\Uaddw(\cB),-)$ preserves homotopy colimits for any $\alpha$-small dg cell $\cB$.
\end{remark}

\begin{theorem}\label{thm:KTradd}
For any dg categories $\cA$ and $\cB$, with $\cB$ a $\alpha$-small dg cell (\ref{def:dgcell}), we have a natural isomorphism in the stable homotopy category of spectra (see Proposition~\ref{prop:repec} and Notation~\ref{not:Ktri})
$$ \bbR\Hom(\,\Uaddw(\cB),\, \Uaddw(\cA)\,) \simeq \Kw(\rep_{tr}(\cB,\cA))\,.$$
\end{theorem}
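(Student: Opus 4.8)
The plan is to transpose the proof of the co-representability theorem in the additive motivator \cite[\S\,15]{Additive} to the present situation, systematically replacing the Morita model structure by the quasi-equiconic one, the functor $\perf(-)$ by $\tri(-)$, the internal Hom $\rep_{mor}$ by $\rep_{tr}$ (Proposition~\ref{prop:repec}), and filtered homotopy colimits by $\alpha$-filtered ones. Throughout, $\cB$ is a fixed $\alpha$-small dg cell and I would regard both members of the claimed isomorphism as functors of $\cA$. On the left-hand side, Proposition~\ref{prop:GenAdd} together with Remark~\ref{rk:enrichcomp} shows that $\cA\mapsto\bbR\Hom(\Uaddw(\cB),\Uaddw(\cA))$ commutes with arbitrary homotopy colimits; on the right-hand side, $\cA\mapsto\Kw(\rep_{tr}(\cB,\cA))$ is the composite of $\rep_{tr}(\cB,-)\colon\Hec\to\Hec$ with $\Kw$, where $\rep_{tr}(\cB,-)$ commutes with $\alpha$-filtered homotopy colimits (because the cell $\cB$ is compact) and $\Kw$ commutes with filtered ones.

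First I would reduce to the case in which $\cA$ is itself an $\alpha$-small dg cell. By Proposition~\ref{prop:general}(iii) an arbitrary dg category $\cA$ is an $\alpha$-filtered homotopy colimit of $\alpha$-small dg cells $\cA_{j}$, and by the previous paragraph both $\cA\mapsto\bbR\Hom(\Uaddw(\cB),\Uaddw(\cA))$ and $\cA\mapsto\Kw(\rep_{tr}(\cB,\cA))$ carry such a homotopy colimit outside; hence it suffices to construct a natural isomorphism $\Kw(\rep_{tr}(\cB,\cA))\simeq\bbR\Hom(\Uaddw(\cB),\Uaddw(\cA))$ when $\cA$ and $\cB$ are both $\alpha$-small dg cells.

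Next, for such $\cA$ and $\cB$, I would unwind the explicit construction of $\Maddw$ recalled in the proof of Theorem~\ref{thm:Uadd}. In the derivator $\mathsf{L}_{\Sigma}\mathsf{Hot}_{\dgcat_{\alpha}}$ of presheaves of spaces, mapping out of the representable $\bbR\underline{h}(\cB)$ into $\bbR\underline{h}(\cA)$ recovers the homotopy function complex $\Map(\cB,\cA)$ of the quasi-equiconic model structure on $\dgcat$; by the quasi-equiconic version of To\"en's derived Morita theory (\cite{Toen}, combined with Propositions~\ref{prop:Bousfield1} and~\ref{prop:repec} and the fact that $\rep_{tr}(\cB,\cA)$ is pre-triangulated), this space is the classifying space of the subcategory of weak equivalences of the Waldhausen category $\tri^{\cW}(\rep_{tr}(\cB,\cA))$ of Notation~\ref{not:Ktri}. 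Applying in turn the pointed localization, the ``unstable'' localization which forces split exact sequences to become biproducts, and the stabilization turns this space into the spectrum obtained by group-completing the direct-sum symmetric monoidal structure on $\tri^{\cW}(\rep_{tr}(\cB,\cA))$; by Waldhausen's $S_{\bullet}$-construction and the additivity theorem \cite{Wald}, that spectrum is canonically $\Kw(\rep_{tr}(\cB,\cA))$. Keeping track of naturality then yields the required isomorphism, compatibly with the homotopy colimits of the preceding step.

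The main obstacle will be this last step. Two points demand genuine care. First, one must establish the quasi-equiconic analogue of To\"en's computation of homotopy function complexes, so as to identify $\Map(\cB,\cA)$ with the whole $K$-theory space of $\rep_{tr}(\cB,\cA)$, and not merely with its set of connected components (the isomorphism classes of objects), which is what makes the subsequent delooping meaningful. Second, and more seriously, one must redo in the quasi-equiconic setting the identification --- carried out in \cite[\S\,12--15]{Additive} for the Morita setting --- between the ``stabilization of the additive localization of presheaves of spaces'' and Waldhausen's $S_{\bullet}$-machine, now for the Waldhausen category $\tri^{\cW}(\rep_{tr}(\cB,\cA))$ and for $\alpha$-filtered rather than filtered homotopy colimits; here the $K$-theoretic input is Waldhausen's additivity theorem applied to $\tri^{\cW}$, and the point to check is that it matches the additive localization that defines $\Maddw$. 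The auxiliary facts used above --- that $\rep_{tr}(\cB,-)$ preserves $\alpha$-filtered homotopy colimits and sends split exact sequences to split exact sequences whenever $\cB$ is an $\alpha$-small dg cell --- should be elementary consequences of Theorem~\ref{thm:Toen} and Proposition~\ref{prop:repec}.
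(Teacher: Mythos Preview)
Your proposal is correct and takes essentially the same approach as the paper: the paper's proof is simply the instruction to redo \cite[Prop.\,14.11, Thm.\,15.9, Thm.\,15.10]{Additive} with $\rep_{mor}$ replaced by $\rep_{tr}$ and $K$ by $\Kw$, and what you have written is a faithful outline of the content of that argument, including the reduction to $\alpha$-small dg cells, the identification of mapping spaces via To\"en's theorem, and the passage through the additive localization and stabilization using Waldhausen additivity. Your identification of the two delicate points (the quasi-equiconic analogue of To\"en's mapping-space computation, and the matching of the $S_\bullet$-machine with the additive localization) is exactly where the work in \cite[\S\,12--15]{Additive} lies.
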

\begin{proof}
The proof of this co-representability Theorem is entirely analogous to the one of \cite[Thm.\,15.10]{Additive}. Simply replace $\rep_{mor}(-,-)$ by $\rep_{tr}(-,-)$ and $K(-)$ by $\Kw(-)$ in (the proofs of) \cite[Prop.\,14.11]{Additive}, \cite[Thm.\,15.9]{Additive} and \cite[Thm.\,15.10]{Additive}.
\end{proof}
\section{Idempotent completion}
\begin{notation}\label{not:hat}
We denote by
\begin{eqnarray*}
(-)^{\w}: \dgcat  \too  \dgcat & & \cA  \mapsto  \cA^{\w}
\end{eqnarray*}
the Morita fibrant resolution functor (see Proposition~\ref{prop:fibMorita}), obtained by the small object argument~\cite[\S 10.5.14]{Hirschhorn}, using the generating trivial cofibrations (see \cite[\S\,2.5]{Thesis}) of the Morita model structure. Thanks to~\cite[Prop.\,2.27]{Thesis}, the functor $(-)^{\w}$ preserves quasi-equiconic dg functors and so it gives rise to a morphism of derivators
$$(-)^{\w}: \Trdgcat \too \Trdgcat\,.$$
Notice that by construction, we have also a $2$-morphism $\mbox{Id} \Rightarrow (-)^{\w}$ of derivators.
\end{notation}
\begin{proposition}\label{prop:hat-properties}
The morphism $(-)^{\w}$ of derivators preserves\,:
\begin{itemize}
\item[(i)] $\alpha$-filtered homotopy colimits,
\item[(ii)] the terminal object and
\item[(iii)] split exact sequences (\ref{def:ses}).
\end{itemize}
\end{proposition}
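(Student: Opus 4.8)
The plan is to verify the three preservation properties of the Morita fibrant resolution functor $(-)^{\w}$ one at a time, leveraging the explicit description of this functor as a small object argument for the generating trivial cofibrations of the Morita model structure, together with the fact (recalled in Notation~\ref{not:hat}) that $(-)^{\w}$ already preserves quasi-equiconic dg functors and hence descends to a morphism of derivators on $\Trdgcat$.

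For property (ii), the terminal object: the terminal dg category $0$ (with one object and zero endomorphism complex) is already Morita fibrant, since its homotopy category is trivial and a fortiori idempotent complete and stable under cones and suspensions (Proposition~\ref{prop:fibMorita}). So the canonical $2$-morphism $\mathrm{Id} \Rightarrow (-)^{\w}$ is an isomorphism on $0$, and $0^{\w} \simeq 0$. For property (i), $\alpha$-filtered homotopy colimits: I would argue that the small object argument producing $(-)^{\w}$ can be taken to be an $\aleph_0$-sequence (the generating trivial cofibrations of the Morita structure have $\alpha$-small, indeed finite, (co)domains — they are built from the same cells as in Theorem~\ref{thm:quasiec} together with the idempotent-splitting maps, all of which are $\alpha$-small), so that $(-)^{\w}$ is a sequential colimit of functors each of which is a pushout along coproducts of generating trivial cofibrations. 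Since $\alpha$-filtered homotopy colimits commute with each other and with the relevant pushouts and transfinite compositions (invoking Proposition~\ref{prop:general}(ii) to identify homotopy colimits with strict colimits on $\alpha$-filtered systems of cofibrations), the functor $(-)^{\w}$ commutes with $\alpha$-filtered homotopy colimits. Equivalently, one can note that the idempotent completion $\perf^{\cW}(-)$ is a filtered colimit of finite data over the diagram of idempotents, and $\Kw$-style bookkeeping as in Notation~\ref{not:Ktri}; but the cleaner route is the small-object-argument one.

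For property (iii), split exact sequences: here I would use the characterization of split exactness in Definition~\ref{def:ses} via $\perf$, together with Notation~\ref{not:idemp}, which already records that the idempotent completion of a split exact sequence of triangulated categories is split exact. More precisely, given a split exact sequence $\cA \to \cB \to \cC$ in $\Hec$ realized by adjoint dg functors $I, R, P, S$, applying $(-)^{\w}$ yields dg functors $I^{\w}, R^{\w}, P^{\w}, S^{\w}$ between the Morita fibrant replacements; the adjunction identities $P\circ S = \mathrm{Id}$, $R\circ I = \mathrm{Id}$ are preserved by functoriality, and on the level of triangulated categories $\tri(\cA^{\w}) \simeq \perf(\cA)$ etc. (Proposition~\ref{prop:fibMorita}), so the resulting sequence $\perf(\cA) \to \perf(\cB) \to \perf(\cC)$ is, up to the equivalences, the idempotent completion of the original split exact sequence of compact parts, hence split exact by Notation~\ref{not:idemp}. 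The main obstacle I anticipate is property (i): one must be careful that the small object argument for the Morita model structure can genuinely be arranged as a transfinite composition indexed by an ordinal $\leq$ some fixed cardinal independent of the input dg category (so that the resulting functor is "small enough" to commute with $\alpha$-filtered homotopy colimits), and that the intermediate strict colimits agree with homotopy colimits — this is exactly where Proposition~\ref{prop:general} and the cellularity/smallness hypotheses on the generating (trivial) cofibrations get used, and the argument should be spelled out rather than asserted.
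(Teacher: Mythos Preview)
Your argument is correct and matches the paper's: (ii) is immediate since the terminal dg category is already Morita fibrant, (i) uses the $\alpha$-smallness of the (co)domains of the generating trivial cofibrations of the Morita model structure together with Proposition~\ref{prop:general}(ii) to identify $\alpha$-filtered homotopy colimits with strict ones, and (iii) reduces via the identification $\tri\circ(-)^{\w}\simeq\widetilde{(-)}\circ\tri$ to the fact (Notation~\ref{not:idemp}) that idempotent completion preserves split exact sequences of triangulated categories. The paper's proof is terser but structurally identical; your caution about (i) is well-placed, though note that the paper only claims (and only needs) $\alpha$-smallness of the (co)domains, not finiteness.
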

\begin{proof}
Condition (i) follows from Proposition~\ref{prop:general} (ii) and the fact that the (co)domains of the generating trivial cofibrations of the Morita model structure are $\alpha$-small. Condition (ii) is clear. In what concerns condition (iii), notice that we have a (up to isomorphism) commutative diagram (see Notations~\ref{not:Tri} and \ref{not:quasiec})\,: 
$$
\xymatrix{
\Hec \ar[d]_{\tri} \ar[rr]^{(-)^{\w}} && \Hec \ar[d]^{\tri} \\
\Tri \ar[rr]_{\widetilde{(-)}} && \Tri\,.
}
$$
Since the idempotent completion functor $\widetilde{(-)}$ (see Notation~\ref{not:idemp}) preserves split exact sequences, the proof is finished.
\end{proof}
By Proposition~\ref{prop:hat-properties}, the composition
$$ \Trdgcat \stackrel{(-)^{\w}}{\too} \Trdgcat \stackrel{\Uaddw}{\too} \Maddw$$
is a $\alpha$-additive invariant (\ref{not:AInv}). Therefore, by Theorem~\ref{thm:Uadd} we obtain an induced morphism of derivators and a $2$-morphism
$$
\begin{array}{lcr}
 (-)^{\w}: \Maddw \too \Maddw & & \mbox{Id} \Rightarrow (-)^{\w}\,,
\end{array} 
$$
such that
$ \Uaddw(\cA)^{\w} \simeq \Uaddw(\cA^{\w})$ for every dg category $\cA$.
\section{Localizing invariants}\label{chap:localizing}
In this Section, we will work with a fixed infinite regular cardinal $\alpha$ \cite[Def.\,10.1.10]{Hirschhorn}.
\subsection{Waldhausen exact sequences}
\begin{definition}\label{def:Wald}
Let $\cB$ be a pre-triangulated dg category, see Proposition~\ref{prop:fibquasiec}. A \emph{thick} dg subcategory of $\cB$
is a full dg subcategory $\cA$ of $\cB$ such that the induced functor
$\dgHo(\cA)\to\dgHo(\cB)$ turns $\dgHo(\cA)$ into a thick subcategory of $\dgHo(\cB)$
(which means that $\cA$ is pre-triangulated and that any object of
$\dgHo(\cB)$ which is a direct factor of an object of $\dgHo(\cA)$
is an object of $\dgHo(\cA)$). A \emph{strict exact sequence} is
a diagram of shape
$$
\begin{array}{rcl}
 \cA \too \cB \too \cB/\cA\,,
 \end{array}
 $$
in which $\cB$ is a pre-triangulated dg category, $\cA$ is
a thick dg subcategory of $\cB$, and $\cB/\cA$ is the Drinfeld
dg quotient~\cite{Drinfeld} of $\cB$ by $\cA$.
\end{definition}
\begin{remark}
Any split short exact sequence (\ref{def:ses}) is a strict exact sequence.
\end{remark}
\begin{notation}\label{not:genE}
We fix once and for all a \emph{set} $\cE$ of representatives
of homotopy $\alpha$-small thick inclusions, by which we mean that
$\cE$ is a set of full inclusions of dg categories $\cA\to\cB$
with the following properties\,:
\begin{itemize}
\item[(i)] For any inclusion $\cA\to\cB$ in $\cE$, the dg category $\cB$
is pre-triangulated, and $\cA$ is thick in $\cB$.
\item[(ii)] For any inclusion $\cA\to\cB$ in $\cE$, there exists an
$\alpha$-small dg cell $\cB_0$ (\ref{sub:dg-cells})
and a quasi-equiconic dg functor $\cB_0\to\cB$ (\ref{def:quasiec}).
\item[(iii)] For any $\alpha$-small dg cell $\cB_0$, there exists
a quasi-equiconic dg functor $\cB_0\to\cB$, with $\cB$ cofibrant
and pre-triangulated, such that any inclusion
$\cA\to\cB$ with $\cA$ thick in $\cB$ is in $\cE$.
\end{itemize}
\end{notation}
\begin{proposition}\label{prop:approx}
Let $\cB$ be a pre-triangulated dg category, and $\cA$ a thick dg subcategory of $\cB$ (\ref{def:Wald}).
Then there exists an $\alpha$-filtered direct system
$\{ \cA_j \stackrel{\epsilon_j}{\to}\cB_j\}_{j \in J}$
of dg functors, such that for any index $j$, we have
a commutative diagram of dg categories of shape
$$\xymatrix{
\cA'_j\ar[r]\ar[d]&\cB'_j\ar[d]\\
\cA_j\ar[r]&\cB_j\,,
}$$
in which the vertical maps are quasi-equivalences (\ref{def:quasieq}), the map $\cA'_j\to\cB'_j$ belongs to $\cE$ (see Notation~\ref{not:genE}), and moreover there exists an isomorphism in the homotopy category of arrows between dg categories
(with respect to the quasi-equiconic model structure of Theorem~\ref{thm:quasiec}) of shape
$$ \underset{j \in J}{\hocolim}\,\{ \cA_j \stackrel{\epsilon_j}{\too}\cB_j\}
\stackrel{\sim}{\too} (\cA \too \cB)\,.$$
\end{proposition}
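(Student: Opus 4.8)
The idea is to realise $(\cA\to\cB)$ as an $\alpha$-filtered homotopy colimit, in the category of arrows of $\dgcat$ equipped with the projective model structure induced by the quasi-equiconic one (Theorem~\ref{thm:quasiec}), of the diagram of \emph{all} of its ``$\cE$-approximations''. Concretely, I would take $J$ to be the category whose objects are the commutative squares
$$
\xymatrix{
\cA_0\ar[r]\ar[d] & \cB_0\ar[d]\\
\cA\ar[r] & \cB
}
$$
whose top arrow $(\cA_0\to\cB_0)$ lies in the set $\cE$ of Notation~\ref{not:genE}, and whose morphisms are the maps of squares that are the identity on the bottom row. The direct system of the statement is then (a cofinal subcategory of) $J$ composed with the functor sending each square to its top arrow $(\cA_0\to\cB_0)$, and one may take the vertical maps of the statement to be identities (every dg category occurring in an element of $\cE$ being cofibrant). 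A harmless preliminary reduction — using that a cofibrant resolution $Q\cB\to\cB$ for the quasi-equivalence model structure (Theorem~\ref{thm:quasieq}) can be chosen identical on objects, and that pre-triangulatedness and thickness are quasi-equivalence invariant (Proposition~\ref{prop:fibquasiec}) — lets me assume $\cB$ cofibrant.

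The two things to prove are that $J$ is $\alpha$-filtered and that the homotopy colimit over it recovers $(\cA\to\cB)$. For $\alpha$-filteredness I would use that $\cB$, being cofibrant, is equivalent in $\Hec$ to an $\alpha$-filtered colimit of $\alpha$-small dg cells mapping to it (Proposition~\ref{prop:general}(iii)). Given an $\alpha$-small family of objects of $J$, I would first absorb the images of the various $\cB_0$ into a single $\alpha$-small dg cell over $\cB$ (using regularity of $\alpha$), then invoke Notation~\ref{not:genE}(iii) to replace this cell by a cofibrant pre-triangulated dg category $\cB_0''$, still mapping to $\cB$ since $\cB$ is fibrant for the quasi-equiconic structure and into which every thick inclusion lies in $\cE$, and finally let $\cA_0''$ be the thick dg subcategory of $\cB_0''$ generated by the images of the various $\cA_0$ — which maps into $\cA$ precisely because $\cA$ is \emph{thick} in $\cB$. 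The square $(\cA_0''\to\cB_0'')\to(\cA\to\cB)$ then receives maps from all the chosen ones; an analogous construction handles $\alpha$-small families of parallel morphisms, so $J$ is $\alpha$-filtered.

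For the homotopy colimit, Proposition~\ref{prop:general}(ii) applied in the arrow category identifies the $\alpha$-filtered colimit over $J$ with the homotopy colimit, so it is enough to identify the colimits of the source and target functors. On the target, $\colim_J\cB_0\simeq\cB$ because the $\cB_0$'s occurring are cofinal among $\alpha$-small pre-triangulated dg categories over $\cB$, which follows from the exhaustion of $\cB$ by $\alpha$-small dg cells and from Notation~\ref{not:genE}(iii). On the source, $\colim_J\cA_0\to\cA$ is shown to be a quasi-equiconic equivalence by passing to $\tri(-)$: since $\tri$ carries these $\alpha$-filtered homotopy colimits to filtered colimits of triangulated categories, $\tri(\colim_J\cA_0)$ is the triangulated subcategory of $\tri(\cB)=\dgHo(\cB)$ generated by the images of the $\dgHo(\cA_0)$; these exhaust the objects of $\dgHo(\cA)$, and as $\cA$ is thick, $\dgHo(\cA)$ is already a thick subcategory of $\dgHo(\cB)$, so this generated subcategory equals $\dgHo(\cA)=\tri(\cA)$. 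Compatibility of the two identifications is automatic, as they arise from a single diagram in the arrow category.

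\textbf{Main obstacle.}
I expect the $\alpha$-filteredness of $J$ to be the delicate part: one has to compress simultaneously an $\alpha$-small amount of data about $\cB$ into one $\alpha$-small pre-triangulated dg category supplied by Notation~\ref{not:genE}(iii), the matching data about $\cA$ into a thick dg subcategory of it, and then check that this thick subcategory still maps into $\cA$ — and it is exactly here that thickness of $\cA$ in $\cB$ is indispensable (for a non-thick $\cA$ the thick closures leak out of $\cA$, which is precisely the phenomenon negative $K$-theory detects) and that regularity of $\alpha$ is used to keep everything $\alpha$-small. The homotopy-colimit identifications are comparatively routine, the only mild subtlety being the commutation of $\tri(-)$ with $\alpha$-filtered homotopy colimits of dg categories.
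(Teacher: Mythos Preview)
Your route is genuinely different from the paper's, and the obstacle you flag is real and not closed by your sketch.

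The paper does not build $J$ as the category of all $\cE$-approximations. Instead it \emph{fixes} from the outset an $\alpha$-filtered system $\{\cB''_j\}_{j\in J}$ of $\alpha$-small dg cells with $\hocolim_j\cB''_j\simeq\cB$ in $\Hec$ (Proposition~\ref{prop:general}(iii)), so that $J$ is $\alpha$-filtered by construction and nothing needs to be verified. It then takes a termwise fibrant replacement $\{\cB_j\}$ for the quasi-equiconic structure and defines $\cA_j$ by the honest pullback $\cA_j=\cA\times_\cB\cB_j$; the isomorphism $\hocolim_j\cA_j\simeq\cA$ then follows from the fact that thick inclusions are stable under filtered colimits in $\Tri$, applied to the diagram of $\dgHo$'s. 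The comparison squares with $\cA'_j\to\cB'_j$ in $\cE$ are produced by choosing, via Notation~\ref{not:genE}(iii), a quasi-equivalence $\cB'_j\to\cB_j$ with $\cB'_j$ cofibrant pre-triangulated, and pulling back once more. No filteredness argument, no cofinality, no thick-closure bookkeeping.

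In your approach the $\alpha$-filteredness of $J$ is the whole difficulty, and your sketch does not establish it. The targets $\cB_0$ of maps in $\cE$ are \emph{not} $\alpha$-small dg cells: by Notation~\ref{not:genE}(ii) they are cofibrant pre-triangulated dg categories which merely \emph{receive} a quasi-equiconic functor from an $\alpha$-small dg cell $\cC_i$. So ``absorbing the images of the various $\cB_0$ into a single $\alpha$-small dg cell over $\cB$'' is not literally possible; what you can absorb are the auxiliary $\cC_i$. Having done so and produced a new $\cB_0''$ via~\ref{not:genE}(iii), you still need \emph{strict} dg functors $\cB_0^{(i)}\to\cB_0''$ over $\cB$ (your morphisms in $J$ are strict maps of squares). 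But $\cC_i\to\cB_0^{(i)}$ is only a quasi-equiconic dg functor, not a trivial cofibration, so the fibrancy of $\cB_0''$ yields a lift only up to homotopy, not strictly over $\cB$; the same problem recurs when coequalizing parallel arrows. This can probably be repaired (e.g.\ by altering the morphisms of $J$, or by functorial replacements), but the paper's pullback construction simply sidesteps the issue.
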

\begin{proof}
By Proposition~\ref{prop:general}, there exists an $\alpha$-filtered direct system
of $\alpha$-small dg cells $\{\cB''_j\}_{j \in J}$ in $\Hec$, such that
$$ \underset{j \in J}{\hocolim}\,\cB''_j \stackrel{\sim}{\too} \cB\,.$$
By taking a termwise fibrant replacement $\{\cB_j\}_{j \in J}$
of the diagram $\{\cB''_j\}_{j \in J}$,
with respect to the quasi-equiconic model category structure (see Proposition~\ref{prop:fibquasiec}), we obtain
a $\alpha$-filtered diagram of pre-triangulated dg categories, and so an
isomorphism in $\Hec$ of shape
$$ \underset{j \in J}{\hocolim}\,\cB_j \stackrel{\sim}{\too} \cB\,.$$
Using the following fiber products
$$
\xymatrix{
\cA_j \ar[d] \ar[r]^{\epsilon_j} \ar@{}[dr]|{\ulcorner} & \cB_j \ar[d] \\
\cA \ar[r] & \cB\,,
}
$$
we construct an $\alpha$-filtered direct system
$\{ \cA_j \stackrel{\epsilon_j}{\to} \cB_j\}_{j \in J}$ such that
the map
$$ \underset{j \in J}{\hocolim}\,\cA_j \stackrel{\sim}{\too} \cA$$
is an isomorphism in $\Hec$. To prove this last assertion, we consider the
commutative diagram of triangulated categories
$$\xymatrix{
{\colim}\,\dgHo(\cA_j)\ar@{=}[r]\ar[d]
&\dgHo({\hocolim}\,\cA_j)\ar[r]\ar[d]
&\dgHo(\cA)\ar[d]\\
{\colim}\,\dgHo(\cB_j)\ar@{=}[r]
&\dgHo({\hocolim}\,\cB_j)\ar[r]
&\dgHo(\cB)\, ,
}$$
and use the fact that thick inclusions are stable under
filtered colimits in the category $\Tri$ of triangulated categories.

Now, for each index $j$, there exists a quasi-equiconic dg functor
$\cB''_j\to\cB'_j$ with $\cB'_j$ cofibrant and pre-triangulated,
such that any thick inclusion into $\cB'_j$ is in $\cE$.
As $\cB'_j$ and $\cB_j$ are isomorphic in $\Hec$, and as $\cB'_j$
is cofibrant and $\cB_j$ is pre-triangulated, we get a quasi-equivalence
$\cB'_j\to\cB_j$. In conclusion, we obtain pullback squares
$$\xymatrix{
\cA'_j\ar[r]\ar[d]\ar@{}[dr]|{\ulcorner}&\cB'_j\ar[d]\\
\cA_j\ar[r]&\cB_j
}$$
in which the vertical maps are quasi-equivalences, and the map $\cA'_j\to\cB'_j$ belongs to $\cE$.
\end{proof}

The derivator $\Maddw$ (see Theorem~\ref{thm:Uadd}) admits a (left proper and cellular) Quillen model and so we can consider its left Bousfield localization with respect to the set of maps
\begin{equation}\label{eq:default}
 \Theta_{\epsilon}: \mathsf{cone} [\,\Uaddw(\epsilon):\Uaddw(\cA) \too \Uaddw(\cB)\,] \too \Uaddw (\cB/\cA)\,,
\end{equation}
where $\epsilon:\cA\to\cB$ belongs to $\cE$ (see~\ref{not:genE}). We obtain then
a new derivator $\Mlocvw$ (which admits also a left proper and cellular Quillen model) and an adjunction
$$ 
\xymatrix{
\Maddw \ar@<-1ex>[d]_{\gamma_!} \\
\Mlocvw \ar@<-1ex>[u]_{\gamma^{\ast}}\,.
}
$$
The morphism $\gamma_!$ preserves homotopy colimits and sends the maps $\Theta_\epsilon$
to isomorphisms, and it is universal with respect to these properties; see \ref{def:Cis}.

\begin{theorem}\label{thm:veryweakloc}
The composition morphism
$$ \Ulocvw: \Trdgcat \stackrel{\Uaddw}{\too} \Maddw \stackrel{\gamma_!}{\too} \Mlocvw$$
has the following properties\,:
\begin{itemize}
\item[$\alpha$-flt)] it commutes with $\alpha$-filtered homotopy colimits;
\item[$\underline{\mbox{p}}$)] it preserves the terminal object;
\item[$\underline{\mbox{wloc}}$)] it sends strict exact sequences (\ref{def:Wald})
to distinguished triangles in $\Mlocvw$.
\end{itemize}
Moreover $\Ulocvw$ is universal with respect to these properties, $\ie$ for every strong triangulated derivator $\bbD$, we have an equivalence of categories
$$ (\Ulocvw)^{\ast}: \HomC(\Mlocvw, \bbD) \stackrel{\sim}{\too} \HomLvw(\Trdgcat, \bbD)\,,$$
where the right-hand side consists of the full subcategory of $\uHom(\Trdgcat, \bbD)$ of morphisms of derivators which verify the above three conditions.
\end{theorem}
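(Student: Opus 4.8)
The plan is to deduce Theorem~\ref{thm:veryweakloc} from the universal property of $\Uaddw$ (Theorem~\ref{thm:Uadd}) together with the universal property of the left Bousfield localization $\gamma_!:\Maddw\to\Mlocvw$ recalled just above the statement. The first observation is that, since $\gamma_!$ preserves homotopy colimits and $\Uaddw$ satisfies $\alpha$-flt) and $\underline{\mathsf p}$), the composite $\Ulocvw=\gamma_!\circ\Uaddw$ automatically satisfies $\alpha$-flt) and $\underline{\mathsf p}$); one only has to check that it sends strict exact sequences to distinguished triangles. The core of the argument is therefore to reduce an arbitrary strict exact sequence $\cA\to\cB\to\cB/\cA$ to the ones of shape $\epsilon\in\cE$ that were used to define the localization.

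\smallskip

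First I would verify $\underline{\mathsf{wloc}}$). Let $\cA\to\cB$ be a thick inclusion with $\cB$ pre-triangulated. By Proposition~\ref{prop:approx} we can write $(\cA\to\cB)$ as an $\alpha$-filtered homotopy colimit of a system $\{\cA_j\xrightarrow{\epsilon_j}\cB_j\}_{j\in J}$, each term of which fits in a square with vertical quasi-equivalences to a map $\cA'_j\to\cB'_j$ belonging to $\cE$. Since $\Ulocvw$ inverts quasi-equiconic (hence quasi-)equivalences and commutes with $\alpha$-filtered homotopy colimits, and since the Drinfeld quotient $\cB/\cA$ is likewise the $\alpha$-filtered homotopy colimit of the $\cB'_j/\cA'_j$ (the quotient functor being a left adjoint, it commutes with homotopy colimits, and one checks it respects the quasi-equivalences in the squares), the triangle
$$\Ulocvw(\cA)\too\Ulocvw(\cB)\too\Ulocvw(\cB/\cA)\too\Ulocvw(\cA)[1]$$
is the $\alpha$-filtered homotopy colimit of the triangles coming from the $\epsilon_j\in\cE$. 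For those, the map $\Theta_{\epsilon_j}$ is inverted by $\gamma_!$ by construction of $\Mlocvw$, so $\mathsf{cone}[\Ulocvw(\epsilon_j)]\xrightarrow{\sim}\Ulocvw(\cB'_j/\cA'_j)$; hence each term is a distinguished triangle, and a filtered homotopy colimit of distinguished triangles in a triangulated derivator is a distinguished triangle. This gives $\underline{\mathsf{wloc}}$).

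\smallskip

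For the universal property, the strategy is the standard two-step comparison. Given a strong triangulated derivator $\bbD$, restriction along $\gamma_!$ identifies $\HomC(\Mlocvw,\bbD)$ with the full subcategory of $\HomC(\Maddw,\bbD)$ of homotopy-colimit-preserving morphisms that send every $\Theta_\epsilon$ ($\epsilon\in\cE$) to an isomorphism: this is precisely the universal property of the Bousfield localization (\ref{def:Cis}). Composing further with $\Uaddw$ and invoking Theorem~\ref{thm:Uadd}, $\HomC(\Maddw,\bbD)\xrightarrow{\sim}\HomAw(\Trdgcat,\bbD)$, so it remains to identify, inside $\HomAw(\Trdgcat,\bbD)$, the essential image of $\HomC(\Mlocvw,\bbD)$ with $\HomLvw(\Trdgcat,\bbD)$. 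Concretely: an $\alpha$-additive invariant $E=\overline{E}\circ\Uaddw$ extends (uniquely, up to the equivalences above) to $\Mlocvw$ if and only if $\overline{E}$ kills all $\Theta_\epsilon$, and one must show this happens exactly when $E$ satisfies $\underline{\mathsf{wloc}}$). The ``only if'' direction is the computation already done in the previous paragraph applied to $E$ in place of $\Ulocvw$ (using that $\overline{E}$, being a morphism of triangulated derivators out of $\Maddw$, preserves cones and the relevant homotopy colimits). For the ``if'' direction, note that the $\Theta_\epsilon$ for $\epsilon\in\cE$ are, up to the quasi-equivalences in Proposition~\ref{prop:approx}(iii), instances of the comparison map attached to an honest strict exact sequence $\cA'\to\cB'\to\cB'/\cA'$; since $E$ is assumed to send such sequences to distinguished triangles, $\overline{E}(\Theta_\epsilon)$ is an isomorphism.

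\smallskip

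The main obstacle I expect is the bookkeeping in the ``if'' direction of the universal property, specifically matching the abstractly-defined generating maps $\Theta_\epsilon$ with the geometric strict exact sequences: one needs that every $\epsilon\in\cE$ really does occur (after cofibrant replacement) as the inclusion part of a strict exact sequence whose quotient is computed by the Drinfeld quotient, and that $\Uaddw$ applied to that quotient agrees with $\Uaddw(\cB/\cA)$ — this is where the cofibrancy/pre-triangulatedness hypotheses built into Notation~\ref{not:genE} and the compatibility of Drinfeld quotients with quasi-equivalences (and with the homotopy colimit presentation) have to be used carefully. A secondary, more routine point is checking that the Drinfeld quotient functor $\cB\mapsto\cB/\cA$ is sufficiently homotopical — preserving the $\alpha$-filtered homotopy colimits and the quasi-equivalences appearing in Proposition~\ref{prop:approx} — so that the reduction to $\cE$ is legitimate; this should follow from its description as a left adjoint, but it is the kind of step that deserves explicit care.
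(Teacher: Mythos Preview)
Your proposal is correct and follows essentially the same approach as the paper's proof: deduce $\alpha$-flt) and $\underline{\mathsf{p}}$) from the fact that $\gamma_!$ preserves homotopy colimits, obtain $\underline{\mathsf{wloc}}$) from Proposition~\ref{prop:approx} combined with $\alpha$-filtered colimit preservation, and derive universality by chaining the universal property of the Bousfield localization (Definition~\ref{def:Cis}) with that of $\Uaddw$ (Theorem~\ref{thm:Uadd}). The paper's proof is considerably terser and does not spell out the ``if and only if'' you unpack, nor the concerns you flag about Drinfeld quotients commuting with the relevant homotopy colimits; but the skeleton is the same.

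One small point you should make explicit: to identify $\HomLvw(\Trdgcat,\bbD)$ with a full subcategory of $\HomAw(\Trdgcat,\bbD)$ you need that every morphism satisfying $\alpha$-flt), $\underline{\mathsf{p}}$), and $\underline{\mathsf{wloc}}$) is automatically $\alpha$-additive. The paper notes this parenthetically: since every split exact sequence is a strict exact sequence, condition $\underline{\mathsf{wloc}}$) together with $\underline{\mathsf{p}}$) implies condition $\underline{\mathsf{add}}$). Without this remark your two-step comparison does not quite close, because you start by restricting along $\Uaddw$ and hence implicitly assume $E$ is $\alpha$-additive.
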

\begin{proof}
Since $\gamma_!$ preserves homotopy colimits, the morphism $\Ulocvw$ satisfies conditions
$\alpha$-flt), $\underline{\mbox{p}}$) and $\underline{\mbox{add}}$)
stated in Theorem \ref{thm:Uadd}.
Condition $\underline{\mbox{wloc}}$) follows from Proposition~\ref{prop:approx}
and the fact that $\Uaddw$ commutes with $\alpha$-filtered homotopy colimits..
Finally, the universality of $\Ulocvw$ is a consequence of Theorem~\ref{thm:Uadd} and Definition~\ref{def:Cis} (notice that, as any split exact sequence is a strict exact sequence,
under condition $\underline{\mbox{p}}$), condition $\underline{\mbox{add}}$) is implied by condition $\underline{\mbox{wloc}}$)).
\end{proof}
Let us now prove a general result, which will be used in the proofs of Theorems~\ref{thm:Kwloc} and \ref{thm:main}.
\begin{proposition}\label{prop:genarg}
Let $\bbD$ be a stable derivator (\ref{def:srp}), $S$ a set of morphisms in the base category $\bbD(e)$ and $X$ a compact object in $\bbD(e)$. Let us consider the left Bousfield localization of $\bbD$ with respect to $S$ (see \ref{def:Cis})\,:
$$ 
\xymatrix{
\bbD \ar@<-1ex>[d]_{\gamma_!} \\
\Loc_S\bbD \ar@<-1ex>[u]_{\gamma^{\ast}}\,.
}
$$
If the functor
$ \bbR\Hom_{\bbD}(X,-): \bbD(e) \to \Ho(\Spt)$
sends the elements of $S$ to isomorphisms, then $\gamma_!(X)$ is compact in $\Loc_S\bbD(e)$ and for every object $M \in \bbD(e)$, we have a natural isomorphism in the homotopy category of spectra\,: 
$$\bbR\Hom_{\bbD}(X,M)\simeq \bbR\Hom_{\Loc_S\bbD}(\gamma_!(X), \gamma_!(M))\,.$$
\end{proposition}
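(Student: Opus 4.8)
The plan is to exploit the adjunction $(\gamma_!,\gamma^{\ast})$ between $\bbD$ and $\Loc_S\bbD$ together with the characterization of $\Loc_S\bbD$ as a left Bousfield localization, in which $\gamma^{\ast}$ is fully faithful with essential image the $S$-local objects. First I would recall from \ref{def:Cis} that for any $M\in\bbD(e)$ and any $S$-local object $N$, the unit map induces a natural isomorphism of mapping spectra $\bbR\Hom_{\Loc_S\bbD}(\gamma_!(M),N)\simeq\bbR\Hom_{\bbD}(M,\gamma^{\ast}(N))$, and that $\gamma^{\ast}$ restricts to an equivalence from $\Loc_S\bbD(e)$ onto the full subcategory of $S$-local objects of $\bbD(e)$, with $\gamma_!\gamma^{\ast}\simeq\id$ on local objects.

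The heart of the argument is the claim that, under the hypothesis that $\bbR\Hom_{\bbD}(X,-)$ inverts the elements of $S$, the object $X$ is already ``$S$-local from the point of view of the spectrum of maps out of it'', in the precise sense that for every $M\in\bbD(e)$ the natural map $\bbR\Hom_{\bbD}(X,M)\to\bbR\Hom_{\bbD}(X,L_S M)$ is an isomorphism, where $M\to L_S M$ denotes an $S$-localization (a fibrant replacement in the Bousfield-localized model). This is the step I expect to be the main obstacle, and I would handle it as follows: the localization map $M\to L_S M$ is, up to retracts and transfinite composition, built out of the maps in $S$ (more precisely, $L_S M$ is obtained from $M$ by the small object argument relative to the set of horns on $S$ together with the generating trivial cofibrations). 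Since $X$ is compact in $\bbD(e)$ — equivalently, by Remark~\ref{rk:enrichcomp}, $\bbR\Hom_{\bbD}(X,-)$ commutes with arbitrary homotopy colimits — and since $\bbR\Hom_{\bbD}(X,-)$ sends each element of $S$ to an isomorphism and trivially inverts the generating trivial cofibrations, it sends every such transfinite composition of pushouts to an isomorphism; stability of isomorphisms under retracts then gives that $\bbR\Hom_{\bbD}(X,M\to L_S M)$ is an isomorphism. (Here one uses that $\bbR\Hom_{\bbD}(X,-)$ sends homotopy pushouts to homotopy pushouts of spectra, which it does as it preserves finite homotopy colimits, and hence sends a pushout of an $S$-equivalence to an $S$-equivalence.)

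Granting this, the two conclusions follow formally. For the isomorphism of spectra, compute
$$\bbR\Hom_{\Loc_S\bbD}(\gamma_!(X),\gamma_!(M))\simeq\bbR\Hom_{\Loc_S\bbD}(\gamma_!(X),\gamma_!(L_S M))\simeq\bbR\Hom_{\bbD}(X,\gamma^{\ast}\gamma_!(L_S M))\simeq\bbR\Hom_{\bbD}(X,L_S M)\simeq\bbR\Hom_{\bbD}(X,M),$$
where the first isomorphism uses $\gamma_!(M)\simeq\gamma_!(L_S M)$ (as $M\to L_S M$ is an $S$-equivalence and $\gamma_!$ inverts $S$), the second is the adjunction applied to the local object $\gamma_!(L_S M)$, the third uses $\gamma^{\ast}\gamma_!\simeq\id$ on the local object $L_S M$, and the fourth is the claim above. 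For compactness of $\gamma_!(X)$ in $\Loc_S\bbD(e)$: given a family $\{N_i\}_{i\in I}$ in $\Loc_S\bbD(e)$, the homotopy coproduct $\coprod_i N_i$ computed in $\Loc_S\bbD$ is $\gamma_!(\coprod_i \gamma^{\ast}N_i)$, so by the displayed chain of isomorphisms $\bbR\Hom_{\Loc_S\bbD}(\gamma_!(X),\coprod_i N_i)\simeq\bbR\Hom_{\bbD}(X,\coprod_i\gamma^{\ast}N_i)\simeq\coprod_i\bbR\Hom_{\bbD}(X,\gamma^{\ast}N_i)\simeq\coprod_i\bbR\Hom_{\Loc_S\bbD}(\gamma_!(X),N_i)$, using compactness of $X$ in $\bbD(e)$ in the middle step; by Remark~\ref{rk:enrichcomp} this is exactly compactness of $\gamma_!(X)$.
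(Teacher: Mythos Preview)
Your proof is correct and follows essentially the same strategy as the paper: both arguments reduce to showing that $\bbR\Hom_{\bbD}(X,-)$ inverts all $S$-local equivalences (equivalently, sends the unit $M\to\gamma^{\ast}\gamma_!(M)$ to an isomorphism), and then deduce both conclusions formally from the adjunction $(\gamma_!,\gamma^{\ast})$.

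The one genuine difference is in how this key step is established. You argue model-categorically, building $L_SM$ from $M$ via the small object argument and tracking that $\bbR\Hom_{\bbD}(X,-)$ inverts each attaching map (pushouts of horns on $S$ and of the old generating trivial cofibrations), using compactness to pass through the transfinite composition. The paper instead invokes \cite[Prop.~4.1]{Additive}, which characterizes the class $\mathsf{W}$ of $S$-local equivalences abstractly as the smallest class containing $S$, satisfying two-out-of-three, and closed under homotopy colimits; since $\bbR\Hom_{\bbD}(X,-)$ preserves homotopy colimits (by compactness) and inverts $S$, it inverts all of $\mathsf{W}$, in particular the unit maps. The paper's route stays at the derivator level and avoids unpacking the small object argument; your route is more concrete but requires a little care to see that the horn maps on $S$ are inverted (which follows, as you note, from stability: their cofibers are suspensions of cofibers of maps in $S$). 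Either way the compactness argument for $\gamma_!(X)$ is the same.
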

\begin{proof}
Let us start by showing the above natural isomorphism in the homotopy category of spectra. Thanks to \cite[Prop.\,4.1]{Additive}, $\Loc_S\bbD(e)$ is the localization of $\bbD(e)$ with respect to the smallest class $\mathsf{W}$ of maps in $\bbD(e)$, which contains the class $S$, has the two-out-of-three property, and which is closed under homotopy colimits. Since $X$ is compact and the functor $\bbR\Hom_{\bbD}(X,-)$ sends the elements of $S$ to isomorphisms, we conclude that it also sends all the elements of $\mathsf{W}$ to isomorphisms. Finally, since for every object $M \in \bbD(e)$, the co-unit adjunction morphism $M \to \gamma^{\ast}\gamma_!(M)$ belongs to $\mathsf{W}$, we obtain the following natural isomorphism in the homotopy category of spectra\,:
$$
 \bbR\Hom_{\Loc_S\bbD}(\gamma_!(X), \gamma_!(M)) \simeq  \bbR\Hom_{\bbD}(X, \gamma^{\ast}\gamma_!(M))
 \simeq   \bbR\Hom_{\bbD}(X, M) \,.$$
We now show that the object $\gamma_!(X)$ is compact (\ref{def:compact}) in the triangulated category $\Loc_S\bbD(e)$. Notice that for every object $N \in \Loc_S\bbD(e)$, the unit adjunction morphism $\gamma_! \gamma^{\ast}(N) \stackrel{\sim}{\to} N$ is an isomorphism. Let $(N_i)_{i \geq 0}$ be a family of objects in $\Loc_S\bbD(e)$. We have the following equivalences
$$
\begin{array}{rcl}
\Hom_{\Loc_S \bbD(e)}(\gamma_!(X), \underset{i}{\oplus} N_i)  & \simeq & \Hom_{\Loc_S\bbD(e)}(\gamma_!(X), \underset{i}{\oplus}\, \gamma_!\gamma^{\ast}(N_i)) \\
& \simeq & \Hom_{\Loc_S\bbD(e)}(\gamma_!(X),  \gamma_!(\underset{i}{\oplus}\,\gamma^{\ast}(N_i)))\\
& \simeq & \Hom_{\bbD(e)}(X, \underset{i}{\oplus}\,\gamma^{\ast}(N_i))\\
& \simeq & \underset{i}{\oplus}\, \Hom_{\bbD(e)}(X, \gamma^{\ast}(N_i))\\
& \simeq & \underset{i}{\oplus}\, \Hom_{\Loc_S\bbD(e)}(\gamma_!(X), \gamma_!\gamma^{\ast}(N_i))\\
& \simeq & \underset{i}{\oplus}\, \Hom_{\Loc_S\bbD(e)}(\gamma_!(X), N_i)\\
\end{array}
$$
and so the proof is finished.
\end{proof}

\begin{theorem}[Waldhausen Localization Theorem]\label{thm:Kwloc}
The object $\Ulocvw(\underline{k})$ is compact (\ref{def:compact}) in the triangulated category $\Mlocvw(e)$ and for every dg category $\cA$, we have a natural isomorphism in the stable homotopy category of spectra (see Notation~\ref{not:Ktri})
$$ \bbR\Hom(\,\Ulocvw(\underline{k}),\, \Ulocvw(\cA)\,) \simeq \Kw(\cA)\,.$$
\end{theorem}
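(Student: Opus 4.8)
The strategy is to combine the previously established additive co-representability result (Theorem~\ref{thm:KTradd}) with Proposition~\ref{prop:genarg}, using Waldhausen's fibration theorem as the key input that makes the hypothesis of Proposition~\ref{prop:genarg} hold. First I would observe that $\Ulocvw = \gamma_! \circ \Uaddw$, where $\gamma_!$ is the left Bousfield localization of $\Maddw$ at the set of maps $\Theta_\epsilon$ from \eqref{eq:default}. By Proposition~\ref{prop:GenAdd} (see also Remark~\ref{rk:enrichcomp}), the object $X := \Uaddw(\underline{k})$ is compact in $\Maddw(e)$, since $\underline{k}$ is an $\alpha$-small dg cell. So the setup of Proposition~\ref{prop:genarg} applies with $\bbD = \Maddw$, with $S = \{\Theta_\epsilon\}_{\epsilon \in \cE}$, and with this $X$; what remains is exactly to check that $\bbR\Hom(\Uaddw(\underline{k}), -)$ sends each $\Theta_\epsilon$ to an isomorphism of spectra. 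Granting that, Proposition~\ref{prop:genarg} immediately yields that $\Ulocvw(\underline{k}) = \gamma_!(X)$ is compact in $\Mlocvw(e)$ and that $\bbR\Hom(\Ulocvw(\underline{k}), \Ulocvw(\cA)) \simeq \bbR\Hom(\Uaddw(\underline{k}), \Uaddw(\cA))$, which by Theorem~\ref{thm:KTradd} (applied with $\cB = \underline{k}$, noting $\rep_{tr}(\underline{k}, \cA) \simeq \cA_f$ is a pre-triangulated resolution of $\cA$, so $\Kw(\rep_{tr}(\underline{k},\cA)) \simeq \Kw(\cA)$) is $\Kw(\cA)$.

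**Checking the hypothesis — the main obstacle.** The heart of the proof is showing that for every $\epsilon\colon \cA\to\cB$ in $\cE$, the map
$$\bbR\Hom(\Uaddw(\underline{k}),\, \mathsf{cone}[\Uaddw(\epsilon)]) \too \bbR\Hom(\Uaddw(\underline{k}),\, \Uaddw(\cB/\cA))$$
is an equivalence. Using Theorem~\ref{thm:KTradd} again and the fact that $\bbR\Hom(\Uaddw(\underline k),-)$ is exact (it commutes with cones, being a spectrum-valued Hom), this translates into showing that the sequence of $\Kw$-spectra
$$\Kw(\rep_{tr}(\underline{k},\cA)) \too \Kw(\rep_{tr}(\underline{k},\cB)) \too \Kw(\rep_{tr}(\underline{k},\cB/\cA))$$
is a homotopy fibration sequence; equivalently, identifying $\rep_{tr}(\underline{k},-)$ with pre-triangulated replacement, that $\Kw(\cA)\to\Kw(\cB)\to\Kw(\cB/\cA)$ is a fibration sequence for any thick inclusion $\cA\subset\cB$ of pre-triangulated dg categories. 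This is precisely where Waldhausen's fibration theorem enters: one presents $\Kw(-)$ via the Waldhausen category $\tri^{\cW}(-)$ of Notation~\ref{not:Ktri}, checks that $\tri^{\cW}(\cA)\hookrightarrow \tri^{\cW}(\cB)$ realizes $\tri^{\cW}(\cA)$ as the subcategory of objects mapping to acyclics in the Drinfeld quotient $\cB/\cA$, verifies the cylinder/saturation/extension axioms, and identifies the quotient Waldhausen category (in the sense of Waldhausen's $w$-localization) with $\tri^{\cW}(\cB/\cA)$ up to $K$-theory equivalence — the latter via an approximation-theorem argument comparing the Drinfeld quotient with the Verdier quotient on homotopy categories. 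The genuinely delicate point is that the Drinfeld dg quotient $\cB/\cA$ need not be pre-triangulated or idempotent-complete, so one works with $\Kw$ (the pre-triangulated, non-idempotent-complete $K$-theory) rather than $K$, and must be careful that no idempotent-completion correction term appears; this is exactly the reason the authors set up the whole $\alpha$-relative, quasi-equiconic framework.

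**Wrapping up.** Once the fibration sequence of $\Kw$-spectra is in hand, the formal machinery does the rest: Proposition~\ref{prop:genarg} gives both the compactness of $\Ulocvw(\underline{k})$ in $\Mlocvw(e)$ and the chain of natural isomorphisms
$$\bbR\Hom(\Ulocvw(\underline{k}),\Ulocvw(\cA)) \simeq \bbR\Hom(\Uaddw(\underline{k}),\Uaddw(\cA)) \simeq \Kw(\rep_{tr}(\underline{k},\cA)) \simeq \Kw(\cA)\,,$$
all natural in $\cA$. I would take care to spell out naturality — it follows because all the comparison maps ($\gamma_!$, the unit $M\to\gamma^\ast\gamma_!M$, and the isomorphism of Theorem~\ref{thm:KTradd}) are natural — and to note that the case of a general thick inclusion reduces, via Proposition~\ref{prop:approx} and the fact that $\Uaddw$ and $\Kw$ commute with $\alpha$-filtered homotopy colimits, to inclusions already belonging to the set $\cE$, so it suffices to verify the fibration-sequence hypothesis for $\epsilon\in\cE$. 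This last reduction is what makes the set $\cE$ (rather than the proper class of all thick inclusions) sufficient for defining the localization, and it is implicitly what Theorem~\ref{thm:veryweakloc} already exploited.
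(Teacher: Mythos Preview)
Your proposal is correct and follows essentially the same approach as the paper's proof: both apply Proposition~\ref{prop:genarg} with $X=\Uaddw(\underline{k})$ and $S=\{\Theta_\epsilon\}_{\epsilon\in\cE}$, invoke Proposition~\ref{prop:GenAdd} for compactness, translate the hypothesis via Theorem~\ref{thm:KTradd} into a statement about $\Kw$-spectra, and then use Waldhausen's fibration theorem (applied to the inclusion $\tri^{\cW}(\cB)\subset v\tri^{\cW}(\cB)$, where the latter has weak equivalences the maps with cone in $\tri^{\cW}(\cA)$) to obtain the needed fiber sequence. Your additional remarks on naturality and on the identification $\Kw(\rep_{tr}(\underline{k},\cA))\simeq\Kw(\cA)$ are correct elaborations the paper leaves implicit; the reduction via Proposition~\ref{prop:approx} you mention at the end is not actually needed here, since the localizing set $S$ is already indexed by $\cE$.
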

\begin{proof}
The proof consists on verifying the conditions of Proposition~\ref{prop:genarg}, with $\bbD=\Maddw$, $S=\{ \Theta_{\epsilon}\,|\, \epsilon \in \cE\}$ (see~\ref{eq:default}), $X= \Uaddw(\underline{k})$ and $M=\Uaddw(\cA)$. Since by Proposition~\ref{prop:GenAdd}, $\Uaddw(\underline{k})$ is compact in $\Maddw(e)$, it is enough to show that the functor
$$ \bbR\Hom(\Uaddw(\underline{k}), -): \Maddw(e) \too \Ho(\Spt)$$
sends the elements of $S$ to isomorphisms.  For this, consider the following diagram
$$
\xymatrix{
\Uaddw(\cA) \ar@{=}[d] \ar[rr]^{\Uaddw(\epsilon)} &&  \Uaddw(\cB) \ar@{=}[d] \ar[rr] && \mathsf{cone}(\Uaddw(\epsilon)) \ar[d]^{\Theta_{\epsilon}} \\
\Uaddw(\cA) \ar[rr]_{\Uaddw(\epsilon)} && \Uaddw(\cB) \ar[rr] && \Uaddw(\cB/\cA)\,.
}
$$
By virtue of Theorem~\ref{thm:KTradd} and Proposition~\ref{prop:GenAdd}, the functor $\bbR\Hom(\Uaddw(\underline{k}),-)$, applied to the above diagram,
gives the following one
$$
\xymatrix{
\Kw(\cA) \ar@{=}[d] \ar[rr]^{\Kw(\epsilon)} && \Kw(\cB) \ar@{=}[d]  \ar[rr] && \mathsf{cone}(\Kw(\epsilon)) \ar[d] \\
\Kw(\cA) \ar[rr]_{\Kw(\epsilon)} && \Kw(\cB) \ar[rr] && \Kw(\cB/\cA)\,,
}
$$
where the upper line is a homotopy cofiber sequence of spectra. Now, consider the following sequence of Waldhausen categories (see Notation~\ref{not:Ktri})
$$ \tri^{\cW}(\cA) \too \tri^{\cW}(\cB) \too \tri^{\cW}(\cB/\cA)\,.$$
Let us denote by $v\tri^{\cW}(\cB)$ the Waldhausen category with the same cofibrations as $\tri^{\cW}(\cB)$, but whose weak equivalences are the maps with $\mathsf{cone}$ in $\tri^{\cW}(\cA)$. Since $\tri(\cA)$ is thick in $\tri(\cB)$, we conclude by Waldhausen's fibration Theorem~\cite[Thm.\, 1.6.4]{Wald} (applied to the inclusion $\tri^{\cW}(\cB) \subset v\tri^{\cW}(\cB)$) that the lower line of the above diagram is a homotopy fiber sequence of spectra. Since the (homotopy) category of spectra is stable, we conclude that the right vertical map is a weak equivalence of spectra. Finally, since by Theorem~\ref{thm:KTradd}, we have a natural isomorphism in the stable homotopy category of spectra
$$ \bbR\Hom(\,\Uaddw(\underline{k}),\, \Uaddw(\cA)\,) \simeq \Kw(\cA)\,,$$
the proof is finished.
\end{proof}
\subsection{Morita invariance}
\begin{notation}
Let $\cS$ be the set of morphisms in $\Hec$ of the form $ \cB \to \cB^{\w}$ (see Notation~\ref{not:hat}), with $\cB$ a $\alpha$-small dg cell (\ref{def:dgcell}).
\end{notation}
\begin{proposition}\label{prop:aprox}
For any dg category $\cA$, there exists an $\alpha$-filtered direct system $\{ \cB_j \to \cB_j^{\w} \}_{j \in J}$ of elements of $\cS$, such that 
$$\underset{j \in J}{\hocolim}\,\{ \cB_j \too \cB_j^{\w} \} \stackrel{\sim}{\too} (\cA \too \cA^{\w})\,.$$
\end{proposition}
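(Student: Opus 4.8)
The plan is to construct the required direct system by applying the endofunctor $(-)^{\w}$, together with its structural $2$-morphism, to a presentation of $\cA$ as an $\alpha$-filtered homotopy colimit of $\alpha$-small dg cells. First I would invoke Proposition~\ref{prop:general}(iii) for the quasi-equiconic model structure on $\dgcat$, which satisfies the hypotheses of that proposition (as already observed in the proof of Theorem~\ref{thm:Uadd}). This yields an $\alpha$-filtered category $J$ and a diagram $J\to\dgcat$, $j\mapsto\cB_j$, whose values are $\alpha$-small dg cells, together with an isomorphism $\underset{j\in J}{\hocolim}\,\cB_j\stackrel{\sim}{\too}\cA$ in $\Hec$ (by Proposition~\ref{prop:general}(ii) this homotopy colimit even agrees with the ordinary colimit, but only the homotopy colimit is needed).

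Next I would apply the morphism of derivators $(-)^{\w}$ of Notation~\ref{not:hat}. Since it is induced by an honest endofunctor of $\dgcat$ preserving quasi-equiconic dg functors, it acts termwise, carrying $\{\cB_j\}_{j\in J}$ to $\{\cB_j^{\w}\}_{j\in J}$; likewise the $2$-morphism $\mbox{Id}\Rightarrow(-)^{\w}$, being a natural transformation of endofunctors, produces from $\{\cB_j\}_{j\in J}$ an $\alpha$-filtered direct system of arrows $\{\cB_j\to\cB_j^{\w}\}_{j\in J}$. As each $\cB_j$ is an $\alpha$-small dg cell, every such arrow is an element of $\cS$, so the family $\{\cB_j\to\cB_j^{\w}\}_{j\in J}$ has the required shape.

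It remains to identify its homotopy colimit. The ``source'' and ``target'' functors from the homotopy category of arrows of $\dgcat$ to $\Hec$ are restriction functors along the two maps $e\to[1]$, and in a derivator such restriction functors admit right adjoints, hence commute with homotopy colimits; therefore $\underset{j\in J}{\hocolim}\,\{\cB_j\to\cB_j^{\w}\}$ is the arrow $\underset{j\in J}{\hocolim}\,\cB_j\to\underset{j\in J}{\hocolim}\,\cB_j^{\w}$. By the first step its source is $\cA$, and by Proposition~\ref{prop:hat-properties}(i) the functor $(-)^{\w}$ commutes with $\alpha$-filtered homotopy colimits, so its target is $(\underset{j\in J}{\hocolim}\,\cB_j)^{\w}\simeq\cA^{\w}$. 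Naturality of the $2$-morphism $\mbox{Id}\Rightarrow(-)^{\w}$ makes these identifications compatible with the arrows, yielding the isomorphism $\underset{j\in J}{\hocolim}\,\{\cB_j\to\cB_j^{\w}\}\stackrel{\sim}{\too}(\cA\to\cA^{\w})$.

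I expect the only delicate point to be this last step: checking that the various weak equivalences assemble into a single isomorphism \emph{in the homotopy category of arrows}, rather than merely at the two endpoints. This is handled by the naturality of the structural $2$-morphism together with the fact that evaluation at the two vertices of $[1]$ preserves homotopy colimits; everything else is a direct appeal to Propositions~\ref{prop:general} and~\ref{prop:hat-properties}.
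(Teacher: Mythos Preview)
Your proof is correct and follows essentially the same approach as the paper's own proof, which simply invokes Proposition~\ref{prop:general}(iii) to write $\cA$ as an $\alpha$-filtered homotopy colimit of $\alpha$-small dg cells and then appeals to Proposition~\ref{prop:hat-properties}(i). Your version is more explicit about why the homotopy colimit of the arrows is computed componentwise and why the identifications at source and target assemble into an isomorphism of arrows, but these are elaborations of the same argument rather than a different route.
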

\begin{proof}
Thanks to~\ref{prop:general}\,(iii), there exits an $\alpha$-filtered direct system $\{\cB_j \}_{j \in J}$ of $\alpha$-small dg cells such that
$$ \underset{j \in J}{\hocolim}\, \cB_j \stackrel{\sim}{\too} \cA \,.$$
Since by Proposition~\ref{prop:hat-properties} (i), the functor $(-)^{\w}$ preserves $\alpha$-filtered homotopy colimits, the proof is finished.
\end{proof}
Since the derivator $\Mlocvw$ (see Theorem~\ref{thm:veryweakloc}) admits a (left proper and cellular) Quillen model, we can localize it with respect to the image of the set $\cS$ under $\Ulocvw$. We obtain then a new derivator $\Mlocw$ and an adjunction
$$
\xymatrix{
\Mlocvw \ar@<-1ex>[d]_{l_!} \\
\Mlocw \ar@<-1ex>[u]_{l^{\ast}}\,.
}
$$
\begin{proposition}\label{prop:MorInv}
The composition
$$\Ulocw: \Trdgcat \stackrel{\Ulocvw}{\too} \Mlocvw \stackrel{l_!}{\too} \Mlocw$$
sends the derived Morita equivalences (\ref{def:Morita}) to isomorphisms.
\end{proposition}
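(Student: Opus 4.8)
The strategy is to reduce an arbitrary derived Morita equivalence to two classes of maps that $\Ulocw$ visibly inverts: the quasi-equiconic equivalences (which $\Uaddw$, hence $\Ulocw$, inverts because they are already isomorphisms in $\Hec=\Trdgcat(e)$) and the idempotent-completion maps $\cA\to\cA^\w$.

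First I would observe that $\Ulocw$ commutes with $\alpha$-filtered homotopy colimits: by Theorem~\ref{thm:veryweakloc} the morphism $\Ulocvw$ does, and $l_!$, being the left adjoint part of a left Bousfield localization (see Definition~\ref{def:Cis}), preserves all homotopy colimits; hence so does $\Ulocw=l_!\circ\Ulocvw$. Next, by construction $\Mlocw$ is obtained from $\Mlocvw$ by localizing along $\Ulocvw(\cS)$, so $l_!$ sends every element of $\Ulocvw(\cS)$ to an isomorphism; equivalently, $\Ulocw(\cB\to\cB^\w)$ is invertible for every $\alpha$-small dg cell $\cB$. Feeding Proposition~\ref{prop:aprox} — which writes the arrow $\cA\to\cA^\w$, for an arbitrary dg category $\cA$, as an $\alpha$-filtered homotopy colimit of arrows of $\cS$ — into the previous two observations, I conclude that
$$\Ulocw(\cA)\too\Ulocw(\cA^\w)$$
is an isomorphism for \emph{every} dg category $\cA$.

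Now let $F:\cA\to\cB$ be a derived Morita equivalence. The naturality $2$-morphism $\mathrm{Id}\Rightarrow(-)^\w$ of Notation~\ref{not:hat} produces a commutative square in $\Hec$ whose horizontal maps are $F$ and $F^\w:\cA^\w\to\cB^\w$ and whose vertical maps are the completion maps $\cA\to\cA^\w$ and $\cB\to\cB^\w$. Applying $\Ulocw$ and using the previous paragraph (the two vertical maps become isomorphisms), it suffices to prove that $\Ulocw(F^\w)$ is an isomorphism. But $F^\w$ is a \emph{quasi-equiconic} equivalence: since $\cA^\w$ is Morita fibrant it is in particular pre-triangulated, so Propositions~\ref{prop:fibquasiec} and \ref{prop:fibMorita} give natural identifications $\tri(\cA^\w)\simeq\dgHo(\cA^\w)\simeq\perf(\cA^\w)\simeq\perf(\cA)$ (the last one because $\cA\to\cA^\w$ is a derived Morita equivalence), and under these the assumption that $\bbL F_!:\perf(\cA)\to\perf(\cB)$ is an equivalence becomes the statement that $\bbL (F^\w)_!:\tri(\cA^\w)\to\tri(\cB^\w)$ is an equivalence. (Alternatively, $(-)^\w$ is a fibrant replacement for the Morita model structure, so this is the usual detection of weak equivalences of a left Bousfield localization by fibrant replacement, cf.~\cite{Hirschhorn}.) Since $\Ulocw=l_!\circ\gamma_!\circ\Uaddw$ and each of these morphisms of derivators preserves isomorphisms while $\Uaddw$ inverts quasi-equiconic dg functors, $\Ulocw(F^\w)$ — and therefore $\Ulocw(F)$ — is an isomorphism.

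The only point I expect to demand some care is the naturality of the identification $\tri(\cA^\w)\simeq\perf(\cA)$ (equivalently, that derived Morita equivalences become quasi-equiconic equivalences after applying $(-)^\w$), together with the bookkeeping needed to transport the $\alpha$-filtered colimit presentation of Proposition~\ref{prop:aprox} through $\Ulocw$; the rest is a formal consequence of the universal properties of $\Uaddw$, $\gamma_!$ and $l_!$ already at our disposal.
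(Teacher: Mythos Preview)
Your proof is correct and follows essentially the same route as the paper's: first show that $\Ulocw$ inverts all completion maps $\cA\to\cA^\w$ via Proposition~\ref{prop:aprox} and $\alpha$-filtered colimit preservation, then reduce an arbitrary derived Morita equivalence $F$ to $F^\w$ via the naturality square. The paper phrases the second step slightly more abstractly---using Propositions~\ref{prop:Bousfield1} and \ref{prop:Bousfield2} to say directly that $F^\w$ is a quasi-equivalence (weak equivalences between fibrant objects in a left Bousfield localization are underlying weak equivalences)---whereas you verify by hand that $F^\w$ is quasi-equiconic via $\tri(\cA^\w)\simeq\perf(\cA)$; but this is exactly the alternative you mention, and either version suffices.
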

\begin{proof}
Observe first that $\Ulocw$ sends the maps of shape $\cA\to\cA^{\w}$
to isomorphisms. This follows from Proposition~\ref{prop:aprox} and from the fact that $\Ulocvw$ commutes with $\alpha$-filtered homotopy colimits. Thanks to Propositions~\ref{prop:Bousfield1} and \ref{prop:Bousfield2}, a dg functor $F:\cA \to \cB$ is a derived Morita equivalence if and only if $F^{\w}: \cA^{\w} \to \cB^{\w}$ is a quasi-equivalence (\ref{def:quasieq}). The proof is then finished.
\end{proof}
Thanks to Proposition~\ref{prop:MorInv}, the morphism $\Ulocw$ descends to the derivator $\HO(\dgcat)$ associated with the Morita model structure on $\dgcat$; see Theorem~\ref{thm:Morita}.
\begin{theorem}\label{thm:weakloc}
The morphism
$$ \Ulocw: \HO(\dgcat) \too \Mlocw$$
\begin{itemize}
\item[$\alpha$-flt)] commutes with $\alpha$-filtered homotopy colimits;
\item[$\underline{\mbox{p}}$)] preserves the terminal object and
\item[loc)] sends the exact sequences in $\Hmo$ (\ref{def:ses}) to distinguished triangles in $\Mlocw$.
\end{itemize}
Moreover $\Ulocw$ is universal with respect to these properties, $\ie$ for every strong triangulated derivator $\bbD$, we have an equivalence of categories
$$ (\Ulocw)^{\ast}: \HomC(\Mlocw, \bbD) \stackrel{\sim}{\too} \HomLw(\HO(\dgcat), \bbD)\,,$$
where the right-hand side consists of the full subcategory of $\uHom(\HO(\dgcat), \bbD)$ of morphisms of derivators which verify the above three conditions.
\end{theorem}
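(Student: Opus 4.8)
The plan is to deduce the theorem formally from the universal properties established in Sections~\ref{chap:additive} and~\ref{chap:localizing}, the only substantial new ingredient being a translation between exact sequences in $\Hmo$ and strict exact sequences. For the three properties: since $l_!$ preserves homotopy colimits, the composite $\Trdgcat\stackrel{\Ulocvw}{\too}\Mlocvw\stackrel{l_!}{\too}\Mlocw$ still commutes with $\alpha$-filtered homotopy colimits and preserves the terminal object (Theorem~\ref{thm:veryweakloc}), and still sends strict exact sequences to distinguished triangles; as it inverts derived Morita equivalences (Proposition~\ref{prop:MorInv}) it descends to $\HO(\dgcat)$, and $\alpha$-flt) and $\underline{\mathrm{p}}$) pass to the descended morphism because the localization morphism $\Trdgcat\to\HO(\dgcat)$ preserves homotopy colimits. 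For loc) I would argue that any exact sequence $\cA\to\cB\to\cC$ in $\Hmo$ is, after replacing the three dg categories by Morita fibrant models, the composite of a strict exact sequence $\cA'\to\cB\to\cB/\cA'$ with a derived Morita equivalence $\cB/\cA'\to\cC$: take $\cA'$ to be the full dg subcategory of $\cB$ on the objects lying in the essential image of $\perf(\cA)\to\perf(\cB)$ --- this is thick precisely because $\perf(\cA)$ is idempotent complete --- and then the Drinfeld quotient $\cB/\cA'$ computes the Verdier quotient $\perf(\cB)/\perf(\cA')$ up to idempotent completion, so that $\cB/\cA'\to\cC$ becomes an equivalence after applying $(-)^{\w}$. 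Since $\Ulocw$ sends strict exact sequences to triangles and inverts derived Morita equivalences, loc) follows.

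For universality, I would chain three equivalences: $(\Uaddw)^{\ast}$ from Theorem~\ref{thm:Uadd}; the universal property (see~\ref{def:Cis}) of the left Bousfield localization $\gamma_!$, which inverts the maps $\Theta_\epsilon$ of~(\ref{eq:default}); and the universal property of the left Bousfield localization $l_!$, which inverts the image under $\Ulocvw$ of the set $\cS$. The outcome is that $(\Ulocw)^{\ast}$ identifies $\HomC(\Mlocw,\bbD)$ with the full subcategory of $\uHom(\Trdgcat,\bbD)$ consisting of the morphisms that commute with $\alpha$-filtered homotopy colimits, preserve the terminal object, send strict exact sequences to distinguished triangles, and invert every element of $\cS$. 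It then remains to identify this category with $\HomLw(\HO(\dgcat),\bbD)$. First, a morphism $E\colon\Trdgcat\to\bbD$ commuting with $\alpha$-filtered homotopy colimits inverts every element of $\cS$ if and only if it inverts every derived Morita equivalence: by Proposition~\ref{prop:aprox} such an $E$ then inverts every map $\cA\to\cA^{\w}$, and since a dg functor $F$ is a derived Morita equivalence exactly when $F^{\w}$ is a quasi-equivalence (Propositions~\ref{prop:Bousfield1} and~\ref{prop:Bousfield2}) --- and quasi-equivalences, being quasi-equiconic, are automatically inverted by $E$ --- the two-out-of-three property applied to the square relating $F$ and $F^{\w}$ finishes this point. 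By the universal property of the left Bousfield localization $\Trdgcat\to\HO(\dgcat)$ (Proposition~\ref{prop:Bousfield2}) such morphisms are exactly those induced from morphisms $\HO(\dgcat)\to\bbD$. Finally, under the standing hypotheses $\alpha$-flt) and $\underline{\mathrm{p}}$), ``sends strict exact sequences to distinguished triangles'' is equivalent to condition loc): a strict exact sequence is in particular an exact sequence in $\Hmo$ (again because the Drinfeld quotient computes the Verdier quotient of perfect complexes up to idempotent completion), giving one direction, and the decomposition of the previous paragraph gives the converse. This yields the desired equivalence $(\Ulocw)^{\ast}\colon\HomC(\Mlocw,\bbD)\stackrel{\sim}{\too}\HomLw(\HO(\dgcat),\bbD)$.

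The step I expect to be the main obstacle is the reduction, used both in the verification of loc) and in the last identification, of an arbitrary exact sequence in $\Hmo$ to a strict exact sequence up to a derived Morita equivalence: this forces one to keep careful track of idempotent completions --- precisely the phenomenon governed by negative $K$-theory --- and to check that the comparatively small set $\cS$ of derived Morita equivalences between $\alpha$-small dg cells really generates, under $\alpha$-filtered homotopy colimits, all derived Morita equivalences. By contrast, the two-categorical bookkeeping with the successive left Bousfield localizations in the universality part should be routine once the derivator-level localizations of Sections~\ref{chap:additive}--\ref{chap:localizing} are granted.
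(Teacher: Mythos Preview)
Your proposal is correct and follows essentially the same route as the paper. The only noteworthy difference is in the reduction step for loc): the paper invokes Keller's result \cite[Thm.\,4.11]{ICM} to assume the exact sequence already has the form $\cA\to\cB\to\cB/\cA$ and then applies the functor $(-)^{\w}$ to the whole sequence to obtain a strict exact sequence $\cA^{\w}\to\cB^{\w}\to\cB^{\w}/\cA^{\w}$ linked to the original one by derived Morita equivalences, whereas you carry out this reduction by hand via the thick hull $\cA'$ inside a Morita fibrant $\cB$; the two arguments are equivalent. For universality the paper simply declares it ``a clear consequence of Theorem~\ref{thm:veryweakloc} and Proposition~\ref{prop:MorInv}'', so your unrolling through the chain of Bousfield localizations and the explicit identification of the subcategories is more detailed than, but consistent with, what the paper intends.
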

\begin{notation}\label{not:alphaLoc}
The objects of the category $\HomLw(\HO(\dgcat), \bbD)$ will be called {\em $\alpha$-localizing invariants} and $\Ulocw$ the {\em universal $\alpha$-localizing invariant}.
\end{notation}
\begin{proof}
Since $l_!$ preserves homotopy colimits, the morphism $\Ulocw$ satisfies conditions $\alpha$-flt) and $\underline{\mbox{p}}$). In what concerns condition loc), we can suppose by \cite[Thm.\,4.11]{ICM} that we have a exact sequence in $\Hmo$ of the form
$$ \cA \stackrel{I}{\too} \cB \stackrel{P}{\too} \cB/\cA \,.$$
Consider the following diagram
$$
\xymatrix{
\cA \ar[d] \ar[r]^I & \cB \ar[d] \ar[r] & \cB/\cA \ar[d] \\
\cA^{\w} \ar[r]_{I^{\w}} & \cB^{\w} \ar[r] &\cB^{\w}/\cA^{\w}\,,
}
$$
where the right vertical map is the induced one. Since the first two vertical maps are derived Morita equivalences, the right vertical one is also a derived Morita equivalence. Furthermore, since the lower line is a
strict exact sequence (\ref{def:Wald}), Proposition~\ref{prop:MorInv} and Theorem~\ref{thm:veryweakloc} imply that $\Ulocw$ satisfies condition loc). The universality of $\Ulocw$ is now a clear consequence of Theorem~\ref{thm:veryweakloc} and Proposition~\ref{prop:MorInv}.
\end{proof}
\section{Schlichting's set-up}\label{chap:set-up}
In this Section we will built a set-up (see~\cite[\S 2.2]{Marco}) in $\Hec$, that will be used in Section~\ref{chap:negK} to construct the non-connective $K$-theory spectrum. In Proposition~\ref{prop:comparison}, we will relate it with Schlichting's set-up on Frobenius pairs.
\subsection{Set-up}
We start by constructing a infinite sums completion functor. Recall from \cite[\S 6]{Sums}, the construction of the following Quillen adjunctions (where we have taken $\alpha=\aleph_1$)
$$
\xymatrix{
\dgcat_{ex,\aleph_1} \ar@<1ex>[d]^{U_1} \\
T_{\aleph_1}\mbox{-}\mathsf{alg} \ar@<1ex>[d]^U \ar@<1ex>[u]^{F_1} \\
\dgcat \ar@<1ex>[u]^F\,,
}
$$ 
where $\dgcat$ is endowed with the model structure of Theorem~\ref{thm:quasieq}. Consider the composed functor
\begin{equation}\label{eq:composition}
\dgcat \stackrel{(-)^{\pretr}}{\too} \dgcat \stackrel{F_1 \circ F}{\too} \dgcat_{ex,\aleph_1} \stackrel{U \circ U_1}{\too} \dgcat\,,
\end{equation}
where $(-)^{\pretr}$ denotes the Bondal-Kapranov's pre-triangulated envelope construction; see~\cite{BK} for details.
\begin{lemma}\label{lem:Infsums}
The above composed functor (\ref{eq:composition}) sends quasi-equiconic dg functors (\ref{def:quasiec}) between cofibrant dg categories to quasi-equivalences (\ref{def:quasieq}).
\end{lemma}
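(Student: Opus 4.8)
The plan is to analyze the composite (\ref{eq:composition}) factor by factor, tracking how it acts on quasi-equiconic dg functors between cofibrant dg categories. First I would recall that the Bondal--Kapranov pre-triangulated envelope $(-)^{\pretr}$ is a model for the fibrant replacement in the quasi-equiconic model structure (Proposition~\ref{prop:fibquasiec}): for any dg category $\cC$ there is a quasi-equiconic dg functor $\cC\to\cC^{\pretr}$ with $\cC^{\pretr}$ pre-triangulated, and this construction is functorial. Hence if $F:\cA\to\cB$ is quasi-equiconic, then so is $F^{\pretr}:\cA^{\pretr}\to\cB^{\pretr}$, and moreover $F^{\pretr}$ is a map between \emph{pre-triangulated} dg categories which is an equivalence after applying $\dgHo$, i.e.\ a quasi-equivalence by Proposition~\ref{prop:fibquasiec} together with the characterization in Definition~\ref{def:quasieq}. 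So the first step reduces the problem to showing that $U\circ U_1\circ F_1\circ F$ sends quasi-equivalences between pre-triangulated (and, after the first step, still cofibrant --- here one uses that cofibrations are the same in all three model structures, Theorems~\ref{thm:quasieq}, \ref{thm:quasiec}) dg categories to quasi-equivalences.

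Next I would invoke the Quillen adjunctions displayed just above the Lemma. Since $F$ and $F_1$ are left Quillen functors (for the quasi-equivalence model structure on $\dgcat$ and the corresponding structures on $T_{\aleph_1}\mbox{-}\mathsf{alg}$ and $\dgcat_{ex,\aleph_1}$ from \cite{Sums}), they preserve weak equivalences between cofibrant objects by Ken Brown's lemma; so $F_1\circ F$ sends a quasi-equivalence between cofibrant dg categories to a weak equivalence in $\dgcat_{ex,\aleph_1}$. Dually $U_1$ and $U$ are right Quillen, hence preserve weak equivalences between fibrant objects, again by Ken Brown. The point that makes this go through cleanly is that everything in sight is already cofibrant \emph{and} the relevant algebras/excellent dg categories coming out of $F_1\circ F$ applied to a cofibrant pre-triangulated dg category are fibrant (or can be assumed so after the standard fibrant-replacement bookkeeping that does not affect the homotopy type), so $U\circ U_1$ preserves the weak equivalence. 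Composing, the image in $\dgcat$ of a quasi-equivalence between cofibrant dg categories is a weak equivalence for the quasi-equivalence model structure, i.e.\ a quasi-equivalence.

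The main obstacle, and the place I would spend the most care, is the compatibility of the various model structures at the interface: the adjunctions from \cite{Sums} are set up with respect to the quasi-equivalence (Morita-type $\aleph_1$) structure, whereas the hypothesis of the Lemma is stated in terms of quasi-\emph{equiconic} dg functors. The resolution is exactly the first step above --- precomposing with $(-)^{\pretr}$ converts the quasi-equiconic input into a quasi-equivalence between pre-triangulated dg categories --- but one must check that cofibrancy is preserved along the way (true, since the three model structures on $\dgcat$ share the same cofibrations, so a cofibrant object stays cofibrant and $(-)^{\pretr}$ of a cofibrant object may be taken cofibrant) and that the Ken Brown arguments apply on the correct side (left Quillen on cofibrant objects for $F_1\circ F$, right Quillen on fibrant objects for $U\circ U_1$). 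Once these bookkeeping points are settled, the conclusion is immediate. I would also remark that this is precisely the statement needed to see that the infinite-sums completion functor descends to a morphism of derivators $\Trdgcat\to\Trdgcat$, which is how it will be used in Section~\ref{chap:negK}.
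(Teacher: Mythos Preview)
Your argument follows essentially the same three-step structure as the paper's proof: use $(-)^{\pretr}$ to convert quasi-equiconic dg functors between cofibrant dg categories into quasi-equivalences between cofibrant dg categories, then push through $F_1\circ F$ via Ken Brown, then through $U\circ U_1$.

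The one place where you make life harder than necessary is the treatment of $U\circ U_1$. You invoke Ken Brown on the right-Quillen side and are then forced into hand-waving about fibrancy (``or can be assumed so after the standard fibrant-replacement bookkeeping''). The paper avoids this entirely: the forgetful functor $U\circ U_1$ preserves \emph{all} weak equivalences, not just those between fibrant objects, because weak equivalences in $T_{\aleph_1}\text{-}\mathsf{alg}$ and $\dgcat_{ex,\aleph_1}$ are created by the underlying-dg-category functor (this is how the model structures in \cite{Sums} are set up). So no fibrancy hypothesis is needed at that stage, and your ``bookkeeping'' paragraph can simply be deleted. With that simplification your proof matches the paper's.
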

\begin{proof}
Note first that by \cite[Lemma\,2.13]{Thesis} $(-)^{\pretr}$ is a fibrant resolution functor with respect to the quasi-equiconic model structure; see Proposition~\ref{prop:fibquasiec}. Therefore, by~\cite[Prop.\,2.14]{Thesis}, $(-)^{\pretr}$ sends quasi-equiconic dg functors (between cofibrant dg categories) to quasi-equivalences (between cofibrant dg categories). Since $F\circ F_1$ is a left Quillen functor, quasi-equivalences between cofibrant dg categories are sent to weak equivalences in $\dgcat_{ex,\aleph_1}$. Finally, the composed functor $U \circ U_1$ maps weak equivalences to quasi-equivalences, and so the proof is finished.
\end{proof}
Thanks to Lemma~\ref{lem:Infsums} and \cite[Prop.\,8.4.8]{Hirschhorn}, the above composed functor (\ref{eq:composition}) admits a total left derived functor (with respect to the quasi-equiconic model structure), denoted by $$\cF(-): \Hec \too \Hec\,.$$
Notice that by construction, we have also a natural transformation $\mbox{Id} \Rightarrow \cF(-)$.
\begin{lemma}\label{lem:F-filt}
The functor $\cF(-)$ gives rise to a morphism of derivators $$\cF(-):\Trdgcat \too \Trdgcat\,,$$ which preserves $\aleph_1$-filtered homotopy colimits.
\end{lemma}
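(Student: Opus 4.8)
The plan is to deduce both assertions from the general machinery already in place. First I would explain why $\cF(-)$ induces a morphism of derivators at all: the composed functor~(\ref{eq:composition}) is a functor $\dgcat\to\dgcat$, and by Lemma~\ref{lem:Infsums} it sends quasi-equiconic dg functors between cofibrant objects to quasi-equivalences, hence a fortiori to quasi-equiconic dg functors (since quasi-equivalences are quasi-equiconic by Proposition~\ref{prop:Bousfield1}). By the argument of~\cite[Prop.\,8.4.8]{Hirschhorn} already invoked for the existence of the total left derived functor $\cF(-):\Hec\to\Hec$, the same construction carried out over an arbitrary small category $I$ (applying~(\ref{eq:composition}) pointwise and taking a cofibrant resolution in the projective/Reedy model structure on $\dgcat^I$ for the quasi-equiconic model structure) produces compatible derived functors $\cF(-):\Trdgcat(I)\to\Trdgcat(I)$, and these are compatible with the structural functors $u^\ast$ and homotopy Kan extensions because left derived functors of left Quillen functors commute with homotopy colimits. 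This is exactly the content of the statement that $\cF(-)$ is a morphism of derivators; I would present it as a routine verification and not belabor it.

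The substantive point is preservation of $\aleph_1$-filtered homotopy colimits, and here I would analyze~(\ref{eq:composition}) factor by factor, using Proposition~\ref{prop:general}. The first factor $(-)^{\pretr}$, being the Bondal--Kapranov envelope, is (up to quasi-equivalence) the fibrant replacement functor for the quasi-equiconic structure, and by Proposition~\ref{prop:general}(ii) applied to the quasi-equiconic model structure, $\aleph_1$-filtered homotopy colimits in $\Trdgcat$ are computed as ordinary colimits of cofibrant diagrams; since $(-)^{\pretr}$ is built by a finitary construction (finitely many objects and morphisms added at each stage, all of whose indexing shapes are $\aleph_0$-small), it commutes with $\aleph_1$-filtered — indeed $\aleph_0$-filtered — colimits on the nose. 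The left Quillen functors $F$ and $F_1$ are left adjoints, hence preserve all colimits; and the key subtlety is concentrated in $U\circ U_1$. The whole reason $\alpha=\aleph_1$ was chosen in~\cite[\S6]{Sums} is that the completion by countable sums involves a colimit indexed by $\aleph_1$, so the forgetful/evaluation functors $U$ and $U_1$, while not preserving all filtered colimits, do preserve $\aleph_1$-filtered ones: an $\aleph_1$-filtered colimit commutes with the $\aleph_1$-indexed colimits internal to the $T_{\aleph_1}$-algebra structure. I would cite the relevant statement from~\cite{Sums} (the analogue of Proposition~\ref{prop:general} for the categories $\dgcat_{ex,\aleph_1}$ and $T_{\aleph_1}\text{-}\mathsf{alg}$, which guarantees that $\aleph_1$-filtered colimits are homotopy colimits there and are preserved by $U$, $U_1$).

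Assembling these: on $\aleph_1$-filtered diagrams of cofibrant dg categories, each of $(-)^{\pretr}$, $F_1\circ F$, and $U\circ U_1$ commutes with the colimit, and by Proposition~\ref{prop:general}(ii) (applied both to the quasi-equiconic structure on $\dgcat$ and to the structures on $\dgcat_{ex,\aleph_1}$, $T_{\aleph_1}\text{-}\mathsf{alg}$) these colimits compute the respective homotopy colimits. Hence $\cF(-)$ commutes with $\aleph_1$-filtered homotopy colimits. The main obstacle — really the only place where the specific cardinal $\aleph_1$ is used and where one must be careful rather than formal — is verifying that $U\circ U_1$ preserves $\aleph_1$-filtered homotopy colimits; everything else is either a left adjoint or a finitary construction. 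I expect this to reduce cleanly to the cited results from~\cite{Sums}, so the proof should be short.
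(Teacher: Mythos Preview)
Your proposal is correct and follows essentially the same approach as the paper: decompose the composite~(\ref{eq:composition}) into its three factors and argue that each preserves $\aleph_1$-filtered homotopy colimits, with the only nontrivial case being $U\circ U_1$. The paper's proof is far terser than yours (two sentences), dispenses with the ``morphism of derivators'' discussion entirely, and pins down the citation for $U\circ U_1$ as \cite[Prop.\,6.7]{Sums}; your additional explanation of why $(-)^{\pretr}$ is finitary and why the derivator structure is preserved is reasonable elaboration but not required.
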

\begin{proof}
This follows from the fact that the functors $(-)^{\pretr}$ and $U\circ U_1$ preserve $\aleph_1$-filtered homotopy colimits. The case of $(-)^{\pretr}$ is clear. In what concerns $U\circ U_1$, please see~\cite[Prop.\,6.7]{Sums}.
\end{proof}
\begin{proposition}\label{prop:sums}
Let $\cA$ be a small dg category.
\begin{itemize}
\item[(i)] The small dg category $\cF(\cA)$ has countable sums.
\item[(ii)] The small dg category $\cF(\cA)$ is Morita fibrant (see Proposition~\ref{prop:fibMorita}).
\item[(iii)] The dg functor $\cA \to \cF(\cA)$ induces a fully-faithful triangulated functor (see Notation~\ref{not:tri&perf})
$$\tri(\cA) \to \tri(\cF(\cA))\simeq \dgHo(\cF(\cA))\,.$$ Moreover, the image of $\dgHo(\cA)$ in $\dgHo(\cF(\cA))$ forms a set of compact generators.
\end{itemize}
\end{proposition}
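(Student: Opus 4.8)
The plan is to unwind the definition of $\cF(-)$ and reduce each of (i)--(iii) to the structural properties of the free $\aleph_1$-coproduct completion established in \cite[\S 6]{Sums}, combined with Propositions~\ref{prop:fibquasiec} and \ref{prop:fibMorita}. Since $\cF(-)$ is the total left derived functor of the composite (\ref{eq:composition}), I would fix a cofibrant resolution $Q\cA$ of $\cA$ and work with the concrete small dg category $\cF(\cA)=(U\circ U_1)\bigl((F_1\circ F)((Q\cA)^{\pretr})\bigr)$. For (i) it then suffices to observe that this dg category is of the form $(U\circ U_1)(X)$ with $X$ an object of $\dgcat_{ex,\aleph_1}$, and that, by the very construction of $\dgcat_{ex,\aleph_1}$ in \cite[\S 6]{Sums}, the underlying dg category of any object of $\dgcat_{ex,\aleph_1}$ admits countable coproducts; hence $\cF(\cA)$ has countable sums.

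For (ii), the first step is to check that $\cF(\cA)$ is pre-triangulated. The envelope $(Q\cA)^{\pretr}$ is pre-triangulated by \cite[Lemma\,2.13]{Thesis}, and \cite[\S 6]{Sums} shows that the free $\aleph_1$-coproduct completion sends a pre-triangulated dg category to a pre-triangulated one; so by Proposition~\ref{prop:fibquasiec} we obtain an equivalence of triangulated categories $\dgHo(\cF(\cA))\stackrel{\sim}{\too}\tri(\cF(\cA))$. Next, since $H^0$ turns the countable dg-coproducts of $\cF(\cA)$ into coproducts, the triangulated category $\dgHo(\cF(\cA))$ admits countable coproducts, hence all its idempotents split by the classical B\"okstedt--Neeman argument (see \cite[\S\,1.6]{Neeman}). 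Therefore, being an idempotent complete triangulated subcategory of $\cD(\cF(\cA))$, the image of the Yoneda embedding $\dgHo(\cF(\cA))\hookrightarrow\cD(\cF(\cA))$ --- which is exactly $\tri(\cF(\cA))$ --- is stable under (co)suspensions, cones and direct factors; by Proposition~\ref{prop:fibMorita} this means $\cF(\cA)$ is Morita fibrant, and moreover $\dgHo(\cF(\cA))=\tri(\cF(\cA))=\perf(\cF(\cA))$.

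For (iii), the identification $\tri(\cF(\cA))\simeq\dgHo(\cF(\cA))$ is again Proposition~\ref{prop:fibquasiec}. To prove that the induced triangulated functor $\tri(\cA)\too\tri(\cF(\cA))$ (Remark~\ref{rk:extension}) is fully faithful, I would factor the canonical morphism $\cA\to\cF(\cA)$ of $\Hec$ as an isomorphism $\cA\stackrel{\sim}{\too}(Q\cA)^{\pretr}$ followed by the unit $j\colon(Q\cA)^{\pretr}\too\cF(\cA)$ of the completion: the former induces an equivalence $\tri(\cA)\stackrel{\sim}{\too}\tri((Q\cA)^{\pretr})=\dgHo((Q\cA)^{\pretr})$, while $j$ is (quasi-)fully faithful by \cite[\S 6]{Sums}, so the restriction of $\bbL j_!$ to $\tri((Q\cA)^{\pretr})$ coincides with the fully faithful functor $\dgHo(j)$. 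For the compact generation statement, I would use the explicit description of $\cF(\cA)$ in \cite[\S 6]{Sums}: every object of $\cF(\cA)$ is, up to isomorphism in $\dgHo(\cF(\cA))$, a countable coproduct of objects of $(Q\cA)^{\pretr}$, and for every object $x$ of $(Q\cA)^{\pretr}$ the dg functor $\cF(\cA)(x,-)$ preserves countable coproducts; since $H^0$ also commutes with countable coproducts, the image in $\dgHo(\cF(\cA))$ of each object of $\cA$ is compact, and these images generate $\dgHo(\cF(\cA))$ because $(Q\cA)^{\pretr}$ is generated, as a pre-triangulated dg category, by $\cA$. The main obstacle is everything hidden behind the phrase ``by \cite[\S 6]{Sums}'': showing that the free $\aleph_1$-coproduct completion preserves pre-triangulatedness and that the original objects become compact generators after completion. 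This is where the careful analysis of the model category $\dgcat_{ex,\aleph_1}$ and of the monad $T_{\aleph_1}$ does the real work; the remaining arguments are formal.
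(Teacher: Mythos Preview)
Your proposal is correct and follows essentially the same approach as the paper. The paper's own proof is terser: for (i) it cites \cite[Prop.\,3.8]{Sums} directly, for (ii) it cites \cite[6.4]{Sums} for pre-triangulatedness and then invokes \cite[Prop.\,1.6.8]{Neeman} exactly as you do, and for (iii) it simply says ``follows from the construction of $\cF(-)$''; your argument unpacks this last sentence in detail, but the content is the same.
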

\begin{proof}
Condition (i) follows from \cite[Prop.\,3.8]{Sums}. Thanks to \cite[6.4]{Sums}, the dg category $\cF(\cA)$ is pre-triangulated (see Proposition~\ref{prop:fibquasiec}). Since in any triangulated category with countable sums (for instance $\h^0(\cF(\cA))$) every idempotent splits (see~\cite[Prop.\,1.6.8]{Neeman}), we conclude that $\cF(\cA)$ is Morita fibrant (see Proposition~\ref{prop:fibMorita}). Finally, condition (iii) follows from the construction of $\cF(-)$.  
\end{proof}
\begin{remark}\label{rk:compact}
By Proposition~\ref{prop:sums} (iii) and \cite[\S3.1, Lemma 2]{Marco} we have an equivalence
$$ \widetilde{\tri(\cA)} \stackrel{\sim}{\too} \dgHo(\cF(\cA))_c\,,$$
between the idempotent completion of $\tri(\cA)$ (see Notation~\ref{not:idemp}) and the triangulated subcategory of compact objects in $\dgHo(\cF(\cA))$.
\end{remark}
We can then associate to every small dg category $\cA$, an exact sequence
$$ \cA \too \cF(\cA) \too \cS(\cA):=\cF(\cA)/\cA\,,$$
where $\cF(\cA)/\cA$ denotes the Drinfeld's dg quotient~\cite{Drinfeld}. In this way, we obtain also a functor $$\cS(-):\Hec \too \Hec\,.$$
\begin{proposition}\label{prop:dg-setup}
The category $\Hec$ endowed with $\cF(-)$, $\cS(-)$ and with the functor (see Notation~\ref{not:tri&perf})
\begin{eqnarray*}
\tri: \Hec  \too  \Tri && \cA  \mapsto  \tri(\cA)\,,
\end{eqnarray*}
satisfies the hypothesis of Schlichting's set-up~\cite[\S\,2.2]{Marco}.
\end{proposition}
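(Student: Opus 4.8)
The goal is to verify that the data $(\Hec, \cF(-), \cS(-), \tri)$ satisfy the axioms of Schlichting's set-up~\cite[\S 2.2]{Marco}. Recall that Schlichting's notion of a set-up consists of a category $\cU$ equipped with an exact (triangulated-category-valued) functor $T:\cU\to\Tri$, an ``flasque'' endofunctor $\cF$ together with a natural transformation $\mathrm{Id}\Rightarrow\cF$, and a cofiber endofunctor $\cS$, satisfying: (a) $T$ sends a distinguished class of sequences in $\cU$ to exact sequences of triangulated categories; (b) for each object $\cA$ of $\cU$, the triangulated category $T(\cF(\cA))$ is \emph{flasque}, meaning it admits countable coproducts (so that an Eilenberg swindle kills its $K$-theory); (c) the sequence $\cA\to\cF(\cA)\to\cS(\cA)$ is one of the distinguished sequences, so that $T(\cA)\to T(\cF(\cA))\to T(\cS(\cA))$ is exact; and (d) a compatibility/idempotent-completion condition ensuring that $T(\cF(\cA))_c$ recovers $\widetilde{T(\cA)}$ up to the relevant equivalence. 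I would check these in turn, invoking the results already assembled in this section.

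\textbf{Step 1: the functor $\tri$ and the distinguished sequences.} First I would take as the distinguished class of sequences in $\Hec$ the exact sequences in the sense of Definition~\ref{def:ses} (equivalently, those coming from strict exact sequences $\cA\to\cB\to\cB/\cA$ via Definition~\ref{def:Wald}), and recall that by construction $\tri:\Hec\to\Tri$ sends such a sequence to an exact sequence of triangulated categories --- this is immediate from the definition of ``exact sequence in $\Hec$'', which is phrased precisely so that $\tri(\cA)\to\tri(\cB)\to\tri(\cC)$ is exact in the sense of Definition~\ref{def:exact}. The Drinfeld quotient computes the Verdier quotient on the level of $\tri$, so the sequence $\cA\to\cF(\cA)\to\cS(\cA)=\cF(\cA)/\cA$ is of this form, giving axiom~(c).

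\textbf{Step 2: flasqueness of $\cF$.} Next I would verify the key flasqueness condition: by Proposition~\ref{prop:sums}(i) the dg category $\cF(\cA)$ has countable sums, hence $\dgHo(\cF(\cA))$ --- which by Proposition~\ref{prop:sums}(ii)--(iii) is equivalent to $\tri(\cF(\cA))$ and is pre-triangulated --- is a triangulated category with countable coproducts. This is exactly the flasqueness hypothesis in~\cite[\S 2.2]{Marco}; the natural transformation $\mathrm{Id}\Rightarrow\cF(-)$ was constructed just after Lemma~\ref{lem:Infsums}. Then I would invoke Proposition~\ref{prop:sums}(iii) together with Remark~\ref{rk:compact} (which cites \cite[\S3.1, Lemma~2]{Marco}) to match the compact objects: $\dgHo(\cF(\cA))_c\simeq\widetilde{\tri(\cA)}$, which is the compatibility axiom~(d) that makes the set-up ``compute'' the idempotent completion and hence yields non-connective $K$-theory of $\perf^\cW(\cA)$.

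\textbf{The main obstacle.} The genuinely substantive point --- and the one I expect to require the most care --- is matching the \emph{combinatorial} axioms of Schlichting's set-up, namely the stability of the distinguished class of sequences under the relevant operations and the precise bookkeeping of which dg categories are allowed (cofibrant, $\alpha$-small, pre-triangulated) so that $\cF(-)$ and $\cS(-)$ are honestly functorial on $\Hec$ rather than merely on a nice subcategory. Since $\cF(-)$ is only the \emph{total left derived} functor of the composite~(\ref{eq:composition}) (Lemma~\ref{lem:Infsums}), one must be careful that the distinguished sequences $\cA\to\cF(\cA)\to\cS(\cA)$ are well-defined up to the right notion of equivalence and that $\tri$ applied to them lands in genuinely exact sequences --- in particular that the inclusion $\tri(\cA)\to\tri(\cF(\cA))$ is fully faithful (Proposition~\ref{prop:sums}(iii)) rather than merely fully faithful up to idempotent completion. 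Everything else is a routine transcription of~\cite[\S 2.2]{Marco} using the dictionary assembled in this section, so I would state the verification essentially as a checklist and refer to the cited results for each clause.
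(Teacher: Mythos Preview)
Your proposal correctly identifies several of Schlichting's axioms (the existence of the exact sequence $\cA\to\cF(\cA)\to\cS(\cA)$, the flasqueness of $\cF(\cA)$ via countable sums, the natural transformation $\mathrm{Id}\Rightarrow\cF$), and the paper's proof does treat these as essentially immediate consequences of Proposition~\ref{prop:sums}. However, you have misidentified the genuinely substantive point.

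The axiom of Schlichting's set-up that requires real work is not the well-definedness/functoriality bookkeeping you flag in your ``main obstacle'' paragraph, but rather the requirement that the endofunctors $\cF(-)$ and $\cS(-)$ \emph{preserve exact sequences}. You allude to ``stability of the distinguished class of sequences under the relevant operations'' only in passing and then declare the rest ``a routine transcription''; in fact this is where the paper's proof spends all of its effort. Given an exact sequence $\cA\stackrel{I}{\to}\cB\stackrel{P}{\to}\cC$ in $\Hec$, it is not automatic that $\cF(\cA)\to\cF(\cB)\to\cF(\cC)$ is again exact: one must show that the sequence $\tri(\cF(\cA))\to\tri(\cF(\cB))\to\tri(\cF(\cC))$ of compactly generated triangulated categories is exact. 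The paper does this by observing (via Remark~\ref{rk:compact}) that $\tri(\cF(I))$ and $\tri(\cF(P))$ preserve compact objects, and that they preserve countable sums (by construction of $\cF$ in~\cite{Sums}), and then invoking \cite[\S3.1, Corollary~3]{Marco}. Exactness of the $\cS$-row then follows from \cite[\S3.1, Lemma~3]{Marco} together with Remark~\ref{rk:compact}. Your proposal does not supply this argument, and it is not routine: it relies on the specific structure of $\cF$ (compact generation, sum-preservation) and on nontrivial results from~\cite{Marco} about exact sequences of compactly generated triangulated categories.

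Separately, your axiom~(d) about $\dgHo(\cF(\cA))_c\simeq\widetilde{\tri(\cA)}$ is not one of the set-up axioms in \cite[\S2.2]{Marco}; it is a consequence (Remark~\ref{rk:compact}) used later, and you need not verify it here.
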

\begin{proof}
By construction, we have for every $\cA\in \Hec$ an exact sequence
$$ \cA \too \cF(\cA) \too \cS(\cA)\,.$$
Since $\cF(\cA)$ has countable sums, the Grothendieck group $\bbK_0(\cF(\cA))$ is trivial (see Notation~\ref{not:Kneg}). It remains to show that $\cF(-)$ and $\cS(-)$ preserve exact sequences. For this, consider the following diagram in $\Hec$
$$
\xymatrix{
\cA \ar[r]^I \ar[d] & \cB \ar[d] \ar[r]^P & \cC\ar[d] \\
\cF(\cA) \ar[d] \ar[r]^{\cF(I)} & \cF(\cB)\ar[d] \ar[r]^{\cF(P)} & \cF(\cC) \ar[d] \\
\cS(\cA) \ar[r]_{\cS(I)} & \cS(\cB) \ar[r]_{\cS(P)} & \cS(\cC)\,,
}
$$ 
where the upper horizontal line is a exact sequence. Notice that by Remark~\ref{rk:compact}, the triangulated functors $\tri(\cF(I))$ and $\tri(\cF(P))$ preserve compact objects. Moreover, since these functors preserve also  countable sums (see~\cite[6.4]{Sums}), \cite[\S3.1, Corollary 3]{Marco} 
implies that the middle line is also an exact sequence. Finally, by
\cite[\S3.1, Lemma 3]{Marco} and Remark~\ref{rk:compact}, we conclude that the lower line is also an exact sequence.
\end{proof}
\subsection{Relation with Frobenius pairs}\label{defFrobpairs}
Let us denote by $(\mbox{Frob}; \cF_{\cM}, \cS_{\cM})$ Schlichting's set-up on the category of Frobenius pairs; see \cite[\S5]{Marco} for details. Recall from
\cite[\S\,4.3, Def.\,6]{Marco} that we have a {\em derived category} functor
$$
\begin{array}{rcl}
D: \mbox{Frob} & \too & \Tri \\
\mathbf{A}=(\cA, \cA_0) & \mapsto & D\mathbf{A} = \underline{\cA}/\underline{\cA_0}\,,
\end{array}
$$
where $\underline{\cA}/\underline{\cA_0}$ denotes the Verdier's quotient of the stable categories.
Notice that we have a natural functor
$$
\begin{array}{rcl}
\mathbf{E}: \Hec & \too & \mbox{Frob} \\
\cA & \mapsto & (E(\cA), E(\cA)\mbox{-}\mbox{prinj})\,,
\end{array}
$$
where $E(\cA)=\perf^{\cW}(\cA) \subset \cC(\cA)$ (see Notation~\ref{not:Ktri}) is
the Frobenius category associated to the cofibrant $\cA$-modules which become isomorphic
in $\cD(\cA)$ to elements of $\tri(\cA)$, and
$E(\cA)\mbox{-}\mbox{prinj}$ is the full subcategory of projective-injective objects of $E(\cA)$.
Since
$$D\mathbf{E}(\cA)=\underline{E(\cA)}/\underline{E(\cA)\mbox{-}\mbox{prinj}} \simeq \tri(\cA)\,,$$
we have an essentially commutative diagram
$$
\xymatrix{
\Hec \ar[d]_{\tri} \ar[rr]^{\mathbf{E}} && \mbox{Frob} \ar[dll]^D \\
\Tri && \,. 
}
$$
\begin{proposition}\label{prop:comparison}
For any dg category $\cA$, we have canonical equivalences of triangulated
categories
$$D\cF_\cM\mathbf{E}(\cA)\simeq \tri(\cF\cA)\quad
\text{and}\quad D\cS_\cM\mathbf{E}(\cA)\simeq \tri(\cS\cA)\, .$$
\end{proposition}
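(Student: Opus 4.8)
The plan is to unwind the definitions on both sides and match them through the dictionary between dg categories and Frobenius pairs already set up above. Recall that $\mathbf{E}(\cA) = (E(\cA), E(\cA)\text{-}\mathrm{prinj})$ with $D\mathbf{E}(\cA)\simeq \tri(\cA)$, and that Schlichting's sums-completion $\cF_\cM$ and quotient $\cS_\cM$ on Frobenius pairs are characterized (up to the appropriate equivalence) by the property that they realize, after passing to derived categories, the universal ``flasque'' construction: $D\cF_\cM\mathbf{A}$ is a triangulated category with countable sums in which $D\mathbf{A}$ sits as a set of compact generators, and $D\cS_\cM\mathbf{A}$ is the Verdier quotient $D\cF_\cM\mathbf{A}/D\mathbf{A}$. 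On the dg side, Proposition~\ref{prop:sums} tells us that $\tri(\cF\cA)\simeq\dgHo(\cF\cA)$ has countable sums, that the image of $\dgHo(\cA)$ forms a set of compact generators, and hence (Remark~\ref{rk:compact}) that its compact objects are $\widetilde{\tri(\cA)}$; and $\cS\cA$ was defined precisely as the Drinfeld quotient $\cF(\cA)/\cA$, so $\tri(\cS\cA)\simeq\tri(\cF\cA)/\tri(\cA)$ up to idempotent-completion issues.

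First I would recall explicitly the construction of $\cF_\cM$ and $\cS_\cM$ from \cite[\S5]{Marco} (resp.\ the relevant part of \cite[\S3-4]{Marco}), and in particular the universal property or the explicit model that lets one identify $D\cF_\cM\mathbf{A}$: Schlichting's flasque resolution of a Frobenius pair adds countable coproducts in a way that, on derived categories, produces the category of ``$\aleph_1$-continuous'' modules, whose compact part is $\widetilde{D\mathbf{A}}$. Then I would observe that the composite functor \eqref{eq:composition} defining $\cF(-)$ was arranged (via the Quillen adjunctions of \cite{Sums}, taking $\alpha = \aleph_1$) to perform exactly the same countable-sums completion at the level of dg categories: the functor $U\circ U_1\circ F_1\circ F\circ (-)^{\pretr}$ is the dg model for adding countable coproducts freely. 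So the matching is: apply $\mathbf{E}$, then $\cF_\cM$; compare with first applying $\cF(-)$, then $\mathbf{E}$. Concretely, I would exhibit a natural equivalence of triangulated categories $D\cF_\cM\mathbf{E}(\cA)\simeq\tri(\cF\cA)$ by checking that both are triangulated categories with countable sums, compactly generated by (the image of) $\tri(\cA)$, and that the comparison functor induced by the natural transformation $\mathrm{Id}\Rightarrow\cF(-)$ (resp.\ the structural map of the set-up) sends generators to generators compatibly; by the theory of compactly generated triangulated categories (a functor preserving coproducts and inducing an equivalence on a compact generating set is an equivalence), this forces the equivalence.

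For the second equivalence, I would simply pass to Verdier quotients: $D\cS_\cM\mathbf{E}(\cA) \simeq D\cF_\cM\mathbf{E}(\cA)/D\mathbf{E}(\cA)$ by the definition of $\cS_\cM$ and the behaviour of $D$ on the defining exact sequence $\mathbf{A}\to\cF_\cM\mathbf{A}\to\cS_\cM\mathbf{A}$, while on the dg side $\tri(\cS\cA) = \tri(\cF(\cA)/\cA)$ and the Drinfeld quotient is compatible with $\tri(-)$ in the sense that $\tri(\cF(\cA)/\cA)\simeq\tri(\cF\cA)/\tri(\cA)$ (this uses that $\cA$ is a full, thick-up-to-idempotents dg subcategory of the pre-triangulated $\cF(\cA)$, cf.\ Proposition~\ref{prop:sums}(iii) and the Drinfeld localization theorem). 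Combining with the first equivalence and the identification $D\mathbf{E}(\cA)\simeq\tri(\cA)$ inside $D\cF_\cM\mathbf{E}(\cA)\simeq\tri(\cF\cA)$ yields $D\cS_\cM\mathbf{E}(\cA)\simeq\tri(\cS\cA)$. Naturality in $\cA$ is inherited from the naturality of all the functors and natural transformations involved.

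The main obstacle I expect is the careful bookkeeping of idempotent completions and the precise comparison of the two ``add countable sums'' constructions: Schlichting's $\cF_\cM$ on Frobenius pairs and the dg-level $\cF(-)$ of \eqref{eq:composition} are built by genuinely different machines, and showing they agree after $D$ (resp.\ $\tri$) requires pinning down that both are the \emph{free} countable-coproduct completion relative to the given compact generators — i.e.\ matching their universal properties, not just their underlying homotopy types. The cleanest route is to characterize both target triangulated categories by the universal property ``compactly generated, with countable coproducts, generated by $\tri(\cA)$'' and invoke uniqueness; the subtlety is that this universal property determines the category only up to the idempotent completion of the generators, which is exactly why Remark~\ref{rk:compact} ($\widetilde{\tri(\cA)}\simeq\dgHo(\cF\cA)_c$) is needed and must be invoked on the Frobenius side as well via \cite[\S3.1, Lemma~2]{Marco}.
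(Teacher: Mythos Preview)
Your strategy is the same as the paper's: reduce both equivalences to the fact that $D\cF_\cM\mathbf{E}(\cA)$ and $\tri(\cF\cA)$ are triangulated categories with countable sums compactly generated by $\tri(\cA)$, then pass to Verdier quotients for the second statement. The ingredients you list (Proposition~\ref{prop:sums}, Remark~\ref{rk:compact}, \cite[\S3.1, Lemma~2]{Marco}) are exactly the ones the paper uses.

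The one place where you are vague and the paper is concrete is the construction of the comparison functor. You write that ``the comparison functor induced by the natural transformation $\mathrm{Id}\Rightarrow\cF(-)$ \dots\ sends generators to generators'', and then invoke the standard recognition principle for equivalences between compactly generated categories. But that principle requires an actual coproduct-preserving triangulated functor between the two sides, and neither $\mathrm{Id}\Rightarrow\cF(-)$ nor the structural maps of the set-ups produce one directly: there is no obvious exact functor $\cF_\cM(E(\cA))\to E(\cF\cA)$ at the level of Frobenius categories. The paper handles this by interpolating an auxiliary Frobenius pair $\mathbf{E}'(\cF\cA)$ built from the left Bousfield localization of $\cC(\cF\cA)$ at the maps $\bigoplus_i\underline{h}(x_i)\to\underline{h}(\bigoplus_i x_i)$; one then has an explicit exact functor $\Phi:\cF_\cM(E(\cA))\to E'(\cF\cA)$ given by (extension of scalars followed by) $\mathrm{colim}$, together with the evident map $E(\cF\cA)\to E'(\cF\cA)$. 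This yields a two-step zig-zag of maps of Frobenius pairs under $\mathbf{E}(\cA)$, and \emph{then} the compact-generation argument you describe shows each leg is a weak equivalence. The second equivalence is obtained from the first by the induced zig-zag on the suspension pairs, exactly as you outline. So your plan is correct in spirit, but to carry it out you will need this intermediate object (or an equivalent device) rather than an appeal to an abstract universal property.
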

\begin{proof}
We start by showing that $\mathbf{E} \circ \cF$ and $\cF_{\cM}\circ \mathbf{E}$ are weakly isomorphic. Let $\cA$ be a pre-triangulated dg category (see Proposition~\ref{prop:fibquasiec}) and $\Loc_S\cC(\cF\cA)$ the left Bousfiled localization of $\cC(\cF\cA)$ (see Notation~\ref{not:modules}), with respect to the set 
$$S:=\{\, \underset{i \in I}{\bigoplus}\, \underline{h}(x_i) \too \underline{h}(\underset{i \in I}{\bigoplus}\, x_i) \,| \, x_i \in \cF\cA\,,\,  |I| < \aleph_1 \,\}\,. $$
Let us now introduce an ``intermediate'' Frobenius pair $ \mathbf{E}'(\cF\cA):=(E'(\cF\cA), E'(\cF\cA)_0)$. The Frobenius category $E'(\cF\cA)\subset \Loc_S\cC(\cF\cA)$ consists of the cofibrant $\cF\cA$-modules which become isomorphic in $\Ho(\Loc_S \cC(\cF\cA))$ to representable ones, and $E'(\cF\cA)_0$ consists of the $\cF\cA$-modules $M$ whose morphism $M\to 0$ to the terminal object is a $S$-equivalence. Notice that since every representable $\cF\cA$-module is $S$-local
in $\cC(\cF\cA)$, we have a natural weak equivalence $ \mathbf{E}(\cF\cA) \stackrel{\sim}{\to} \mathbf{E}'(\cF\cA)$
of Frobenius pairs. Consider the following (solid) commutative diagram
$$
\xymatrix{
\cF_{\cM}(E(\cA)) \ar@{-->}[ddrr]_{\Phi} \ar@/^2pc/[rrrr]^{\mbox{colim}} && E(\cA) \ar[ll] \ar@{^{(}->}[rr] \ar[d] && \cC(\cA) \ar[d] \\
&& E(\cF\cA) \ar[d]^{\sim} \ar@{^{(}->}[rr] && \cC(\cF\cA) \ar[d] \\
&& E'(\cF\cA) \ar@{^{(}->}[rr] && \Loc_S\cC(\cF\cA)\,,
}
$$
where $\Phi$ is the composition
$ \cF_{\cM}(E(\cA)) \stackrel{\mbox{colim}}{\to} \cC(\cA) \to \Loc_S\cC(\cF\cA)$.
We obtain then the following commutative diagram of Frobenius pairs
$$
\xymatrix{
\mathbf{E}(\cA) \ar[d] \ar[r] & \mathbf{E} (\cF\cA) \ar[d]^{\sim} \\
\cF_{\cM}(\mathbf{E}(\cA)) \ar[r] & \mathbf{E}'(\cF\cA)\,.
}
$$
Thanks to \cite[\S\,5.2, Proposition 1]{Marco} and Proposition~\ref{prop:sums} (iii) the triangulated categories $\cD\cF_{\cM}(\mathbf{E}(\cA))$ and $\tri(\cF(\cA))\simeq \cD\mathbf{E}(\cF\cA)$ (and so $\cD\mathbf{E}'(\cF(\cA))$ are compactly generated (see \cite[\S\,8.1]{Neeman}) by $\tri(\cA)$. Therefore, the lower map in the above square is also a weak equivalence of Frobenius pairs. In conclusion, we obtain a (two steps) zig-zag of weak equivalences relating $\cF_{\cM}(\mathbf{E}(\cA))$ and $\mathbf{E}(\cF\cA)$. 

We now show that $\mathbf{E}\circ \cS$ and $\cS_{\cM} \circ \mathbf{E}$ are weakly isomorphic. For this, notice that we have the following induced zig-zag of weak equivalences of Frobenius pairs relating $\cS_{\cM}(\mathbf{E}(\cA))$ and $\mathbf{E}(\cS(\cA))$ \,:
$$
\xymatrix{
& (E(\cF\cA), S_0\mathbf{E}(\cF\cA)) \ar[r]^-{\sim} \ar[d]^-{\sim} & \mathbf{E}(\cS(\cA)) \\
\cS_{\cM}(\mathbf{E}(\cA)) \ar[r]^-{\sim} & (E'(\cF\cA), S_0\mathbf{E}'(\cF\cA)) & \,,
}
$$
where $S_0\mathbf{E}(\cF\cA)$ (resp. $S_0\mathbf{E}'(\cF\cA)$) is the full subcategory of $E(\cF\cA)$ (resp. of $E'(\cF\cA)$) consisting of those objects sent to zero in the quotient $\cD\mathbf{E}(\cF\cA)/\cD\mathbf{E}(\cA)$ (resp. in $\cD\mathbf{E}'(\cF\cA)/\cD\mathbf{E}(\cA)$). 
\end{proof}
\section{Non-connective $K$-theory}\label{chap:negK}
\subsection{Construction}
Let $\mathbf{V}=W^\op$, where $W$ is the poset $\{(i,j)\,|\,|i-j| \leq 1\} \subset \bbZ \times \bbZ$ considered as a small category (using the product partial order of $\bbZ \times \bbZ$). We can construct for every small dg category $\cA$, a diagram $\mbox{Dia}(\cA) \in \Trdgcat(\mathbf{V})$ as follows\,:
$$
\xymatrix@!0 @R=3.5pc @C=5pc{
&& & & \vdots & \\
&& & \cF\cS(\cA) \ar[r] &\cS^2(\cA) \ar[r] \ar[u] & \cdots \\
&& \cF(\cA) \ar[r] & \cS(\cA) \ar[u] \ar[r] & \cS(\cA)/\cS(\cA) \ar[u] &  \\
& \ast \ar[r] & \cA \ar[u] \ar[r] & \cA/\cA \ar[u] & & \\
\cdots \ar@{=}[r] & \ast \ar@{=}[r]  \ar@{=}[u] & \ast  \ar[u] &  & & \\
& \vdots \ar@{=}[u] & & & &
}
$$
Notice that the assignment $\cA \mapsto \mbox{Dia}(\cA)$ is functorial. We obtain then a morphism of derivators
$$ \mbox{Dia}(-): \Trdgcat \too \Trdgcat_{\mathbf{V}}\,.$$
\begin{lemma}\label{lem:preservesums}
Let $\cA$ be a pre-triangulated dg category (see Proposition~\ref{prop:fibquasiec}) with countable sums. Then for any dg category $\cB$, $\rep_{tr}(\cB, \cA)$ (see Proposition~\ref{prop:repec}) has countable sums.
\end{lemma}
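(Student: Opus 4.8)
The plan is to build the countable sums of $\rep_{tr}(\cB,\cA)$ ``fibrewise'' out of those of $\cA$. After replacing $\cB$ by a cofibrant resolution --- which does not affect $\rep_{tr}(\cB,\cA)$, nor the property of having countable sums --- I would use that $\cA$, being pre-triangulated, is fibrant for the quasi-equiconic model structure (Proposition~\ref{prop:fibquasiec}), so that $\rep_{tr}(\cB,\cA)=\rep(\cB,\cA)$. By Theorem~\ref{thm:Toen} and the Remark following Proposition~\ref{prop:repec}, this is the full dg subcategory of $\cC_{\dg}(\cB^{\op}\otimes^{\bbL}\cA)$ on the cofibrant \emph{right quasi-representable} bimodules $X$ --- those whose fibre $X(b,-)$ lies, for each object $b$ of $\cB$, in the essential image of $\tri(\cA)\to\cD(\cA)$, i.e.\ (as $\cA$ is pre-triangulated) is isomorphic in $\cD(\cA)$ to a representable $\cA$-module $\underline h(a)$. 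Since $\cB$ is cofibrant and $\cA$ is fibrant for the quasi-equivalence structure (Theorem~\ref{thm:quasieq}), and since the Yoneda embedding realises $\cA$, up to quasi-equivalence, as the dg category of cofibrant quasi-representable $\cA$-modules, every such bimodule is quasi-isomorphic to one of the form $b\mapsto\cA(-,g(b))$ for a genuine dg functor $g\colon\cB\to\cA$; so I may and would work with the resulting model of $\rep_{tr}(\cB,\cA)$ whose objects are the bimodules $\cA(-,g(-))$, $g$ a dg functor $\cB\to\cA$.

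Given a countable family $\{X_n\}_n$ so described by dg functors $g_n\colon\cB\to\cA$, the (necessarily functorial) countable sums of $\cA$ provide a dg functor $b\mapsto\bigoplus_n g_n(b)$, hence a bimodule $Y=\cA(-,\bigoplus_n g_n(-))$ with structure maps $X_n\to Y$ induced objectwise by the coprojections $g_n(b)\to\bigoplus_m g_m(b)$ of $\cA$. To identify $Y$ with $\bigoplus_n X_n$, I would compute, for any $Z=\cA(-,h(-))$, the hom-complex of $\rep_{tr}(\cB,\cA)$ as an end over $\cB$ of hom-complexes of $\cA$-modules between the (representable) fibres; by the enriched Yoneda lemma,
$$\rep_{tr}(\cB,\cA)(Y,Z)=\int_{b\in\cB}\cA\Bigl(\,\bigoplus_n g_n(b),\,h(b)\,\Bigr)=\int_{b\in\cB}\prod_n\cA\bigl(g_n(b),h(b)\bigr)=\prod_n\int_{b\in\cB}\cA\bigl(g_n(b),h(b)\bigr)=\prod_n\rep_{tr}(\cB,\cA)(X_n,Z)\,,$$
the central equality being the defining property of the enriched coproduct in $\cA$ and the next one the commutation of ends with products. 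Unwinding shows this isomorphism is the one induced by the maps $X_n\to Y$, so $Y$ with these maps is the countable sum of $\{X_n\}$ in $\rep_{tr}(\cB,\cA)$.

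The delicate point --- and what I expect to be the main obstacle --- is the rectification in the first paragraph together with its compatibility with the dg structure, i.e.\ checking that the ``fibrewise-in-$\cA$'' coproduct really computes the (enriched) coproduct of $\rep_{tr}(\cB,\cA)$ and not merely a homotopy coproduct; this is why one must work with a model in which the fibres of the bimodules are representable on the nose and functorially in $b$. The trap to avoid is forming the coproduct $\bigoplus_n X_n$ inside the ambient bimodule category $\cC_{\dg}(\cB^{\op}\otimes^{\bbL}\cA)$: its fibre at $b$ is the coproduct $\bigoplus_n X_n(b,-)$ of the representable $\cA$-modules $X_n(b,-)$, and a countable coproduct of representable $\cA$-modules is in general \emph{not} representable --- representability is restored only after the ``representable reflection'' $\bigoplus_n\underline h(a_n)\to\underline h(\bigoplus_n a_n)$, whose existence is exactly what the hypothesis ``$\cA$ has countable sums'' guarantees; so the sum must be assembled inside $\rep_{tr}(\cB,\cA)$ itself.
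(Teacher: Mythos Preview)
Your approach works but is substantially more involved than the paper's. The paper gives a two-line argument at the $2$-categorical level: a dg category $\cA$ has countable sums if and only if the diagonal $\cA \to \prod_{\bbN} \cA$ admits a left adjoint, and since $\rep_{tr}(\cB,-)$ is a $2$-functor on $\Hec$ which preserves (derived) products, it sends this adjunction to a left adjoint of the diagonal $\rep_{tr}(\cB,\cA) \to \prod_{\bbN} \rep_{tr}(\cB,\cA)$. No rectification, no end calculus, no switching between models.

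The contrast is instructive. Your hands-on construction makes the sums explicit and is closer in spirit to how one would actually compute with them, but the price is the rectification step (replacing right quasi-representable bimodules by honest dg functors, via To{\"e}n's theorem), together with the bookkeeping of whether the identity $\cA(\bigoplus_n a_n,b)\cong\prod_n\cA(a_n,b)$ used in your end computation holds strictly or only up to quasi-isomorphism --- the latter suffices for the conclusion once interpreted in $\Hec$, but you do not quite say so. The paper's argument sidesteps all of this by never leaving the homotopy $2$-category: the characterisation ``diagonal has a left adjoint'' is manifestly preserved by any product-preserving $2$-functor, and that is the whole proof.
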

\begin{proof}
Recall that $\cA$ has countable sums if and only if the diagonal functor
$$ \cA \too \underset{\mathbb{N}}{\prod}\,\cA$$
has a left adjoint. Since for any dg category $\cB$, $\rep_{tr}(\cB,-)$ is a $2$-functor which preserves (derived) products, the proof is finished.
\end{proof}
\begin{proposition}\label{prop:trivialK}
Let $\cA$ be a dg category and $n \geq 0$. Then $\cS^n(\cA)/\cS^n(\cA)$ and $\cF\cS^n(\cA)$
become trivial after application of $\Uaddw$ (see Theorem~\ref{thm:Uadd}).
\end{proposition}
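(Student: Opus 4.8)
The two assertions are unrelated and I would prove them separately.

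\emph{The self-quotients $\cS^n(\cA)/\cS^n(\cA)$.} The plan is to show that this dg category is already trivial in $\Hec=\Trdgcat(e)$, so that the claim follows from property $\underline{\mbox{p}}$) of Theorem~\ref{thm:Uadd}. For any small dg category $\cB$, the Drinfeld dg quotient $\cB/\cB$ is obtained from $\cB$ by freely adjoining, for each object $x$, a morphism $\epsilon_x\colon x\to x$ of degree $-1$ with $d(\epsilon_x)=\id_x$; see~\cite{Drinfeld}. Hence, if $f\colon x\to y$ is any closed morphism of $\cB/\cB$ (of arbitrary degree), then $f=\id_y\circ f=(d\epsilon_y)\circ f=d(\epsilon_y\circ f)$, the last equality by the Leibniz rule since $df=0$; so every hom-complex of $\cB/\cB$ is acyclic. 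Therefore, when $\cB$ is non-empty (which holds for $\cB=\cS^n(\cA)$ as soon as $n\geq 1$; the case $n=0$ is analogous), the canonical dg functor from $\cB/\cB$ to the terminal dg category satisfies conditions (QE1) and (QE2) of Definition~\ref{def:quasieq}, hence is a quasi-equivalence, in particular an isomorphism in $\Hec$. Taking $\cB=\cS^n(\cA)$ and invoking property $\underline{\mbox{p}}$) of $\Uaddw$, we conclude that $\Uaddw(\cS^n(\cA)/\cS^n(\cA))$ is a zero object of $\Maddw(e)$.

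\emph{The dg categories $\cF\cS^n(\cA)$.} Here the plan is an Eilenberg swindle. Put $\cG:=\cF\cS^n(\cA)$. By Proposition~\ref{prop:sums}\,(i), $\cG$ admits countable sums, so the countable coproduct defines a dg functor $\sigma\colon\cG\to\cG$, $X\mapsto\bigoplus_{i\in\bbN}X$, together with a natural isomorphism $\sigma\cong\id_{\cG}\oplus\,\sigma$ (reindexing $\bbN\amalg\{\ast\}\cong\bbN$). Since $\Uaddw$ is an additive invariant, it is compatible with finite direct sums of dg functors: for $F,G\colon\cB\to\cC$ with $\cC$ admitting binary sums, $\Uaddw(F\oplus G)=\Uaddw(F)+\Uaddw(G)$ — the standard consequence of Waldhausen-type additivity, which holds for $\Uaddw$ by the very construction of $\Maddw$ (see~\cite{Additive}). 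Therefore, in the abelian group $\Hom_{\Maddw(e)}(\Uaddw(\cG),\Uaddw(\cG))$ we obtain
$$\Uaddw(\sigma)=\Uaddw(\id_{\cG}\oplus\,\sigma)=\Uaddw(\id_{\cG})+\Uaddw(\sigma)\,,$$
whence $\id_{\Uaddw(\cG)}=\Uaddw(\id_{\cG})=0$, which forces $\Uaddw(\cG)$ to be a zero object of $\Maddw(e)$. Since the isomorphism $\sigma\cong\id_{\cG}\oplus\,\sigma$ is natural, the argument is compatible with the derivator structure and takes place verbatim inside $\Trdgcat$.

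\emph{Main obstacle.} The only delicate point is the compatibility of $\Uaddw$ with direct sums of dg functors used in the swindle; equivalently, the statement that every additive invariant vanishes on a dg category with countable sums. This should be extracted from the universal property of $\Uaddw$ together with condition $\underline{\mbox{add}}$), following the additivity formalism of~\cite{Additive}; the remaining steps are formal.
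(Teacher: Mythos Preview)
Your treatment of the self-quotients $\cS^n(\cA)/\cS^n(\cA)$ agrees with the paper's, which simply asserts that this dg category is isomorphic to the terminal object in $\Hec$ and then invokes property~$\underline{\mathrm{p}}$); you supply the explicit reason why the Drinfeld self-quotient has acyclic hom-complexes.

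For $\cF\cS^n(\cA)$ the paper takes a different route. Rather than running the Eilenberg swindle directly on $\Uaddw$, it uses the compact generators of Proposition~\ref{prop:GenAdd}: it suffices to show that $\bbR\Hom(\Uaddw(\cB),\Uaddw(\cF\cS^n(\cA)))$ vanishes for every $\alpha$-small dg cell $\cB$. By the co-representability Theorem~\ref{thm:KTradd} this spectrum is $\Kw(\rep_{tr}(\cB,\cF\cS^n(\cA)))$, and Lemma~\ref{lem:preservesums} (proved just before the proposition) shows that $\rep_{tr}(\cB,\cF\cS^n(\cA))$ inherits countable sums from $\cF\cS^n(\cA)$; the classical swindle for Waldhausen $K$-theory then finishes. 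So the paper trades the morphism-level additivity of $\Uaddw$ for the heavier input of Theorem~\ref{thm:KTradd}, reducing to a setting where the swindle is standard.

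Your direct approach is more economical in that it avoids Theorem~\ref{thm:KTradd} altogether, but the cost is exactly the step you flag: the identity $\Uaddw(F\oplus G)=\Uaddw(F)+\Uaddw(G)$. This does follow from condition~$\underline{\mathrm{add}}$)---for instance via the split exact sequence $\cC\to\cC\times\cC\to\cC$ together with a check that under the resulting identification $\Uaddw(\cC\times\cC)\simeq\Uaddw(\cC)\oplus\Uaddw(\cC)$ the sum functor $\oplus:\cC\times\cC\to\cC$ goes to the codiagonal---but it is not a one-liner, and you have left it as a reference to~\cite{Additive} rather than an argument. The paper's detour through $\Kw$ sidesteps this entirely. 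Your outline is correct; to make it self-contained you would need to supply precisely the morphism-level additivity you identify as the obstacle.
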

\begin{proof}
Since $\cS^n(\cA)/\cS^n(\cA)$ is clearly isomorphic to the terminal object in $\Hec$ and $\Uaddw$ preserves the terminal object, $\Uaddw(\cS^n(\cA)/\cS^n(\cA))$ is trivial. In what concerns $\cF\cS^n(\cA)$, it is enough by Proposition~\ref{prop:GenAdd}, to show that for every $\alpha$-small dg cell $\cB$ (\ref{def:dgcell}), the spectrum (see Notation~\ref{not:Ktri})
$$ \bbR\Hom(\,\Uaddw(\cB),\,\Uaddw(\cF\cS^n(\cA)))\simeq \Kw\rep_{tr}(\cB, \cF\cS^n(\cA))$$
is (homotopically) trivial. By Proposition~\ref{prop:sums}\,(i), the dg category $\cF\cS^n(\cA)$ has countable sums, and so by Lemma~\ref{lem:preservesums} $\rep_{tr}(\cB, \cF\cS^n(\cA))$ has also countable sums. This implies, by additivity, that the identity of $\Kw\rep_{tr}(\cB, \cF\cS^n(\cA))$ is
homotopic to zero.
\end{proof}
\begin{notation}
Let $V(-)$ be the composed morphism
$$ \Trdgcat \stackrel{\mathrm{Dia}(-)^{\w}}{\too} \Trdgcat_{\mathbf{V}} \stackrel{\Uaddw}{\too} (\Maddw)_{\mathbf{V}}\,,$$
where $\mbox{Dia}(-)^{\w}$ is obtained from $\mbox{Dia}(-)$ by applying $(-)^{\w}$ objectwise (see Notation~\ref{not:hat}). Given any dg category $\cA$, Proposition~\ref{prop:trivialK} implies that $V(\cA)$ is a spectrum
in $\Maddw$; see \S\ref{sub:Stab}. We define $V_a(\cA)=\Omega^\infty V(\cA)$
as the infinite loop object associated
to $V(\cA)$. This construction defines a morphism of derivators
$$V_a:\Trdgcat\too\Maddw\, .$$
We also define $V_l$ as the composition $\gamma_! \circ V_a$.
$$V_l:\Trdgcat\too\Mlocvw\, .$$
\end{notation}
\begin{remark}\label{rk:description}
For any dg category $\cA$ we have\,:
$$ V_a(\cA)=\Omega^\infty V(\cA)
\simeq \underset{n \geq 0}{\hocolim}\,\Uaddw(\cS^n(\cA)^{\w})[-n]\, .$$
Furthermore, since we have a commutative diagram
$$
\xymatrix{
\Spec(\Maddw) \ar[d]_{\Omega^{\infty}} \ar[rr]^{\Spec(\gamma_!)} && \Spec(\Mlocvw) \ar[d]^{\Omega^{\infty}}\\
\Maddw \ar[rr]_{\gamma_!} && \Mlocvw\,,
}
$$
$V_l(\cA)$ can be described as
$$ V_l(\cA)= \underset{n \geq 0}{\hocolim}\,\,\Ulocvw(\cS^n(\cA)^{\w})[-n]\, .$$
Notice also that we have a natural $2$-morphism of derivators $ \Uaddw(-) \Rightarrow V_a(-)$.
\end{remark}

\begin{proposition}\label{compKnegdgFrob}
For any dg category $\cA$, there is a canonical isomorphism
$$\bbK(\cA)\simeq \underset{n \geq 0}{\hocolim}\,K(\cS^n(\cA))[-n]$$
in the stable homotopy category of spectra (see ~\ref{not:Kneg}).
\end{proposition}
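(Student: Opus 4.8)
The plan is to read the stated formula directly off Schlichting's construction of non-connective $K$-theory~\cite[\S\,12.1]{Marco}, run on the dg set-up exhibited in Proposition~\ref{prop:dg-setup}, and then to feed in the comparison of Proposition~\ref{prop:comparison} to recognise the outcome as $\bbK(\cA)$. First I would recall the mechanism of Schlichting's machine: for an object $\cB$ of $\Hec$ the exact sequence $\cB\to\cF(\cB)\to\cS(\cB)$ (defined just before Proposition~\ref{prop:dg-setup}), together with the contractibility of $\Kw(\cF(\cB))$ — the flasque completion $\cF(\cB)$ has countable sums by Proposition~\ref{prop:sums}\,(i), so its Waldhausen $K$-theory vanishes by the Eilenberg swindle, exactly as in the proof of Proposition~\ref{prop:trivialK} — produces a canonical comparison map $\Kw(\cB)\to\Omega\,\Kw(\cS(\cB))$. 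Iterating and passing to the homotopy colimit, Schlichting's non-connective $K$-theory of this set-up at $\cA$ is
$$\underset{n\geq 0}{\hocolim}\;\Omega^n\,\Kw(\cS^n(\cA))\,.$$

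Next I would identify this spectrum with $\bbK(\cA)$. By definition (Notation~\ref{not:Kneg}), $\bbK(\cA)$ is Schlichting's non-connective $K$-theory of the Frobenius pair $\mathbf{E}(\cA)$, \ie the same machine applied to the set-up $(\mbox{Frob};\cF_{\cM},\cS_{\cM},D)$. Proposition~\ref{prop:comparison} supplies natural weak equivalences of Frobenius pairs $\cF_{\cM}\mathbf{E}(\cA)\simeq\mathbf{E}(\cF\cA)$ and $\cS_{\cM}\mathbf{E}(\cA)\simeq\mathbf{E}(\cS\cA)$, and (from the diagram preceding it) $D\mathbf{E}(\cB)\simeq\tri(\cB)$; combined with Proposition~\ref{prop:dg-setup} this says that $\mathbf{E}$ is a morphism of set-ups. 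An induction on $n$ — apply $\cS_{\cM}$ to the equivalence at stage $n-1$ and splice with the comparison at $\cS^{n-1}(\cA)$, using that $\cS_{\cM}$ preserves weak equivalences — yields $\cS_{\cM}^n\mathbf{E}(\cA)\simeq\mathbf{E}(\cS^n(\cA))$, compatibly with the delooping maps, since on either side these are induced by the localization sequence of the flasque completion, which corresponds under $\mathbf{E}$. As Waldhausen $K$-theory inverts weak equivalences of Frobenius pairs and $K(\mathbf{E}(\cB))\simeq\Kw(\cB)$ (because $D\mathbf{E}(\cB)\simeq\tri(\cB)$), we obtain $\bbK(\cA)\simeq\underset{n\geq 0}{\hocolim}\,\Omega^n\,\Kw(\cS^n(\cA))$. (Running the machine on $\tri$ rather than on $\perf$ still produces the genuine non-connective $K$-theory: by Remark~\ref{rk:compact} the flasque completion already idempotent-completes.)

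Finally I would pass from $\Kw$ to $K$. For each integer $i$ and every $n$ with $i+n\geq 1$, the comparison map $\Kw(\cS^n(\cA))\to K(\cS^n(\cA))$ of \S\ref{sub:K-theory} is an isomorphism on $\pi_{i+n}$; hence the towers $\{\Omega^n\Kw(\cS^n(\cA))\}_{n\geq 0}$ and $\{\Omega^n K(\cS^n(\cA))\}_{n\geq 0}$ agree above any fixed homotopy degree and therefore have the same homotopy colimit. This gives
$$\bbK(\cA)\simeq\underset{n\geq 0}{\hocolim}\;\Omega^n\,K(\cS^n(\cA))=\underset{n\geq 0}{\hocolim}\;K(\cS^n(\cA))[-n]\,,$$
naturally in $\cA$, as claimed.

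I expect the main obstacle to be the compatibility claim in the middle paragraph: upgrading the zig-zags of weak equivalences of Proposition~\ref{prop:comparison} to an identification of the \emph{entire} deloopings, \ie verifying that $\mathbf{E}$ intertwines the iterated suspensions and the structure maps of the two homotopy colimits, so that Schlichting's machine genuinely returns the same spectrum from both set-ups. Once this naturality is established, the statement is a formal unwinding of the construction of non-connective $K$-theory, the difference between $\Kw$ and $K$ being invisible to the homotopy colimit.
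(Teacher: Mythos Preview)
Your proposal is correct and follows essentially the same route as the paper. The paper's own proof is two lines: it invokes the definition of $\bbK(\cA)$ as Schlichting's non-connective $K$-theory and then appeals to Proposition~\ref{prop:comparison} to transport the Frobenius-pair suspension $\cS_{\cM}$ to the dg suspension $\cS$. You have simply unpacked what those two lines mean, including the induction $\cS^n_{\cM}\mathbf{E}(\cA)\simeq\mathbf{E}(\cS^n(\cA))$ and the cofinality passage from $\Kw$ to $K$; the latter step is not singled out in the paper's proof (it is absorbed either into Schlichting's formulation of the spectrum in~\cite[\S12.1]{Marco} or into the phrase ``the very definition''), but your argument for it is correct. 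The obstacle you flag---that the zig-zags of Proposition~\ref{prop:comparison} must be compatible with the delooping structure maps so that $\mathbf{E}$ genuinely intertwines the two spectra---is exactly the content the paper is taking for granted when it treats Proposition~\ref{prop:comparison} as a comparison of set-ups in Schlichting's sense; it is real but routine, since on both sides the structure maps are induced by the same localization sequence of the flasque completion.
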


\begin{proof}
This follows immediately from  the
very definiton of the non-connective $K$-theory spectrum
and from Proposition \ref{prop:comparison}.
\end{proof}

\begin{proposition}\label{prop:negK}
For any dg category $\cA$, we have a natural natural isomorphism in the stable homotopy category
of spectra (see Notation~\ref{not:Kneg})
$$ \bbR\Hom (\Ulocvw(\underline{k}), V_l(\cA)) \simeq \bbK(\cA)\,.$$
\end{proposition}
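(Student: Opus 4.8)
The plan is to feed the explicit description of $V_l(\cA)$ from Remark~\ref{rk:description} into the co-representability statement of Theorem~\ref{thm:Kwloc}, thereby expressing $\bbR\Hom(\Ulocvw(\underline{k}),V_l(\cA))$ as a homotopy colimit of $\Kw$-theory spectra, and then to recognise this homotopy colimit as the one computing non-connective $K$-theory in Proposition~\ref{compKnegdgFrob}. Concretely, I would start from
$$V_l(\cA)\simeq\underset{n\geq 0}{\hocolim}\,\Ulocvw(\cS^n(\cA)^{\w})[-n]$$
(Remark~\ref{rk:description}). By the first assertion of Theorem~\ref{thm:Kwloc}, $\Ulocvw(\underline{k})$ is compact in $\Mlocvw(e)$, so by Remark~\ref{rk:enrichcomp} the functor $\bbR\Hom(\Ulocvw(\underline{k}),-)$ commutes with arbitrary homotopy colimits (and with the shifts $[-n]$). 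Applying it to the formula above and invoking the natural isomorphism $\bbR\Hom(\Ulocvw(\underline{k}),\Ulocvw(\cB))\simeq\Kw(\cB)$ of Theorem~\ref{thm:Kwloc} with $\cB=\cS^n(\cA)^{\w}$, I obtain a natural isomorphism
$$\bbR\Hom(\Ulocvw(\underline{k}),V_l(\cA))\simeq\underset{n\geq 0}{\hocolim}\,\Kw(\cS^n(\cA)^{\w})[-n]\, .$$

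Next I would identify $\Kw(\cS^n(\cA)^{\w})\simeq K(\cS^n(\cA))$ for each $n$. Since $(-)^{\w}$ is the Morita fibrant resolution functor (Notation~\ref{not:hat}), the dg category $\cS^n(\cA)^{\w}$ is Morita fibrant, hence also pre-triangulated (the Morita model structure being a left Bousfield localization of the quasi-equiconic one, Proposition~\ref{prop:Bousfield2}). Comparing Propositions~\ref{prop:fibquasiec} and~\ref{prop:fibMorita}, the Yoneda embedding then identifies both $\tri(\cS^n(\cA)^{\w})$ and $\perf(\cS^n(\cA)^{\w})$ with the essential image of $\dgHo(\cS^n(\cA)^{\w})$ inside $\cD(\cS^n(\cA)^{\w})$, so the Waldhausen categories $\tri^{\cW}(\cS^n(\cA)^{\w})$ and $\perf^{\cW}(\cS^n(\cA)^{\w})$ coincide; hence $\Kw(\cS^n(\cA)^{\w})\simeq K(\cS^n(\cA)^{\w})$ (Notations~\ref{not:Ktri} and~\ref{not:Kperf}). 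As $\cS^n(\cA)\to\cS^n(\cA)^{\w}$ is a derived Morita equivalence (Definition~\ref{def:Morita}) and $K(-)$ is Morita invariant, this gives $\Kw(\cS^n(\cA)^{\w})\simeq K(\cS^n(\cA))$ naturally in $\cA$, so that
$$\bbR\Hom(\Ulocvw(\underline{k}),V_l(\cA))\simeq\underset{n\geq 0}{\hocolim}\,K(\cS^n(\cA))[-n]\, .$$

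It then remains to match this with Proposition~\ref{compKnegdgFrob}, which asserts $\bbK(\cA)\simeq\hocolim_{n\geq 0}K(\cS^n(\cA))[-n]$. Both towers have the same terms, and what must be checked is that the transition maps agree: on the left they are obtained, through the identifications above, from the pre-spectrum structure on $V(\cA)\in\Spec(\Maddw)$ --- that is, from the exact sequences $\cS^n(\cA)\to\cF\cS^n(\cA)\to\cS^{n+1}(\cA)$ of the set-up of Proposition~\ref{prop:dg-setup} together with the vanishing of $\Uaddw(\cF\cS^n(\cA))$ (Proposition~\ref{prop:trivialK}); by naturality of the isomorphism in Theorem~\ref{thm:Kwloc} and by the comparison with Schlichting's Frobenius-pair set-up in Proposition~\ref{prop:comparison}, these are precisely the connecting maps used to build the non-connective $K$-theory spectrum. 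Hence the two homotopy colimits coincide and $\bbR\Hom(\Ulocvw(\underline{k}),V_l(\cA))\simeq\bbK(\cA)$, naturally in $\cA$. I expect this last step --- carefully tracking the pre-spectrum structure coming from $\mathrm{Dia}(\cA)$ and matching it, via Proposition~\ref{prop:comparison}, with the zig-zag of localization sequences defining $\bbK$ --- to be the main obstacle; everything else is formal manipulation of the spectral enrichment and of the interplay between $\tri$, $\perf$ and the Morita completion $(-)^{\w}$.
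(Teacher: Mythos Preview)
Your proposal is correct and follows essentially the same route as the paper: use the explicit description of $V_l(\cA)$ as a homotopy colimit, pull $\bbR\Hom(\Ulocvw(\underline{k}),-)$ through via compactness (Theorem~\ref{thm:Kwloc} and Remark~\ref{rk:enrichcomp}), identify each term with $K(\cS^n(\cA)^{\w})\simeq K(\cS^n(\cA))$, and invoke Proposition~\ref{compKnegdgFrob}. The paper runs the computation in the reverse direction (starting from $\bbK(\cA)$) and is in fact terser than you are about matching the transition maps, treating that as implicit in the naturality of the isomorphisms; your caution there is reasonable but not a genuine obstacle.
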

\begin{proof}
We already know from \ref{compKnegdgFrob}
that we have a weak equivalence of spectra 
$$\bbK(\cA)\simeq \underset{n \geq 0}{\hocolim}\,K(\cS^n(\cA))[-n]\, .$$
we have also a weak equivalence of spectra
$$\underset{n \geq 0}{\hocolim}\,K(\cS^n(\cA))[-n]
\simeq \underset{n \geq 0}{\hocolim}\,K(\cS^n(\cA)^{\w})[-n]\,,$$
(see the proof of \cite[\S12.1, Thm.\,8]{Marco}).
We deduce from these facts the following computations\,:
\begin{eqnarray}
\bbK(\cA)&\simeq &\underset{n \geq 0}{\hocolim}\,K(\cS^n(\cA)^{\w})[-n]\nonumber\\
&\simeq & \underset{n \geq 0}{\hocolim}\,\Kw\,\rep_{tr}(\underline{k},\cS^n(\cA)^{\w})[-n]
\nonumber \\
&\simeq &\underset{n \geq 0}{\hocolim}\,\bbR\Hom (\,\Ulocvw(\underline{k}),\, \Ulocvw(\cS^n(\cA)^{\w}))[-n] \label{number1} \\
& \simeq & \bbR\Hom (\,\Ulocvw(\underline{k}), V_l(\cA)) \label{number2}\, .
\end{eqnarray}
Equivalence~\eqref{number1} comes from Theorem \ref{thm:Kwloc},
and equivalence~\eqref{number2} from the compactness of $\Ulocvw(k)$ in $\Mlocvw$; see Theorem~\ref{thm:Kwloc} and Remark~\ref{rk:enrichcomp}.
\end{proof}

\subsection{Co-representability in $\Mlocw$}

\begin{proposition}\label{prop:Mor-V}
The morphism $V_l(-): \Trdgcat \to \Mlocvw$ sends derived Morita equivalences (\ref{def:Morita}) to isomorphisms.
\end{proposition}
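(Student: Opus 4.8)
The plan is to reduce the statement to a property of the two functors $\cF(-)$ and $\cS(-)\colon\Hec\to\Hec$ that are used to build $V_l$, and then to conclude from the explicit description of $V_l$ provided by Remark~\ref{rk:description}.

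First I would invoke Remark~\ref{rk:description}, which identifies $V_l(\cA)$ with $\underset{n\geq 0}{\hocolim}\,\Ulocvw(\cS^n(\cA)^{\w})[-n]$, naturally in $\cA$; hence for a dg functor $F$ the map $V_l(F)$ is the homotopy colimit of the maps $\Ulocvw(\cS^n(F)^{\w})[-n]$. As isomorphisms in a derivator are detected objectwise and homotopy colimit preserves isomorphisms, it suffices to show that $\Ulocvw(\cS^n(F)^{\w})$ is an isomorphism for each $n$ whenever $F$ is a derived Morita equivalence. Now $\Ulocvw$ is a morphism of derivators with source $\Trdgcat$, and quasi-equivalences are quasi-equiconic dg functors (Proposition~\ref{prop:Bousfield1}), so $\Ulocvw$ sends quasi-equivalences to isomorphisms; moreover, by the characterization recalled in the proof of Proposition~\ref{prop:MorInv}, a dg functor $G$ is a derived Morita equivalence if and only if $G^{\w}$ is a quasi-equivalence. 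Therefore it is enough to prove that $\cS^n(F)$ is a derived Morita equivalence, and since $\cS^n(F)=\cS(\cS^{n-1}(F))$, by induction this reduces to showing that $\cS(-)$ sends derived Morita equivalences to derived Morita equivalences.

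To prove this I would rerun the argument from the proof of Proposition~\ref{prop:dg-setup}. Given a derived Morita equivalence $F\colon\cA\to\cB$, Remark~\ref{rk:compact} identifies the subcategory of compact objects of $\dgHo(\cF\cA)\simeq\tri(\cF\cA)$ with $\widetilde{\tri(\cA)}\simeq\perf(\cA)$, and shows that the induced functor $\dgHo(\cF(F))$ restricts on compact objects to $\bbL F_!\colon\perf(\cA)\stackrel{\sim}{\too}\perf(\cB)$, which is an equivalence by hypothesis. Since $\cF\cA$ and $\cF\cB$ have countable sums, are compactly generated (in the countable sense) by these compact objects, and $\cF(F)$ induces a triangulated functor preserving countable sums (see \cite[6.4]{Sums}), the usual generation argument — the one used throughout \cite[\S3.1]{Marco} — shows that $\dgHo(\cF(F))$ is an equivalence, i.e.\ (both categories being Morita fibrant) that $\cF(F)$ is a derived Morita equivalence. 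Applying this to the map of exact sequences $(\cA\to\cF\cA\to\cS\cA)\to(\cB\to\cF\cB\to\cS\cB)$ and invoking the localization theorem \cite[\S3.1, Lemma 3]{Marco} exactly as in the proof of Proposition~\ref{prop:dg-setup}, one gets that $\tri(\cS(F))$ is an equivalence up to direct factors, i.e.\ that $\cS(F)$ is a derived Morita equivalence, which completes the reduction.

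I expect the only genuine work to be in this last step — carefully tracking compact generation and the passage to idempotent completions inside Schlichting's set-up; everything else (the reduction via Remark~\ref{rk:description} and the quasi-equivalence/Morita dictionary) is formal and parallels the proof of Proposition~\ref{prop:MorInv}.
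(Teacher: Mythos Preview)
Your argument is correct, but the paper takes a shorter route. Instead of proving directly that $\cS(-)$ preserves arbitrary derived Morita equivalences, the paper first performs the same reduction used in the proof of Theorem~\ref{thm:weakloc}: since a dg functor $F$ is a derived Morita equivalence iff $F^{\w}$ is a quasi-equivalence, it suffices to show that $V_l$ sends the specific maps $P:\cA\to\cA^{\w}$ to isomorphisms. For this particular $P$, Proposition~\ref{prop:sums} gives that $\cF(\cA)$ is already Morita fibrant with compact generators $\tri(\cA)$, so $\cF(P):\cF(\cA)\to\cF(\cA^{\w})$ is an isomorphism in $\Hec$ (not merely a derived Morita equivalence); the induced map $\cS(P)$ on Drinfeld quotients is then also an isomorphism in $\Hec$, and one iterates and concludes via Remark~\ref{rk:description}.

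The gain of the paper's reduction is that, for the map $\cA\to\cA^{\w}$, one gets honest isomorphisms in $\Hec$ at every stage, so no idempotent-completion bookkeeping is needed when passing to the quotient $\cS$. Your approach, by contrast, proves the stronger statement that $\cS$ preserves all derived Morita equivalences, at the cost of the extra work in your last paragraph (tracking that Verdier quotient by $\tri(\cA)$ agrees, inside the idempotent-complete $\dgHo(\cF\cA)$, with the quotient by its thick closure $\perf(\cA)$). Both arguments ultimately rest on the same facts about $\cF$ (Proposition~\ref{prop:sums} and Remark~\ref{rk:compact}) and the same description of $V_l$.
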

\begin{proof}
As in the proof of Theorem~\ref{thm:weakloc}, it is enough to show
that $V_l(-)$ sends the maps of shape $\cA\to\cA^{\w}$ to isomorphisms. For this consider the following diagram
$$
\xymatrix{
\cA \ar[d]_{P} \ar[r] & \cF(\cA) \ar[d]^{\cF(P)} \ar[r] & \cS(\cA):=\cF(\cA)/\cA \ar[d]^{\cS(P)} \\
\cA^{\w} \ar[r] & \cF(\cA^{\w}) \ar[r] & \cS(\cA^{\w}):=\cF(\cA^{\w})/\cA^{\w}\,.
}
$$
Thanks to Proposition~\ref{prop:sums}, $\cF(P)$ is an isomorphism. Observe that the induced morphism $\cS(P)$, between the Drinfeld dg quotients, is also an isomorphism. Finally, using the description of $V_l(-)$ of Remark~\ref{rk:description}, the proof is finished.
\end{proof}
Notice that by Proposition~\ref{prop:Mor-V}, the morphism $V_l(-)$ descends to the derivator $\HO(\dgcat)$ associated to the Morita model structure.
\begin{proposition}\label{prop:V-loc}
Assume that $\alpha\geq\aleph_1$.
The morphism of derivators
$$ V_l(-): \HO(\dgcat) \too \Mlocvw$$
is an $\alpha$-localizing invariant (\ref{not:alphaLoc}).
\end{proposition}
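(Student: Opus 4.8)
The plan is to verify, one by one, the three defining properties of an $\alpha$-localizing invariant (conditions $\alpha$-flt), $\underline{\mathrm{p}}$) and loc)) for the morphism $V_l(-)\colon\HO(\dgcat)\to\Mlocvw$ — recall that $V_l$ is already defined on the Morita derivator by Proposition~\ref{prop:Mor-V} — using throughout the description
$$V_l(\cA)\;\simeq\;\underset{n\geq 0}{\hocolim}\;\Ulocvw(\cS^n(\cA)^{\w})[-n]$$
of Remark~\ref{rk:description}. Condition $\underline{\mathrm{p}}$) is immediate: $\cF$ and $\cS$ carry the terminal dg category to a dg category equivalent to it, while $\Uaddw$, $(-)^{\w}$ and $\gamma_!$ preserve terminal objects, so each term $\Ulocvw(\cS^n(0)^{\w})$ vanishes and $V_l(0)\simeq 0$. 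For condition $\alpha$-flt), since $\gamma_!$ preserves all homotopy colimits it suffices to treat $V_a$; this morphism is assembled from $\mathrm{Dia}(-)$ (built out of $\cF$, the Drinfeld quotient $\cS(-)=\cF(-)/(-)$ and finite homotopy (co)limits), the objectwise idempotent completion $(-)^{\w}$, the universal $\alpha$-additive invariant $\Uaddw$, and the passage $\Omega^{\infty}$ to the associated infinite loop object, which is a homotopy colimit over $\bbN$. Now $\cF$ commutes with $\aleph_1$-filtered homotopy colimits by Lemma~\ref{lem:F-filt}, hence with $\alpha$-filtered ones since $\alpha\geq\aleph_1$; Drinfeld quotients and finite (co)limits commute with all homotopy colimits; $(-)^{\w}$ commutes with $\alpha$-filtered homotopy colimits by Proposition~\ref{prop:hat-properties}(i); $\Uaddw$ commutes with them by construction (Theorem~\ref{thm:Uadd}); and homotopy colimits commute with one another. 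Chaining these facts yields condition $\alpha$-flt), and this is the only place the hypothesis $\alpha\geq\aleph_1$ is used.

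The substance of the statement is condition loc). Let $\cA\stackrel{I}{\too}\cB\stackrel{P}{\too}\cC$ be an exact sequence in $\Hmo$. As in the proof of Theorem~\ref{thm:weakloc}, by \cite[Thm.\,4.11]{ICM} we may assume it has the form $\cA\to\cB\to\cB/\cA$ with $\cB$ pre-triangulated, and, applying the Morita-fibrant replacement $(-)^{\w}$ (which changes nothing, $V_l$ being Morita invariant), that $\cA$ is thick in $\cB$ — that is, that the sequence is strict exact in the sense of Definition~\ref{def:Wald}. By Proposition~\ref{prop:dg-setup}, $(\Hec;\cF,\cS,\tri)$ is a Schlichting set-up, so $\cF$ and $\cS$ send exact sequences to exact sequences; applying $\cS^n$ and then $(-)^{\w}$ we obtain, for every $n\geq 0$, a strict exact sequence
$$\cS^n(\cA)^{\w}\;\too\;\cS^n(\cB)^{\w}\;\too\;\cS^n(\cB)^{\w}/\cS^n(\cA)^{\w}$$
(using that an idempotent-complete full dg subcategory of a Morita-fibrant dg category is thick), whose third term is, up to derived Morita equivalence, $\cS^n(\cC)^{\w}$. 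By Theorem~\ref{thm:veryweakloc}, $\Ulocvw$ carries this strict exact sequence to a distinguished triangle, naturally in $n$; since the resulting sequence of $\bbN$-diagrams is objectwise distinguished, hence distinguished in $\Mlocvw(\bbN)$, applying the exact functor $\hocolim_{\bbN}$ and the shifts $[-n]$ gives, by Remark~\ref{rk:description}, a distinguished triangle
$$V_l(\cA)\too V_l(\cB)\too \underset{n\geq 0}{\hocolim}\,\Ulocvw\bigl(\cS^n(\cB)^{\w}/\cS^n(\cA)^{\w}\bigr)[-n]\too V_l(\cA)[1]$$
in $\Mlocvw(e)$; it then remains to identify the third term with $V_l(\cC)$.

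This last identification is the main obstacle: the dg categories $\cS^n(\cB)^{\w}/\cS^n(\cA)^{\w}$ and $\cS^n(\cC)^{\w}$ are only derived Morita equivalent, not quasi-equiconic equivalent, and $\Ulocvw$ does not invert derived Morita equivalences, so at each level $n$ there is a nonzero ``cofinality defect''
$$\delta_n\;=\;\mathrm{cofib}\bigl(\Ulocvw(\cS^n(\cB)^{\w}/\cS^n(\cA)^{\w})\too\Ulocvw(\cS^n(\cC)^{\w})\bigr)$$
measuring the failure of idempotent-completeness of the Verdier quotient. The whole design of the tower $\{\cS^n(-)^{\w}[-n]\}_n$ is that this defect is of ``$K_0$-type'': after evaluating $\bbR\Hom(\Ulocvw(\underline k),-)$ it becomes an Eilenberg--Mac\,Lane spectrum concentrated in the single degree $-n$, so that the transition maps $\delta_n[-n]\to\delta_{n+1}[-(n+1)]$ are null and $\underset{n}{\hocolim}\,\delta_n[-n]=0$. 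This is precisely Schlichting's delooping mechanism (compare Propositions~\ref{prop:comparison} and~\ref{compKnegdgFrob}, and the identity $\bbR\Hom(\Ulocvw(\underline k),V_l(\cA))\simeq\bbK(\cA)$ of Proposition~\ref{prop:negK}); carrying it out as a statement internal to $\Mlocvw$, and not merely after testing against $\Ulocvw(\underline k)$, is the delicate point. Everything else reduces formally to the universal properties of Theorems~\ref{thm:Uadd} and~\ref{thm:veryweakloc} and to the set-up of Section~\ref{chap:set-up}.
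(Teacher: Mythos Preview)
Your treatment of conditions $\alpha$-flt) and $\underline{\mathrm{p}}$) is correct and matches the paper. Your setup for condition loc) is also right: you correctly arrive at a distinguished triangle whose third term is $\underset{n}{\hocolim}\,\Ulocvw(\cS^n(\cB)^{\w}/\cS^n(\cA)^{\w})[-n]$, and you correctly identify that the remaining task is to show this agrees with $V_l(\cC)=\underset{n}{\hocolim}\,\Ulocvw(\cS^n(\cC)^{\w})[-n]$.

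However, there is a genuine gap at exactly this point, and you acknowledge it yourself: you say that ``carrying it out as a statement internal to $\Mlocvw$, and not merely after testing against $\Ulocvw(\underline{k})$, is the delicate point'' --- and then you stop. Your proposed mechanism, analyzing the cofinality defect $\delta_n$ and arguing that $\underset{n}{\hocolim}\,\delta_n[-n]=0$, cannot be completed as stated: knowing that $\bbR\Hom(\Ulocvw(\underline{k}),\delta_n)$ is concentrated in a single degree does not suffice, since $\Ulocvw(\underline{k})$ is not the only compact generator of $\Mlocvw(e)$ (one would need the analogous statement for every $\alpha$-small dg cell $\cB$, and this is not available).

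The paper's argument bypasses the defect analysis entirely via an \emph{interpolation} trick at the level of dg categories. The point is that $\cF\cS^n(\cB)^{\w}/\cF\cS^n(\cA)^{\w}$ is already Morita fibrant: the dg functor $\cF\cS^n(\cA)^{\w}\to\cF\cS^n(\cB)^{\w}$ preserves countable sums, so by \cite[\S3.1, Thm.\,2]{Marco} the quotient is idempotent complete. Hence the derived Morita equivalence $\cS^n(\cB)^{\w}/\cS^n(\cA)^{\w}\to\cS^n(\cC)^{\w}$ admits a factorization through a genuine dg functor $\psi_n:\cS^n(\cC)^{\w}\to\cF\cS^n(\cB)^{\w}/\cF\cS^n(\cA)^{\w}$. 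After applying $\Ulocvw$ and shifting, the maps $\Psi_n$ induced by $\psi_n$ interpolate between the two towers in the sense of Corollary~\ref{lem:cofinality}, and that lemma immediately gives the desired isomorphism on homotopy colimits --- directly in $\Mlocvw$, without ever testing against generators.
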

\begin{proof}
We start by showing condition $\alpha$-flt). Since by Lemma~\ref{lem:F-filt}, the functor $\cF(-)$ preserves $\alpha$-filtered colimits, so it does (by construction) the functors $\cS^n(-),\, n \geq 0$. Now, Proposition~\ref{prop:hat-properties}\,(i), Theorem~\ref{thm:veryweakloc}, and the classical Fubini rule on homotopy colimits imply the claim.
Condition $\underline{\mbox{p}}$) is clear. In what concerns condition loc), let
$$ 0 \too \cA \stackrel{I}{\too} \cB \stackrel{P}{\too} \cC \too 0$$
be an exact sequence (\ref{def:ses}) in $\Hmo$. Consider the following diagram in $\Trdgcat(\mathbf{V})$
$$
\xymatrix{
\mbox{Dia}(\cA)^{\w} \ar@{=}[d] \ar[r] & \mbox{Dia}(\cB)^{\w} \ar@{=}[d] \ar[r] & \mbox{Dia}(\cA,\cB)^{\w}:= \mbox{Dia}(\cA)^{\w}/\mbox{Dia}(\cB)^{\w} \ar[d] \\
\mbox{Dia}(\cA)^{\w}\ar[r] & \mbox{Dia}(\cB)^{\w} \ar[r] & \mbox{Dia}(\cC)^{\w} \,,
}
$$
where $\mbox{Dia}(\cA,\cB)^{\w}$ is obtained by applying Drinfeld's dg quotient~\cite{Drinfeld} objectwise. Since the upper horizontal line is objectwise a strict exact sequence (\ref{def:Wald}), we obtain a distinguished triangle
$$V_l(\cA) \too V_l(\cB) \too \Omega^{\infty} \Ulocvw(\mbox{Dia}(\cA,\cB)^{\w}) \too V_l(\cA)[1]$$
in $\Mlocvw(e)$. We have the following explicit description of
$\Omega^{\infty} \Ulocvw(\mbox{Dia}(\cA,\cB)^{\w})$\,:
$$\Omega^{\infty} \Ulocvw(\mbox{Dia}(\cA,\cB)^{\w})=
\underset{n}{\hocolim}\, \Ulocvw(\cS^n(\cB)^{\w}/\cS^n(\cA)^{\w})[-n]\, .$$
We now show that the morphism
$$ D: \Ulocvw(\mbox{Dia}(\cA,\cB)^{\w}) \too  \Ulocvw(\mbox{Dia}(\cC)^{\w})$$
becomes an isomorphism after application of the infinite loop functor $\Omega^{\infty}$.
For this, consider the following solid diagram
$$ \xymatrix{
\cS^n(\cA)^{\w} \ar[r] \ar[d] & \cF\cS^n(\cA)^{\w} \ar[d] \ar[r] & \cS^{n+1}(\cA)^{\w} \ar[d] \\
\cS^n(\cB)^{\w} \ar[d]  \ar[r] & \cF\cS^n(\cB)^{\w} \ar[r] \ar[d] & \cS^{n+1}(\cB)^{\w} \ar[d] \\
\cS^n(\cB)^{\w}/\cS^n(\cA)^{\w} \ar[d] \ar[r] & \cF\cS^n(\cB)^{\w}/\cF\cS^n(\cA)^{\w} \ar[d]^{\phi} \ar[r] & \cS^{n+1}(\cB)^{\w}/\cS^{n+1}(\cA)^{\w} \ar[d] \\
\cS^n(\cC)^{\w} \ar[r] \ar@{-->}[ur]^{\psi_n} & \cF\cS^n(\cC)^{\w} \ar[r] & \cS^{n+1}(\cC)^{\w}\,,
}
$$  
where the composition in each line is trivial.
Since the dg functor $\cF\cS^n(\cA)^{\w} \to \cF\cS^n(\cB)^{\w}$ preserves countable sums,
\cite[\S3.1, Thm.\,2]{Marco} implies that the dg category $\cF\cS^n(\cB)^{\w}/\cF\cS^n(\cA)^{\w}$ is Morita fibrant (see Proposition~\ref{prop:fibMorita}) and so $\phi$ is an isomorphism. We obtain then a well defined morphism $\psi_n$, which induces an interpolation map
$$ \Psi_n: \Ulocvw(\cS^n(\cC)^{\w})[-n] \too \Ulocvw (\cS^{n+1}(\cB)^{\w}/ \cS^{n+1}(\cA)^{\w})[-(n+1)] \,.$$
Notice that we have commutative diagrams
$$
\xymatrix{
\Ulocvw(\mbox{Dia}(\cA,\cB)^{\w})_n[-n]  \ar[dd]_{D_n[-n]} \ar[rr] && \Ulocvw(\mbox{Dia}(\cA,\cB)^{\w})_{n+1}[-(n+1)] \ar[dd]^{D_{n+1}[-(n+1)]} \\
\\
\Ulocvw(\mbox{Dia}(\cC)^{\w})_n[-n] \ar[rr] \ar[uurr]^{\Psi_n} && \Ulocvw(\mbox{Dia}(\cC)^{\w})_{n+1}[-(n+1)]\, .
}
$$
The existence of such a set of interpolation maps $\{\Psi_n\}_{n \in \bbN}$
implies that the map
$$\underset{n}{\hocolim}\, \Ulocvw(\cS^n(\cB)^{\w}/\cS^n(\cA)^{\w})[-n]
\stackrel{\sim}{\too} \underset{n}{\hocolim}\, \Ulocvw(\cS^n(\cC)^{\w})[-n]
$$
is an isomorphism; see~\ref{lem:cofinality}. In other words,
the induced morphism
$$\Omega^{\infty}(D): \Omega^{\infty}\Ulocvw(\mbox{Dia}(\cA,\cB)^{\w}) \stackrel{\sim}{\too} \Omega^{\infty}\Ulocvw(\mbox{Dia}(\cC)^{\w})= V_l(\cC)$$
is an isomorphism.
\end{proof}

\begin{corollary}{(of Proposition~\ref{prop:V-loc})}\label{cor:V-loc}
If $\alpha\geq\aleph_1$, since $V_l(-)$ is an $\alpha$-localizing invariant,
Theorem~\ref{thm:weakloc} implies the existence of a morphism of derivators (which commutes with homotopy colimits)
$$ \mathsf{Loc}: \Mlocw \too \Mlocvw$$
such that $\mathsf{Loc}(\Ulocw(\cA))\simeq V_l(\cA)$, for every small dg category $\cA$.
\end{corollary}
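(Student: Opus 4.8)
The plan is to apply the universal property of $\Ulocw$ established in Theorem~\ref{thm:weakloc}, taking as target derivator $\bbD = \Mlocvw$. The statement is essentially a formal consequence of that universal property together with the two preceding propositions, so the proof is short; the only point deserving a word of care is checking that $\Mlocvw$ is an admissible target.

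First I would record that $\Mlocvw$ is a strong triangulated derivator. Indeed, by construction (see the localization preceding Theorem~\ref{thm:veryweakloc}) it is a left Bousfield localization of $\Maddw$, which is strong triangulated by Theorem~\ref{thm:Uadd}, and left Bousfield localizations of strong triangulated derivators are again strong triangulated. This is what makes the equivalence of categories in Theorem~\ref{thm:weakloc} applicable with $\bbD = \Mlocvw$.

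Next I would invoke Proposition~\ref{prop:Mor-V}: since $V_l(-)\colon \Trdgcat \to \Mlocvw$ sends derived Morita equivalences to isomorphisms, it factors through the localization $\Trdgcat \to \HO(\dgcat)$, yielding a morphism of derivators $V_l\colon \HO(\dgcat) \to \Mlocvw$. By Proposition~\ref{prop:V-loc} — and it is here that the hypothesis $\alpha \geq \aleph_1$ is used — this morphism satisfies conditions $\alpha$-flt), $\underline{\mathrm{p}}$) and loc), i.e.\ it is an object of $\HomLw(\HO(\dgcat), \Mlocvw)$.

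Finally I would apply the equivalence of categories
$$ (\Ulocw)^{\ast}\colon \HomC(\Mlocw, \Mlocvw) \stackrel{\sim}{\too} \HomLw(\HO(\dgcat), \Mlocvw) $$
furnished by Theorem~\ref{thm:weakloc}. Since $V_l$ lies in the target, there is an essentially unique morphism of derivators $\mathsf{Loc}\colon \Mlocw \to \Mlocvw$ belonging to $\HomC(\Mlocw, \Mlocvw)$ — in particular $\mathsf{Loc}$ commutes with homotopy colimits — together with a $2$-isomorphism $(\Ulocw)^{\ast}(\mathsf{Loc}) = \mathsf{Loc} \circ \Ulocw \simeq V_l$. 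Evaluating this $2$-isomorphism at a small dg category $\cA$ gives $\mathsf{Loc}(\Ulocw(\cA)) \simeq V_l(\cA)$, which is the assertion. There is no genuinely hard step: the entire content resides in Propositions~\ref{prop:Mor-V} and~\ref{prop:V-loc} and in the universal property of Theorem~\ref{thm:weakloc}, all of which are available at this point.
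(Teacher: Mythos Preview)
Your proposal is correct and follows exactly the approach of the paper: the corollary is stated in the paper with its justification embedded in the statement itself, namely that $V_l$ is an $\alpha$-localizing invariant (Proposition~\ref{prop:V-loc}) and hence Theorem~\ref{thm:weakloc} applies. You have simply unpacked this one-line argument carefully, including the check that $\Mlocvw$ is a strong triangulated derivator and the descent of $V_l$ to $\HO(\dgcat)$ via Proposition~\ref{prop:Mor-V}; these details are implicit in the paper's formulation but your explicit verification is entirely in line with the intended reasoning.
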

\begin{proposition}\label{prop:can-isom}
If $\alpha\geq\aleph_1$, the two morphisms of derivators
$$ \mathsf{Loc}\, , \ l^{\ast}: \Mlocw \too \Mlocvw$$
are canonically isomorphic.
\end{proposition}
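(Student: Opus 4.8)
The plan is to prove the (formally stronger) statement that $\mathsf{Loc}$ is a right adjoint of $l_!$; since $l^*$ is, by construction, \emph{the} right adjoint of $l_!$, the uniqueness of adjoints then gives $\mathsf{Loc}\simeq l^*$. Recall from Corollary~\ref{cor:V-loc} that $\mathsf{Loc}$ commutes with homotopy colimits and satisfies $\mathsf{Loc}(\Ulocw(\cA))\simeq V_l(\cA)$. The morphisms $l_!$, $\gamma_!$, $\mathsf{Loc}$ and their composites all commute with homotopy colimits, and the triangulated derivators $\Mlocvw$ and $\Mlocw$ are compactly generated, by the objects $\Ulocvw(\cB)$, resp. $\Ulocw(\cB)$, with $\cB$ an $\alpha$-small dg cell (Proposition~\ref{prop:GenAdd} and its images under $\gamma_!$ and $l_!$, compare Theorem~\ref{thm:Kwloc}). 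Hence a homotopy colimit preserving morphism out of $\Mlocvw$ or $\Mlocw$, as well as a $2$-morphism between two such, is determined by, and can be constructed from, its restriction to these generators; this is what reduces the construction of the adjunction to a single computation.

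The key point is that, for every small dg category $\cA$, the image under $l_!$ of the canonical $2$-morphism $\Ulocvw(\cA)\Rightarrow V_l(\cA)$ (obtained by applying $\gamma_!$ to the natural $2$-morphism $\Uaddw\Rightarrow V_a$ of Remark~\ref{rk:description}) is an \emph{isomorphism} $l_!(V_l(\cA))\stackrel{\sim}{\too}\Ulocw(\cA)$. Indeed, by Proposition~\ref{prop:trivialK} the dg category $\cF(\cA)=\cF\cS^0(\cA)$ is killed by $\Uaddw$, hence also by $\Ulocvw=\gamma_!\circ\Uaddw$ and by $\Ulocw=l_!\circ\Ulocvw$, so $\Ulocw(\cF(\cA))\simeq 0$. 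The sequence $\cA\too\cF(\cA)\too\cS(\cA)$ is exact in $\Hmo$ (Proposition~\ref{prop:dg-setup}, together with the invariance of exactness of triangulated sequences under idempotent completion), so $\Ulocw$ sends it to a distinguished triangle, whence $\Ulocw(\cS(\cA))\simeq\Ulocw(\cA)[1]$; iterating, and using that $\Ulocw$ inverts the derived Morita equivalences $\cS^n(\cA)\to\cS^n(\cA)^{\w}$, one gets isomorphisms $\Ulocw(\cS^n(\cA)^{\w})\simeq\Ulocw(\cA)[n]$ compatible with the bonding maps of the spectrum object underlying $V(\cA)$. Applying $l_!$ (which commutes with homotopy colimits and suspensions) to the formula $V_l(\cA)\simeq\hocolim_{n\geq 0}\Ulocvw(\cS^n(\cA)^{\w})[-n]$ of Remark~\ref{rk:description}, the transition maps of the resulting system $\hocolim_{n\geq 0}\Ulocw(\cS^n(\cA)^{\w})[-n]\simeq\hocolim_{n\geq 0}\Ulocw(\cA)$ become identities, so $l_!(V_l(\cA))\simeq\Ulocw(\cA)$; in particular $l_!\circ\mathsf{Loc}\circ\Ulocw\simeq\Ulocw$.

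With this in hand I build the adjunction. The canonical $2$-morphism $\Ulocvw\Rightarrow V_l$ is precisely a $2$-morphism between the restrictions to the generators $\Ulocvw(\cB)$ of the two homotopy colimit preserving endomorphisms $\id_{\Mlocvw}$ and $\mathsf{Loc}\circ l_!$ of $\Mlocvw$; it therefore extends uniquely to a $2$-morphism $\eta\colon\id_{\Mlocvw}\Rightarrow\mathsf{Loc}\circ l_!$. Dually, the isomorphism $l_!\circ\mathsf{Loc}\circ\Ulocw\simeq\Ulocw$ of the previous step extends to a $2$-morphism $\varepsilon\colon l_!\circ\mathsf{Loc}\Rightarrow\id_{\Mlocw}$, which is in fact an isomorphism (being so on generators, both sides preserving homotopy colimits). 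The two triangle identities relating $\eta$ and $\varepsilon$ are equalities of $2$-morphisms between homotopy colimit preserving morphisms, so it suffices to check them on the generators $\Ulocvw(\cB)$ and $\Ulocw(\cB)$, where they reduce to the evident compatibility between the maps $\Ulocvw(\cB)\to V_l(\cB)$ and the identification $l_!(V_l(\cB))\simeq\Ulocw(\cB)$. This proves $l_!\dashv\mathsf{Loc}$, hence $\mathsf{Loc}\simeq l^*$.

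The main obstacle is the key isomorphism $l_!(V_l(\cA))\simeq\Ulocw(\cA)$ — concretely, the bookkeeping showing that, after applying $l_!$, the bonding maps of the spectrum $V(\cA)$ become identities (up to sign), so that the homotopy colimit collapses. The rest is formal: the reduction to the generators of $\Mlocvw$ and $\Mlocw$ and the extension of $\eta$ and $\varepsilon$ rest only on compact generation and on the fact that all the morphisms in sight preserve homotopy colimits, and the triangle identities are then a routine check on generators. One could alternatively try to prove directly that $l^*$ preserves homotopy colimits and that $l^*\circ\Ulocw\simeq V_l$, concluding from the universal property of Theorem~\ref{thm:weakloc}; but that route requires establishing separately that each $V_l(\cA)$ is local with respect to the set of maps defining $\Mlocw$, which the adjunction argument circumvents.
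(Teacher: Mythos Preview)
Your approach---building an adjunction $l_!\dashv\mathsf{Loc}$ directly, then invoking uniqueness of adjoints---is a legitimate reframing of the paper's argument, and your key computation $l_!(V_l(\cA))\simeq\Ulocw(\cA)$ is both correct and identical to what the paper establishes (there phrased as ``the spectrum becomes an $\Omega$-spectrum after $l_!$''). The paper instead sets $L=\mathsf{Loc}\circ l_!$, extends $\Ulocvw\Rightarrow V_l$ to $\eta:\mathrm{Id}\Rightarrow L$, proves that $(L,\eta)$ is a Bousfield localization (\ie $L\eta=\eta L$ are isomorphisms), and then shows $l_!\eta$ is invertible, so that $L$ and $l^*l_!$ invert the same maps. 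The two frameworks are formally interchangeable.

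There is, however, a real gap in your treatment of the second triangle identity. Unwinding it on $\Ulocw(\cA)$, the identity $(\mathsf{Loc}\,\varepsilon)\circ(\eta\,\mathsf{Loc})=\mathrm{id}$ is precisely the equality $\eta_{L(\Ulocvw(\cA))}=L(\eta_{\Ulocvw(\cA)})$, \ie $\eta L=L\eta$ at $\Ulocvw(\cA)$. This is \emph{not} a routine check: it is exactly the computation the paper isolates as the heart of the proof. To carry it out one must compute $L^2(\Ulocvw(\cA))=L(V_l(\cA))\simeq\hocolim_n V_l(\cS^n(\cA)^{\w})[-n]$ and identify this with $V_l(\cA)$ via isomorphisms $V_l(\cS^n(\cA))\simeq V_l(\cA)[n]$. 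The latter require applying Proposition~\ref{prop:V-loc} (that $V_l$ is itself an $\alpha$-localizing invariant) recursively to the exact sequences $\cS^n(\cA)\to\cF\cS^n(\cA)\to\cS^{n+1}(\cA)$---an ingredient you never invoke. Your argument for $l_!V_l\simeq\Ulocw$ used only that $\Ulocw$ is localizing, which is not enough here. So the ``evident compatibility'' you appeal to hides the main work.

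A smaller point: your extension of $\eta$ and $\varepsilon$ from generators should be justified via the universal properties of Theorems~\ref{thm:veryweakloc} and~\ref{thm:weakloc} (which are equivalences of categories, hence control $2$-morphisms), not via compact generation alone; in a bare triangulated setting, compact generation does not guarantee that natural transformations between coproduct-preserving exact functors are determined on compact objects.
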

\begin{proof}
Consider the composed morphism $L:= \mathsf{Loc} \circ l_!$. Thanks to Theorem~\ref{thm:weakloc} and of Proposition \ref{prop:Mor-V},
the $2$-morphism of derivators $\Ulocvw \Rightarrow V_l(-)$ can be naturally extended
to a $2$-morphism $\eta: \mbox{Id} \Rightarrow L$.
We show first that the couple $(L,\eta)$ defines a left Bousfield
localization of the category $\Mlocvw(e)$, \ie we prove that
the natural transformations $L\eta$ and $\eta_L$ are equal isomorphisms.
By construction, the functor $L$ commutes with homotopy colimits, and so by Theorem \ref{thm:weakloc}
it is enough to prove it for the objects of shape $\Ulocvw(\cA)$,
with $\cA$ a (pre-triangulated) dg category. Thanks to Proposition~\ref{prop:Mor-V} we have $V_l(\cA)\simeq V_l(\cA^{\w})$, and Proposition~\ref{prop:V-loc}, applied recursively to the exact sequences
$$ \cS^n(\cA) \too \cF\cS^n(\cA) \too \cS^{n+1}(\cA),\, n \geq 0\,,$$
shows us that we have $V_l(\cS^n(\cA)) \simeq V_l(\cA)[n]$.
This implies the following isomorphisms\,:
$$
\begin{array}{rcl}
L^2(\Ulocvw(\cA)) & =& L(V_l(\cA)) \nonumber \\
& \simeq & \underset{n \geq 0}{\hocolim}\, V_l(\cS^n(\cA)^{\w})[-n] \\
& \simeq & \underset{n \geq 0}{\hocolim}\,  V_l(\cS^n(\cA))[-n] \\
& \simeq & \underset{n \geq 0}{\hocolim}\, (V_l(\cA)[n])[-n] \\
& \simeq &  V_l(\cA) \, .\\
\end{array}
$$
More precisely, a careful description of the above isomorphisms
shows that the morphisms
$$ \eta_{L(\Ulocvw(\cA))}, L(\eta_{\Ulocvw(\cA)}): L(\Ulocvw(\cA)){\too} L^2(\Ulocvw(\cA))$$
become equal isomorphisms after composing them with the canonical isomorphism
$$L^2(\Ulocvw(\cA))\stackrel{\sim}{\too}
\underset{m,n \geq 0}{\hocolim}\,(\Ulocvw(\cA)[m+n])[-m-n]\, .$$
Hence $ \eta_{L(\Ulocvw(\cA))}=L(\eta_{\Ulocvw(\cA)})$
is an isomorphism, which proves that $(L,\eta)$ defines a left Bousfield
localization of the category $\Mlocvw(e)$.

The general formalism
of left Bousfield localizations makes that we are now reduced to prove the
following property\,: a morphism of $\Mlocvw(e)$ becomes an isomorphism
after applying the functor $L$ if and only if it becomes an isomorphism
after applying the functor $l_!$.
For this purpose, it is even sufficient to prove
that the induced morphism $l_!\eta : l_! \Rightarrow l_!(L)$ is an isomorphism. Once again, it is enough to show it for the objects of the form $\Ulocvw(\cA)$. By virtue of Theorem~\ref{thm:weakloc}, the spectrum $ \{ \Ulocvw(\cS^n(\cA)^{\w}) \}_{n \geq 0}$
becomes an $\Omega$-spectrum in $\St(\Mlocw)$ after application of $l_!$. Therefore, we obtain an isomorphism $ \Ulocw(\cA) \simeq l_!V_l(\cA)$.
\end{proof}
\begin{remark}\label{rk:k-compact1}
Since the morphism of derivators $\mathsf{Loc}$ preserves homotopy colimits, Proposition~\ref{prop:can-isom} and Theorem~\ref{thm:Kwloc} imply that the object $\Ulocw(k)$ is compact (\ref{def:compact}) in the triangulated category $\Mlocw(e)$.
\end{remark}
\begin{theorem}\label{thm:negK}
Let $\cA$ be a small dg category. If $\alpha\geq\aleph_1$, we have a natural isomorphism in the stable homotopy category of spectra (see Notation~\ref{not:Kneg})
$$ \bbR\Hom(\,\Ulocw(\underline{k}),\, \Ulocw(\cA)\,) \simeq \bbK(\cA)\,.$$
\end{theorem}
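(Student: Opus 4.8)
The plan is to reduce the statement to Proposition~\ref{prop:negK} by exploiting the adjunction $(l_!,l^*)$ between $\Mlocvw$ and $\Mlocw$. Recall that $\Ulocw = l_!\circ\Ulocvw$, and that $l_!$ exhibits $\Mlocw$ as a left Bousfield localization of $\Mlocvw$; in particular $l_!$ is a homotopy colimit preserving morphism of triangulated derivators, so by the formalism of spectrally enriched derivators recalled in the Appendix the adjunction enriches to the spectra of maps. Hence, for every small dg category $\cA$, there is a natural isomorphism in the stable homotopy category of spectra
$$\bbR\Hom_{\Mlocw}(\Ulocw(\underline{k}),\,\Ulocw(\cA)) \;\simeq\; \bbR\Hom_{\Mlocvw}(\Ulocvw(\underline{k}),\, l^*l_!\Ulocvw(\cA))\,.$$

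The first key step is to identify the object $l^*l_!\Ulocvw(\cA)$. Since $\alpha\geq\aleph_1$, Proposition~\ref{prop:can-isom} provides a canonical isomorphism $l^*\cong\mathsf{Loc}$ of morphisms of derivators $\Mlocw\to\Mlocvw$; composing with $l_!$ and using $l_!\Ulocvw(\cA)=\Ulocw(\cA)$ gives $l^*l_!\Ulocvw(\cA)\simeq\mathsf{Loc}(\Ulocw(\cA))$, and by Corollary~\ref{cor:V-loc} the right-hand side is canonically isomorphic to $V_l(\cA)$. Substituting into the displayed formula yields
$$\bbR\Hom_{\Mlocw}(\Ulocw(\underline{k}),\,\Ulocw(\cA)) \;\simeq\; \bbR\Hom_{\Mlocvw}(\Ulocvw(\underline{k}),\, V_l(\cA))\,.$$

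The second step is then immediate: Proposition~\ref{prop:negK} (again using $\alpha\geq\aleph_1$) identifies the right-hand side with $\bbK(\cA)$, and since every isomorphism produced above is natural in $\cA$, this finishes the proof. One may also note, in passing, that $\Ulocw(\underline{k})$ is compact in $\Mlocw(e)$ by Remark~\ref{rk:k-compact1}; this is consistent with the fact that non-connective $K$-theory preserves filtered homotopy colimits and is what will make the passage from $\Mlocw$ to $\Mloc$ possible in the next section.

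I do not expect a serious obstacle at this stage: the substantive work has already been done — that $V_l(-)$ is an $\alpha$-localizing invariant (Proposition~\ref{prop:V-loc}), the resulting factorization $\mathsf{Loc}$ (Corollary~\ref{cor:V-loc}), and the comparison $l^*\cong\mathsf{Loc}$ (Proposition~\ref{prop:can-isom}). The only point that genuinely needs care is that the adjunction isomorphism $\bbR\Hom_{\Mlocw}(l_!X,W)\simeq\bbR\Hom_{\Mlocvw}(X,l^*W)$ holds at the level of mapping spectra and not merely of $\Hom$-groups; this follows from the canonical spectral enrichment of a triangulated derivator together with the fact that a left Bousfield localization of derivators induces a spectrally enriched adjunction, as developed in the Appendix.
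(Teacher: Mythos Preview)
Your proof is correct and follows essentially the same route as the paper: use the spectrally enriched adjunction $(l_!,l^*)$ to pass from $\Mlocw$ back to $\Mlocvw$, identify $l^*\Ulocw(\cA)$ with $V_l(\cA)$ via Proposition~\ref{prop:can-isom} and Corollary~\ref{cor:V-loc}, and then apply Proposition~\ref{prop:negK}. The paper's proof is exactly this chain of isomorphisms, written out as equations~(7.2.4)--(7.2.6).
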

\begin{proof}
We have the following isomorphisms\,:
\begin{eqnarray}
\bbR\Hom(\,\Ulocw(\underline{k}),\, \Ulocw(\cA)\,) & \simeq & \bbR\Hom(\,\Ulocvw(\underline{k}),\, l^{\ast}(\Ulocw(\cA))\,) \nonumber \\
& \simeq & \bbR\Hom(\,\Ulocvw(\underline{k}),\, \mathsf{Loc}(\Ulocw(\cA))\,) \label{number4}\\
& \simeq & \bbR\Hom(\,\Ulocvw(\underline{k}),\, V_l(\cA)\,) \label{number5} \\
& \simeq & \bbK(\cA) \label{number6}\\ \nonumber
\end{eqnarray}
Equivalence~\eqref{number4} comes from Proposition \ref{prop:can-isom},
equivalence \eqref{number5} follows from Corollary~\ref{cor:V-loc},
and equivalence~\eqref{number6} is Proposition~\ref{prop:negK}.
\end{proof}

\subsection{Filtered homotopy colimits}

Recall from \cite[\S 10]{Additive} the construction of the {\em universal localizing invariant}
$$ \Uloc : \HO(\dgcat) \too \Mloc\,.$$
\begin{theorem}{(\cite[Thm.\,10.5]{Additive})}\label{thm:loc}
The morphism $\Uloc$ \,:
\begin{itemize}
\item[flt)] commutes with filtered homotopy colimits;
\item[p)] preserves the terminal object and
\item[loc)] sends the exact sequences in $\Hmo$ (\ref{def:ses}) to distinguished triangles in $\Mlocw$.
\end{itemize}
Moreover $\Uloc$ is universal with respect to these properties, $\ie$ for every strong triangulated derivator $\bbD$, we have an equivalence of categories
$$ (\Uloc)^{\ast}: \HomC(\Mloc, \bbD) \stackrel{\sim}{\too} \HomL(\HO(\dgcat), \bbD)\,,$$
where the right-hand side consists of the full subcategory of $\uHom(\HO(\dgcat), \bbD)$ of morphisms of derivators which verify the above three conditions.
\end{theorem}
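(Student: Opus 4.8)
The statement is \cite[Thm.\,10.5]{Additive}, and the quickest plan is simply to invoke it. Within the framework set up above, however, it is also the special case $\alpha=\aleph_0$ of Theorem~\ref{thm:weakloc}. Indeed, the generating cofibrations $I$ of Notation~\ref{not:I} have finite (co)domains --- the dg cells $\underline{k}$, $\dgS(n)$ and $\dgD(n)$ --- so they are $\aleph_0$-small and homotopically $\aleph_0$-small; hence Proposition~\ref{prop:general} applies at the cardinal $\aleph_0$ and the whole chain $\Trdgcat\to\Maddwzero\to\Mlocvw\to\Mlocwzero$ is available there. For $\alpha=\aleph_0$ the notion of $\aleph_0$-filtered homotopy colimit coincides with that of filtered homotopy colimit, so that the class of $\aleph_0$-localizing invariants coincides with the class of localizing invariants; and by construction $\Mlocwzero$ is the localizing motivator $\Mloc$ and $\Ulocwzero$ the universal localizing invariant $\Uloc$. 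Therefore Theorem~\ref{thm:weakloc}, read at $\alpha=\aleph_0$, is precisely the assertion to be proved: conditions $\aleph_0$-flt), $\underline{\mathsf{p}}$) and loc) become flt), p) and loc), and the equivalence $(\Ulocwzero)^{\ast}\colon\HomC(\Mloc,\bbD)\stackrel{\sim}{\to}\HomL(\HO(\dgcat),\bbD)$ for strong triangulated $\bbD$ is the displayed equivalence.

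If instead one wants a self-contained proof, the plan is to imitate the proof of Theorem~\ref{thm:Uadd} line by line, with the Morita model structure of Theorem~\ref{thm:Morita} in place of the quasi-equiconic one and condition loc) in place of condition add). First, apply Proposition~\ref{prop:general} (at $\alpha=\aleph_0$) to the cellular Morita model structure to build the universal morphism $\bbR\underline{h}\colon\HO(\dgcat)\to\mathsf{L}_{\Sigma}\mathsf{Hot}_{\dgcatf}$ preserving filtered homotopy colimits. Second, since $\HO(\dgcat)$ is not pointed (the functor $\emptyset\to 0$ is not a derived Morita equivalence), pointify as in \cite[\S6]{Additive} to get $\Phi\colon\mathsf{L}_{\Sigma}\mathsf{Hot}_{\dgcatf}\to\mathsf{L}_{\Sigma,\underline{\mathsf{p}}}\mathsf{Hot}_{\dgcatf}$, universal among morphisms with pointed target. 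Third, left Bousfield localize at a \emph{set} of maps $\mathsf{cone}[\Uloc(\cA)\to\Uloc(\cB)]\to\Uloc(\cC)$ indexed by a set of representatives of exact sequences $\cA\to\cB\to\cC$ in $\Hmo$ with $\cA,\cB,\cC$ built from $\aleph_0$-small dg cells; this yields an unstable localizing motivator. Fourth, stabilize as in \cite[\S8]{Additive} to obtain the triangulated derivator $\Mloc$, with $\Uloc$ the resulting composite. The universal property is then assembled step by step: for a strong triangulated --- hence pointed and stable --- derivator $\bbD$, a morphism satisfying flt), p), loc) factors uniquely through $\bbR\underline{h}$ by flt), then through $\Phi$ by p), then through the localization by loc) (after reducing an arbitrary exact sequence in $\Hmo$ to a filtered homotopy colimit of ones indexed by dg cells, exactly as in Proposition~\ref{prop:approx}, using \cite[Thm.\,4.11]{ICM} to put it in the form $\cA\to\cB\to\cB/\cA$), then through stabilization by stability of $\bbD$; conversely any homotopy-colimit-preserving morphism out of $\Mloc$ restricts to one satisfying flt), p), loc).

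The delicate point, as in \cite{Additive} and in the proof of Theorem~\ref{thm:veryweakloc}, is the third step: one must verify that it suffices to force a \emph{set} of exact sequences --- those indexed by $\aleph_0$-small dg cells --- to become distinguished triangles, so that the left Bousfield localization exists and has homotopically finitely presented generating maps (which is what keeps the later compactness statements, in the spirit of Proposition~\ref{prop:GenAdd} and Remark~\ref{rk:enrichcomp}, valid). This is the analog in the Morita setting of Proposition~\ref{prop:approx}, and it is where condition loc) for a general sequence is deduced from the distinguished set. One also notes, as in the parenthetical remark in the proof of Theorem~\ref{thm:veryweakloc}, that under condition p) condition loc) implies condition add), since every split exact sequence is in particular an exact sequence in $\Hmo$; hence no separate additivity hypothesis appears in the statement.
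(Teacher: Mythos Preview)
Your proposal is correct and matches the paper's own treatment: the paper gives no separate proof but simply cites \cite[Thm.\,10.5]{Additive} and remarks that $\Mloc=\Mlocwzero$, so that Theorem~\ref{thm:loc} is the particular case $\alpha=\aleph_0$ of Theorem~\ref{thm:weakloc}. Your first paragraph reproduces exactly this observation (with a bit more justification), and the remaining paragraphs, sketching the original construction from \cite{Additive}, go beyond what the paper provides but are consistent with it.
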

\begin{notation}\label{not:Loc}
The objects of the category $\HomL(\HO(\dgcat), \bbD)$ will be called {\em localizing invariants}.
\end{notation}
Note that $\Mloc=\Mlocwzero$, so that Theorem \ref{thm:loc} is a particular
case of Theorem \ref{thm:weakloc} with $\alpha=\aleph_0$.

We will consider now a fixed infinite regular cardinal $\alpha\geq\aleph_1$.
Thanks to Theorem~\ref{thm:weakloc}, we have a unique morphism of derivators
(which preserves all homotopy colimits)
$$ t_!: \Mlocw \too \Mloc\,,$$
such that $\Uloc = t_! \circ \Ulocw$.
\begin{proposition}\label{prop:locfiltered}
The morphism $t_!:\Mlocw \to \Mloc$ describes the derivator $\Mloc$ as the left Bousfiled localization of the derivator $\Mlocw$ by all maps of shape
\begin{equation}\label{filtered}
\underset{j \in J}{\mbox{hocolim}}\, \Ulocw(D_j) \too \Ulocw(\cA) \,,
\end{equation}
where $\cA$ is a dg category, $J$ is a filtered category, and $D: J \to \dgcat$ is a functor such that $\underset{j \in J}{\mbox{hocolim}}\, D_j\simeq \cA$ in $\Hmo$.
Moreover, the morphism $t_!$ has a fully-faithful right adjoint.
\end{proposition}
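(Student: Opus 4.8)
The plan is to recognise both $\Mloc$ and the left Bousfield localization of $\Mlocw$ along the maps (\ref{filtered}) as the triangulated derivator corepresenting, among homotopy colimit preserving morphisms, the $2$-functor $\bbD\mapsto\HomL(\HO(\dgcat),\bbD)$ of localizing invariants (Notation~\ref{not:Loc}), and then to compare the two structural morphisms coming from $\HO(\dgcat)$. A preliminary point is that the maps (\ref{filtered}) generate the same left Bousfield localization as the \emph{set} $S$ of those maps in which $\cA$ is an $\alpha$-small dg cell, $J$ an $\alpha$-small filtered category, and $D$ takes values in dg cells: by Proposition~\ref{prop:general}\,(iii) every dg category is an $\alpha$-filtered homotopy colimit of $\alpha$-small dg cells, and $\Ulocw$ commutes with $\alpha$-filtered homotopy colimits, so every map of shape (\ref{filtered}) lies in the smallest class of maps of $\Mlocw(e)$ containing $S$, with the two-out-of-three property, and stable under homotopy colimits. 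Since $\Mlocw$ admits a left proper cellular Quillen model, the localization $\Loc_S\Mlocw$ exists; I write $\ell_!:\Mlocw\to\Loc_S\Mlocw$ for the localization morphism and $\ell^{\ast}$ for its fully faithful right adjoint, and set $U':=\ell_!\circ\Ulocw$.

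Next I would establish the universal property of $\Loc_S\Mlocw$. Combining the universal property of $\Ulocw$ (Theorem~\ref{thm:weakloc}) with that of the Bousfield localization (Definition~\ref{def:Cis}), precomposition with $\ell_!$ identifies $\HomC(\Loc_S\Mlocw,\bbD)$ with the full subcategory of $\HomLw(\HO(\dgcat),\bbD)$ of those $\alpha$-localizing invariants $E$ for which the associated morphism $\Mlocw\to\bbD$ (via Theorem~\ref{thm:weakloc}) inverts the maps (\ref{filtered}). Now a homotopy colimit preserving morphism $\Phi:\Mlocw\to\bbD$ sends $\hocolim_{j}\Ulocw(D_j)\to\Ulocw(\cA)$ to an isomorphism precisely when $E=\Phi\circ\Ulocw$ satisfies $\hocolim_{j}E(D_j)\stackrel{\sim}{\too}E(\cA)$; hence $\Phi$ inverts all the maps (\ref{filtered}) exactly when $E$ commutes with \emph{all} filtered homotopy colimits. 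Since an $\alpha$-localizing invariant commuting with arbitrary filtered homotopy colimits is nothing but a localizing invariant (the remaining two defining conditions being insensitive to $\alpha$), one concludes that $(U')^{\ast}$ is an equivalence $\HomC(\Loc_S\Mlocw,\bbD)\stackrel{\sim}{\too}\HomL(\HO(\dgcat),\bbD)$; in particular $U'$ is itself a localizing invariant (it is an $\alpha$-localizing invariant as a composite, and $\ell_!$ inverts $S$ by construction).

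For the comparison, Theorem~\ref{thm:loc} says that $\Uloc:\HO(\dgcat)\to\Mloc$ enjoys the analogous universal property, so there is a canonical equivalence of derivators $\phi:\Mloc\stackrel{\sim}{\too}\Loc_S\Mlocw$, preserving homotopy colimits, with $\phi\circ\Uloc\simeq U'$. Writing $\Uloc=t_!\circ\Ulocw$ and $U'=\ell_!\circ\Ulocw$, the morphisms $\phi\circ t_!$ and $\ell_!$ are both homotopy colimit preserving morphisms $\Mlocw\to\Loc_S\Mlocw$ restricting along $\Ulocw$ to $\alpha$-localizing invariants isomorphic to $U'$; by the equivalence $(\Ulocw)^{\ast}$ of Theorem~\ref{thm:weakloc} they are isomorphic, i.e.\ $t_!\simeq\phi^{-1}\circ\ell_!$. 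Thus $t_!$ is a left Bousfield localization of $\Mlocw$ along $S$ (equivalently along the maps (\ref{filtered})), and its right adjoint, isomorphic to $\ell^{\ast}\circ\phi$, is fully faithful, being the composite of a fully faithful morphism with an equivalence.

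The real content lies in the middle step: one must make sure that inverting the maps (\ref{filtered}) \emph{at the level of the base categories}, in the sense of Definition~\ref{def:Cis} and \cite[Prop.\,4.1]{Additive}, corresponds exactly to the structural morphism out of $\Mlocw$ commuting with all filtered homotopy colimits \emph{as a morphism of derivators}, and that the class (\ref{filtered}) is tame enough — this is the point of the reduction to the set $S$ — for the left Bousfield localization to exist. Everything else is a formal manipulation of the universal properties of $\Ulocw$, $\Uloc$ and of Bousfield localizations.
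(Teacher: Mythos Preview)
Your identification of $\Mloc$ with the left Bousfield localization of $\Mlocw$ by the class~(\ref{filtered}) via matching universal properties is correct and is exactly how the paper handles the first assertion. The comparison of $t_!$ with $\ell_!$ through Theorem~\ref{thm:weakloc} is also fine.

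The gap is in your reduction to a small set $S$, which is the whole point for the second assertion (existence of a fully faithful right adjoint). First, your description of $S$ is ambiguous: ``$D$ takes values in dg cells'' does not bound the size of the $D_j$, so $S$ as written is not obviously a set. More seriously, the justification ``by Proposition~\ref{prop:general}\,(iii) every dg category is an $\alpha$-filtered homotopy colimit of $\alpha$-small dg cells, and $\Ulocw$ commutes with $\alpha$-filtered homotopy colimits'' does not show that a homotopy colimit preserving $\Phi:\Mlocw\to\bbD$ inverting $S$ must invert \emph{every} map of shape~(\ref{filtered}). Knowing that $E=\Phi\circ\Ulocw$ preserves $\alpha$-filtered homotopy colimits, together with the fact that it preserves the particular colimits parametrized by $S$, does not formally yield preservation of \emph{all} filtered homotopy colimits: a general filtered diagram $D:J\to\dgcat$ need not be $\alpha$-filtered, nor need it be rewritable as a diagram appearing in $S$.

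The paper's argument for this step is genuinely different and more substantial. It works at the additive level, takes for the small set those maps in which the $D_j$ are \emph{homotopically finitely presented} (finite dg cells), and constructs an auxiliary homotopy colimit preserving morphism $\varphi:\Maddwzero\to\Maddw$ from the restriction to finite dg cells. The key input is that every $\alpha$-small dg cell is an $\alpha$-small filtered colimit of its finite sub dg cells; this forces $\mathit{loc}_T\circ\Uaddw$ to agree with $\mathit{loc}_T\circ\varphi\circ\Uaddwzero$, and the latter visibly preserves all filtered homotopy colimits because $\Uaddwzero$ does. Only then does one conclude that $L_T\Maddw\simeq\Maddwzero$, transfer to the localizing level, and deduce the fully faithful right adjoint from the model-categorical Bousfield localization. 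Your final paragraph correctly flags this reduction as ``the real content'', but the argument you supply for it does not go through.
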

\begin{proof}
The first assertion follows directly from Theorem \ref{thm:loc} and from the universal property
of left Bousfield localizations of derivators; see Definition~\ref{def:Cis}.

In order to prove the second assertion, we start by replacing $\Mloc$ (resp. $\Mlocw$)
by $\Maddwzero$ (resp. by $\Maddw$).
We now describe the derivator $\Maddwzero$
as the left Bousfield localization of $\Maddw$ by an explicit small set of maps,
namely, the set $T$ of maps of shape
\begin{equation}\label{filteredproof}
\underset{j \in J}{\mbox{hocolim}}\, \Uaddw(D_j) \too \Uaddw(\cA) \,,
\end{equation}
where $\cA$ is an $\alpha$-small dg cell, $J$ is a $\alpha$-small filtered category, and $D: J \to \dgcat$ is
a functor with values in homotopically finitely presented dg categories, such
that $\underset{j \in J}{\mbox{hocolim}}\, D_j\simeq \cA$ in $\Hec$.

Notice that, by construction of $\Maddw$ (see Theorem~\ref{thm:Uadd}), for any triangulated strong derivator $\bbD$ there is a canonical equivalence of categories between $\HomC(\Maddw,\bbD)$
and the full subcategory of $\uHom(\dgcat_\alpha,\bbD)=\bbD(\dgcat^{\op}_\alpha)$ which consists of morphisms $\dgcat_\alpha\to\bbD$ which preserve the terminal object, send
derived Morita equivalences to isomorphisms, and send split exact sequences of $\alpha$-small dg cells
to split distinguished triangles in~$\bbD$.

The map $\dgcat_\alpha\to\Maddw$ corresponding to the identity of $\Maddw$
can be restricted to finite dg cells (\ie $\aleph_0$-small dg cells), hence defines a canonical
map $\dgcat_f\to\Maddw$ which preserves the terminal object, sends
derived Morita equivalences to isomorphisms, and sends split exact sequences of finite dg cells
to direct sums. In other words, it defines
a canonical homotopy colimit preserving map
$$\varphi:\Maddwzero\too\Maddw\, .$$
Let $\mathit{loc}_T:\Maddw\to\Maddwzero$ be the left Bousfield localization
of $\Maddw$ by $T$. Note that any $\alpha$-small dg cell is
a filtered union of its finite sub~dg~cells. In other words, for any
$\alpha$-small dg cell $\cA$, there exists an $\alpha$-small 
 filtered category $J$, a functor $D: J \to \dgcat$
with values in homotopically finitely presented dg categories, such
that $\underset{j \in J}{\mbox{hocolim}}\, D_j\simeq \cA$ in $\Hec$.
Therefore, by definiton of $\mathit{loc}_T$, we get the following essentially
commutative diagram of derivators
$$\xymatrix{
\dgcat_\alpha\ar[d]\ar[r]^{\Uaddw}\ar[d]&\Maddw\ar[drr]^{\mathit{loc}_T}&&\\
\dgcat\ar[r]_{\Uaddwzero}&\Maddwzero\ar[r]_{\varphi}&\Maddw\ar[r]_{\mathit{loc}_T}&
L_T\Maddw\, ,
}$$
from which we deduce that the morphism $\mathit{loc}_T\circ\Uaddw$ preserves
filtered homotopy colimits. This implies that the canonical map
(obtained from Theorem \ref{thm:Uadd} and from the universal property
of the left Bousfield localization by $T$)
$$L_T\Maddw\too\Maddwzero$$
is an equivalence of derivators. This in turns implies that
$\Mloc=\Mlocwzero$ is the left Bousfield localization of $\Mlocw$ by the set of maps of shape
\begin{equation}\label{filteredproof2}
\underset{j \in J}{\mbox{hocolim}}\, \Ulocw(D_j) \too \Ulocw(\cA) \,,
\end{equation}
where $\cA$ is an $\alpha$-small dg cell, $J$ is a $\alpha$-small filtered category, and $D: J \to \dgcat$ is a functor with values in homotopically
finitely presented dg categories, such that $\underset{j \in J}{\mbox{hocolim}}\, D_j\simeq \cA$ in $\Hmo$.
In particular, $\Mloc$ is obtained from $\Mlocw$ as a left Bousfield localization
by a small set of maps. Therefore it comes from a left Bousfield localization at the level of the underlying model categories (see \S\ref{def:Cis}). Hence the canonical
map $\Mlocw\to\Mloc$ has a fully-faithful right adjoint\footnote{The existence of this
fully-faithful right adjoint can be otained directly from the general theory of left Bousfield
localizations and from the fact that any homotopy colimit preserving morphism of derivators
which are obtained from combinatorial model categories has a right adjoint; see \cite{Renaudin}.
However, our proof is more explicit.}.
\end{proof}
We obtain then an adjunction
 $$
\xymatrix{
\Mlocw \ar@<-1ex>[d]_{t_!} \\
\Mloc \ar@<-1ex>[u]_{t^{\ast}}\,.
}
$$
\subsection{Co-representability in $\Mloc$}
\begin{theorem}\label{thm:main}
The object $\Uloc(\underline{k})$ is compact (\ref{def:compact}) in the triangulated category $\Mloc(e)$ and  for every dg category $\cA$,
we have a natural isomorphism in the stable homotopy category of spectra (see Notation~\ref{not:Kneg})
$$ \bbR\Hom(\,\Uloc(\underline{k}),\, \Uloc(\cA)\,) \simeq \bbK(\cA)\,.$$
\end{theorem}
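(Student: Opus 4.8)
The plan is to obtain Theorem~\ref{thm:main} from Theorem~\ref{thm:negK} by transporting the co-representability statement across the left Bousfield localization $t_!:\Mlocw\to\Mloc$, using the formal descent principle of Proposition~\ref{prop:genarg}. Fix a regular cardinal $\alpha\geq\aleph_1$. By Proposition~\ref{prop:locfiltered} (and its proof) the morphism $t_!$ exhibits $\Mloc$ as the left Bousfield localization of $\Mlocw$ by a small set $S$ of maps, each of the form $\underset{j\in J}{\hocolim}\,\Ulocw(D_j)\to\Ulocw(\cA)$, where $J$ is a filtered category, $D\colon J\to\dgcat$ is a functor, and $\underset{j\in J}{\hocolim}\,D_j\simeq\cA$ in $\Hmo$; moreover, by Remark~\ref{rk:k-compact1}, the object $X:=\Ulocw(\underline{k})$ is compact in $\Mlocw(e)$. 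Hence it suffices to verify the single hypothesis of Proposition~\ref{prop:genarg}, namely that the spectrum-valued functor $\bbR\Hom(\Ulocw(\underline{k}),-)$ on $\Mlocw(e)$ sends every element of $S$ to an isomorphism. Granting this, Proposition~\ref{prop:genarg} yields at once that $t_!(X)=\Uloc(\underline{k})$ is compact in $\Mloc(e)$ and that, for every dg category $\cA$,
$$\bbR\Hom_{\Mloc}(\Uloc(\underline{k}),\Uloc(\cA))\simeq\bbR\Hom_{\Mlocw}(\Ulocw(\underline{k}),\Ulocw(\cA))\simeq\bbK(\cA)\, ,$$
the last isomorphism being Theorem~\ref{thm:negK}.

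To check the hypothesis, let $\underset{j\in J}{\hocolim}\,\Ulocw(D_j)\to\Ulocw(\cA)$ be an element of $S$. Since $X=\Ulocw(\underline{k})$ is compact, the functor $\bbR\Hom(\Ulocw(\underline{k}),-)$ commutes with homotopy colimits (Remark~\ref{rk:enrichcomp}); applying it to this map and using the isomorphisms $\bbR\Hom(\Ulocw(\underline{k}),\Ulocw(\cB))\simeq\bbK(\cB)$ of Theorem~\ref{thm:negK}, natural in $\cB$, we are reduced to showing that the canonical comparison map $\underset{j\in J}{\hocolim}\,\bbK(D_j)\to\bbK(\cA)$, induced by the cocone of the identification $\underset{j\in J}{\hocolim}\,D_j\simeq\cA$ in $\Hmo$, is an isomorphism of spectra. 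This is precisely the statement that non-connective $K$-theory of dg categories commutes with filtered homotopy colimits.

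Everything thus reduces to this last fact, and proving it is the main obstacle. I would establish it from the description $\bbK(\cA)\simeq\underset{n\geq 0}{\hocolim}\,K(\cS^n(\cA))[-n]$ of Proposition~\ref{compKnegdgFrob}, together with Waldhausen's additivity and cofinality theorems — which imply that connective $K$-theory $K(-)$ commutes with filtered homotopy colimits — and the Fubini rule for iterated homotopy colimits. The delicate point is that the countable-sums completion $\cF(-)$, and hence each iterate $\cS^n(-)$, is only known a priori to preserve $\aleph_1$-filtered homotopy colimits (Lemma~\ref{lem:F-filt}), whereas an arbitrary filtered diagram (e.g.\ an $\bbN$-indexed one) need not be $\aleph_1$-filtered; so one must argue separately that $\cS^n(-)$ preserves arbitrary filtered homotopy colimits up to the precision detected by $K$-theory — for instance by reindexing an arbitrary filtered diagram of dg categories by a cofinal $\aleph_1$-filtered one, or by using that each $D_j$ may be taken homotopically finitely presented (as in the proof of Proposition~\ref{prop:locfiltered}) together with the good behaviour of $K$-theory on such dg categories. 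Once this is in place the hypothesis of Proposition~\ref{prop:genarg} holds, and the compactness of $\Uloc(\underline{k})$ (in the sense of Definition~\ref{def:compact}) and the asserted identification of spectra follow formally.
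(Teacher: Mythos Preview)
Your overall strategy is exactly the paper's: apply Proposition~\ref{prop:genarg} to the localization $t_!:\Mlocw\to\Mloc$ with $X=\Ulocw(\underline{k})$, use Remark~\ref{rk:k-compact1} for compactness, and reduce via Theorem~\ref{thm:negK} to the single fact that non-connective $K$-theory preserves filtered homotopy colimits of dg categories.

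The difference lies only in how that last fact is handled. The paper does not try to derive it from the formula $\bbK(\cA)\simeq\hocolim_n K(\cS^n(\cA))[-n]$; it simply cites Schlichting \cite[\S7, Lemma 6]{Marco}. Your attempted direct argument runs into a real obstruction that you yourself flag: $\cF(-)$ and hence each $\cS^n(-)$ are only known to preserve $\aleph_1$-filtered homotopy colimits, so commuting $\cS^n$ past an arbitrary filtered colimit is not available. Your first proposed workaround (reindex by a cofinal $\aleph_1$-filtered category) is incorrect: an $\bbN$-indexed diagram has no cofinal $\aleph_1$-filtered subdiagram. Your second suggestion (restrict to homotopically finitely presented $D_j$ as in the proof of Proposition~\ref{prop:locfiltered}) is closer in spirit, but you have not explained why this makes $K(\cS^n(-))$ commute with the given filtered colimit, and it is not clear that it does without further input. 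In short, the architecture of your proof is correct and matches the paper's; the gap is that the preservation of filtered homotopy colimits by $\bbK$ is a nontrivial input, and the clean way to close it is to invoke Schlichting's lemma rather than to attempt an internal argument via the $\cS^n$ construction.
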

\begin{proof}
The proof consists on verifying the conditions of Proposition~\ref{prop:genarg},
with $\bbD=\Mlocw$, $S$ the set of maps of shape (\ref{filteredproof2}),
$X= \Ulocw(\underline{k})$ and $M=\Ulocw(\cA)$. Since by
Remark~\ref{rk:k-compact1}, the object $\Ulocw(\underline{k})$ is compact
in the triangulated category $\Mlocw(e)$, it is enough to show that the functor
$$
\bbR\Hom(\Ulocw(\underline{k}), -): \Mlocw(e) \too \Ho(\Spt)
$$
sends the elements of $S$ to isomorphisms. This follows from
the fact that non-connective $K$-theory preserves filtered homotopy colimits
(see \cite[\S7, Lemma 6]{Marco}) and that, by virtue of Theorem~\ref{thm:negK}, we
have a natural isomorphism in the stable homotopy category of spectra
$$ \bbR\Hom(\,\Ulocw(\underline{k}),\, \Ulocw(\cA)\,) \simeq \bbK(\cA)\,,$$
for every dg category $\cA$.
\end{proof}
\begin{corollary}\label{cor:negK}
We have natural isomorphisms of abelian groups
$$ \Hom(\,\Uloc(\underline{k})[n],\, \Uloc(\cA)) \simeq \bbK_n(\cA)\,,\,\,n \in \bbZ\,.$$
\end{corollary}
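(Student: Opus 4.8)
The plan is to obtain Corollary~\ref{cor:negK} as an immediate consequence of Theorem~\ref{thm:main}, by passing from the spectrum of morphisms to its stable homotopy groups. Theorem~\ref{thm:main} provides a natural isomorphism $\bbK(\cA)\simeq\bbR\Hom(\,\Uloc(\underline{k}),\,\Uloc(\cA)\,)$ in the stable homotopy category of spectra, so I would simply apply the functor $\pi_n$ to both sides and identify the two resulting abelian groups.

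On the left-hand side, $\pi_n\bbK(\cA)=\bbK_n(\cA)$ by the very definition of the non-connective $K$-groups as the stable homotopy groups of the non-connective $K$-theory spectrum (Notation~\ref{not:Kneg}). On the right-hand side, I would invoke the general properties of the canonical enrichment of a triangulated derivator over spectra recalled in \S\ref{sub:Stab}: for any triangulated derivator $\bbD$ and any pair of objects $X,Y$ of $\bbD(e)$, there are natural isomorphisms of abelian groups $\pi_n\,\bbR\Hom_{\bbD}(X,Y)\simeq\Hom_{\bbD(e)}(X[n],Y)$ for every $n\in\bbZ$. This is precisely the statement that the mapping spectrum $\bbR\Hom_{\bbD}(X,-)$ refines the representable functor $\Hom_{\bbD(e)}(X,-)$ on $\pi_0$, together with the compatibility of the enrichment with the triangulated shift (shifting $X$ corresponding to looping/delooping the mapping spectrum). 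Applied to $\bbD=\Mloc$, $X=\Uloc(\underline{k})$ and $Y=\Uloc(\cA)$, this identifies $\pi_n\,\bbR\Hom(\,\Uloc(\underline{k}),\,\Uloc(\cA)\,)$ with $\Hom(\,\Uloc(\underline{k})[n],\,\Uloc(\cA)\,)$.

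Combining these two identifications with the isomorphism of Theorem~\ref{thm:main} yields the claimed natural isomorphisms $\Hom(\,\Uloc(\underline{k})[n],\,\Uloc(\cA)\,)\simeq\bbK_n(\cA)$ for all $n\in\bbZ$; naturality in $\cA$ is inherited from that of Theorem~\ref{thm:main} and of the enrichment. I do not expect any genuine obstacle here: the argument is a formal manipulation once Theorem~\ref{thm:main} is in hand, and the only point requiring a little care is the bookkeeping of the shift convention (\ie whether $\pi_n$ corresponds to $X[n]$ or to $X[-n]$), which is fixed once and for all by the conventions of \S\ref{sub:Stab}, and with those conventions the statement reads exactly as above.
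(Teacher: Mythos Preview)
Your proposal is correct and matches the paper's approach: the corollary is stated without proof immediately after Theorem~\ref{thm:main}, indicating that it is regarded as a direct consequence obtained exactly as you describe, by applying $\pi_n$ and using the identification $\pi_n\,\bbR\Hom_{\bbD}(X,Y)\simeq\Hom_{\bbD(e)}(X[n],Y)$ from the spectral enrichment of~\S\ref{sub:Stab}.
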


\section{Higher Chern characters}\label{chap:Chern}
In this Section we will show how our co-representability Theorem~\ref{thm:main} furnishes us for free higher Chern characters and higher trace maps; see Theorem~\ref{thm:Chern}.
Recall that, throughout the article, we have been working over a commutative base ring $k$.

Consider a localizing invariant (\ref{not:Loc})
$$ E: \HO(\dgcat) \too \HO(\Spt) \,,$$
with values in the triangulated derivator of spectra.
Non-connective $K$-theory furnishes us also a localizing invariant\,:
$$ \bbK:\HO(\dgcat) \too \HO(\Spt)\, .$$
We will use Theorem \ref{thm:main} and its Corollary~\ref{cor:negK} to understand
the natural transformations from $\bbK$ to $E$.
Since $E$ and $\bbK$ are localizing invariants, they descend by Theorem~\ref{thm:loc} to homotopy colimit preserving morphisms of derivators
$$ E_{\mathsf{loc}},\, \bbK_{\mathsf{loc}}: \Mloc \too \HO(\Spt)\,,$$
such that $E_{\mathsf{loc}} \circ \Uloc=E$
and $\bbK_{\mathsf{loc}}\circ\Uloc=\bbK$.

\begin{theorem}\label{thm:abschern}
There is a canonical bijection between the following sets\,:
\begin{itemize}
\item[{(i)}] The set of $2$-morphisms of derivators $\bbK_{\mathsf{loc}} \Rightarrow E_{\mathsf{loc}}$.
\item[{(ii)}] The set of $2$-morphisms of derivators $\bbK\Rightarrow E$.
\item[{(iii)}] The set $E_0(\underline{k}):=\pi_0(E(\underline{k}))$ (see Definition~\ref{def:Icells}).
\end{itemize}
\end{theorem}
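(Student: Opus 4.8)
The plan is to deduce the theorem from the two main results already available, namely Theorem~\ref{thm:loc} and Theorem~\ref{thm:main}, by establishing the bijections (i)$\leftrightarrow$(ii) and (i)$\leftrightarrow$(iii) separately; together with the definition $E_0(\underline{k})=\pi_0(E(\underline{k}))$ this yields the three-way correspondence.

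The bijection (i)$\leftrightarrow$(ii) is purely formal. Applying Theorem~\ref{thm:loc} with $\bbD=\HO(\Spt)$, the functor $(\Uloc)^{\ast}\colon\HomC(\Mloc,\HO(\Spt))\to\HomL(\HO(\dgcat),\HO(\Spt))$ is an equivalence of categories, and by construction it sends $\bbK_{\mathsf{loc}}$ to $\bbK_{\mathsf{loc}}\circ\Uloc=\bbK$ and $E_{\mathsf{loc}}$ to $E_{\mathsf{loc}}\circ\Uloc=E$. Since an equivalence of categories is fully faithful, it induces a bijection between the set of morphisms $\bbK_{\mathsf{loc}}\to E_{\mathsf{loc}}$ in $\HomC(\Mloc,\HO(\Spt))$, that is, the set of $2$-morphisms of derivators $\bbK_{\mathsf{loc}}\Rightarrow E_{\mathsf{loc}}$, and the set of $2$-morphisms $\bbK\Rightarrow E$.

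For the bijection (i)$\leftrightarrow$(iii) I would use Theorem~\ref{thm:main} in two ways. First, it asserts that $\Uloc(\underline{k})$ is compact in $\Mloc(e)$, so by Remark~\ref{rk:enrichcomp} the spectrum-valued morphism of derivators $\bbR\Hom_{\Mloc}(\Uloc(\underline{k}),-)\colon\Mloc\to\HO(\Spt)$ preserves homotopy colimits; second, it identifies $\bbK_{\mathsf{loc}}$ with $\bbR\Hom_{\Mloc}(\Uloc(\underline{k}),-)$ as homotopy colimit preserving morphisms $\Mloc\to\HO(\Spt)$, both agreeing naturally on the generating family of objects $\Uloc(\cA)$. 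Thus (i) is identified with the set of $2$-morphisms from the corepresentable morphism $\bbR\Hom_{\Mloc}(\Uloc(\underline{k}),-)$ to $E_{\mathsf{loc}}$, and I would invoke the Yoneda lemma for the canonical enrichment of a triangulated derivator over the derivator of spectra (see~\S\ref{sub:Stab}): it identifies this set with $\pi_0\,E_{\mathsf{loc}}(\Uloc(\underline{k}))=\pi_0\,E(\underline{k})=E_0(\underline{k})$, the bijection being given by evaluating a $2$-morphism $\phi$ on the unit class $\mathrm{id}_{\Uloc(\underline{k})}\in\pi_0\,\bbR\Hom_{\Mloc}(\Uloc(\underline{k}),\Uloc(\underline{k}))$, with inverse sending a class $c\colon\mathbb{S}\to E(\underline{k})$ to the $2$-morphism whose component at $M$ is the composite $\bbR\Hom_{\Mloc}(\Uloc(\underline{k}),M)\to\bbR\Hom_{\HO(\Spt)}(E_{\mathsf{loc}}(\Uloc(\underline{k})),E_{\mathsf{loc}}(M))\xrightarrow{\,c^{\ast}\,}E_{\mathsf{loc}}(M)$, the first arrow being the enrichment of $E_{\mathsf{loc}}$. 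Transporting through the isomorphism of Theorem~\ref{thm:main} and the equivalence $(\Uloc)^{\ast}$, and using that the corepresentability isomorphism carries $\mathrm{id}_{\Uloc(\underline{k})}$ to the unit $\mathbb{S}\to\bbK(\underline{k})$, \ie to $1\in\bbK_0(\underline{k})$ (Corollary~\ref{cor:negK}), the resulting correspondence (ii)$\leftrightarrow$(iii) acquires the explicit form: a $2$-morphism $\psi\colon\bbK\Rightarrow E$ corresponds to $\psi_{\underline{k}}(1)\in\pi_0\,E(\underline{k})$.

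The step I expect to be the main obstacle is the rigorous formulation of this last Yoneda argument inside the formalism of derivators. Concretely, one needs: that a $2$-morphism between homotopy colimit preserving morphisms of triangulated derivators with target $\HO(\Spt)$ is reconstructible from its component at the terminal category $e$; and that the canonical spectral enrichment $\bbR\Hom_{\Mloc}(X,Y)\to\bbR\Hom_{\HO(\Spt)}(E_{\mathsf{loc}}(X),E_{\mathsf{loc}}(Y))$ of $E_{\mathsf{loc}}$ is natural in both variables, compatible with composition and with the triangulated structure, and carries identities to identities. These are formal properties of the enrichment recorded in~\S\ref{sub:Stab}; granting them together with the compactness of $\Uloc(\underline{k})$ from Theorem~\ref{thm:main}, the verification that the two constructions above are mutually inverse reduces to the observations that the enrichment preserves identities and that, by naturality, the component of $\phi$ at any object $M$ sends a class $g\in\pi_0\,\bbR\Hom_{\Mloc}(\Uloc(\underline{k}),M)$ to $E_{\mathsf{loc}}(g)\circ\phi_{\Uloc(\underline{k})}(\mathrm{id})$ — a routine diagram chase.
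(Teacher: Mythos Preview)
Your proof is correct and follows essentially the same approach as the paper: the bijection (i)$\leftrightarrow$(ii) comes from the equivalence of Theorem~\ref{thm:loc}, and the remaining bijection is obtained by identifying $\bbK$ (equivalently $\bbK_{\mathsf{loc}}$) with the corepresentable functor $\bbR\Hom(\Uloc(\underline{k}),-)$ via Theorem~\ref{thm:main} and then applying the enriched Yoneda lemma, with the explicit inverse maps $c\mapsto c(\underline{k})(1)$ and $x\mapsto c_x$ constructed exactly as you describe. The only cosmetic difference is that the paper sets up the Yoneda argument on the (ii)-side rather than the (i)-side, and phrases the intermediate step as a factorization $\bbK\Rightarrow\bbR\Hom(E(\underline{k}),E(-))\Rightarrow E$; your concerns about the derivator-level Yoneda formalism are legitimate but the paper treats them at the same level of detail (``one checks easily'').
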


\begin{proof}
The bijection between (i) and (ii) comes directly from Theorem \ref{thm:loc}.
By virtue of Theorem \ref{thm:main}, we have
$$\bbK(\cA)\simeq\bbR\Hom(\,\Uloc(\underline{k}),\,\Uloc(\cA)\,)\, .$$
Therefore, as $E_{\mathsf{loc}}$ is a functor enriched in spectra (see \ref{sub:Stab}),
we have a natural map
$$\bbK(\cA)=\bbR\Hom(\,\Uloc(\underline{k}),\,\Uloc(\cA)\,)
\too \bbR\Hom(E_{\mathsf{loc}}(\Uloc(\underline{k})), E_{\mathsf{loc}}(\Uloc(\cA))\,)\, .$$
But, by definition of $E_{\mathsf{loc}}$ we also have:
$$\bbR\Hom(E_{\mathsf{loc}}(\Uloc(\underline{k})),E_{\mathsf{loc}}(\Uloc(\cA))\,)
\simeq \bbR\Hom(E(\underline{k}),E(\cA)\,)\simeq E(\cA)^{E(\underline{k})}\, .$$
In other words, we have a $2$-morphism of derivators
$$\bbK\Rightarrow F=\bbR\Hom(E(\underline{k}),E(-))\, .$$
Any class $x\in E_0(\underline{k})$, seen as a map $x:S^0\to E(\underline{k})$ in
the stable homotopy category of spectra, obviously defines a $2$-morphism of derivators
$F\Rightarrow E$ (using the formula $E=\bbR\Hom(S^0,E(-))$), hence gives
rise to a $2$-morphism of derivators $c_x:\bbK\too E$. On the other hand,
given a natural transformation $c:\bbK\Rightarrow E$, we have in particular a map
$$c(k):\bbK_0(k)=\pi_0\bbK(k)\too\pi_0E(\underline{k})=E_0(\underline{k})\,,$$
which furnishes us an element $c(k)(1)$ in $E_0(\underline{k})$.
One checks easily that the maps $c\mapsto c(k)(1)$ and $x\mapsto c_x$
are inverse to each other.
\end{proof}

\begin{remark}
As $\bbK_{\mathsf{loc}}$ and $E_{\mathsf{loc}}$ are enriched
over spectra (see \S\ref{sub:Stab}), there is a spectra $\bbR\mathsf{Nat}(\bbK_{\mathsf{loc}},E_{\mathsf{loc}})$
of natural transformations.
Theorem \ref{thm:abschern} and the enriched Yoneda lemma leads to
an enriched version of the preceding Theorem (as it was suggested in its proof)\,:
\end{remark}

\begin{corollary}
There is a natural isomorphism in the stable homotopy category of spectra
$$\bbR\mathsf{Nat}(\bbK_{\mathsf{loc}},E_{\mathsf{loc}})\simeq E(\underline{k})\, .$$
\end{corollary}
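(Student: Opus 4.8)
The approach is to upgrade Theorem~\ref{thm:abschern} from an identification of the \emph{set} of $2$-morphisms $\bbK_{\mathsf{loc}}\Rightarrow E_{\mathsf{loc}}$ to an identification of the \emph{spectrum} $\bbR\mathsf{Nat}(\bbK_{\mathsf{loc}},E_{\mathsf{loc}})$, by combining the co-representability of Theorem~\ref{thm:main} with the enriched Yoneda lemma in the spectral enrichment recalled in \S\ref{sub:Stab}.

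The first step is to identify $\bbK_{\mathsf{loc}}$, as a morphism of derivators $\Mloc\to\HO(\Spt)$, with the spectrally co-represented functor $\bbR\Hom_{\Mloc}(\Uloc(\underline{k}),-)$. Both morphisms preserve homotopy colimits: the former by Theorem~\ref{thm:loc}, the latter because $\Uloc(\underline{k})$ is compact in $\Mloc(e)$ by Theorem~\ref{thm:main} (see Remark~\ref{rk:enrichcomp}). By Theorem~\ref{thm:main} they agree after precomposition with $\Uloc$; since the image of $\Uloc$ generates $\Mloc$ under homotopy colimits (by construction of the localizing motivator, compare Proposition~\ref{prop:GenAdd}) and a homotopy colimit preserving morphism out of $\Mloc$ is determined up to canonical isomorphism by its restriction to a generating family, the two morphisms are canonically isomorphic.

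The second step is the enriched Yoneda lemma: for any homotopy colimit preserving morphism $E_{\mathsf{loc}}\colon\Mloc\to\HO(\Spt)$ and any object $X$ of $\Mloc(e)$, there is a natural isomorphism of spectra $\bbR\mathsf{Nat}\bigl(\bbR\Hom_{\Mloc}(X,-),\,E_{\mathsf{loc}}\bigr)\simeq E_{\mathsf{loc}}(X)$. Applying this with $X=\Uloc(\underline{k})$, and using $E_{\mathsf{loc}}(\Uloc(\underline{k}))=E(\underline{k})$ by definition of $E_{\mathsf{loc}}$, gives the claimed isomorphism $\bbR\mathsf{Nat}(\bbK_{\mathsf{loc}},E_{\mathsf{loc}})\simeq E(\underline{k})$ in the stable homotopy category. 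As a sanity check, $\pi_n$ of the left-hand side is the set of $2$-morphisms $\bbK_{\mathsf{loc}}\Rightarrow E_{\mathsf{loc}}[-n]$, which Theorem~\ref{thm:abschern} applied to the localizing invariant $E[-n]$ identifies with $\pi_0(E(\underline{k})[-n])=\pi_n E(\underline{k})$, matching $\pi_n$ of the right-hand side.

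The main obstacle is setting up and proving the enriched Yoneda lemma in this context. One must first make precise the spectrum $\bbR\mathsf{Nat}(-,-)$ of natural transformations between homotopy colimit preserving morphisms $\Mloc\to\HO(\Spt)$ (the content of the Remark preceding the statement, which rests on the enrichment of \S\ref{sub:Stab}), and then check that co-represented functors satisfy the Yoneda property. Once this formalism is available the argument is purely formal. Alternatively, one can avoid it entirely: define the mapping spectrum through its spaces of maps, compute $\pi_\ast$ levelwise using Theorem~\ref{thm:abschern} applied to all shifts of $E$ as above, and verify that these identifications are compatible with the structure maps of the spectrum --- which follows since the construction in Theorem~\ref{thm:abschern} is natural and additive in $E$.
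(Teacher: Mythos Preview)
Your proposal is correct, and in fact you have written down both the paper's proof and an alternative. Your primary argument---identify $\bbK_{\mathsf{loc}}$ with the co-represented functor $\bbR\Hom_{\Mloc}(\Uloc(\underline{k}),-)$ and invoke the enriched Yoneda lemma---is more conceptual but, as you acknowledge, requires setting up the spectral Yoneda lemma in this context. The paper instead takes exactly the ``alternative'' route you sketch at the end: it exhibits a specific comparison map $\bbR\mathsf{Nat}(\bbK_{\mathsf{loc}},E_{\mathsf{loc}})\to E(\underline{k})$ (evaluation at the unit $1\in\bbK_0(k)$), observes that $\pi_0$ of this map is a bijection by Theorem~\ref{thm:abschern}, and then gets the bijection on $\pi_n$ for all $n$ by applying the same theorem to the shifted invariant $E(-)[-n]$. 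This buys the paper a shorter proof that avoids any discussion of the enriched Yoneda formalism, at the cost of being less structural; your main approach, once the Yoneda machinery is in place, would give the isomorphism in one stroke and would generalize more readily.
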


\begin{proof}
Theorem \ref{thm:abschern} implies that the natural map
$$\bbR\mathsf{Nat}(\bbK_{\mathsf{loc}},E_{\mathsf{loc}})\too
\overline{E}(\Uloc(\underline{k}))=E(\underline{k})$$
induced by the unit $1\in\bbK(k)$ leads to a bijection
$$\pi_0\bbR\mathsf{Nat}(\bbK_{\mathsf{loc}},E_{\mathsf{loc}})\simeq \pi_0E(\underline{k})$$
(as $\pi_0\bbR\mathsf{Nat}(\bbK_{\mathsf{loc}},E_{\mathsf{loc}})$ is the set
of $2$-morphisms $\bbK_{\mathsf{loc}}\Rightarrow E_{\mathsf{loc}}$). By applying
this to the morphism of derivators $E(-)[-n]:\HO(\dgcat) \to \HO(\Spt)$ (\ie $E$
composed with the loop functor iterated $n$ times), one deduces
that the map
$$\pi_n\bbR\mathsf{Nat}(\bbK_{\mathsf{loc}}, E_{\mathsf{loc}})\too \pi_n E(\underline{k})$$
is bijective for any integer $n$.
\end{proof}

As an illustration, let us consider
$$ \bbK_n(-): \Ho(\dgcat) \too \ModZ\,,\,\, n \in \bbZ\, ,$$
the $n$-th algebraic $K$-theory group functor (see~\cite[\S12]{Marco}),
$$ HC_j(-):\Ho(\dgcat) \too \ModZ\,,\,\, j \geq 0\, ,$$
the $j$-th cyclic homology group functor (see~\cite[Thm.\,10.7]{Additive}), and
$$ THH_j(-): \Ho(\dgcat) \too \ModZ\,,\,\, j \geq 0\, ,$$
the $j$-th topological Hochschild homology group functor (see~\cite[\S 11]{THH}).
\begin{theorem}\label{thm:Chern}
We have the following functorial morphisms of abelian groups for every
dg category $\cA$\,:
\begin{itemize}
\item[(i)] Higher Chern characters
$$ ch_{n,r}: \bbK_n(\cA) \too HC_{n+2r}(\cA)\,,\,\,n \in \bbZ\,,\,\, r \geq 0\,,$$
such that $ch_{0,r}: \bbK_0(k) \too HC_{2r}(k)$ sends $1\in \bbK_0(k)$ to
a generator of the $k$-module of rank one $HC_{2r}(k)$.
\item[(ii)] When $k=\bbZ$, higher trace maps
$$ tr_{n}: \bbK_n(\cA) \too THH_{n}(\cA)\,, \, n \in \bbZ\, ,$$
such that $tr_{0}: \bbK_0(k) \too THH_{0}(k)$ sends $1\in\bbK_0(\bbZ)$
to $1\in THH_0(\bbZ)$, and
$$ tr_{n,r}: \bbK_n(\cA) \too THH_{n+2r-1}(\cA)\,, \, n \in \bbZ\,,\,\, r \geq 1\,,$$
such that $tr_{0,r}: \bbK_0(k) \too THH_{2r-1}(k)$ sends $1\in\bbK_0(\bbZ)$
to a generator in the cyclic group $THH_{2r-1}(\bbZ)\simeq\bbZ/ r\bbZ$.
\item[(iii)] When $k=\bbZ/p\bbZ$, with $p$ a prime number, higher trace maps
$$ tr_{n,r}: \bbK_n(\cA) \too THH_{n+2r}(\cA)\,,\, n,r \in \bbZ\,,$$
such that $tr_{0,r}: \bbK_0(k) \too THH_{2r}(k)$ sends $1\in\bbK_0(\bbZ)$
to a generator in the cyclic group $THH_0(\bbZ/p\bbZ)\simeq\bbZ/ p\bbZ$.
\end{itemize}
\end{theorem}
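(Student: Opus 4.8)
The plan is to obtain all the asserted maps as immediate consequences of Theorem~\ref{thm:abschern}. Recall that, for any localizing invariant $E\colon\HO(\dgcat)\to\HO(\Spt)$, that theorem identifies the $2$-morphisms of derivators $\bbK\Rightarrow E$ with the group $E_0(\underline k)=\pi_0(E(\underline k))$, the bijection sending $c$ to the image of $1\in\bbK_0(k)$ under $c(\underline k)\colon\bbK_0(k)\to\pi_0(E(\underline k))$. First I would note that cyclic homology and topological Hochschild homology both refine to localizing invariants valued in the triangulated derivator of spectra: for $HC$, start from the mixed-complex functor, which is a localizing invariant (\cite[Thm.\,10.7]{Additive}), form the cyclic homology complex, and compose with the exact, filtered-homotopy-colimit-preserving realization from the derived category of $k$-modules to $\HO(\Spt)$; for $THH(-)\colon\HO(\dgcat)\to\HO(\Spt)$ use its extension to dg categories together with its finitariness and localization properties (\cite[\S10]{THH}). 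For any integer $m$, composing such an $E$ with the shift $[-m]$ of $\HO(\Spt)$ yields again a localizing invariant $E(-)[-m]$, with $\pi_n(E(\cA)[-m])\cong\pi_{n+m}(E(\cA))$ naturally in $\cA$. Thus Theorem~\ref{thm:abschern}, applied to $E(-)[-m]$, associates to each class in $\pi_m(E(\underline k))$ a $2$-morphism $\bbK\Rightarrow E(-)[-m]$, hence a family of maps $\bbK_n(\cA)\to\pi_{n+m}(E(\cA))$ functorial in $\cA$ and normalised so that the chosen class is the image of $1\in\bbK_0(k)$.

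For part (i) I would first record the elementary fact that the mixed complex of $\underline k$ is $k$ placed in degree zero with vanishing Connes operator, so that $HC_{2r}(\underline k)\cong k$ is free of rank one while $HC_{2r+1}(\underline k)=0$. Fixing $r\geq0$, taking $E=HC(-)$ and $m=2r$, and choosing a generator of $HC_{2r}(k)$, the resulting $2$-morphism $\bbK\Rightarrow HC(-)[-2r]$ gives, on stable homotopy groups, the desired higher Chern characters $ch_{n,r}\colon\bbK_n(\cA)\to HC_{n+2r}(\cA)$, with $ch_{0,r}$ sending $1\in\bbK_0(k)$ to the chosen generator by the normalisation in Theorem~\ref{thm:abschern}.

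For parts (ii) and (iii) I would invoke the classical computations of B\"okstedt: $THH_0(\bbZ)\cong\bbZ$, $THH_{2i-1}(\bbZ)\cong\bbZ/i\bbZ$ and $THH_{2i}(\bbZ)=0$ for $i\geq1$; and $THH_\ast(\bbZ/p\bbZ)\cong(\bbZ/p\bbZ)[\mu]$ with $\mu$ in degree $2$. For $k=\bbZ$, applying the construction of the first paragraph to $E=THH(-)$ with $m=0$ and the unit $1\in THH_0(\bbZ)$ produces trace maps $tr_n\colon\bbK_n(\cA)\to THH_n(\cA)$ with $tr_0(1)=1$, while taking $m=2r-1$ ($r\geq1$) and a generator of $THH_{2r-1}(\bbZ)\cong\bbZ/r\bbZ$ produces the $tr_{n,r}\colon\bbK_n(\cA)\to THH_{n+2r-1}(\cA)$. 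For $k=\bbZ/p\bbZ$, taking $m=2r$ and a generator of $THH_{2r}(\bbZ/p\bbZ)\cong\bbZ/p\bbZ$ produces the remaining $tr_{n,r}\colon\bbK_n(\cA)\to THH_{n+2r}(\cA)$; functoriality in $\cA$ is again automatic, each map being induced by a $2$-morphism of derivators.

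I expect no genuinely hard step here --- this is precisely the sense in which the higher Chern characters and trace maps come ``for free'' from the co-representability Theorem~\ref{thm:main}. The only points needing care are bookkeeping: choosing the loop shifts in the right direction so that the targets are the positively-shifted groups $HC_{n+2r}$, $THH_{n+2r-1}$, $THH_{n+2r}$ rather than their negative shifts, and verifying the structural input that $HC$ and $THH$ genuinely give \emph{spectrum-valued} localizing invariants, i.e.\ that Keller's localization theorem for mixed complexes and its analogue for $THH$ survive the passage to spectra (they do, that passage being exact and finitary). Together with the classical evaluations of $HC_\ast$ and $THH_\ast$ on the base ring, nothing else is required.
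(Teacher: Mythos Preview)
Your proposal is correct and follows essentially the same route as the paper: both apply Theorem~\ref{thm:abschern} to the shifted invariants $HC(-)[-2r]$ and $THH(-)[-m]$, feeding in the known computations $HC_\ast(k)\simeq k[u]$ and B\"okstedt's $THH_\ast(\bbZ)$, $THH_\ast(\bbZ/p\bbZ)$ to select the relevant classes. The only cosmetic difference is that the paper keeps $HC$ valued in $\cD(k)$ and takes $E=\bbR\Hom_{\cD(k)}(k,HC(-))[-2r]$ rather than first passing to spectra, and it cites Blumberg--Mandell's localization theorem for the localizing property of $THH$.
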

\begin{proof}
(i) Recall from \cite[Thm.\,10.7]{Additive}, that we have a {\em mixed complex} localizing invariant
$$C: \HO(\dgcat) \too \HO(\Lambda\text{-}\mathsf{Mod})\,.$$
This defines a cyclic homology functor
$$HC: \HO(\dgcat) \too \cD(k)\, ,$$
where $HC(\cA)=k\otimes^\bbL_\Lambda C(\cA)$, and where $\cD(k)$
denotes the triangulated derivator associated to the
model category of unbounded complexes of $k$-modules (localized by
quasi-isomorphisms).
We thus have the following formula (see Lemma \ref{existSpHom})\,:
$$\begin{aligned}
HC_j(\cA)&=\Hom_{\cD(k)}(k,HC(\cA)[-j])\\
&\simeq\Hom_{\cD(k)}((S^0\otimes k,HC(\cA)[-j])\\
&\simeq\Hom_{\Ho(\Spt)}(S^0,\bbR\Hom_{\cD(k)}(k,HC(\cA)[-j]))\\
&\simeq\pi_0(\bbR\Hom_{\cD(k)}(k,HC(\cA)[-j]))\, .
\end{aligned}$$
Since $$ HC_{\ast}(k)\simeq k[u], \,\, |u|=2\,,$$
we conclude from Theorem \ref{thm:abschern} applied to
$E=\bbR\Hom_{\cD(k)}(k,HC(-))[-2r]$, that for each integer $r\geq 0$,
the element $u^r\in HC_{2r}(k)$ defines a natural map
$$\bbK(\cA)\too\bbR\Hom_{\cD(k)}(k,HC(\cA))[-2r]\, .$$

(ii) By Blumberg-Mandell's localization Theorem \cite[Thm.\,6.1]{BMandell} and the connection between dg and spectral categories developped in \cite{THH}, we have a localizing invariant $$THH : \HO(\dgcat) \too \HO(\Spt)\,.$$
Thanks to B{\"o}kstedt~\cite[0.2.3]{DGMcC}, we have the following calculation
\[
THH_j(\bbZ)=\left\{ \begin{array}{lcl}
\bbZ & \text{if} & j=0 \\
\bbZ/r\bbZ& \text{if} & j=2r-1, \,\, r \geq 1\\
0 & & \text{otherwise,}
\end{array} \right.
\]
Therefore the canonical generators of $\bbZ$ and $\bbZ/r\bbZ$ furnishes us the higher trace maps
by applying Theorem \ref{thm:abschern} to $E=THH(-)[-n]$ with $n=0$, or with $n=2r-1$ and $r\geq 1$.

(iii) The proof is the same as the preceding one\,: over the ring $\bbZ/p\bbZ$, we have the following calculation (see \cite[0.2.3]{DGMcC})
\[
THH_j(\bbZ/p\bbZ)=\left\{ \begin{array}{lcl}
\bbZ/p\bbZ & \text{if $j$ is even,} \\
0 & \text{if $j$ is odd.}
\end{array} \right.
\]
Hence the canonical generator $\bbZ/p\bbZ$ furnishes us the higher trace maps
by applying Theorem \ref{thm:abschern} to $E=THH(-)[-2r]$.

\end{proof}
\appendix
\section{Grothendieck derivators}\label{appendix}
\subsection{Notations}
The original reference for derivators is Grothendieck's
manus\-cript~\cite{Grothendieck} and Heller's monograph~\cite{Heller0}
on homotopy theories. See also \cite{maltsin,imdir,propuni,CN}.
A derivator $\bbD$ consists of a strict contravariant $2$-functor from the $2$-category of small
categories to the $2$-category of categories 
$$
\bbD: \Cat^{\op} \longrightarrow \CAT,
$$
subject to certain conditions, the main ones being that for any
functor between small categories $u:X\to Y$, the \emph{inverse image
functor}
$$u^*=\bbD(u):\bbD(Y)\too\bbD(X)$$
has a left adjoint, called the \emph{homological direct image functor},
$$u_!:\bbD(X)\too\bbD(Y)\, ,$$
as well as right adjoint, called the \emph{cohomological direct image functor}
$$u_{\ast}:\bbD(X)\too\bbD(Y)\,.$$
See~\cite{imdir} for details. 

\begin{example}
The essential example to keep in mind is the derivator $\bbD=\HO(\cM)$
associated to a (complete and cocomplete) Quillen model category~$\cM$
and defined for every small category~$X$ by
\begin{equation*}
\HO(\cM)\ (X)=\Ho\big(\Fun(X^{\op},\cM)\big)\,,
\end{equation*}
where $\Fun(X^\op,\cM)$ is the category of presheaves on $X$
with values in $\cM$, and $\Ho\big(\Fun(X^{\op},\cM)\big)$ is
its homotopy category; see \cite[Thm.\, 6.11]{imdir}.
\end{example}
We denote by $e$ the $1$-point category with one object and one
(identity) morphism. Heuristically, the category $\bbD(e)$ is the
basic ``derived" category under consideration in the
derivator~$\bbD$: we might think of $\bbD$ as a structure
on $\bbD(e)$ which allows to speak of homotopy
limits and colimits in $\bbD(e)$. For instance, if $\bbD=\HO(\cM)$ then
$\bbD(e)$ is the usual homotopy category $\Ho(\cM)$ of $\cM$.

\begin{definition}\label{def:srp}
We now recall some technical properties of derivators\,:
\begin{itemize}
\item[(i)] A derivator $\bbD$ is called {\em strong} if for every finite free category
$X$ and every small category $Y$, the natural functor $ \bbD(X
\times Y) \longrightarrow \Fun(X^{\op},\bbD(Y))$ is full and
essentially surjective. See~\cite{Heller} for details.
\item[(ii)]
A derivator $\bbD$ is called {\em regular} if in $\bbD$, sequential homotopy
colimits commute with finite products and homotopy pullbacks. See also~\cite{Heller} for details.
\item[(iii)] A derivator $\bbD$ is {\em pointed} if for any closed immersion $i:Z
\rightarrow X$ in $\Cat$ the cohomological direct image functor
$i_{\ast}:\bbD(Z) \longrightarrow \bbD(X)$ has a right adjoint, and
if moreover and dually, for any open immersion $j:U \rightarrow X$
the homological direct image functor $ j_!: \bbD(U) \longrightarrow
\bbD(X)$ has a left adjoint. See~\cite[1.13]{CN} for details.
\item[(iv)]
A derivator $\bbD$ is called {\em triangulated} or {\em stable} if it is
pointed and if every global commutative square in $\bbD$ is
cartesian exactly when it is cocartesian. See~\cite[1.15]{CN} for details.
\end{itemize}
\end{definition}
\begin{remark}
A strong derivator is the same thing as a small homotopy theory in
the sense of Heller~\cite{Heller}. By~\cite[Prop.\,2.15]{catder}, if
$\cM$ is a Quillen model category, its associated derivator
$\HO(\mathcal{M})$ is strong. Moreover, if sequential homotopy
colimits commute with finite products and homotopy pullbacks
in~$\cM$, the associated derivator $\HO(\cM)$ is regular. Notice also that if
$\cM$ is pointed, then so is~$\HO(\cM)$.  Finally, a pointed model category $\cM$ is stable
if and only if its associated derivator $\HO(\cM)$ is triangulated.
\end{remark}
\begin{notation}
Let $\bbD$ and $\bbD'$ be derivators. We denote by $\uHom(\bbD,\bbD')$ the
category of all morphisms of derivators. Its morphisms will be called $2$-morphisms
of derivators. Finally we denote by $\HomC(\bbD,\bbD')$ the
category of morphisms of derivators which commute with homotopy
colimits; see \cite{imdir,propuni}. For instance, 
any left (resp. right) Quillen functor induces a morphism of derivators
which preserves homotopy colimits (resp. limits); see \cite[Prop.\,6.12]{imdir}.
\end{notation}
\begin{notation}\label{derivatorpresheaves}
If $\bbD$ is a derivator and if $A$ is a small category, we denote by
$\bbD_A$ the derivator defined by $\bbD_A(X)=\bbD(A\times X)$.
One can think of $\bbD_A$ as the derivator of presheaves on $A$ with
values in $\bbD$.
\end{notation}
\begin{notation}\label{oppositederivator}
If $\bbD$ is a derivator, its \emph{opposite} $\bbD^\op$ is defined by
$\bbD^\op(A)=\bbD(A^\op)^\op$. In the case $\bbD=\HO(\cM)$, we thus
have $\bbD^\op=\HO(\cM^\op)$.
\end{notation}
\subsection{Left Bousfield localization}\label{sub:leftBousfield}
Let $\bbD$ be a derivator and $S$ a class of morphisms in the base category $\bbD(e)$.

\begin{definition}\label{def:Cis}
The derivator $\bbD$ admits a {\em left Bousfield localization} with
respect to~$S$ if there exists a morphism of derivators
$$ \gamma : \bbD \too \Loc_S\bbD\,,$$
which commutes with homotopy colimits, sends the elements of $S$ to
isomorphisms in $\Loc_S\bbD(e)$ and satisfies the following
universal property\,: for every derivator $\bbD'$ the morphism
$\gamma$ induces an equivalence of categories
$$\gamma^{\ast}: \HomC(\Loc_S\bbD,\bbD')
  \stackrel{\sim}{\too}
  \uHom_{!,S}(\bbD,\bbD')\,,$$
where $\uHom_{!,S}(\bbD,\bbD')$ denotes
the category of morphisms of derivators which commute with homotopy
colimits and send the elements of $S$ to isomorphisms in $\bbD'(e)$.
\end{definition}
\begin{theorem}{(\cite[Thm.\,4.4]{Additive})}
Let $\cM$ be a left proper, cellular model category and $\Loc_S\cM$
its left Bousfield localization \cite[Def.\,4.1.1]{Hirschhorn} with
respect to a set of morphisms~$S$ in $\Ho(\cM)$, $\ie$ to perform the localization we choose in $\cM$ a representative of each element of $S$. Then, the induced morphism of
derivators $\HO(\cM)\to \HO(\Loc_S\cM)$ is a left Bousfield
localization of derivators with respect to $S$.
In this situation, we have moreover a natural adjunction of derivators
$$
\xymatrix{
\HO(\cM) \ar@<1ex>[d] \\
\HO(\Loc_S\cM) \ar@<1ex>[u]\,.
}
$$
\end{theorem}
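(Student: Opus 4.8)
The plan is to derive the statement entirely from the corresponding Bousfield localization at the model-category level together with the standard dictionary between Quillen adjunctions and morphisms of derivators. Since $\cM$ is left proper and cellular, Hirschhorn's existence theorem for Bousfield localizations \cite{Hirschhorn} guarantees that $\Loc_S\cM$ exists, is again left proper and cellular, has the same cofibrations as $\cM$, and has strictly more weak equivalences (the $S$-local equivalences). In particular the identity functors define a Quillen adjunction between $\cM$ and $\Loc_S\cM$, with $\id\colon\cM\to\Loc_S\cM$ left Quillen. Passing to associated derivators --- using that a left (resp.\ right) Quillen functor induces a morphism of derivators preserving homotopy colimits (resp.\ limits); see \cite[Prop.\,6.12]{imdir} --- produces the displayed adjunction, whose left adjoint $\gamma\colon\HO(\cM)\to\HO(\Loc_S\cM)$ commutes with homotopy colimits and sends the elements of $S$ to isomorphisms (any $S$-local equivalence becomes invertible in $\Ho(\Loc_S\cM)$). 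The final assertion of the theorem is nothing but this adjunction.

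The \emph{real content} is the universal property. The key structural fact I would establish is that the right adjoint $\delta=\HO(\id)\colon\HO(\Loc_S\cM)\to\HO(\cM)$ is fully faithful, \ie the counit $\gamma\delta\Rightarrow\id$ is an isomorphism. Over the terminal category this is the familiar statement that $\Ho(\Loc_S\cM)$ is the reflective subcategory of $\Ho(\cM)$ spanned by the $S$-local objects. To obtain it over an arbitrary small index category $A$, I would use that --- again by left properness and cellularity --- the projective model structure on $\Fun(A^{\op},\Loc_S\cM)$ is the left Bousfield localization of the projective model structure on $\Fun(A^{\op},\cM)$ at a set $S_A$ of maps obtained from $S$ by tensoring with presheaves represented on $A$. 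Consequently $\delta_A$ is the inclusion of the reflective subcategory of $S_A$-local diagrams, compatibly with the transition functors $u^{*}$, so $\delta$ is fully faithful as a morphism of derivators.

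Granting this, the universal property would follow by the general mechanism of reflective localizations. Write $\mathsf W$ for the class of morphisms of $\HO(\cM)(e)$ inverted by $\gamma$; by \cite[Prop.\,4.1]{Additive} (applied over each $A$ via the previous paragraph), $\mathsf W$ is the smallest class containing $S$, stable under homotopy colimits and satisfying two-out-of-three, and $\HO(\Loc_S\cM)(e)$ is the localization of $\HO(\cM)(e)$ at $\mathsf W$. Given any derivator $\bbD'$ and any $F\in\HomC(\HO(\cM),\bbD')$ sending $S$ to isomorphisms, the class of maps inverted by $F$ is stable under homotopy colimits and satisfies two-out-of-three, hence contains $\mathsf W$; therefore $F$ inverts the unit $\id\Rightarrow\delta\gamma$, and $\overline F:=F\delta$ satisfies $\overline F\gamma\simeq F$. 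One then checks that $\overline F$ preserves homotopy colimits (using $\gamma\delta\simeq\id$ and essential surjectivity of $\gamma$ onto a generating family) and that such a factorization is unique up to unique isomorphism, yielding the equivalence $\gamma^{*}\colon\HomC(\HO(\Loc_S\cM),\bbD')\stackrel{\sim}{\too}\uHom_{!,S}(\HO(\cM),\bbD')$ demanded by Definition~\ref{def:Cis}.

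The hard part is the middle paragraph: the compatibility of Bousfield localization with passage to diagram categories, \ie the fact that $\HO(\Loc_S\cM)(A)$ is again a reflective localization of $\HO(\cM)(A)$, uniformly in $A$. This is the only genuinely model-categorical input needed, and it is exactly what the left-properness and cellularity hypotheses are present to guarantee; the rest of the argument is formal manipulation of derivators.
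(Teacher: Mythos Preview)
The paper does not prove this theorem: it is quoted verbatim from \cite[Thm.\,4.4]{Additive} and no argument is supplied here. So there is nothing in the present paper to compare your proposal against.

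That said, your outline is the correct and standard one, and it is essentially how the result is proved in \cite{Additive}. Two small points are worth tightening. First, for the step ``$\Fun(A^{\op},\Loc_S\cM)$ is the Bousfield localization of $\Fun(A^{\op},\cM)$'', you do not need to re-invoke Hirschhorn's existence theorem (which would require checking that the diagram category is again cellular); it suffices to observe directly that both projective model structures on $\Fun(A^{\op},\cM)$ have the same cofibrations and that the weak equivalences of the second are the objectwise $S$-local equivalences, so the identity is a left Quillen functor with fully faithful right derived functor. Second, in your universal-property paragraph you argue only at the level of $\bbD(e)$ that $F$ inverts the unit; to conclude for all $A$ you should add the observation that the unit $\eta_X:X\to\delta\gamma X$ in $\HO(\cM)(A)$ is \emph{objectwise} an $S$-local equivalence, and that any morphism of derivators commutes with the evaluation functors $a^*$, which jointly detect isomorphisms by the derivator axioms. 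With these two clarifications your sketch goes through.
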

\begin{lemma}{(\cite[Lemma~4.3]{Additive})}
The Bousfield localization $\Loc_S\bbD$
of a \emph{triangulated} derivator $\bbD$ remains triangulated as
long as $S$ is stable under the desuspension functor $[-1]$.
\end{lemma}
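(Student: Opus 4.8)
The plan is to reduce the statement to the single assertion that, after localization, the suspension endofunctor becomes an equivalence on the underlying category, using the standard fact that a pointed derivator is triangulated precisely when its suspension functor $[1]\colon\bbD'(e)\to\bbD'(e)$ is an equivalence of categories. First I would dispose of pointedness: in the situations where the lemma is applied one has $\bbD=\HO(\cM)$ and $\Loc_S\bbD=\HO(\Loc_S\cM)$ with $\cM$ a (left proper, cellular) pointed model category, and a left Bousfield localization leaves the zero object untouched, so $\Loc_S\cM$ is pointed and hence so is its associated derivator. Abstractly, one uses instead that $\gamma_!$ preserves the initial object, that the zero object $0$ of $\bbD(e)$ is $S$-local (every map into it is invertible), and that therefore $\gamma_!(0)$ is both initial and terminal in $\Loc_S\bbD(e)$; a derivator whose underlying category has a zero object is pointed.

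The main step is an analysis of the class $\mathsf{W}$ of $S$-local equivalences. By \cite[Prop.\,4.1]{Additive}, the functor $\gamma_!\colon\bbD(e)\to\Loc_S\bbD(e)$ is the localization of $\bbD(e)$ at the smallest class $\mathsf{W}$ containing $S$, satisfying the two-out-of-three property, and closed under homotopy colimits. Since the suspension $X\mapsto X[1]$ is a finite homotopy colimit, $\mathsf{W}$ is automatically stable under $[1]$; the content of the hypothesis $S[-1]\subseteq S$ is that $\mathsf{W}$ is then also stable under the loop functor $[-1]$. For this I would use that, $\bbD$ being triangulated, $[-1]$ is an equivalence quasi-inverse to $[1]$, and therefore commutes with all homotopy colimits; consequently the class $\{\,f\mid f[-1]\in\mathsf{W}\,\}$ contains $S$ (by the hypothesis on $S$), has the two-out-of-three property, and is closed under homotopy colimits, hence contains $\mathsf{W}$. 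Thus $\mathsf{W}=\mathsf{W}[1]=\mathsf{W}[-1]$.

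With this in hand the conclusion is formal. The mutually inverse self-equivalences $[1]$ and $[-1]$ of $\bbD(e)$ preserve $\mathsf{W}$, hence descend to mutually inverse endofunctors of $\bbD(e)[\mathsf{W}^{-1}]=\Loc_S\bbD(e)$; and since $\gamma$ commutes with homotopy colimits, the descended functor induced by $[1]$ is canonically identified with the suspension functor of the derivator $\Loc_S\bbD$. Therefore the suspension functor of $\Loc_S\bbD$ is an equivalence on the underlying category, so $\Loc_S\bbD$ is triangulated; equivalently, one can now check directly that every cocartesian square in $\Loc_S\bbD$ is cartesian, which is available precisely because $[-1]$ has been made invertible there. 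The compatibility of the triangulated structure with $\gamma$ is then immediate from $\mathsf{W}=\mathsf{W}[-1]$.

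I expect the only real obstacle to be the stability of $\mathsf{W}$ under $[-1]$ in the second paragraph: a homotopy-colimit-preserving localization manifestly respects suspension but not, a priori, the loop functor, and it is exactly the closure of $S$ under desuspension---combined with the fact that in a stable derivator $[-1]$ commutes with all homotopy colimits---that removes this asymmetry. Everything else reduces to bookkeeping with the universal property of the Bousfield localization and the characterization of stability via the suspension functor.
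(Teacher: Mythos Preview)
The paper itself does not prove this lemma; it simply cites \cite[Lemma~4.3]{Additive}, so there is no in-paper argument to compare against. Your outline is correct and is essentially the standard proof. Two points deserve care. First, the paper's notion of \emph{pointed} (Definition~\ref{def:srp}(iii), taken from \cite[1.13]{CN}) asks for the exceptional adjoints $i^!$ and $j_?$, which is a priori stronger than merely having a zero object in $\bbD'(e)$; the two conditions are in fact equivalent for derivators, but that equivalence is itself a nontrivial lemma, so your model-category reduction (where $\Loc_S\cM$ is visibly pointed) is the cleaner route in this paper's framework. Second, you invoke the characterization ``pointed $+$ suspension invertible $\Leftrightarrow$ triangulated''; this is true but not stated in the paper, so you should cite it or sketch why invertibility of $[1]$ forces cocartesian squares to coincide with cartesian ones. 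Granting these two ingredients, the heart of your argument---showing $\mathsf{W}[-1]\subset\mathsf{W}$ by applying the minimality of $\mathsf{W}$ to the class $\{f\mid f[-1]\in\mathsf{W}\}$, using that in a stable derivator $[-1]$ is an equivalence and hence commutes with all homotopy colimits---is exactly right and is the key observation behind the cited lemma.
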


\subsection{Stabilization and spectral enrichment}\label{sub:Stab}
Let $\bbD$ be a regular pointed strong derivator. In~\cite{Heller}, Heller constructed the universal morphism to a triangulated strong derivator
$$ \stab: \bbD \too \St(\bbD)\,,$$
in the sense of the following Theorem.
\begin{theorem}[Heller~\cite{Heller}]
Let $\bbT$ be a triangulated strong derivator. Then the morphism $\stab$ induces an equivalence of categories
$$ \stab^{\ast}:\HomC(\St(\bbD),\bbT) \too \HomC(\bbD,\bbT)\,.$$
\end{theorem}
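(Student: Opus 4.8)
The plan is to prove the asserted equivalence by producing an explicit quasi-inverse to $\stab^{\ast}$. Recall first the shape of Heller's construction of $\St(\bbD)$: a \emph{spectrum object} of a regular pointed strong derivator $\bbD$ is a coherent diagram $E$ indexed by a zig-zag poset (such as the poset $\mathbf{V}$ of \S\ref{chap:negK}), whose off-diagonal entries are zero objects and whose structure squares
$$\xymatrix{ E_n\ar[r]\ar[d] & 0\ar[d]\\ 0\ar[r] & E_{n+1} }$$
are cocartesian; equivalently, it is a sequence $(E_n)_{n\in\bbZ}$ in $\bbD(e)$ together with coherent equivalences $E_n\stackrel{\sim}{\to}\Omega E_{n+1}$. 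Then $\St(\bbD)$ is the full sub-prederivator of $\bbD_{\mathbf{V}}$ spanned by the spectrum objects, and $\stab\colon\bbD\to\St(\bbD)$ is the suspension-spectrum morphism $X\mapsto\Sigma^{\infty}X$. I will take as given --- this is the technical core of Heller's theorem --- that $\St(\bbD)$ is again a strong \emph{triangulated} derivator, that $\stab$ preserves homotopy colimits, and that $\Sigma$ is invertible in $\St(\bbD)$ with $\stab(\Sigma X)\simeq\Sigma\,\stab(X)$.

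Given a homotopy colimit preserving morphism $F\colon\bbD\to\bbT$ into a triangulated strong derivator, I define a candidate $G\colon\St(\bbD)\to\bbT$ by the formula
$$G(E)\ :=\ \underset{n\geq 0}{\hocolim}\ \Omega^{n}_{\bbT}\,F(E_n)\, ,$$
the transition maps being obtained as follows: since $F$ preserves homotopy colimits it commutes with $\Sigma$ (a cofibre), so the structure maps $\Sigma E_n\to E_{n+1}$ of $E$ yield maps $\Sigma_{\bbT}F(E_n)\to F(E_{n+1})$, and as $\Sigma_{\bbT}$ is invertible on the stable derivator $\bbT$ this is the same datum as a tower $F(E_0)\to\Omega_{\bbT}F(E_1)\to\Omega_{\bbT}^{2}F(E_2)\to\cdots$. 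The delicate point is that this must be carried out coherently over every base category $X\in\Cat$, not merely on underlying categories; here one uses the strongness of $\St(\bbD)$ together with its concrete description inside $\bbD_{\mathbf{V}}$, so that the level-wise formula assembles into an honest morphism of derivators. That $G$ preserves homotopy colimits follows because $\bbT$, being triangulated, is regular, so sequential homotopy colimits commute with the finite homotopy limits that occur, and $\Omega_{\bbT}$ is an equivalence. A telescope/cofinality argument of the kind familiar for ordinary spectra --- this is where one checks that the $n$-th-term description of $\Sigma^{\infty}X$ interacts correctly with $F$ --- then identifies $G\circ\stab$ with $F$. Hence $\stab^{\ast}$ is essentially surjective.

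For full faithfulness I will use that every spectrum object is a homotopy colimit of shifted suspension spectra: concretely
$$E\ \simeq\ \underset{n\geq 0}{\hocolim}\ \Sigma^{-n}_{\St(\bbD)}\,\stab(E_n)\, ,$$
naturally in the base category, the canonical maps $\Sigma^{-n}_{\St(\bbD)}\stab(E_n)\to E$ exhibiting $E$ as their homotopy colimit just as for ordinary spectra. Consequently, for any two homotopy colimit preserving morphisms $G_1,G_2\colon\St(\bbD)\to\bbT$, a $2$-morphism $G_1\Rightarrow G_2$ is determined by its restriction along $\stab$, and conversely any $2$-morphism $G_1\stab\Rightarrow G_2\stab$ extends uniquely --- compatibly with the above homotopy colimit presentations --- to a $2$-morphism $G_1\Rightarrow G_2$. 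Together with the essential surjectivity established above, this shows that $\stab^{\ast}$ is an equivalence of categories.

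I expect the main obstacle to be not the universal-property bookkeeping above but the input I have allowed myself to assume: that $\St(\bbD)$, defined as the sub-prederivator of spectrum objects in $\bbD_{\mathbf{V}}$, really is a strong triangulated derivator with $\stab$ cocontinuous. Establishing this is exactly where the hypotheses that $\bbD$ be regular, pointed and strong are genuinely needed --- pointedness to dispose of $\Omega$ and $\Sigma$, regularity to control the interaction of the sequential colimits implicit in $\Omega^{\infty}$-type reflections with finite limits, and strongness to retain enough $2$-categorical rigidity for the reflection onto spectrum objects to be a morphism of derivators --- and it is the one step that relies on genuine homotopy-theoretic input rather than formal manipulation.
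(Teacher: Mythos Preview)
The paper does not supply a proof of this theorem: it is attributed to Heller and quoted as input. There is therefore no proof in the paper to compare your attempt against.

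As for the attempt itself, your outline follows the expected strategy, and the two key formulas --- the quasi-inverse $G(E)=\hocolim_n\Omega^n_{\bbT}F(E_n)$ and the presentation $E\simeq\hocolim_n\Sigma^{-n}_{\St(\bbD)}\stab(E_n)$ used for full faithfulness --- are the right ones. One terminological wrinkle: what you call a ``spectrum object'' (with cocartesian structure squares) is what the paper calls an $\Omega$-spectrum; in the paper's conventions a spectrum need only have vanishing off-diagonal entries, and $\St(\bbD)$ is then described as the left Bousfield localization of $\Spec(\bbD)$ at the stable equivalences. That localization description is precisely how Heller obtains the derivator structure on $\St(\bbD)$ that you acknowledge assuming, and you are right that this is where the substance lies; once one knows $\St(\bbD)$ is a strong triangulated derivator with $\stab$ cocontinuous, the universal-property argument you sketch is essentially formal.
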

The derivator $\St(\bbD)$ is described as follows.
Let $\mathbf{V}=W^\op$, where $W$ is the poset $\{(i,j)\,|\,|i-j| \leq 1\} \subset \bbZ \times \bbZ$ considered as a small category. A \emph{spectrum} in $\bbD$ is an object $X$ in
$\bbD(\mathbf{V})$ such that $X_{i,j}\simeq 0$ for $i\neq j$.
This defines a derivator $\Spec(\bbD)$ (as a full subderivator of $\bbD_{\mathbf{V}}$).
Spectra are diagrams of the following shape\,:
$$
\xymatrix@!0 @R=3.5pc @C=5pc{
&& & & \vdots & \\
&& & 0 \ar[r] &X_{i+2,i+2} \ar[r] \ar[u] & \cdots \\
&& 0 \ar[r] & X_{i+1,i+1} \ar[u] \ar[r] & 0 \ar[u] &  \\
& 0 \ar[r] & X_{i,i} \ar[u] \ar[r] & 0 \ar[u] & & \\
\cdots \ar[r] & X_{i-1,i-1} \ar[r]  \ar[u] & 0  \ar[u] &  & & \\
& \vdots \ar[u] & & & &
}
$$
In particular, we get maps $X_{i,i}\to\Omega(X_{i+1,i+1})$, $i\in\bbZ$.
The derivator $\St(\bbD)$ is obtained as the full subderivator
of $\Spec(\bbD)$ which consists of $\Omega$-spectra, \ie spectra $X$
such that the maps $X_{i,i}\to\Omega(X_{i+1,i+1})$ are all isomorphisms.

There is a morphism of derivators, called the \emph{infinite loop functor},
$$\Omega^\infty:\Spec(\bbD)\too\bbD$$
defined by
$$\Omega^\infty(X)=\underset{n}{\hocolim}\, \Omega^n(X_{n,n})$$
(where $\Omega^n$ denotes the loop space functor iterated $n$ times).
There is also a shift morphism
$$(-)<n>:\Spec(\bbD)\too\Spec(\bbD)$$
defined by $(X<n>)_{i,j}=X_{n+i,n+j}$ for $n\in\bbZ$.

The derivator $\St(\bbD)$ can be described as the left Bousfield localization
of $\Spec(\bbD)$ by the maps $X\to Y$ which induce isomorphisms
$\Omega^\infty(X<n>)\simeq\Omega^\infty(Y<n>)$ for any integer $n$; see \cite{Heller}.
Note that, for an $\Omega$-spectrum $X$, we have a canonical
isomorphism\,: $\Omega^\infty(X<n>)\simeq X_{n,n}$.
\begin{notation}
For a small category $A$, let $\mathsf{Hot}_A$ (resp. $\mathsf{Hot}_{\bullet,A}$)
be the derivator associated to the projective model category structure on the category
of simplicial presheaves (resp. of pointed simplicial presheaves) on $A$
(this is compatible with the notations introduced in \ref{derivatorpresheaves}).
\end{notation}
\begin{remark}\label{rempointedHot}
The homotopy colimit preserving morphism
$$\mathsf{Hot}_A\too\mathsf{Hot}_{\bullet,A}\ , \quad X\longmapsto X_+=X\amalg *$$
is the universal one with target a pointed regular strong derivators; see \cite[Prop. 4.17]{propuni}.
\end{remark}
\begin{notation}
Denote by $\Spt_A$ the stable Quillen model category of presheaves of spectra on $A$, endowed with
the projective model structure.
\end{notation} 

The infinite suspension functor
defines a homotopy colimit preserving morphism
$$\Sigma^\infty:\mathsf{Hot}_{\bullet,A}\too\HO(\Spt_A)\, ,$$
and as $\HO(\Spt_A)$ is a triangulated strong derivator, it induces
a unique homotopy colimit preserving morphism
$$\St(\mathsf{Hot}_{\bullet,A})\too\HO(\Spt_A)$$
whose composition with $\stab:\mathsf{Hot}_{\bullet,A}\to\St(\mathsf{Hot}_A)$
is the infinite suspension functor. A particular case of \cite[Thm.\,3.31]{Thesis}
gives\,:
\begin{theorem}The canonical morphism
$\St(\mathsf{Hot}_{\bullet,A})\to\HO(\Spt_A)$
is an equivalence of derivators. As a consequence, the map
$\Sigma^\infty:\mathsf{Hot}_{\bullet,A}\to\HO(\Spt_A)$
is the universal homotopy colimit preserving morphism
from $\mathsf{Hot}_{\bullet,A}$ to a triangulated derivator.
\end{theorem}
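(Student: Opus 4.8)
The plan is to deduce the statement from the general stabilization theorem
\cite[Thm.\,3.31]{Thesis}; the only genuine work lies in matching the concrete
situation with its hypotheses. Write $\cM$ for the category of pointed simplicial
presheaves on $A$ with the projective model structure, so that
$\mathsf{Hot}_{\bullet,A}=\HO(\cM)$. This $\cM$ is a left proper, cellular, pointed
model category, and the suspension $\Sigma=S^1\wedge-$ is a left Quillen
endofunctor of $\cM$, with right adjoint the (internal) loop functor. One also
notes that $\mathsf{Hot}_{\bullet,A}$ is a regular pointed strong derivator:
pointedness and strongness are automatic for the derivator associated to a pointed
model category, and regularity (sequential homotopy colimits commute with finite
products and homotopy pullbacks) is verified objectwise, where it reduces to the
classical statement for pointed simplicial sets. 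In particular
$\St(\mathsf{Hot}_{\bullet,A})$ is defined and Heller's theorem applies.

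Next I would identify the model-categorical stabilization of $\cM$ along $\Sigma$.
By construction, Hovey's category of $\Sigma$-spectra over $\cM$, equipped with its
stable projective model structure, is exactly the model category $\Spt_A$ of
presheaves of spectra on $A$, and the infinite suspension left Quillen functor
$\Sigma^\infty:\cM\too\Spt_A$ induces on associated derivators precisely the
morphism $\Sigma^\infty:\mathsf{Hot}_{\bullet,A}\too\HO(\Spt_A)$ of the statement.
Then \cite[Thm.\,3.31]{Thesis} applies to $\cM$ and tells us that the canonical
comparison $\St(\HO(\cM))\too\HO(\Spt_A)$ is an equivalence of derivators; by the
uniqueness part of Heller's theorem this comparison is canonically isomorphic to
the canonical morphism $\St(\mathsf{Hot}_{\bullet,A})\too\HO(\Spt_A)$ considered
here, namely the unique homotopy colimit preserving morphism whose composite with
$\stab$ is $\Sigma^\infty$. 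This settles the first assertion.

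The second assertion is then formal. By Heller's theorem, for every triangulated
strong derivator $\bbT$ the functor
$\stab^{\ast}:\HomC(\St(\mathsf{Hot}_{\bullet,A}),\bbT)\too
\HomC(\mathsf{Hot}_{\bullet,A},\bbT)$ is an equivalence of categories; transporting
along the equivalence
$\St(\mathsf{Hot}_{\bullet,A})\stackrel{\sim}{\too}\HO(\Spt_A)$ just established,
and using that $\stab$ is carried to $\Sigma^\infty$, one concludes that
$(\Sigma^\infty)^{\ast}:\HomC(\HO(\Spt_A),\bbT)\too
\HomC(\mathsf{Hot}_{\bullet,A},\bbT)$ is an equivalence of categories as well.
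This says exactly that $\Sigma^\infty$ is the universal homotopy colimit
preserving morphism from $\mathsf{Hot}_{\bullet,A}$ to a triangulated strong
derivator.

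The main obstacle is entirely packaged inside \cite[Thm.\,3.31]{Thesis}: one must
check that the abstract stabilization machinery of that reference (and of Hovey)
applies to $\cM$, and that the resulting stable model category is genuinely
$\Spt_A$ with a comparison functor that agrees, at the level of derivators, with
the one produced by Heller's universal property. Once this is granted, the rest is
pure bookkeeping with universal properties.
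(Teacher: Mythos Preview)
Your proposal is correct and follows essentially the same approach as the paper: the paper simply states that the theorem is ``a particular case of \cite[Thm.\,3.31]{Thesis}'' without further argument, and your proof spells out in detail how to verify the hypotheses of that reference and deduce the second assertion from Heller's universal property. The additional verifications you supply (left properness, cellularity, regularity, identification of Hovey's stable model category with $\Spt_A$) are exactly the bookkeeping implicit in the paper's one-line citation.
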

The composition of the Yoneda embedding $A\to\mathsf{Hot}_A$
with the infinite suspension functor gives a canonical morphism
$$h:A\too\HO(\Spt_A)\, .$$
A combination of the preceding theorem, of Remark \ref{rempointedHot},
and of \cite[Corollary 4.19]{propuni}, leads to the following statement.
\begin{theorem}\label{stableuniv}
For any triangulated derivator $\bbD$, the functor
$$h^*:\HomC(\HO(\Spt_A),\bbD)\stackrel{\sim}{\too}\uHom(A,\bbD)\simeq\bbD(A^\op)$$
is an equivalence of categories.
\end{theorem}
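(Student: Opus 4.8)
The plan is to exhibit $h^*$ as a composite of three equivalences of categories, each of which is one of the universal properties already recorded above.

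First I would recall the universal property of $\mathsf{Hot}_A$ as the free homotopy-cocomplete regular strong derivator generated by $A$: by \cite[Cor.\,4.19]{propuni} together with the $2$-Yoneda lemma for derivators, for every regular strong derivator $\bbD$, restriction along the Yoneda embedding $y\colon A\to\mathsf{Hot}_A$ is an equivalence $\HomC(\mathsf{Hot}_A,\bbD)\stackrel{\sim}{\too}\uHom(A,\bbD)\simeq\bbD(A^\op)$. Next, Remark~\ref{rempointedHot} tells us that the pointification morphism $(-)_+\colon\mathsf{Hot}_A\to\mathsf{Hot}_{\bullet,A}$ is the universal homotopy-colimit-preserving morphism with target a pointed regular strong derivator; hence restriction along it is an equivalence $\HomC(\mathsf{Hot}_{\bullet,A},\bbD)\stackrel{\sim}{\too}\HomC(\mathsf{Hot}_A,\bbD)$ for every pointed regular strong derivator $\bbD$. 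Finally, by the preceding theorem — the identification $\St(\mathsf{Hot}_{\bullet,A})\simeq\HO(\Spt_A)$, a particular case of \cite[Thm.\,3.31]{Thesis}, combined with Heller's universal property of stabilization \cite{Heller} — restriction along $\Sigma^\infty\colon\mathsf{Hot}_{\bullet,A}\to\HO(\Spt_A)$ is an equivalence $\HomC(\HO(\Spt_A),\bbD)\stackrel{\sim}{\too}\HomC(\mathsf{Hot}_{\bullet,A},\bbD)$ for every triangulated derivator $\bbD$.

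With these three inputs in hand the rest is a short assembly. A triangulated derivator $\bbD$ is in particular pointed, regular and strong (in a stable derivator sequential homotopy colimits commute with finite products and homotopy pullbacks, since there biproducts agree with products and homotopy pullbacks with homotopy pushouts), so all three equivalences apply to it, and their composite is an equivalence
$$\HomC(\HO(\Spt_A),\bbD)\stackrel{\sim}{\too}\HomC(\mathsf{Hot}_{\bullet,A},\bbD)\stackrel{\sim}{\too}\HomC(\mathsf{Hot}_A,\bbD)\stackrel{\sim}{\too}\bbD(A^\op)\simeq\uHom(A,\bbD)\,.$$
It then remains only to identify this composite with $h^*$, and this is immediate from the construction of $h$: by definition $h\colon A\to\HO(\Spt_A)$ is the composite $\Sigma^\infty\circ(-)_+\circ y$ of the Yoneda embedding, the pointification and the infinite suspension functor, so by (contravariant) functoriality of restriction $h^*=y^*\circ((-)_+)^*\circ(\Sigma^\infty)^*$ is precisely the composite displayed above.

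I expect the only real work to lie inside the cited inputs rather than in this assembly; in particular, the identification $\St(\mathsf{Hot}_{\bullet,A})\simeq\HO(\Spt_A)$, the special case of \cite[Thm.\,3.31]{Thesis}, is the genuine comparison between Heller's formal stabilization and the model category of presheaves of spectra, and that is where the effort goes. At the level of the present statement the one point to keep track of is the compatibility of the comparison morphisms — that the tautological object of $\HO(\Spt_A)(A^\op)$ attached to $\id_{\HO(\Spt_A)}$ corresponds, under the inverse equivalences, to the Yoneda object of $\mathsf{Hot}_A(A^\op)$ — but this is forced by having set up $h$ as the composite of the three universal morphisms, so no extra argument is needed.
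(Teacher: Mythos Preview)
Your proposal is correct and follows exactly the approach indicated in the paper: the paper's own ``proof'' is the single sentence preceding the statement, namely that the theorem is a combination of the preceding theorem (the equivalence $\St(\mathsf{Hot}_{\bullet,A})\simeq\HO(\Spt_A)$ and the resulting universal property of $\Sigma^\infty$), Remark~\ref{rempointedHot}, and \cite[Corollary~4.19]{propuni}, and you have simply spelled out this composite of three universal properties explicitly and checked it agrees with $h^*$. The one small point to be aware of is that Remark~\ref{rempointedHot} and Heller's stabilization theorem are stated for \emph{strong} (and regular) derivators, so your sentence ``a triangulated derivator is in particular \ldots\ strong'' is not literally supported by the paper's Definition~\ref{def:srp}; but the paper itself invokes the same inputs without comment, so this is a hypothesis implicitly carried along rather than a gap in your argument.
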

Hence, given any object $X$ in $\bbD(e)$, there is a unique
homotopy colimit preserving morphism of triangulated derivators
$$\HO(\Spt)\too\bbD\ , \quad E\longmapsto E\otimes X$$
which sends the stable $0$-sphere to $X$.
This defines a canonical action of $\HO(\Spt)$ on $\bbD$; see \cite[Thm. 5.22]{propuni}.
\begin{lemma}\label{existSpHom}
For any small category $A$, the functor
$$\Ho(\Spt_A)=\HO(\Spt)(A)\too\bbD(A)\ , \quad E\longmapsto E\otimes X$$
has a right adjoint
$$\bbR\Hom_{\bbD}(X,-):\bbD(A)\too\HO(\Spt)(A)\, .$$
\end{lemma}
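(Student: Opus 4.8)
The plan is to obtain the right adjoint from Brown representability. First I would recall, from Theorem~\ref{stableuniv} and the construction following it, that the action $E\mapsto E\otimes X$ is a morphism of triangulated derivators $\HO(\Spt)\to\bbD$ that commutes with homotopy colimits. Fixing a small category $A$, I write $F_A\colon\HO(\Spt)(A)=\Ho(\Spt_A)\to\bbD(A)$ for the functor $E\mapsto E\otimes X$ evaluated at level $A$. Since $\bbD$ is a triangulated derivator, both $\Ho(\Spt_A)$ and $\bbD(A)$ are triangulated categories with all small coproducts — for a set $S$, the coproduct of a family $(M_i)_{i\in S}$ is computed as $\pi_!$ along the projection $\pi\colon A\times S\to A$, with $S$ viewed as a discrete category — and $F_A$ is an exact functor.

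The key preliminary point is to check that $F_A$ preserves small coproducts. This will follow formally from the fact that $E\mapsto E\otimes X$ commutes with homotopy colimits: such a morphism commutes with $\pi_!$ for $\pi$ as above and with the inverse image equivalences identifying $\bbD(A\times S)$ with $\prod_{i\in S}\bbD(A)$, hence it carries the coproduct in $\Ho(\Spt_A)$ to the coproduct in $\bbD(A)$. I expect this verification, although routine, to be the only step requiring any care; the rest is a black-box application of standard results. (Note that $E\mapsto E\otimes X$ need not be a left Quillen functor, so one cannot simply invoke a right Quillen adjoint here — this is the reason for going through Brown representability.)

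Next I would record that $\Ho(\Spt_A)$ is compactly generated. With $h\colon A\to\HO(\Spt_A)$ the canonical morphism recalled in \S\ref{sub:Stab}, the objects $h(a)[n]$, for $a$ an object of $A$ and $n\in\bbZ$, form a set of compact generators: one has $\Hom_{\Ho(\Spt_A)}(h(a)[n],M)\simeq\pi_n(M(a))$ for any presheaf of spectra $M$, these functors detect isomorphisms (an objectwise stable equivalence is an equivalence), and they are compact because homotopy colimits of presheaves of spectra are computed objectwise and stable homotopy groups commute with coproducts of spectra.

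Finally I would invoke Neeman's Brown representability theorem \cite[\S\,8]{Neeman}: an exact, coproduct-preserving functor out of a compactly generated triangulated category admits a right adjoint. Applying this to $F_A$ yields the desired right adjoint $\bbD(A)\to\HO(\Spt)(A)$, which we denote $\bbR\Hom_{\bbD}(X,-)$; being right adjoint to an exact functor it is itself exact, it preserves small products, and for $A=e$ it recovers the spectral mapping functor $\bbR\Hom_{\bbD}(X,-)$ used earlier in \S\ref{sub:Stab}. If desired, one can further check, via the mate correspondence together with the compatibility of $E\mapsto E\otimes X$ with inverse image functors, that these right adjoints are pseudo-natural in $A$.
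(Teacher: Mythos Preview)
Your proposal is correct and follows exactly the same route as the paper: the paper's proof is a one-line appeal to Brown representability for the compactly generated triangulated category $\Ho(\Spt_A)$ (citing \cite[Thm.\,8.4.4]{Neeman}), and you have simply unpacked the implicit hypotheses (exactness and coproduct-preservation of $F_A$, compact generation of $\Ho(\Spt_A)$) that make that appeal legitimate.
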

\begin{proof}
This follows from the Brown representability Theorem
applied to the compactly generated triangulated category $\Ho(\Spt_A)$;
see \cite[Thm.\,8.4.4]{Neeman}.
\end{proof}
Using Theorem \ref{stableuniv}, one sees easily that the functors of
Lemma \ref{existSpHom} define a morphism
$$\bbR\Hom_{\bbD}(X,-):\bbD\too\HO(\Spt)$$
which is right adjoint to the morphism $(-)\otimes X$.
In particular, we have the formula
$$\Hom_{\Ho(\Spt)}(E,\bbR\Hom_{\bbD}(X,Y))\simeq\Hom_{\bbD(e)}(E\otimes X,Y)$$
for any spectrum $E$ and any objects $X$ and $Y$ in $\bbD(e)$.

This enrichment in spectra is compatible with homotopy colimit preserving morphisms
of triangulated derivators (see \cite[Thm. 5.22]{propuni})\,:
if $\Phi:\bbD\to\bbD'$ is a homotopy colimit preserving
morphism of triangulated derivators, then for any spectrum $E$ and any object $X$
of $\bbD$, we have a canonical coherent isomorphism
$$E\otimes\Phi(X)\simeq \Phi(E\otimes X)\, .$$
As a consequence, if moreover $\Phi$ has a right adjoint $\Psi$, then we have
canonical isomorphisms in the stable homotopy category of spectra\,:
$$\bbR\Hom_{\bbD'}(\Phi(X),Y)\simeq\bbR\Hom_{\bbD}(X,\Psi(Y))\, .$$
Indeed, to construct such an isomorphism, it is sufficient to
construct a natural isomorphism of abelian groups, for any spectrum $E$\,:
$$\Hom_{\Ho(\Spt)}(E,\bbR\Hom_{\bbD'}(\Phi(X),Y)))\simeq
\Hom_{\Ho(\Spt)}(E,\bbR\Hom_{\bbD}(X,\Psi(Y)))\, .$$
Such an isomorphism is obtained by the following computations\,:
$$\begin{aligned}
\Hom_{\Ho(\Spt)}(E,\bbR\Hom_{\bbD'}(\Phi(X),Y)))
&\simeq \Hom_{\bbD'}(E\otimes\Phi(X),Y)\\
&\simeq \Hom_{\bbD'}(\Phi(X\otimes E),Y)\\
&\simeq \Hom_{\bbD}(X\otimes E,\Psi(Y))\\
&\simeq \Hom_{\Ho(\Spt)}(E,\bbR\Hom_{\bbD}(X,\Psi(Y)))\, .
\end{aligned}$$
Note finally that we can apply Theorem \ref{stableuniv}
to $\bbD^\op$ (\ref{oppositederivator}): for any object $X$ of $\bbD$,
there is a unique homotopy colimit preserving morphism
$$\HO(\Spt)\too\bbD^\op\ , \quad E\longmapsto X^E$$
such that $X^{S^0}=X$. We then have the formula
$$\bbR\Hom_{\bbD}(X,Y^E)\simeq \bbR\Hom_{\Ho(\Spt)}(E,\bbR\Hom_\bbD(X,Y))
\simeq\bbR\Hom_\bbD(E\otimes X,Y)\, .$$

\subsection{The Milnor short exact sequence}

\begin{proposition}\label{Milnorsequence}
Let $F$ be a strong triangulated derivator, and $F_\bullet$
an object of $\bbD(\bbN^\op)$. We then have a distinguished triangle
$$\bigoplus_n F_n\xrightarrow{{\bf 1}-{\bf s}}\bigoplus_n F_n\too \hocolim F_\bullet
\too \bigoplus_n F_n[1]$$
where ${\bf s}$ is the morphism induced by the maps $F_n\to F_{n+1}$. 
As a consequence, we also have the Milnor short exact sequence
$$0\to\underset{n}{\mathrm{lim}}^1\Hom_{\bbD(e)}(F_n[1],A)\to\Hom_{\bbD(e)}(\hocolim F_\bullet,A)
\to\underset{n}{\mathrm{lim}}\, \Hom_{\bbD(e)}(F_n,A)\to 0\, .$$
\end{proposition}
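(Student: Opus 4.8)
The plan is to deduce the Milnor short exact sequence formally from the distinguished triangle, so the substance lies in constructing the triangle. For that I will identify $\hocolim F_\bullet$ with a mapping telescope.

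First I introduce the \emph{telescope category} $T$: the poset with objects $a_n,c_n$ for $n\in\bbN$ whose only non-identity morphisms are $a_n\to c_n$ and $a_n\to c_{n+1}$, so that $T$ is the zig-zag $c_0\leftarrow a_0\to c_1\leftarrow a_1\to c_2\leftarrow\cdots$. There is a functor $\pi\colon T\to\bbN$ with $\pi(a_n)=\pi(c_n)=n$, sending $a_n\to c_n$ to $\mathrm{id}_n$ and $a_n\to c_{n+1}$ to the unique morphism $n\to n+1$; consequently $\pi^*F_\bullet$ has value $F_n$ at both $a_n$ and $c_n$, with $a_n\to c_n$ sent to $\mathrm{id}_{F_n}$ and $a_n\to c_{n+1}$ sent to $s_n\colon F_n\to F_{n+1}$. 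For each $m\in\bbN$ the comma category $m\backslash\pi$ is the full subposet of $T$ on $\{a_n,c_n\mid n\ge m\}$, again a zig-zag and hence with weakly contractible nerve; since a functor all of whose comma categories have weakly contractible nerve is homotopy cofinal in every derivator, one gets $\hocolim F_\bullet\simeq\hocolim_T\pi^*F_\bullet$. As $T$ has a one-dimensional nerve, the bar construction computing $\hocolim_T\pi^*F_\bullet$ truncates and exhibits it as the homotopy coequalizer of two morphisms $\bigoplus_n F_n\to\bigoplus_n F_n$; reading off $\pi^*F_\bullet$ and discarding the contractible summands indexed by the $a_n$ (which are hit isomorphically), these two morphisms are exactly $\mathrm{id}$ and $\mathbf s$, where $\mathbf s$ is the map induced by the $s_n$.

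Next I use that in a stable derivator the homotopy coequalizer of two morphisms $f,g\colon X\to Y$ is the cofibre of $f-g$. Indeed $\mathrm{hocoeq}(f,g)$ is the homotopy pushout of $Y\xleftarrow{(f,g)}X\oplus X\xrightarrow{\nabla}X$; a homotopy cocartesian square in a stable derivator yields a distinguished triangle $X\oplus X\to Y\oplus X\to\mathrm{hocoeq}(f,g)\to (X\oplus X)[1]$; and a $2\times 2$ change of basis, legitimate because the derivator is strong (so that the relevant ``matrices'' of morphisms are induced by honest maps), identifies the map $X\oplus X\to Y\oplus X$ with $(f-g)\oplus\mathrm{id}_X$, whose $\mathrm{id}_X$-part contributes a zero cofibre. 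With $f=\mathrm{id}$ and $g=\mathbf s$ this produces the asserted triangle $\bigoplus_n F_n\xrightarrow{\ \mathbf 1-\mathbf s\ }\bigoplus_n F_n\too\hocolim F_\bullet\too\bigoplus_n F_n[1]$.

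Finally I apply $\Hom_{\bbD(e)}(-,A)$ to this triangle. Since $\bigoplus$ is a coproduct in $\bbD(e)$, $\Hom_{\bbD(e)}(\bigoplus_n F_n,A)\simeq\prod_n\Hom_{\bbD(e)}(F_n,A)$ (and likewise with $F_n$ replaced by $F_n[1]$), and the induced endomorphism of this product is $\mathbf 1-\mathbf s^{*}$, where $(\mathbf s^{*}\psi)_n=\psi_{n+1}\circ s_n$; its kernel is $\underset{n}{\mathrm{lim}}\,\Hom_{\bbD(e)}(F_n,A)$ and its cokernel is $\underset{n}{\mathrm{lim}}^1\,\Hom_{\bbD(e)}(F_n,A)$, and analogously for the shifted system. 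The long exact sequence of the triangle then collapses, around the term $\Hom_{\bbD(e)}(\hocolim F_\bullet,A)$, into exactly the stated short exact sequence. The steps that need care are the two derivator-theoretic inputs in the telescope step: that $\pi$ is homotopy cofinal (a condition on weakly contractible comma categories, valid because every derivator is a module over $\mathsf{Hot}$) and that the bar construction for the one-dimensional category $T$ truncates to a homotopy coequalizer (alternatively, one computes $\hocolim_T$ directly as a filtered homotopy colimit of finite iterated homotopy pushouts); all the rest is formal manipulation of distinguished triangles and homotopy cocartesian squares.
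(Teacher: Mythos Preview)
Your proof is correct, but it follows a genuinely different route from the paper's.

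The paper argues in the reverse order: it first defines the ``weak homotopy colimit'' $F'=\mathsf{cone}(\mathbf{1}-\mathbf{s})$ and observes that the Milnor sequence for $F'$ is just the long exact sequence of that defining triangle; then, \emph{independently}, it uses the spectral enrichment of $\bbD$ to write $\bbR\Hom(\hocolim F_\bullet,A)$ as a homotopy limit of a tower of spectra and invokes Bousfield--Kan~[IX.3.1] to obtain the Milnor sequence for the genuine $\hocolim F_\bullet$; finally it compares the two Milnor sequences (for all $A$) to conclude that $\hocolim F_\bullet\simeq F'$, which gives the triangle a posteriori.

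Your approach instead establishes the triangle directly, by a telescope/cofinality argument internal to the derivator, and only then reads off the Milnor sequence. This is more conceptual and avoids the detour through the spectral enrichment and Bousfield--Kan, but it requires more derivator-theoretic input: homotopy cofinality via contractible comma categories, and the computation of $\hocolim_T$ over a free one-dimensional category as a homotopy coequalizer (your alternative description via a filtered colimit of finite iterated pushouts is the safer way to justify this step in a general derivator). The paper's argument, by contrast, is a black-box reduction to two classical results (sequential colimits in triangulated categories \`a la Hovey--Palmieri--Strickland, and the $\mathrm{lim}^1$ sequence for towers of spectra), at the cost of the isomorphism $\hocolim F_\bullet\simeq F'$ being non-canonical.
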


\begin{proof}
Given an object $F_\bullet$ in $\bbD(\mathbb{N}^\op)$, we can consider
the \emph{weak homotopy colimit} of $F_\bullet$
(also called the sequential colimit of $F_\bullet$ in \cite[\S2.2]{HPS})
$$F'=\mathsf{cone}\big({\bf 1}-{\bf s}:\bigoplus_n F_n\too\bigoplus_n F_n\big)\, .$$
For any object $A$ of $\bbD(e)$, we then have a Milnor short exact sequence
$$0\too\underset{n}{\mathrm{lim}}^1\Hom_{\bbD(e)}(F_n[1],A)\too\Hom_{\bbD(e)}(F',A)
\too\underset{n}{\mathrm{lim}}\, \Hom_{\bbD(e)}(F_n,A)\too 0\, .$$
As the morphism $\bbR\Hom_{\bbD}(-,A)$ preserves homotopy limits,
we deduce from \cite[Thm.\, IX.3.1]{BKan} that we also have
a Milnor short exact sequence
$$0\to{\underset{n}{\mathrm{lim}}^1}\Hom_{\bbD(e)}(F_n[1],A)\to
\Hom_{\bbD(e)}(\underset{n}{\hocolim}\,  F_n,A)
\to\underset{n}{\mathrm{lim}}\Hom_{\bbD(e)}(F_n,A)\to 0\, .$$
We deduce from this a (non unique) isomorphism
$\underset{n}{\hocolim} \, F_n\simeq F'$.
\end{proof}

\begin{corollary}\label{lem:cofinality}
Let $\bbD$ a strong triangulated derivator, and
let $D_\bullet: X_{\bullet} \to Y_{\bullet}$ be a morphism in $\bbD(\mathbb{N}^\op)$.
If there exists a family of  interpolation maps $\{ \Psi_n \}_{n \in \mathbb{N}}$, making the diagram
$$
\xymatrix{
X_0 \ar[d]^{D_0} \ar[rr] && X_1 \ar[d]^{D_1} \ar[rr] && X_2 \ar[d]^{D_2} \ar[rr] && \cdots \\
Y_0 \ar[urr]_{\Psi_0} \ar[rr] && Y_1 \ar[urr]_{\Psi_1} \ar[rr] && Y_2 \ar[rr] \ar[urr]_{\Psi_2} && \cdots \\
}
$$
commutative in $\bbD(e)$, then the induced morphism
$$\hocolim D_\bullet:\underset{n}{\hocolim} \,X_{n}{\too} \underset{n}{\hocolim}\, Y_{n}$$
is an isomorphism.
\end{corollary}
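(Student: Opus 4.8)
The plan is to reduce the statement to the Milnor exact sequence of Proposition~\ref{Milnorsequence} together with an elementary observation about inverse systems of abelian groups. Fix an arbitrary object $A$ of $\bbD(e)$ and apply the contravariant functor $\Hom_{\bbD(e)}(-,A)$ to the whole interpolation diagram. Since $D_\bullet$ is a morphism in $\bbD(\bbN^\op)$, the maps $D_n\colon X_n\to Y_n$ assemble into a morphism of towers $D^*\colon\{\Hom_{\bbD(e)}(Y_n,A)\}_n\to\{\Hom_{\bbD(e)}(X_n,A)\}_n$, while the interpolation maps $\Psi_n\colon Y_n\to X_{n+1}$ give a ``wrong way'' collection $\Psi^*_n\colon\Hom_{\bbD(e)}(X_{n+1},A)\to\Hom_{\bbD(e)}(Y_n,A)$.

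The key point is then that the commutativity of the interpolation diagram forces $D^*$ and $\Psi^*$ to be mutually inverse up to the transition maps. From $\Psi_n\circ D_n=(X_n\to X_{n+1})$ and $D_{n+1}\circ\Psi_n=(Y_n\to Y_{n+1})$ one gets, after applying $\Hom_{\bbD(e)}(-,A)$, that $D^*_n\circ\Psi^*_n$ is the transition map of the tower $\{\Hom_{\bbD(e)}(X_n,A)\}_n$ and that $\Psi^*_n\circ D^*_{n+1}$ is the transition map of $\{\Hom_{\bbD(e)}(Y_n,A)\}_n$. Hence these two towers are isomorphic as pro-objects, so $D^*$ induces isomorphisms on $\underset{n}{\mathrm{lim}}$ and on $\underset{n}{\mathrm{lim}}^1$; the same argument applied after replacing each $X_n,Y_n$ by $X_n[1],Y_n[1]$ handles the derived-limit term appearing in the Milnor sequence.

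It then remains to assemble this. Proposition~\ref{Milnorsequence} gives, for the towers $\{X_n\}$ and $\{Y_n\}$, a commutative ladder of Milnor short exact sequences with left-hand terms $\underset{n}{\mathrm{lim}}^1\Hom_{\bbD(e)}(-[1],A)$, middle terms $\Hom_{\bbD(e)}(\hocolim(-),A)$, right-hand terms $\underset{n}{\mathrm{lim}}\,\Hom_{\bbD(e)}(-,A)$, and vertical maps induced by $D_\bullet$; the two outer vertical maps are isomorphisms by the previous paragraph, so the five lemma shows that $\Hom_{\bbD(e)}(\hocolim Y_\bullet,A)\to\Hom_{\bbD(e)}(\hocolim X_\bullet,A)$ is bijective. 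As this holds for every object $A$ of $\bbD(e)$, the usual Yoneda argument in a triangulated category shows that $\hocolim D_\bullet$ is an isomorphism.

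I expect the only mildly delicate point to be the bookkeeping in the middle step: checking carefully which composites of $D^*$ and $\Psi^*$ yield transition maps, and invoking the standard fact that $\underset{n}{\mathrm{lim}}$ and $\underset{n}{\mathrm{lim}}^1$ are invariant under such pro-isomorphisms of towers; but this is routine. A more conceptual alternative would be to observe that $\Psi_\bullet$ is itself a morphism $Y_\bullet\to X_{\bullet+1}$ in $\bbD(\bbN^\op)$, use the canonical isomorphism $\hocolim X_{\bullet+1}\simeq\hocolim X_\bullet$ coming from cofinality of the shift, and verify directly that $\hocolim\Psi_\bullet$ is a two-sided inverse of $\hocolim D_\bullet$; I prefer the route above because it uses only Proposition~\ref{Milnorsequence}, which is already at hand, rather than separate cofinality input.
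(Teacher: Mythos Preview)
Your argument is correct and is essentially a spelled-out version of the paper's proof, which likewise invokes Proposition~\ref{Milnorsequence} together with the standard interpolation lemma (the paper cites \cite[Lemma~1.7.1]{Neeman} for this). One caveat about your closing remark: the ``more conceptual alternative'' would not work as stated, since the hypotheses only require the interpolation diagram to commute in $\bbD(e)$, so the maps $\Psi_n$ need not assemble into a genuine morphism $Y_\bullet\to X_{\bullet+1}$ in $\bbD(\bbN^\op)$; fortunately your main argument does not rely on this.
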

\begin{proof}
This follows from the preceding proposition and from an easy cofinality argument.
See for instance \cite[Lemma 1.7.1]{Neeman}.
\end{proof}

\medbreak\noindent\textbf{Acknowledgments\,:} The authors would like to thank
the Institut Henri Poincar{\'e} in Paris, for its hospitality, where some of this work was carried out.

\end{document}